\documentclass[11pt,reqno]{amsart}
\usepackage{hyperref}
\usepackage{mathtools}
\usepackage{amssymb}
\usepackage{color}
\usepackage{physics}
\usepackage[left=3cm,right=3cm,top=3cm,bottom=3cm]{geometry}
\usepackage{enumitem}

\newtheorem{theorem}{Theorem}
\newtheorem{lemma}{Lemma}[section]
\newtheorem{proposition}{Proposition}[section]

\newtheorem{corollary}{Corollary}[section]
\newtheorem{definition}{Definition}[section]
\theoremstyle{remark}
\newtheorem{remark}{Remark}[section]
\numberwithin{equation}{section}

\newcommand{\R}{\mathbb{R}}

\newcommand{\N}{\mathbb{N}}
\newcommand{\tsum}{\textstyle\sum}
\numberwithin{equation}{section}
\def\bell{{\boldsymbol{\ell}}}
\def\by{{\boldsymbol{y}}}
\def\psl#1#2{\left(#1,#2 \right)_{L^2}}
\def\pse#1#2{\left(#1,#2 \right)_{\mathcal E}}
\def\psh#1#2{\left(#1,#2 \right)_{\dot H^1}}
\def\pshb#1#2{\left(#1,#2 \right)_{\dot H^1_\ell}}

\def\nol#1{\left\|#1 \right\|_{L^2}}
\def\noh#1{\left\|#1 \right\|_{\dot H^1}}
\def\nohb#1{\left\|#1 \right\|_{\dot H^1_\ell}}

\def\Nint{\mathcal N_{\Omega}}

\newcommand{\bg}{\mathbf{g}}
\newcommand{\bh}{\mathbf{h}}
\newcommand{\bbf}{\mathbf{f}}
\newcommand{\br}{\mathbf{r}}
\newcommand{\WW}{\mathbf{W}}
\newcommand{\XX}{\mathbf{X}}
\newcommand{\trans}{t}
\newcommand{\cC}{\mathcal{C}}
\newcommand{\cE}{\mathcal{E}}

\newcommand{\cG}{\mathcal{G}}
\newcommand{\cH}{\mathcal{H}}
\newcommand{\cK}{\mathcal{K}}

\newcommand{\mm}{m}
\newcommand{\pp}{p}
\newcommand{\pun}{\partial_1}

\newcommand{\cN}{\mathcal{N}}
\newcommand{\MS}{S}
\newcommand{\ba}{\boldsymbol{\alpha}}
\newcommand{\bs}{\boldsymbol\sigma}
\newcommand{\bR}{\mathbf R}
\newcommand{\bW}{\mathbf W}
\newcommand{\bX}{\mathbf X}

\begin{document}
\title[Classification of multi-solitons for critical wave equation]
{Classification of multi-solitons in one sense of time for the 5D energy-critical wave equation}
\author[Y. Martel]{Yvan Martel}
\address{Laboratoire de mathématiques de Versailles, UVSQ, CNRS
and Institut Universitaire de France,
45 avenue des Etats-Unis, 78035 Versailles}
\email{yvan.martel@uvsq.fr}
\author[F. Merle]{Frank Merle}
\address{Universit\'e de Cergy Pontoise and Institut des Hautes \'Etudes Scientifiques, AGM CNRS UMR8088, 95302 Cergy-Pontoise, France}
\email{merle@math.u-cergy.fr}
\begin{abstract}
We classify the multi-solitons, in the positive sense of time, of the focusing energy-critical wave equation in space dimension $5$, more precisely the solutions $u(t,x)$ of the equation
\[
\partial_t^2 u - \Delta u - |u|^{\frac 4{3}} u = 0, \quad (t,x)\in [T_0,\infty)\times \R^5,
\]
which satisfy the estimate
\[
\bigl\|\nabla_{t,x} \bigl(u(t) - \tsum_{k} W_k(t) \bigr) \bigr\|_{L^2}
\lesssim t^{-\frac12^+} \quad \hbox{for all $t\gg 1$}.
\]
Here, $\{W_k\}_k$ is any given finite family of traveling waves, based on the explicit ground state solution
\[
W(x) = \left(1+ \frac{|x|^2}{15}\right)^{-\frac32}
\]
with collinear two-by-two different velocities.

The proof combines the strategy introduced in \cite{Co} to prove a similar classification result for the supercritical generalized Korteweg-de Vries equation, with tools from \cite{DMwave,MMwave1,MMwave2} 
(mainly energy estimates) which are specific to the energy critical wave equation.
In particular, the construction of a finite family of multi-solitons, corresponding to the instability direction of each soliton, is a prerequisite of the classification result.
\end{abstract}

\maketitle

\section{Introduction}
\subsection{Main results}
We consider the focusing energy-critical nonlinear wave equation in dimension $5$
\begin{equation}\label{wave}
\left\{ \begin{aligned}
&\partial_t^2 u - \Delta u - |u|^{\frac 4{3}} u = 0, \quad (t,x)\in [0,\infty)\times \R^5,\\
& u_{|t=0} = u_0\in \dot H^1(\R^5),\\ 
& \partial_t u_{|t=0} = u_1\in L^2(\R^5),
\end{aligned}\right.
\end{equation}
which is written equivalently as a system for $\vec u = (u,\partial_t u) = (u,v)$,
\begin{equation*}
\begin{cases}
\partial_t u = v\\
\partial_t v = \Delta u + |u|^{\frac 4{3}} u 
\end{cases}
\end{equation*}
with $\vec u_{|t=0}=(u_0,u_1)$.

\medskip

We recall that the Cauchy problem for equation \eqref{wave} is locally well-posed in the energy space $\cE:=\dot H^1(\R^5)\times L^2(\R^5)$. 
See \emph{e.g.} \cite{BCLPZ,GiSoVe92,Kapitanski94, KM,LiSo95,Pecher84,Sogge95,ShSt94,ShSt98}.
The nonlinear wave equation~\eqref{wave} is energy-critical since it is invariant by the $\dot H^1$ scaling:
let $u$ be a solution of \eqref{wave} and $\lambda>0$, then the function $u_\lambda$ defined by
\[
u_\lambda(t,x)=\frac{1}{\lambda^{\frac32}}u\left(\frac{t}{\lambda},\frac{x}{\lambda}\right)
\]
is also a solution of \eqref{wave} and $\|u_\lambda(t)\|_{\dot H^ 1}=\|u(t/\lambda)\|_{\dot H^ 1}$.

\medskip

For any $\dot H^1\times L^2$ solution $\vec u=(u,\partial_t u)$ of \eqref{wave}, the energy 
$E(u(t),\partial_t u(t))$ and the momentum $M(u(t),\partial_t u(t))$ are conserved, where
\begin{equation}\label{eq:91}
\begin{aligned}
E(u,v) & = \frac 12 \int v^2 + \frac 12 \int |\nabla u|^2
- \frac {3}{10} \int |u|^{\frac {10}{3}},\\
M(u,v) & = \int v\nabla u.
\end{aligned}
\end{equation}
Recall that the function $W$ defined by
\begin{equation}\label{defW}
W(x) = \left( 1+ \frac {|x|^2}{15}\right)^{-\frac{3}2}
\end{equation}
satisfies
\begin{equation}
\Delta W + W^{\frac 73}=0 , \quad x\in \R^5,
\end{equation}
and so it is a stationary solution, called \emph{standing soliton}, of \eqref{wave}.
Using the Lorentz transformation on the function $W$, we obtain \emph{traveling solitons}:
for any $\boldsymbol{\ell}\in \R^5$, with $|\boldsymbol{\ell}|< 1$, let
\begin{equation}\label{defWbb}
W_{\bell}(x)=W\left(\left(\frac{1}{\sqrt{1-|\boldsymbol{\ell}|^2}}-1\right) \frac{\boldsymbol{\ell}(\boldsymbol{\ell}\cdot x)}{|\boldsymbol\ell|^2} +x\right),
\end{equation}
then $u(t,x)=\pm W_{\boldsymbol{\ell}}(x-{\boldsymbol{\ell}} t)$ is a solution of \eqref{wave}.

\medskip

In this article, we only consider the special case $\bell=\ell e_1$, where $e_1=(1,0,0,0,0)\in \R^5$, $\ell\in (-1,1)$, and we use the simplified notation
\[
W_\ell(x) = W \left(\frac{x_1}{\sqrt{1-\ell^2}}, \overline x \right), \quad \overline x = (x_2,\ldots,x_5).
\]
See Remark \ref{rk:ll} for a discussion on this restriction.

\medskip

First, we recall from \cite{MMwave1} the following result of existence of multi-solitons.

\begin{theorem}[Existence of multi-solitons {\cite[Theorem 1]{MMwave1}}]\label{th:0}
Let $K\geq 2$. For $k\in \{1,\ldots,K\}$, let $\lambda_k^\infty>0$, $\by_k^\infty\in \R^5$, 
$\epsilon_k=\pm 1$ and $\ell_k\in (-1,1)$ with $\ell_k\neq \ell_{m}$ for $k\neq m$.
Let
\[
W_k^\infty(t,x)= \frac {\epsilon_k}{(\lambda_k^\infty)^{\frac 32}} 
W_{\ell_k} \left( \frac{x - \ell_k t e_1 -{\by}^\infty_k}{\lambda_k^\infty}\right).
\]
Then, there exists $T_0>0$ and a solution $\MS$ of \eqref{wave} on $[T_0,+\infty)$
in the energy space such that
\begin{equation*}
\left\|\nabla_{t,x} \left(\MS - \sum_{k=1}^K W_k^\infty \right) (t)\right\|_{L^2} \lesssim t^{-1}.
\end{equation*}
\end{theorem}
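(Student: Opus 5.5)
The plan is to use the standard backward-in-time compactness construction for multi-solitons, with an energy method adapted to the energy-critical wave equation and a topological (Brouwer) argument to control the unstable directions. Fix an increasing sequence $S_n\to+\infty$. For each $n$, consider the solution $\vec u_n$ of \eqref{wave} with final data at $t=S_n$ given by
\[
\vec u_n(S_n)=\sum_{k=1}^K \vec W_k^\infty(S_n)+\sum_{k=1}^K a_{k,n}\,\vec Z_k^{+}(S_n),
\]
where $\vec Z_k^{+}$ is the Lorentz-boosted, rescaled and translated eigenfunction of the linearized operator around $W_{\ell_k}$ associated with its (unique) positive eigenvalue. The parameters $a_n=(a_{k,n})_k\in\R^K$ are small and are fixed later. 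The core of the proof is a uniform estimate: there exist $T_0$ independent of $n$ and a choice of $a_n$ such that
\[
\bigl\|\vec u_n(t)-\tsum_k \vec W_k^\infty(t)\bigr\|_{\mathcal E}\lesssim t^{-1}\quad\text{on }[T_0,S_n].
\]
Once this holds, the data $\vec u_n(T_0)$ are bounded in $\mathcal E$ and, in fact, close to $\sum_k\vec W_k^\infty(T_0)$. Combining this with the uniform decay and the finite speed of propagation gives compactness in $\mathcal E$ of a subsequence. Passing to the limit, and using the continuous dependence of the Cauchy problem, then yields $S$.

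The uniform estimate is proved by a bootstrap on $[T_0,S_n]$. I decompose
\[
\vec u_n=\sum_k \vec W_k(t)+\vec\varepsilon,
\]
where $\vec W_k$ now carry modulated scaling parameters $\lambda_k(t)$ and translation parameters $\by_k(t)$. These are chosen so that $\vec\varepsilon$ satisfies orthogonality conditions killing the generalized kernel directions (scaling, translations) of each boosted linearized operator. Since the solitons are separated at distance $\gtrsim t$ and $W$ decays like $|x|^{-3}$, the interaction terms are of size $t^{-3}$ in the relevant dual norm. With the $\dot H^{-1}$-type pairing, the modulation equations give $|\dot\lambda_k|+|\dot\by_k|\lesssim\|\vec\varepsilon\|_{\mathcal E}+t^{-2}$.

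The central step is an energy functional built from the conserved energy and momentum, localized around each soliton by cutoff functions moving at intermediate velocities between consecutive $\ell_k$. Schematically, it is
\[
\mathcal F=\sum_k\bigl(E+\ell_k M_1\bigr)\ \text{localized near soliton }k,\ \text{expanded to second order in }\vec\varepsilon.
\]
Its time derivative is controlled by $t^{-1}\|\vec\varepsilon\|_{\mathcal E}^2+t^{-3}\|\vec\varepsilon\|_{\mathcal E}$, the first term coming from the slowly varying cutoffs. Coercivity of $\mathcal F$ holds modulo the orthogonality conditions and the unstable and stable components $\alpha_k^\pm=\langle\vec\varepsilon,\vec Y_k^\pm\rangle$. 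This coercivity is a localized, boosted version of the known coercivity of the linearized energy around $W$.

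The unstable modes are handled separately. The components satisfy
\[
\frac{d}{dt}\alpha_k^\pm=\pm\frac{\sqrt{\lambda_0}\sqrt{1-\ell_k^2}}{\lambda_k}\alpha_k^\pm+O\bigl(\|\vec\varepsilon\|^2_{\mathcal E}+t^{-3}\bigr),
\]
so the stable components $\alpha_k^-$ are controlled by integrating forward from $S_n$, where they are small. The unstable components $\alpha_k^+$ are fixed at time $S_n$ through $a_n$. I then run the standard Brouwer argument. Suppose that for every $a_n$ in a ball of radius $S_n^{-2}$, the solution exits the bootstrap regime $|\alpha^+(t)|\le t^{-2}$ at some time $t^*>T_0$. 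Then the map $a_n\mapsto (t^*)^{2}\alpha^+(t^*)$ is a continuous retraction of the ball onto its boundary sphere, which is a contradiction. Transversality at exit follows from the positive eigenvalue dominating the error terms.

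Integrating $\dot{\mathcal F}$ backward from $S_n$ then gives $\|\vec\varepsilon(t)\|_{\mathcal E}\lesssim t^{-2}$, up to the Gronwall issue discussed below. The parameter drift is handled by integrating the modulation equations from $\infty$: this gives $|\lambda_k-\lambda_k^\infty|\lesssim t^{-1}$ and $|\by_k-\by_k^\infty|\lesssim t^{-1}$, hence the claimed $t^{-1}$ rate relative to $W_k^\infty$.

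The main obstacle I expect is this energy estimate. The slow $|x|^{-3}$ decay of $W$ in dimension $5$ produces weak interactions, and a naive Gronwall argument with the $t^{-1}\|\vec\varepsilon\|^2$ cutoff term loses a power. To avoid this, I would choose cutoffs whose gradients are supported in regions where the solitons are negligible, making the bad term $\lesssim t^{-1}\|\vec\varepsilon\|^2$ with a small constant. That suffices to close the bootstrap with $\|\vec\varepsilon\|_{\mathcal E}\lesssim t^{-2+}$.
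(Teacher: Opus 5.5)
Your overall architecture is the standard one for this result: backward construction from $S_n$, modulation, an energy--momentum functional with a moving weight, a Brouwer argument, and compactness. However, the exponential directions are treated in reverse, and as written the argument fails.

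\textbf{The unstable directions are swapped.} You solve on $[T_0,S_n]$ from data at $S_n$, i.e.\ backward in time. With your own equations $\frac{d}{dt}\alpha_k^\pm=\pm\mu_k\alpha_k^\pm+O(\cdot)$, it is $\alpha_k^+$ that is harmless. Integrating $(e^{-\beta_k}\alpha_k^+)'$ on $[t,S_n]$ gives $|\alpha_k^+(t)|\lesssim|\alpha_k^+(S_n)|+\sup_{s\geq t}|O(s)|$ with no tuning at all. Your Brouwer argument on $\alpha^+$ is therefore vacuous. On the sphere $|\alpha^+|=t^{-2}$ one has $\frac{d}{dt}(t^4|\alpha^+|^2)\geq 2\min_k\mu_k-O(t^{-1})>0$, so backward trajectories move inward and never exit transversally. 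The dangerous coordinates are the $\alpha_k^-$, and they cannot be ``integrated forward from $S_n$'', since there is no forward time here. Integrating backward gives $|\alpha_k^-(t)|\leq e^{\beta_k(S_n)-\beta_k(t)}|\alpha_k^-(S_n)|+\int_t^{S_n}e^{\beta_k(s)-\beta_k(t)}|O(s)|\,ds$. This is exponentially large in $S_n-t$ even when $\alpha_k^-(S_n)=0$, because the $t^{-3}$ interaction source does not vanish. Perturbing the final data along the forward-growing eigenfunction $\vec Z_k^+$ cannot fix this, since that component decays backward. You must swap the roles:
\begin{itemize}
\item perturb the data at $S_n$ along the eigenfunction of the linearized flow with negative eigenvalue (in the paper's notation $J\vec Z_k^+$, which pairs nontrivially with $\vec Z_k^-$ by \eqref{eq:PM}; cf.\ $\vec E_k$ in Section~\ref{S:5});
\item take the final values of $(\alpha_k^-)_k$ in a ball of radius $S_n^{-2}$;
\item use $\frac{d}{dt}(t^4|\alpha^-|^2)\leq-2\min_k\mu_k+O(t^{-1})<0$ on the sphere for strict exit and the retraction contradiction;
\item control $(\alpha_k^+)_k$ by backward integration.
\end{itemize}

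\textbf{The energy estimate mechanism is misidentified.} The cut-off contribution to $\frac{d}{dt}\mathcal F$ is quadratic in $\vec\varepsilon$, not in the solitons. Putting the gradient of the weight where the solitons are negligible is indeed needed for other terms, but it does not make this contribution small. Radiation may sit exactly in the transition region, and the weight must pass from $\ell_k$ to $\ell_{k+1}$ over a distance $O(|\ell_{k+1}-\ell_k|t)$, so its slope is at least of order $t^{-1}$. What works (see Lemmas~\ref{mainprop} and~\ref{le:vE}) is the specific weight $\chi$ of \eqref{defchiK}. It is linear with slope $\frac{1}{(1-2\sigma)t}$ between solitons, so $\chi\approx x_1/t$ on $\Omega$, and the flux is bounded by $\frac{1+C\sigma}{t}\mathcal N_\Omega$ (see \eqref{eq:h1}). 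Since $\mathcal N_\Omega$ enters $\mathcal H$ with coefficient essentially $1$, this yields $-\frac{d}{dt}\mathcal H\leq\frac{1+\delta}{t}\mathcal H+\dots$. The constant is close to $1$ and measured against $\mathcal H$ itself. It is neither small nor a bound $\frac{C}{t}\|\vec\varepsilon\|_{\mathcal E}^2$ with uncontrolled $C$ (after coercivity, possibly $C/\mu$). This is what matters: if $-\frac{d}{dt}\mathcal H\leq\frac{c}{t}\mathcal H+Ct^{-3}\|\vec\varepsilon\|_{\mathcal E}$, backward integration of $t^{c}\mathcal H$ closes uniformly in $n$ with $\|\vec\varepsilon\|_{\mathcal E}\lesssim t^{-2}$ when $c<4$. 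When $c\geq4$, which steeper cut-offs can produce, the bounds grow with $S_n$. You also need the sharp $t^{-2}$, not $t^{-2+}$, to get exactly $|\lambda_k-\lambda_k^\infty|\lesssim t^{-1}$ from the modulation equations.

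\textbf{Compactness needs an extra ingredient.} Uniform $\mathcal E$ bounds plus finite speed of propagation give only weak compactness of $\vec u_n(T_0)$. For this energy-critical problem you need something more to pass to the limit, such as uniform higher-regularity bounds or a weak-continuity property of the flow.
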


Theorem \ref{th:0} is a basic existence result where the decay $t^{-1}$ 
is optimal when comparing the multi-soliton $S$ to a sum of exact traveling waves.
In Definition \ref{SMS} and Proposition \ref{pr:vp}, 
we will recall from \cite{MMwave1,MMwave2} 
more refined properties of the multi-soliton $\MS$, in particular its regularity and
its proximity at order $t^{-2}$ to a sum of suitably modulated solitons.

\medskip

Recall that \cite[Theorem 1]{MMwave1} was proved after a series of other constructions of 
multi-solitons for several nonlinear dispersive equations
(\cite{CMM,CMkg,KMR,Ma,MMnls,Me}).
We point out \cite{CMkg} where the strategy of \cite{CMM,Ma,MMnls} has been extended to wave-type equations (namely, for the Klein-Gordon equation).The articles \cite{MMwave1,MMwave2} had to deal with the additional 
difficulty of the low decay rate of the ground state \eqref{defW}.

\medskip

Inspired by the  article \cite{Co} devoted to the classification of multi-solitons of the supercritical generalized KdV equation, 
our goal is to make an advance on the classification of the multi-solitons of equation \eqref{wave}
 in the context of Theorem \ref{th:0}.

\medskip

We start with the following multi-existence result, the parameters of the solitons being fixed.

\begin{theorem}[Multi-existence of multi-solitons]\label{th:1}
Under the assumptions of Theorem \ref{th:0}, 
there exists a $K$-parameter family $\{\MS_{\bf A}\}_{{\bf A}\in \R^K}$
of solutions of \eqref{wave} such that for all ${\bf A}\in \R^K$,
for $t$ large enough, 
\begin{equation*}
\left\|\nabla_{t,x} \left(\MS_{\bf A} - \sum_{k=1}^K W_k^\infty \right) (t)\right\|_{L^2} \lesssim t^{-1}
\end{equation*}
and if $\tilde {\bf A}\neq {\bf A}$ then 
$\MS_{\tilde {\bf A}}\neq \MS_{\bf A}$.
\end{theorem}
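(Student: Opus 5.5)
The plan is to construct $\MS_{\bf A}$ as a perturbation of the multi-soliton $\MS$ of Theorem \ref{th:0} along the unstable directions of each soliton. I will follow the strategy of \cite{Co} for gKdV and the construction of \cite{CMM} for one-parameter families near an unstable soliton.

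\emph{Spectral input.} The linearized operator around $W$ has a unique negative eigenvalue. Hence the linearized flow around $W$ in $\cE$ has an exponentially unstable mode $Y^+$, with eigenvalue $e_0>0$, and a stable mode $Y^-$. After Lorentz transformation and rescaling by $\lambda_k^\infty$, each $W_k^\infty$ carries an unstable direction $Y_k^+$ with growth rate $e_k = e_0\sqrt{1-\ell_k^2}/\lambda_k^\infty>0$. For ${\bf A}=(A_1,\ldots,A_K)$ I look for a solution of the form
\[
\vec \MS_{\bf A}(t) = \vec \MS(t) + \sum_{k=1}^K A_k e^{-e_k t}\, \vec Y_k^+(t) + \vec h(t),
\]
where $\vec Y_k^+(t)$ is $Y_k^+$ translated to the position of the $k$-th soliton. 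The remainder should satisfy $\|\vec h(t)\|_{\cE}\lesssim e^{-2\min_k e_k\, t}$, or at least $\|\vec h(t)\|_{\cE}=o(e^{-\max_k e_k t})$ on the relevant components. Since $e^{-e_k t}$ decays much faster than $t^{-1}$, the bound $\|\nabla_{t,x}(\MS_{\bf A}-\sum_k W_k^\infty)\|_{L^2}\lesssim t^{-1}$ then follows immediately from Theorem \ref{th:0}.

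\emph{Construction.} I use a compactness/backward-integration argument. Take $T_n\to\infty$ and solve \eqref{wave} backward from the data at $T_n$ given by $\vec\MS(T_n)+\sum_k A_k e^{-e_kT_n}\vec Y_k^+(T_n)+\sum_k b_k^n \vec Y_k^{+}$. The small parameters $b^n\in\R^K$ are chosen by a Brouwer/topological argument to control the unstable components $\alpha_k^+$ of $\vec h$. Uniform estimates on $[T_0,T_n]$ come from the following ingredients:
\begin{itemize}
\item modulation of the solitons' scaling and translation parameters around the refined approximation of $\MS$ (Definition \ref{SMS}, Proposition \ref{pr:vp});
\item the localized energy functional of \cite{MMwave1,MMwave2}, built from energy and momentum with cut-offs adapted to the collinear velocities, which is coercive on $\vec h$ modulo the unstable/stable directions and modulo the kernel;
\item the ODE $\frac{d}{dt}\alpha_k^\pm = \pm e_k\alpha_k^\pm + O(\text{error})$.
\end{itemize}
The source terms are of two kinds. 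The interaction between the perturbation and the other solitons is $O(t^{-c}e^{-e_kt})$, because of the polynomial tails of $W$. The quadratic terms are $O(e^{-2e_k t})$. Both are integrable against the weight $e^{2\min e_k t}$ only with care.

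\emph{Passing to the limit.} Weak limits in $\cE$, together with continuity of the flow, give $\MS_{\bf A}$.

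\emph{Distinctness.} If ${\bf A}\neq\tilde{\bf A}$, choose $k$ with $A_k\neq\tilde A_k$ and project $\vec\MS_{\bf A}-\vec\MS_{\tilde{\bf A}}$ onto the dual direction $Z_k^+$ associated with $Y_k^+$, localized near soliton $k$. This projection equals $(A_k-\tilde A_k)e^{-e_kt}+o(e^{-e_kt})$, which is nonzero for $t$ large. Here I need the remainder $\vec h$ to be $o(e^{-e_k t})$ in that projection. If the rates differ, I will order the solitons so that $\max_k e_k<2\min_k e_k$ is not required: $\vec h$ is quadratic in the perturbation or of interaction type $t^{-c}e^{-e_k t}$, and either is $o(e^{-e_k t})$ in the $k$-th projection.

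\emph{Main obstacle.} The hardest step is the uniform energy estimate for $\vec h$ in the presence of the slowly decaying tails of $W$, namely interactions of order only $t^{-3}$ or so between solitons. These pollute the modulation equations and force me to work with the refined approximate multi-soliton at order $t^{-2}$ (Proposition \ref{pr:vp}) rather than with a plain sum of solitons. I would first try to treat $\vec \MS$ itself as the background, linearizing around the exact solution $\MS$ instead of around $\sum_k W_k^\infty$. Then all polynomial interaction errors are absorbed, and only the linear coupling of $\vec h$ with the $K$ solitons remains; that coupling is controlled by the localized energy functional with an error $O(t^{-1})\|\vec h\|_{\cE}^2$, which is harmless against exponential weights.
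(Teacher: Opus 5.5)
\textbf{The gap is in the distinctness step, and it comes from your one-shot ansatz} $\vec\MS+\sum_kA_ke^{-e_kt}\vec Y_k+\vec h$. Your $e_k$ is the paper's $\mu_k^\infty$. Order the solitons by increasing rate and take $k$ with $\mu_k^\infty>\mu_1^\infty$.

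The term $A_1e^{-e_1t}\vec Y_1$ feeds $\vec h$ in two ways:
\begin{itemize}
\item through the linear coupling $(f'(\MS)-f'(W_1))A_1E_1$, of size $t^{-2}e^{-e_1t}$;
\item through quadratic terms, of size $e^{-2e_1t}$.
\end{itemize}
So the global energy estimate only gives $\|\vec h\|_\cE\lesssim t^{-c}e^{-e_1t}$. The coefficient $\alpha_k^-$ of $\vec h$ along soliton $k$'s forward-decaying direction obeys $|\frac{d}{dt}\alpha_k^-+\mu_k\alpha_k^-|\lesssim t^{-2}\|\vec h\|_\cE+\|\vec h\|_\cE^2+\cdots$ (compare \eqref{eq:zo}). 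This allows $\alpha_k^-$ of size $t^{-2-c}e^{-e_1t}$ or $e^{-2e_1t}$, which in general is $\gg e^{-e_kt}$.

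Nothing in the energy method localizes $\vec h$ near soliton $1$. A compactness/Brouwer construction also gives neither uniqueness nor Lipschitz dependence on ${\bf A}$, so you cannot make these contributions cancel in $\vec\MS_{\bf A}-\vec\MS_{\tilde{\bf A}}$. Hence if $A_1=\tilde A_1$ but $A_k\neq\tilde A_k$, your projection near soliton $k$ need not detect $A_k-\tilde A_k$. Your claim that $\vec h$ is \emph{of interaction type $t^{-c}e^{-e_kt}$} tacitly assumes that only the $k$-th perturbation feeds the $k$-th component, and that is false. Reordering the solitons does not change this.

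\textbf{How the paper avoids it.} It does not build $\MS_{\bf A}$ in one shot.
\begin{enumerate}
\item Proposition \ref{pr:31} is a single-direction result: for any strong multi-soliton $\MS$, index $m$ and $A\in\R$, there is a solution with $\|\vec u-\vec\MS-A\vec E_m\|_\cE\lesssim|A|t^{-1}e^{-\beta_m}$. This $u$ is again a strong multi-soliton with the same $\lambda_k,\by_k$, so it serves as the next exact background.
\item It then builds $\MS\to\MS_{A_1}\to\MS_{A_1,A_2}\to\cdots$, with solitons ordered by increasing $\mu_k^\infty$ (ties broken by decreasing $\nu_k$) and one solution fixed for each prefix.
\item Injectivity is proved at the \emph{first} index $k_0$ where ${\bf A}$ and $\tilde{\bf A}$ differ. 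The common $\MS_{A_1,\ldots,A_{k_0-1}}$ cancels exactly. What remains is $(A_{k_0}-\tilde A_{k_0})\vec E_{k_0}$, plus $\vec E_k$ for $k>k_0$, plus errors $O(t^{-1}e^{-\beta_k})$ for $k\ge k_0$. By the ordering, all of these are negligible against $\vec E_{k_0}$.
\item Within one step, Brouwer is used only for $\alpha_k^-$ with $k\in J_m$ (faster rates). All other directions are handled by direct integration.
\end{enumerate}

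\textbf{Two further corrections to your setup.}
\begin{itemize}
\item The rate near soliton $k$ of $\MS$ is $\mu_k(t)=\sqrt{\lambda_0}(1-\ell_k^2)^{\frac12}/\lambda_k(t)$, with $\lambda_k(t)=\lambda_k^\infty(1+d_k/t)+O(t^{-2+\delta})$. The correct profile is therefore $e^{-\beta_k}\approx D_kt^{\nu_k}e^{-\mu_k^\infty t}$. With the constant rate $e_k$ you leave a critical source of size $t^{-1}e^{-e_kt}$, and $\vec h=o(e^{-e_kt})$ fails whenever $d_k\neq0$.
\item The direction you should perturb along is the forward-decaying $J\vec Z_k^+$. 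It is detected by $\vec Z_k^-$ through \eqref{eq:PM}, not by $\vec Z_k^+$, since $(J\vec Z_k^+,\vec Z_k^+)_{L^2}=0$.
\end{itemize}

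Your last remark is right: you must decompose around the exact $\MS$, not modulate around $\vec\bW$, whose $t^{-4+\delta}$ error would swamp all exponential effects. The paper does this using additive first- and second-order modulation $\vec Q$ in the kernel directions.
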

We point out that as in \cite{Co}, such a multi-existence result is a consequence of an instability direction (backwards in time) attached to the dynamics of each soliton. 
We also refer to \cite{DMwave} for the case of the single soliton of \eqref{wave}.
The solutions constructed in Theorem \ref{th:1} are shown to be strong multi-solitons
in the sense of Definition \ref{SMS}.

\medskip

Second, we prove the following classification result, under a mild decay assumption in time.
\begin{theorem}[Classification of multi-solitons]\label{th:2} 
Under the assumptions of Theorem \ref{th:0}, if 
$u(t)$ is a solution of \eqref{wave} on a certain time interval $[T,+\infty)$ satisfying that for some $\delta>0$,
and for all $t\geq T$,
\begin{equation}\label{eq:hy}
\left\|\nabla_{t,x} \left(u - \sum_{k=1}^K W_k^\infty \right) (t)\right\|_{L^2} \lesssim t^{-\frac12-\delta},
\end{equation}
then there exists ${\bf A}\in \R^K$ such that $u\equiv \MS_{\bf A}$.
\end{theorem}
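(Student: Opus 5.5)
The plan follows the strategy of \cite{Co}. We compare $u$ with the strong multi-soliton $\MS$ of Theorem~\ref{th:0}, using its refined description from Definition~\ref{SMS} and Proposition~\ref{pr:vp}. Write $\vec u = \vec \MS + \vec h$ on $[T,\infty)$; from \eqref{eq:hy} and Theorem~\ref{th:0} we get $\|\vec h(t)\|_{\cE}\lesssim t^{-\frac12-\delta}$. Around each soliton $W_k$ the linearized operator has exactly one pair of real eigenvalues $\pm e_k$ (with $e_k>0$ up to the Lorentz factor). It also has a kernel generated by the translations $\partial_{x_j}W_k$ and the scaling direction $\Lambda W_k$. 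We decompose
\[
\vec h(t)=\sum_k \bigl(a_k^+(t)\,\vec Y_k^+ + a_k^-(t)\,\vec Y_k^-\bigr)(t) + \text{(modulation directions)} + \vec\varepsilon(t),
\]
with $\vec\varepsilon$ orthogonal to the dual directions. Here $a_k^\pm=(\vec h,\vec Z_k^\pm)$ are the projections on the stable and unstable directions.

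The first step is to obtain a closed system of differential inequalities. Note that $\frac{d}{dt}a_k^\pm \mp e_k a_k^\pm$ is bounded by $O(\|\vec h\|_{\cE}^2+ t^{-2}\|\vec h\|_{\cE})$; the interaction error $t^{-2}$ comes from the $t^{-2}$ proximity in Proposition~\ref{pr:vp}. The modulation parameters $\lambda_k,\by_k$ (relative to those of $\MS$) obey $|\dot\lambda_k|+|\dot\by_k|\lesssim \|\vec h\|_{\cE}(t^{-2}+\|\vec h\|_{\cE})$. For $\vec\varepsilon$ we use a localized energy functional of Weinstein type, namely the quadratic form of the linearized energy plus momentum corrections cut off around each soliton, as in \cite{MMwave1,MMwave2}. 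Its time derivative is controlled by $t^{-1}\|\vec h\|_{\cE}^2$; that weak $t^{-1}$ loss is due to the cut-offs and to the slow algebraic decay of $W$. Coercivity modulo the directions above yields
\[
\|\vec\varepsilon(t)\|_{\cE}^2\lesssim \sum_k |a_k^\pm|^2 + \int_t^\infty s^{-1}\|\vec h(s)\|_{\cE}^2\,ds + (\text{modulation})^2 .
\]
This is where the assumption $\frac12+\delta$ enters. Because $\|\vec h\|^2\lesssim t^{-1-2\delta}$, the integral above and the integrated modulation equations converge. As a result, $\lambda_k(t)\to\lambda_k^\infty$ and $\by_k(t)\to\by_k^\infty$, with differences that decay.

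The second step is a bootstrap improving the decay. The stable components $a_k^-$ are controlled by integrating forward from $T$, and their damping $e^{-e_k t}$ makes them exponentially small modulo nonlinear terms. The unstable components satisfy $a_k^+(t) = -\int_t^\infty e^{-e_k(s-t)}(\dots)\,ds$, since $a_k^+$ cannot grow: $\vec h$ tends to zero. Inserting these bounds into the energy estimate, iteration gives $\|\vec h(t)\|_{\cE}\lesssim t^{-1/2-\delta-\delta'}$ at each round, and eventually $\|\vec h(t)\|_{\cE}\lesssim e^{-\gamma t}$ for some $\gamma>0$ smaller than $\min_k e_k$. Once the decay is exponential, the modulation parameters must coincide with those of $\MS$, so the modulation part is absorbed. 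Then $\vec h$ is dominated by $\sum_k a_k^-(t)\vec Y_k^-$ with $a_k^-(t)\approx A_k e^{-e_k t}$; these are the directions used to build the family $\MS_{\bf A}$ in Theorem~\ref{th:1}.

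The last step is identification. Define ${\bf A}=(A_k)_k$ as the limits $A_k=\lim_{t\to\infty} e^{e_k t}a_k^-(t)$, whose existence follows from the exponential bounds. Then set $\vec g=\vec u-\vec\MS_{\bf A}$. By construction the leading exponential modes of $\vec g$ cancel, so $\|\vec g(t)\|_{\cE}\lesssim e^{-(e_{\min}+\gamma')t}$, and repeating the linearized analysis around $\MS_{\bf A}$ shows that this forces $\vec g\equiv0$. The same argument can be run with all rates ordered; a uniqueness-by-contraction argument in a space with norm $\sup_t e^{\mu t}\|\cdot\|$ is the cleanest way to do it.

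I expect the main obstacle to be the first bootstrap step, in which decay slower than $t^{-1}$ is upgraded. There the $t^{-1}$ losses of the localized energy functional, which come from the slow algebraic tails of $W$, must be beaten only by the margin $\delta$. The first thing I would try is to weight the functional by $t^{2\delta'}$ and check that the cut-off errors are genuinely integrable.
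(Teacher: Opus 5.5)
Your identification step has a genuine gap. You cannot read off all of ${\bf A}$ from the single decomposition around $\MS$ and then conclude from $\|\vec g(t)\|_{\cE}\lesssim e^{-(e_{\min}+\gamma')t}$, for three reasons.

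\emph{Time-dependent rates.} Since $\lambda_k(t)=\lambda_k^\infty(1+d_k/t)+O(t^{-7/4})$, the right weight is $e^{-\beta_k(t)}$ with $\beta_k=\int_T^t\mu_k$, which behaves like $t^{\nu_k}e^{-\mu_k^\infty t}$. With a constant rate, your limit $\lim_{t\to\infty}e^{e_kt}a_k^-(t)$ is in general $0$ or $\infty$. Ties $\mu_k^\infty=\mu_{k'}^\infty$ also have to be broken by the $\nu_k$, as in \eqref{eq:er}.

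\emph{No limit for the faster modes around $\MS$.} The equation of $\alpha_k^-$ is forced by terms of size $t^{-2}(\|\vec a\|_{\cE}+q)$ (plus quadratic terms). Around $\MS$ you only know $\|\vec a\|_{\cE}+q\lesssim e^{-\beta_1}$. So for $\mu_k^\infty>\mu_1^\infty$ the forcing in $\frac{d}{dt}(e^{\beta_k}\alpha_k^-)$ grows exponentially, and nothing yields a limit. In any case, the parameter $A_k$ of Theorem~\ref{th:1} is a coordinate relative to $\MS_{A_1,\ldots,A_{k-1}}$, not relative to $\MS$.

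\emph{Decay beyond the slowest mode is not enough.} It does not exclude the faster stable modes, and these are exactly how members of the family differ: $\MS_{(0,\ldots,0,A_K)}-\MS_{(0,\ldots,0)}$ is nonzero and $O(e^{-\beta_K})$.

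The paper peels the modes one at a time (Lemma~\ref{le:te}). From $e^{-\beta_{m-1}}$ decay relative to $\MS_{A_1,\ldots,A_{m-1}}$, together with $e^{\beta_k}\alpha_k^-\to0$ for $k<m$, it gets $e^{-\beta_m}$ decay. It then sets $A_m=\lim_{t\to\infty}e^{\beta_m}\alpha_m^-[\MS_{A_1,\ldots,A_{m-1}}]$ and re-decomposes $u$ around the exact solution $\MS_{A_1,\ldots,A_m}$. Only at the step $m=K+1$, where every $\alpha_k^-$ can be integrated from $+\infty$, does the closed system force $\vec a=0$ and $q=B=0$. Your fallback, a contraction in a norm $\sup_t e^{\mu t}\|\cdot\|$, would need dispersive estimates for the flow linearized around the multi-soliton. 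These are not available here, since the solitons decay only algebraically.

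Your decay-improvement mechanism is also not right. If the localized energy only loses $Ct^{-1}\|\vec h\|_{\cE}^2$, then from $\|\vec h\|_{\cE}^2\lesssim t^{-p}$ you get $\int_t^\infty s^{-1}\|\vec h(s)\|_{\cE}^2\,ds\lesssim t^{-p}/p$. That is the same rate, so there is no gain of $\delta'$ per round and no route to exponential decay. Since this integral converges for every $p>0$, it is also not where $\frac12+\delta$ enters.

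What the paper exploits is the sharp constant produced by the cut-off $\chi$ interpolating the speeds: $-\frac{d}{dt}\cH\le\frac{1+\delta}{t}\cH+(\text{source terms})$. Then $t^{1+\delta}\cH$ can be integrated from $+\infty$ precisely because the hypothesis gives $t^{1+\delta}\cH\to0$, and the decay jumps at once to the level of the sources. In Proposition~\ref{pr:2} this is done by modulating $u$ around the approximate solution $\vec\bW[u]$. Its error $t^{-4+\delta}$ yields $t^{-3+\delta}$, hence $\|\vec u-\vec\MS\|_{\cE}\lesssim t^{-2+\delta}$. Note that the modulation derivatives are linear in the remainder (see \eqref{eq:ly} and \eqref{eq:ke}), not quadratic as you claim, so they cannot be integrated before this gain.

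The step you did not anticipate is the passage from $t^{-2+\delta}$ to exponential decay around the exact $\MS$. There is no source term left, but $\frac{d}{dt}\cH$ contains terms of size $t^{-2}q\|\vec a\|_{\cE}$. Since $q$ is only controlled by $\int_t^\infty\|\vec a\|_{\cE}$, these are of the same order as $t^{-1}\|\vec a\|_{\cE}^2$. The paper removes them with the explicit correction $\cG$ of Lemma~\ref{le:vG}; the second-order terms $\vec Q^{\rm (II)}$ have already removed the $O(q^2)$ contributions. Only then does the sup-type argument of Lemma~\ref{le:td} give $e^{-\beta_1/2}$, after which Lemma~\ref{le:te} takes over.
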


The proofs of Theorems \ref{th:1} and \ref{th:2} consist of adapting the general strategy introduced
in \cite{Co} to the context of 
the energy methods developed in \cite{DMwave,MMwave1,MMwave2} for the energy-critical wave equation.
See Section \ref{S:1.2} for the organisation of the proofs.

\begin{remark}
We comment on the mild decay assumption in \eqref{eq:hy}.
For equation \eqref{wave}, it remains an open question to establish the classification result under the weaker assumption
\[
\lim_{t\to+\infty} \left\|\nabla_{t,x} \left(u -\sum_{k=1}^K W_k^\infty \right) (t)\right\|_{L^2} =0.
\]
For the generalized KdV equation,
in the uniqueness result of \cite{Ma} (unconditional uniqueness in the subcritical and
critical case) and in the classification result proved in \cite{Co} (supercritical case),
there is no need of supposing an explicit decay rate as in \eqref{eq:hy},
and convergence to zero is sufficient.
This is due to specific monotonicity properties derived in the case of the generalized KdV equation.
For the nonlinear Klein-Gordon equation in $3$D,
\cite[Theorem 1.9]{ChJe} provides an unconditionnal classification result 
of the family of (pure) multi-solitons.
The method in \cite{ChJe} is based on Strichartz estimates for a wave operator with moving
potential developed in \cite{ChJ1}, which also allows to prove a general asymptotic stability result
on multi-soliton solutions.
Note that the above results also rely on the exponential decay property of the solitons.

\medskip

We refer to \cite{CoFr,RoSZ} for other results of conditional uniqueness of multi-solitons,
in the context of nonlinear Schrödinger equations.
\end{remark}

\begin{remark}
The classification in Theorem \ref{th:2} combined with the inelasticity result
of \cite{MMwave2} implies that a multi-soliton satisfying \eqref{eq:hy} 
(in the positive sense of time) cannot be 
a multi-soliton in the other sense of time.
See Corollary \ref{co:in}.
Such a global classification result is expected to play a role in the soliton resolution
conjecture, see for example \cite{DKM1,DKM2,JL18} for such questions.
We also refer to the review \cite{CB} on the soliton resolution for energy critical wave equations.

\medskip

For the critical wave equation in dimensions larger than~$6$, we refer to \cite{Ja19,JL18}
for global results on radial multi-bubbles.
\end{remark} 

\begin{remark}\label{rk:ll}
Being based on the techniques introduced in the previous articles~\cite{MMwave1,MMwave2}
(energy estimates specific to the nonlinear wave equation around multi-solitons)
the results of the present paper are restricted to the case of collinear speeds.
By rotation invariance, we assume without loss of generality that all the speeds are along the direction $e_1$.

\medskip

For a general multi-soliton existence result, we refer to our recent article \cite{MMwave3}.
We expect that the new energy functional introduced in \cite{MMwave3} in order to address the case of general
speeds can also be used to prove a classification result.
However, this functional is not defined in the energy space and 
requires additional decay in space on the solution.
This is not problematic for a construction result, but 
its use would certainly restrict any classification result to a space strictly included in the energy space.
\end{remark}

\subsection{Organisation of the article}\label{S:1.2}

In Section \ref{S:2}, we quickly recall some notation and basic information on solitons and multi-solitons from \cite{MMwave1,MMwave2}.

\medskip

In Section \ref{S:3}, we introduce the notion of \emph{strong multi-soliton}, inspired by the properties of the multi-solitons constructed in \cite{MMwave2}. We also recall the definition of the \emph{refined approximate multi-soliton} $\WW$ introduced in \cite{MMwave2} in order to prove that 
any multi-soliton satisfying \eqref{eq:hy} is a strong multi-soliton (pre-classification result Proposition \ref{pr:2}).
The proof follows rather directly from the techniques introduced in \cite{MMwave2}. This result allows to reduce the classification of multi-solitons
satisfying \eqref{eq:hy} to the classification of strong multi-solitons.

\medskip

In Section \ref{S:4}, we start by fixing a strong multi-soliton $S$ and
we introduce a refined decomposition of any multi-soliton $u$ around $S$. Indeed, to construct the whole family of multi-soliton (Theorem \ref{th:1})
and to classify strong multi-solitons (Theorem \ref{th:2}), it is essential to rely on an exact strong multi-soliton $S$ and not on an approximate
multi-soliton (the approximate multi-soliton $\WW$ introduced in Section \ref{S:3} would be easier to handle but it satisfies the equation 
only up to some power of $t^{-1}$, which is not sharp enough to distinguish multi-solitons).

\noindent  Section \ref{S:4} also includes the introduction of
an energy functional $\cH$ required to control the difference between two multi-solitons (this requires some minimal regularity and decay properties on one of the two multi-solitons, thus motivating the notion of strong multi-soliton).

\medskip

In Section \ref{S:5}, we construct a family of multi-solitons (Theorem \ref{th:1})
inspired by \cite{Co} and using the framework of Section \ref{S:4}.
A subtlety here lies on the fact that the various multi-solitons differ among them by a factor which is exponentially small in time, while they are all at a distance $t^{-2}$ of a sum of (modulated) solitons.

\medskip

Finally, in Section \ref{S:6}, we classify all the strong multi-solitons of \eqref{wave}, following the strategy 
introduced in \cite{Co}. This is
the most delicate step of this article from the technical point of view: to pass from an estimate on the difference of two multi-solitons in powers of 
$t^{-1}$ to an exponential estimate, one needs sharp energy estimates (this is why the decomposition of Section \ref{S:4} is somehow involved) as well as
technical lemmas for \emph{perturbed hyperbolic configurations} (see Lemmas \ref{le:td}, \ref{le:te}).

\section{Preliminaries}\label{S:2}
The content of this section is mainly taken from \cite{MMwave1,MMwave2} for the reader's convenience.
The proofs are omitted.

\subsection{Notation}
We denote
\[
\psl g {\tilde g} =\int g \tilde g,\quad \nol g^2 = \int |g|^2,\quad \psh g {\tilde g} =\int \nabla g \cdot \nabla {\tilde g} ,
\quad \noh g^2=\int |\nabla g|^2.
\]
For 
\[
\vec g = \begin{pmatrix} g \\h\end{pmatrix} ,
\ \vec {\tilde g} = \begin{pmatrix}\tilde g \\\tilde h\end{pmatrix},
\]
set
\begin{align*}
 \psl {\vec g} {\vec {\tilde g}} = \psl g {\tilde g} + \psl h{\tilde h},\quad
 \pse {\vec g} {\vec {\tilde g}} = \psh g {\tilde g} + \psl h{\tilde h},\quad 
\|\vec g\|_\cE^2 = \noh g^2 + \nol h^2.
\end{align*}
We will also use the notation $\partial_j=\partial_{x_j}$, for $j=1,\ldots,5$
and for $\beta=(\beta_1,\ldots,\beta_5)\in \N^5$, the notation
$\partial^\beta=\partial_1^{\beta_1}\cdots\partial_5^{\beta_5}$,
$|\beta|=\sum_{j=1}^5 \beta_j$.
When there is no risk of confusion, we write 
$\sum_j=\sum_{j=1}^5$.

Since $x_1$ is a specific coordinate, we denote
\[
\overline x = (x_2,\ldots, x_5),
\quad \overline \nabla g = (\partial_{x_2} g, \ldots, \partial_{x_5} g),
\quad \overline \Delta g = \sum_{j=2}^5 \partial_j^2 g.
\]
For $-1<\ell<1$, let
\[
\pshb g{\tilde g} =(1-\ell^2)\int \pun g \pun \tilde g+ \int \overline \nabla g \cdot \overline \nabla \tilde g, 
\quad \nohb g^2= \pshb gg,
\]
and
\[ 
A_\ell=\partial_t +\ell \partial_1,\quad
\Delta_\ell = (1-\ell^2) \partial_1^2+ \overline \Delta.
\]
Let $\Lambda$ and $\widetilde \Lambda$ be the $\dot H^1$ and $L^2$ scaling operators defined as follows
\begin{equation}
\label{aL}
\Lambda g = \frac 32 g+ x \cdot \nabla g,
\quad \widetilde \Lambda g = \frac 5 2 g + x \cdot \nabla g,\quad
\widetilde\Lambda \nabla=\nabla\Lambda,\quad 
\vec \Lambda = \begin{pmatrix} \Lambda \\ \widetilde \Lambda \end{pmatrix},
\quad 
\vec {\widetilde \Lambda} = \begin{pmatrix} \widetilde \Lambda \\ \Lambda \end{pmatrix}.
\end{equation}
Recall the Hardy and Sobolev inequalities, used frequently in this article, for any $v\in \dot H^1$,
\begin{equation}\label{z2}
\int \frac {|v|^2}{|x|^2} \lesssim \int |\nabla v|^2,
\end{equation}
\begin{equation}\label{z1}
\|v\|_{L^{10/3}} \lesssim \|\nabla v\|_{L^2}.
\end{equation}
Denote
\[
F(u) = \frac {3}{10}\cdot |u|^{\frac {10}3},\quad 
f(u)= |u|^{\frac 43} u,\quad f'(u)=\frac 73\cdot |u|^\frac43,\quad
f''(u)=\frac73\cdot\frac43\cdot |u|^{-\frac23} u.
\]
For $0<\gamma \ll1$, we set
\begin{equation}\label{phia}
\varphi_\gamma (x) = (1+|x|^2)^{- \gamma}.
\end{equation}

\subsection{Energy linearization around the standing soliton}
Let
\begin{align*}
& L = -\Delta - f'(W) ,\quad 
\psl{L g}g = \int |\nabla g|^2 - f'(W) g^2,
\\
& H = \begin{pmatrix} L & 0 \\0 & {\rm Id}\end{pmatrix},\quad 
\psl{H \vec g} {\vec g} = \psl{L g}g+ \nol h^2,\\
&  J=\begin{pmatrix}0 & {\rm Id} \\-{\rm Id} & 0\end{pmatrix}.
\end{align*}
If $\vec g$ is a small function in the energy space, then it is easily checked (see \eqref{eq:91}) that
\begin{equation}
E(W+g,h) =E(W,0)+\frac 12\psl{L g}g+ \frac 12 \nol h^2 + O(\noh g^3).\label{enerlin}
\end{equation}

We gather here some properties of the operator $L$ (see \cite{DMwave,MMwave1}).

\begin{lemma}[Spectral properties of $L$]\label{le:Q}
The operator $L$ on $L^2$ with domain $H^2$ is a self-adjoint operator with essential spectrum $[0,+\infty)$, no positive eigenvalue and only one negative eigenvalue $-\lambda_0$, associated to a smooth radial positive eigenfunction $Y \in \mathcal C^\infty(\R^5)$
satisfying, for all $p\geq 0$,
\begin{equation}\label{eq:dY}
|Y^{(p)}(x)|\lesssim e^{-\sqrt{\lambda_0}|x|}.
\end{equation}
Moreover, $L (\Lambda W) = L (\partial_j W) =0$, for any $j=1,\ldots,5$.

Furthermore, there exists $\mu>0$ such that, for all $g \in \dot H^1$, the following hold.
\begin{enumerate}[label=\emph{(\roman*)}]
\item Coercivity.
\begin{equation*}
\psl {Lg}g\geq \mu \noh g^2 -\frac 1{\mu} \left( \psh g{\Lambda W}^2 + \sum_j \psh g{\partial_j W}^2+\psl g{Y}^2\right).
\end{equation*}
\item Localized coercivity. For $\gamma>0$ small enough,
\begin{equation*}
\int |\nabla g|^2 \varphi_\gamma^2- f'(W) g^2 \geq \mu \int |\nabla g|^2 \varphi_\gamma^2 
-\frac 1{\mu} \left( \psh g{\Lambda W}^2 + \sum_j \psh g{\partial_j W}^2+\psl g{Y}^2\right).
\end{equation*}
\end{enumerate}
\end{lemma}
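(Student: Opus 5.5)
The plan has three parts: the spectral facts, the global coercivity (i), and the localized coercivity (ii), which is deduced from (i) by a commutator argument.

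\emph{Spectral facts.} Since $f'(W)=\frac73 W^{4/3}\sim |x|^{-4}$ at infinity, $f'(W)$ is a relatively compact perturbation of $-\Delta$. Hence $L$ is self-adjoint on $H^2$ with essential spectrum $[0,+\infty)$. Positive eigenvalues are excluded by Kato's theorem, because the potential is bounded and decays. Differentiating the equation $\Delta W+W^{7/3}=0$ with respect to translations gives $L\partial_jW=0$. Differentiating the family $\lambda^{-3/2}W(\cdot/\lambda)$, which solves the same equation, with respect to $\lambda$ gives $L\Lambda W=0$.

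For the negative spectrum, $\psl{LW}{W}=-\frac43\int W^{10/3}<0$, so there is at least one negative eigenvalue. To show there is exactly one, I would invoke the classical nondegeneracy result for $W$: the kernel of $L$ in $\dot H^1$ is spanned by $\Lambda W$ and $\partial_jW$. I would then count negative eigenvalues by decomposing into spherical harmonics. In the radial sector, Sturm oscillation applies: $\Lambda W$ has exactly one zero, so the radial part has exactly one negative eigenvalue. In the sector $k=1$, $\partial_jW$ is a positive multiple of $x_j/|x|$ with no zero in $r>0$, so this sector is nonnegative. Sectors $k\ge2$ are then positive by monotonicity in $k$. The ground state $Y$ is radial and positive, and elliptic estimates give smoothness. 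Agmon's estimate, using $f'(W)\to0$, gives the exponential decay $e^{-\sqrt{\lambda_0}|x|}$ for $Y$ and all its derivatives.

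\emph{Coercivity (i).} I would argue by contradiction. Suppose there is a sequence $g_n$ with $\noh{g_n}=1$, $\psl{Lg_n}{g_n}\le 1/n$, and the orthogonality quantities also of size at most $1/n$. Extract a weak limit $g_n\rightharpoonup g$ in $\dot H^1$. The map $g\mapsto\int f'(W)g^2$ is weakly continuous on $\dot H^1$: Hardy's inequality \eqref{z2} controls the region $|x|>R$ since $f'(W)\lesssim|x|^{-4}\le R^{-2}|x|^{-2}$ there, and Rellich compactness handles $|x|<R$.
\begin{itemize}
\item If $g=0$, then $\int f'(W)g_n^2\to0$, so $\psl{Lg_n}{g_n}\to1$, a contradiction.
\item If $g\ne0$, weak lower semicontinuity gives $\psl{Lg}{g}\le0$ together with $g\perp Y$ in $L^2$ and $g$ orthogonal to the kernel in $\dot H^1$.
\end{itemize}
The spectral theorem gives $\psl{Lg}{g}\ge0$ on $\{Y\}^{\perp}$, so $g$ minimizes the quadratic form there. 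The minimization then forces $Lg=\alpha Y$ plus kernel terms, and the orthogonality conditions force $g=0$, again a contradiction. The delicate point here is that the kernel sits at the bottom of the continuous spectrum, so the spectral theorem alone does not give coercivity. The compactness argument above is precisely what circumvents this.

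\emph{Localized coercivity (ii).} I would apply (i) to $g\varphi_\gamma$ and expand:
\[
\int|\nabla(g\varphi_\gamma)|^2=\int|\nabla g|^2\varphi_\gamma^2-\int g^2\varphi_\gamma\Delta\varphi_\gamma .
\]
Since $|\Delta\varphi_\gamma|\lesssim\gamma\varphi_\gamma(1+|x|^2)^{-1}$, Hardy's inequality (applied to $g\varphi_\gamma$) bounds the error term by $C\gamma\int|\nabla g|^2\varphi_\gamma^2$, which is absorbed for $\gamma$ small. The potential terms differ by $\int f'(W)g^2(1-\varphi_\gamma^2)$. This is bounded by $C\gamma\int(1+|x|)^{-4}\log(2+|x|)g^2$, and Hardy's inequality again makes it small relative to $\int|\nabla g|^2\varphi_\gamma^2$. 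Finally, the orthogonality terms for $g\varphi_\gamma$ differ from those for $g$ by $O(\gamma)\|\nabla g\,\varphi_\gamma\|_{L^2}$, thanks to the decay of $\Lambda W$, $\partial_jW$ and $Y$. I expect this error control in the weighted setting to be the main obstacle.
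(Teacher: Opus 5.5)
The paper does not prove this lemma. Section~2 omits all proofs, and the lemma is quoted from \cite{DMwave,MMwave1}, so there is no in-paper argument to compare against. Your outline is a correct, self-contained proof along standard lines.

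The contradiction argument for (i) works as you describe. Weak continuity of $g\mapsto\int f'(W)g^2$ on $\dot H^1$ follows from Hardy plus Rellich. The weak limit $g$ then minimizes $\psl{Lh}h\ge 0$ on the constraint set, so Lagrange multipliers give $Lg=\alpha Y+\beta(-\Delta\Lambda W)+\sum_j\gamma_j(-\Delta\partial_jW)$. Pairing with $\Lambda W$, $\partial_jW$ and $Y$, using $L\Lambda W=L\partial_jW=0$ and $LY=-\lambda_0Y$, kills all multipliers. The diagonal $\dot H^1$ Gram matrix of $\Lambda W,\partial_jW$ then forces $g=0$. Deducing (ii) by applying (i) to $g\varphi_\gamma$ is also the natural route.

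For the count of negative eigenvalues you use Sturm oscillation in spherical-harmonic sectors. Indeed $\Lambda W=(1+|x|^2/15)^{-5/2}(\frac32-\frac{|x|^2}{10})$ has the single zero $|x|=\sqrt{15}$, and $W'(r)$ has none. This yields nondegeneracy at the same time. However, it uses the oscillation theorem at the threshold energy $0$, which you should justify; it applies here because $f'(W)=O(|x|^{-4})$ decays faster than $|x|^{-2}$.

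A shorter route for the count alone is variational. $W$ is an extremizer of \eqref{z1}, and the second variation of $\noh u^2/\|u\|_{L^{10/3}}^2$ at $W$ is a positive multiple of $\psl{Lh}h$ when $\int W^{7/3}h=0$. Hence $L\ge 0$ on a codimension-one subspace and has at most one negative eigenvalue. With this route, nondegeneracy must be imported separately (Rey's theorem).

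Some details need tightening.
\begin{itemize}
\item Kato's theorem requires $|x|f'(W)\to 0$, not mere decay; Wigner--von Neumann potentials decay and still have positive eigenvalues. The condition does hold here.
\item Agmon's estimate alone gives only $e^{-(1-\epsilon)\sqrt{\lambda_0}|x|}$. The sharp rate in \eqref{eq:dY} follows from the maximum principle on $\{|x|>R\}$, since $(-\Delta+\lambda_0-f'(W))e^{-\sqrt{\lambda_0}r}=(4\sqrt{\lambda_0}\,r^{-1}-f'(W))e^{-\sqrt{\lambda_0}r}>0$ for $r$ large. Local elliptic estimates then handle the derivatives.
\item Since $\dot H^1\not\subset L^2$ in dimension~$5$, the inequality $\psl{Lg}g\ge 0$ for $g\in\dot H^1$ with $\psl gY=0$ needs a density step: truncate $g$ and correct its $Y$-component.
\item In (ii), apply Hardy to $g\varphi_\gamma$, not to $g$. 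Write $g^2=(g\varphi_\gamma)^2(1+|x|^2)^{2\gamma}$ and use $(1+|x|)^{-4+4\gamma}\log(2+|x|)\lesssim |x|^{-2}$, and treat the orthogonality errors the same way. Then all errors are $O(\gamma)\int|\nabla g|^2\varphi_\gamma^2$, respectively $O(\gamma)(\int|\nabla g|^2\varphi_\gamma^2)^{1/2}$, rather than $O(\gamma)\noh g^2$. With this, the step you flagged as the main obstacle closes for $\gamma$ small relative to $\mu$.
\end{itemize}
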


\subsection{Energy linearization with Lorentz boost}

For any $-1<\ell<1$, let
\begin{equation}\label{eqWb}
W_{\ell }(x) = W\left(\frac {x_1}{\sqrt{1-\ell^2}}, \overline x\right),\quad 
\vec W_{\ell} = \begin{pmatrix} W_\ell \\ -\ell \pun W_\ell\end{pmatrix}
\end{equation}
so that
\[
\Delta_\ell W_{\ell} + W_\ell^{\frac 73}=0,
\]
and $u(t,x) = W_{\ell }\left(x_1 - \ell t,\overline x\right)$ is a traveling wave solution of \eqref{wave}. 

Let
\begin{equation}\label{Hbeta}
\begin{aligned}
& L_{\ell} = -\Delta_\ell - f'(W_\ell) ,\\
& \psl{L_\ell g}g = \int  (1-\ell^2)  |\pun g|^2+  |\overline \nabla g|^2 -f'(W_\ell) g^2  
,\\
& H_\ell = \begin{pmatrix} -\Delta -f'(W_\ell) & -\ell \pun \\ \ell \pun & {\rm Id}\end{pmatrix},\\
& \psl {H_\ell \vec g}{\vec g} = 
\psl{L_\ell g}g + \| \ell \pun g + h\|_{L^2}^2.
\end{aligned}
\end{equation}
As before, $L_\ell$ and $H_\ell$ are related to the linearization of the energy around $W_\ell$
by the expansion
\begin{align*}
 & E(W_\ell + g, -\ell \pun W_\ell+h) + \ell \int \pun (W_\ell+g) (-\ell \pun W_\ell + h) \\
 &\quad = (1-\ell^2)^{\frac 12} E(W , 0 ) +\frac 12 \psl {H_\ell \vec g}{\vec g} + O(\noh g^3).
\end{align*}
The following functions will be needed in relation with the operators $H_\ell$ and $H_\ell J$
\begin{align*}
&\vec Z_\ell^\Lambda = \begin{pmatrix} \Lambda W_\ell \\ - \ell \pun \Lambda W_\ell\end{pmatrix},\quad
\vec Z_\ell^{(j)} = \begin{pmatrix} \partial_j W_\ell \\ - \ell \pun \partial_j W_\ell\end{pmatrix},\\
&Y_{\ell}(x) = Y\left(\frac {x_1}{\sqrt{1-\ell^2}},\overline x\right),
\quad 
\vec Z_\ell^\pm = \begin{pmatrix} \left(\ell \pun Y_\ell 
\pm \frac {\sqrt{\lambda_0}}{\sqrt{1-\ell^2}} Y_\ell\right) e^{\pm \frac {\ell \sqrt{\lambda_0}}{\sqrt{1-\ell^2}}x_1} \\ 
Y_\ell e^{\pm \frac {\ell \sqrt{\lambda_0}}{\sqrt{1-\ell^2}}x_1 } \end{pmatrix}.
\end{align*}
We recall the following simple properties. 
\begin{lemma}\label{le:22} The following hold for any $-1<\ell<1$.
\begin{enumerate}[label=\emph{(\roman*)}]
\item Properties of $L_\ell$.
\begin{equation}\label{LW}
  L_\ell (\Lambda W_\ell) = L_\ell (\partial_j W_\ell)=0,\quad L_\ell Y_\ell = -\lambda_0 Y_\ell,\quad 
L_\ell W_\ell = - \frac 43 W_\ell^{\frac 73}.
\end{equation}
\item Properties of $H_\ell$ and $H_\ell J$.
\begin{align}\label{ZW}
&H_\ell \vec Z_\ell^\Lambda = H_\ell \vec Z_\ell^{(j)}=0,\\
\label{oZ} 
&\psl { \vec Z_\ell^\Lambda}{\vec Z_\ell^\pm} = \psl { \vec Z_\ell^{(j)}}{\vec Z_\ell^\pm}=0,
\\ \label{Zpm}
&- H_\ell J (\vec Z_\ell^\pm)  = \pm \sqrt{\lambda_0} (1-\ell^2)^{\frac 12} \vec Z_\ell^\pm,
\\ \label{eq:PM}
&\psl { J \vec Z_\ell^+}{\vec Z_\ell^-} =
-2 \frac{\sqrt{\lambda_0}}{\sqrt{1-\ell^2}} \|Y_\ell\|_{L^2}^2.
\end{align}
\end{enumerate}
\end{lemma}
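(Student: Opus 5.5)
The plan is to derive every item of Lemma \ref{le:22} from Lemma \ref{le:Q} through the change of variables $x_1=\sqrt{1-\ell^2}\,y_1$, $\overline x=\overline y$. The one ingredient needed is that for $g(x)=G(x_1/\sqrt{1-\ell^2},\overline x)$ we have $\Delta_\ell g=(\Delta G)(x_1/\sqrt{1-\ell^2},\overline x)$, and hence $L_\ell g=(LG)(x_1/\sqrt{1-\ell^2},\overline x)$.

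For (i), apply this to $G=\partial_jW$, $G=Y$ and $G=W$. Together with $LW=-\Delta W-\frac73W^{\frac73}=W^{\frac73}-\frac73W^{\frac73}=-\frac43W^{\frac73}$, this gives $L_\ell\partial_jW_\ell=0$, $L_\ell Y_\ell=-\lambda_0Y_\ell$ and $L_\ell W_\ell=-\frac43W_\ell^{\frac73}$. The case $\partial_1$ carries an extra constant factor, which is harmless. For $\Lambda W_\ell$, I would not rescale; instead, differentiate the scaling family $\lambda^{-3/2}W_\ell(x/\lambda)$, which solves $\Delta_\ell w+w^{7/3}=0$, at $\lambda=1$.

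For (ii), first compute $H_\ell$ acting on a vector $(g,-\ell\partial_1g)$. The second component is $\ell\partial_1g-\ell\partial_1g=0$. The first component is $-\Delta g-f'(W_\ell)g+\ell^2\partial_1^2g=L_\ell g$. This reduces \eqref{ZW} to (i).

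For \eqref{Zpm}, set $\alpha=\ell\sqrt{\lambda_0}/\sqrt{1-\ell^2}$ and $\beta=\sqrt{\lambda_0}/\sqrt{1-\ell^2}$, and write $\vec Z^\pm=(a,b)$ with $b=Y_\ell e^{\pm\alpha x_1}$. Since $J\vec Z=(b,-a)$, we get
\[
-H_\ell J\vec Z=\bigl(-(-\Delta-f'(W_\ell))b-\ell\partial_1a,\ -\ell\partial_1b+a\bigr).
\]
The second component is
\[
a-\ell\partial_1b=\bigl(\ell\partial_1Y_\ell\pm\beta Y_\ell-\ell\partial_1Y_\ell\mp\ell\alpha Y_\ell\bigr)e^{\pm\alpha x_1}=\pm\beta(1-\ell^2)Y_\ell e^{\pm\alpha x_1}.
\]
This equals $\pm\sqrt{\lambda_0}(1-\ell^2)^{1/2}b$, as required. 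The first component is a direct computation: expand $\Delta(Y_\ell e^{\pm\alpha x_1})$, then use $\Delta_\ell Y_\ell+f'(W_\ell)Y_\ell=\lambda_0Y_\ell$, i.e. $\Delta Y_\ell=\lambda_0Y_\ell-f'(W_\ell)Y_\ell+\ell^2\partial_1^2Y_\ell$. The terms in $\partial_1^2Y_\ell$ and $\partial_1Y_\ell$ cancel. The zeroth-order coefficient should reduce to $\pm\sqrt{\lambda_0}(1-\ell^2)^{1/2}(\ell\partial_1Y_\ell\pm\beta Y_\ell)$ after using $\alpha=\ell\beta$ and $\beta^2(1-\ell^2)=\lambda_0$. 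Checking this cancellation is the only real bookkeeping.

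For \eqref{oZ}, I would avoid explicit integrals. Since $H_\ell$ is symmetric, for $\vec Z^0\in\ker H_\ell$ we have
\[
\pm\sqrt{\lambda_0}(1-\ell^2)^{1/2}\psl{\vec Z^0}{\vec Z^\pm}=-\psl{\vec Z^0}{H_\ell J\vec Z^\pm}=-\psl{H_\ell\vec Z^0}{J\vec Z^\pm}=0.
\]
These pairings are finite because $Y$ decays exponentially at rate $\sqrt{\lambda_0}$ in the rescaled variable. Since $|\alpha|<\beta$, the weight $e^{\pm\alpha x_1}$ is dominated by $e^{-\beta|x_1|}$-type decay, so $\vec Z^\pm$ is exponentially decaying.

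For \eqref{eq:PM}, compute $J\vec Z^+=(b^+,-a^+)$, so that
\[
\psl{J\vec Z^+}{\vec Z^-}=\int b^+a^- - a^+b^-.
\]
The exponentials cancel, leaving
\[
\int Y_\ell(\ell\partial_1Y_\ell-\beta Y_\ell)-(\ell\partial_1Y_\ell+\beta Y_\ell)Y_\ell=-2\beta\|Y_\ell\|_{L^2}^2,
\]
which is the claimed value.

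I expect the main obstacle to be the first-component verification in \eqref{Zpm}, since it is where all the cancellations happen. The remaining items are short symmetry arguments.
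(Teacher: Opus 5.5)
Your proposal is correct. For \eqref{eq:PM} it is exactly the paper's argument: a direct computation in which the exponential weights and the $\ell Y_\ell\pun Y_\ell$ terms cancel pointwise.

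The remaining identities are not proved in the paper; they are recalled from \cite{MMwave1,MMwave2}. Your derivations therefore serve as a self-contained replacement rather than a competing route, and they are sound:
\begin{enumerate}[label=(\alph*)]
\item The anisotropic rescaling reduces (i) to Lemma \ref{le:Q}.
\item The computation of $H_\ell$ on $(g,-\ell\pun g)$ gives $(L_\ell g,0)$, hence \eqref{ZW}.
\item Obtaining \eqref{oZ} from \eqref{Zpm} via the formal symmetry of $H_\ell$ avoids computing any integrals. The integrations by parts are justified by the exponential decay of $\vec Z_\ell^\pm$, which you correctly trace to $|\ell|<1$.
\end{enumerate}
Incidentally, rescaling also handles $\Lambda W_\ell$ directly. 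Since $x\cdot\nabla$ commutes with the diagonal dilation, $\Lambda W_\ell=(\Lambda W)(x_1/\sqrt{1-\ell^2},\overline x)$. Your scaling-family argument is an equally good alternative.

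One correction to your description of the first component in \eqref{Zpm}: the $\pun Y_\ell$ terms do not cancel. With $\alpha=\ell\beta$, only the $\pun^2 Y_\ell$ terms cancel, and the zeroth-order correction $(\alpha^2-\ell\alpha\beta)Y_\ell$ vanishes. This leaves
\[
(\Delta+f'(W_\ell))b-\ell\pun a=\bigl(\lambda_0Y_\ell\pm\ell\beta(1-\ell^2)\pun Y_\ell\bigr)e^{\pm\alpha x_1}.
\]
This equals $\pm\beta(1-\ell^2)a$ because $\beta^2(1-\ell^2)=\lambda_0$. So the identity holds as claimed; only your account of which terms cancel needs fixing.
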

The identity \eqref{eq:PM} is new but it is easily checked
\begin{align*}
\psl { J \vec Z_\ell^+}{\vec Z_\ell^-} &=
\begin{pmatrix} 
Y_\ell e^{ \frac {\ell \sqrt{\lambda_0}}{\sqrt{1-\ell^2}}x_1 } \\
-\left(\ell \pun Y_\ell 
+ \frac {\sqrt{\lambda_0}}{\sqrt{1-\ell^2}} Y_\ell\right) e^{\frac {\ell \sqrt{\lambda_0}}{\sqrt{1-\ell^2}}x_1} \\ \end{pmatrix}
\begin{pmatrix} \left(\ell \pun Y_\ell 
- \frac {\sqrt{\lambda_0}}{\sqrt{1-\ell^2}} Y_\ell\right) e^{-\frac {\ell \sqrt{\lambda_0}}{\sqrt{1-\ell^2}}x_1} \\ 
Y_\ell e^{- \frac {\ell \sqrt{\lambda_0}}{\sqrt{1-\ell^2}}x_1 } \end{pmatrix}\\
& = -2 \frac{\sqrt{\lambda_0}}{\sqrt{1-\ell^2}} \|Y_\ell\|_{L^2}^2.
\end{align*}
Useful coercivity results involve the directions $\vec Z_\ell^\pm$ instead of $Y_\ell$.
Such results are stated now (from \cite{MMwave1,MMwave2}).
\begin{lemma}\label{pr:22}
Let $-1<\ell<1$. There exists $\mu>0$ such that, for all $\vec g \in \dot H^1\times L^2$, the following hold.
\begin{enumerate}[label=\emph{(\roman*)}]
\item Coercivity of $H_\ell$.
\begin{align}
\psl {H_\ell \vec g}{\vec g} &\geq \mu \|\vec g\|_\cE^2 
- \frac 1{\mu}\biggl( \pshb{g}{\Lambda W_\ell}^2 +\sum_j \pshb {g}{\partial_jW_\ell}^2
+ \psl {\vec g}{\vec Z_\ell^{+}}^2 + \psl {\vec g}{\vec Z_\ell^{-}}^2\biggr)\label{eq:22}.\end{align}
\item Localized coercivity. For $\gamma>0$ small enough,
\begin{align}
& \int \left(| \nabla g|^2 \varphi_\gamma^2 -f'(W_\ell) g^2 + h^2 \varphi_\gamma^2
 + 2 \ell (\pun g) h \varphi_\gamma^2 \right)
\nonumber \\ &\geq \mu \int \left(|\nabla g|^2 + h^2 \right)\varphi_\gamma^2 
 - \frac 1{\mu }\biggl(\pshb{g}{\Lambda W_\ell}^2
+\sum_j \pshb {g}{\partial_j W_\ell}^2
+ \psl {\vec g}{\vec Z_\ell^{+}}^2 + \psl {\vec g}{\vec Z_\ell^{-}}^2\biggr)\label{eq:2.30}.\end{align}
\end{enumerate}
\end{lemma}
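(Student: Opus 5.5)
The plan is to reduce both estimates to the corresponding statements for the standing soliton in Lemma \ref{le:Q}, using the Lorentz-type change of variables $x_1 = \sqrt{1-\ell^2}\, y_1$, $\overline x=\overline y$. Write $g(x) = \tilde g(x_1/\sqrt{1-\ell^2},\overline x)$. Then $W_\ell$, $\Lambda W_\ell$, $\partial_j W_\ell$ and $Y_\ell$ are the rescalings of $W$, $\Lambda W$, $\partial_j W$ (up to a constant factor for $j=1$) and $Y$. Moreover
\[
\psl{L_\ell g}g = \sqrt{1-\ell^2}\,\psl{L\tilde g}{\tilde g},\qquad \pshb{g}{\partial_jW_\ell} = c\,\psh{\tilde g}{\partial_j W},
\]
and similarly for $\Lambda W_\ell$ and for $\psl g{Y_\ell}$. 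Lemma \ref{le:Q}(i) therefore transfers directly, with constants depending on $\ell$. The result is
\[
\psl{L_\ell g}g \gtrsim \noh g^2 - C\Bigl(\pshb g{\Lambda W_\ell}^2+\textstyle\sum_j\pshb g{\partial_jW_\ell}^2+\psl g{Y_\ell}^2\Bigr).
\]

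Next I will handle the kinetic part. By \eqref{Hbeta}, $\psl{H_\ell\vec g}{\vec g}=\psl{L_\ell g}g+\|\ell\pun g+h\|_{L^2}^2$. Set $k=\ell\pun g + h$, so that $h = k-\ell\pun g$ and $\nol h\lesssim \nol k+\noh g$. Controlling $\noh g$ and $\nol k$ then controls $\|\vec g\|_\cE$. It remains to replace the direction $\psl g{Y_\ell}$ by the two quantities $\psl{\vec g}{\vec Z_\ell^\pm}$.

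Integrating by parts in the first component gives
\[
\psl{\vec g}{\vec Z^\pm_\ell} = \int \bigl(\pm\tfrac{\sqrt{\lambda_0}}{\sqrt{1-\ell^2}}\, g + k\bigr) Y_\ell e^{\pm a x_1} + (\text{terms with } \pun(e^{\pm a x_1})),
\]
where $a=\ell\sqrt{\lambda_0}/\sqrt{1-\ell^2}$. This step is where I expect the main difficulty. The weights $e^{\pm ax_1}$ do not simply cancel, so $\psl g{Y_\ell}$ is not an exact linear combination of the $\psl{\vec g}{\vec Z^\pm_\ell}$. I would therefore argue by contradiction and compactness in the standard way. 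Suppose there are $\vec g_n$ with $\|\vec g_n\|_\cE=1$, all the listed scalar products tending to $0$, and $\psl{H_\ell\vec g_n}{\vec g_n}\le 1/n$. Extract a weak limit $\vec g$. The potential term $f'(W_\ell)g^2$ is weakly continuous, since $W_\ell$ decays and Hardy's inequality \eqref{z2} applies. The orthogonality conditions pass to the limit because $\vec Z^\pm_\ell$ decays exponentially, as $\sqrt{\lambda_0}/\sqrt{1-\ell^2}>|a|$ is compatible with \eqref{eq:dY} after rescaling. The limit satisfies $\psl{H_\ell\vec g}{\vec g}\le 0$. If $\vec g=0$, then the potential term vanishes in the limit and the quadratic form tends to $\noh{g_n}^2(1-\ell^2)$-type positive quantities, a contradiction. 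If $\vec g\ne0$, I will use the spectral structure of $H_\ell J$ given by \eqref{ZW}--\eqref{eq:PM}. On the $\psl\cdot\cdot$-orthogonal complement of the kernel directions and of $\vec Z^\pm_\ell$, $H_\ell$ has no nonpositive direction: a nonpositive direction would, by the single negative eigenvalue of $L_\ell$ together with the symplectic pairing \eqref{eq:PM} of $\vec Z^+_\ell$ and $\vec Z^-_\ell$, force a two-dimensional nonpositive subspace. Such a subspace is excluded because $H_\ell$ has exactly one negative direction, its Morse index being that of $L_\ell$. This yields \eqref{eq:22}.

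For \eqref{eq:2.30}, I would follow the proof of Lemma \ref{le:Q}(ii). Apply (i) to $\vec g\varphi_\gamma$ and compute the commutator terms: $\nabla(g\varphi_\gamma)=\varphi_\gamma\nabla g + g\nabla\varphi_\gamma$. The error $|\nabla\varphi_\gamma|\lesssim \gamma\varphi_\gamma/(1+|x|)$ is controlled by Hardy's inequality by $\gamma\int|\nabla g|^2\varphi_\gamma^2$. The potential term and the scalar products against the decaying functions change only by $O(\gamma)$ times the localized norm, because $1-\varphi_\gamma^2\lesssim\gamma\log(2+|x|)$ is harmless against $W_\ell$ and $Y_\ell$ weights. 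Taking $\gamma$ small absorbs these errors.
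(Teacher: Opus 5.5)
Your overall route is sound and is the standard one for such statements; the paper itself gives no proof here and simply quotes the lemma from \cite{MMwave1,MMwave2}. The following steps all go through as you describe: the rescaling of Lemma \ref{le:Q}(i) to $L_\ell$, the absorption of $\|\ell\pun g+h\|_{L^2}^2$, the weak continuity of $\int f'(W_\ell)g^2$, the passage of the constraints to the limit, the case $\vec g=0$, and the derivation of (ii) from (i) via Hardy's inequality. (Incidentally, your integration by parts actually produces $h-\ell\pun g$, not $k=\ell\pun g+h$, but you abandon that route anyway.)

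The gap is in the positivity step, which is exactly where your contradiction has to come from. Morse index one does \emph{not} exclude a two-dimensional nonpositive subspace, because $H_\ell$ has a six-dimensional kernel. For instance, $\mathrm{span}\{J\vec Z_\ell^++J\vec Z_\ell^-,\vec Z_\ell^\Lambda\}$ is nonpositive. Morse index one only rules out negative definite planes, so your argument only disposes of a weak limit with $\psl{H_\ell\vec g}{\vec g}<0$. Lower semicontinuity gives only $\psl{H_\ell\vec g}{\vec g}\le 0$, so the case $\psl{H_\ell\vec g}{\vec g}=0$ with $\vec g\neq 0$ is left open.

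It can be closed as follows.

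First, by \eqref{Zpm}, $\psl{H_\ell\vec g}{J\vec Z_\ell^\pm}=\mp\sqrt{\lambda_0}(1-\ell^2)^{\frac12}\psl{\vec g}{\vec Z_\ell^\pm}$. Hence $\mathcal S=\{\vec g:\psl{\vec g}{\vec Z_\ell^\pm}=0\}$ is the $H_\ell$-orthogonal of $\mathcal P=\mathrm{span}\{J\vec Z_\ell^+,J\vec Z_\ell^-\}$.

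Second, by antisymmetry of $J$ and \eqref{eq:PM}, the form restricted to $\mathcal P$ has zero diagonal and off-diagonal entry $\sqrt{\lambda_0}(1-\ell^2)^{\frac12}\psl{J\vec Z_\ell^+}{\vec Z_\ell^-}<0$. So it is nondegenerate, which gives $\dot H^1\times L^2=\mathcal S\oplus\mathcal P$. It also shows that $J\vec Z_\ell^++J\vec Z_\ell^-$ is a negative direction $H_\ell$-orthogonal to $\mathcal S$.

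Third, apply your Morse-index argument to the plane spanned by this direction and a hypothetical $\vec w\in\mathcal S$ with $\psl{H_\ell\vec w}{\vec w}<0$. That plane is negative definite, so $H_\ell\ge 0$ on $\mathcal S$.

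Fourth, suppose $\vec g\in\mathcal S$ and $\psl{H_\ell\vec g}{\vec g}=0$. Cauchy--Schwarz for the nonnegative form on $\mathcal S$ shows that $\vec w\mapsto\psl{H_\ell\vec g}{\vec w}$ vanishes on $\mathcal S$. By the first identity it also vanishes on $\mathcal P$, hence $H_\ell\vec g=0$.

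Finally, $\ker H_\ell=\{(g,-\ell\pun g):L_\ell g=0\}=\mathrm{span}\{\vec Z_\ell^\Lambda,\vec Z_\ell^{(j)}\}$. This follows from the rescaled Lemma \ref{le:Q}(i), since $L_\ell g=0$ forces $\psl{g}{Y_\ell}=0$. The $\dot H^1_\ell$ orthogonality conditions then give $\vec g=0$, which is the contradiction you need.
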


\subsection{Second order relations}
For future reference, we introduce functions related to the second variation along the null directions of
the operator $L_\ell$.
Let 
\begin{align*}
\vec Z_\ell^{\Lambda\Lambda} = \begin{pmatrix} \Lambda^2 W_\ell \\ - \ell \pun \Lambda^2 W_\ell\end{pmatrix},\quad
\vec Z_\ell^{(j)\Lambda} = \begin{pmatrix} \partial_j \Lambda W_\ell \\ - \ell \pun \partial_j \Lambda W_\ell\end{pmatrix},
\quad
\vec Z_\ell^{(j')(j)} = \begin{pmatrix} \partial_{j'} \partial_j W_\ell \\ - \ell \pun \partial_{j'} \partial_j W_\ell\end{pmatrix}.
\end{align*}
\begin{lemma}
The following hold for any $-1<\ell<1$,
\begin{equation}\label{L2}
\begin{aligned}
& L_\ell \Lambda^2 W_\ell = f''(W_\ell) (\Lambda W_\ell)^2 ,\quad
L_\ell \partial_j \Lambda W_\ell = f''(W_\ell) \partial_j W_\ell \Lambda W_\ell  ,\\
& L_\ell \partial_{j'}\partial_j W_\ell = f''(W_\ell) \partial_j W_\ell \partial_{j'} W_\ell.
\end{aligned}
\end{equation}
As a consequence,
\begin{equation}\label{H2}
\begin{aligned} 
&H_\ell \vec Z_\ell^{\Lambda\Lambda} =  \begin{pmatrix} f''(W_\ell) (\Lambda W_\ell)^2 \\ 0\end{pmatrix},\quad
H_\ell \vec Z_\ell^{(j)\Lambda} =  \begin{pmatrix} f''(W_\ell)\partial_j W_\ell \Lambda W_\ell  \\ 0\end{pmatrix},\\
&H_\ell \vec Z_\ell^{(j')(j)} =   \begin{pmatrix} f''(W_\ell)\partial_j W_\ell \partial_{j'} W_\ell \\ 0\end{pmatrix}. 
\end{aligned}
\end{equation}
\end{lemma}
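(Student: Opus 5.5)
The identities \eqref{L2} follow by differentiating the soliton equation along its symmetry families, and \eqref{H2} is then read off from the block structure of $H_\ell$.

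\emph{Step 1: one-parameter families of solutions.} The equation $\Delta_\ell W_\ell + f(W_\ell)=0$ is invariant under translations and under the $\dot H^1$ scaling. Hence, for $\lambda>0$ and $y\in\R^5$,
\[
W_{\ell,\lambda,y}(x)=\lambda^{-\frac32}W_\ell\left(\frac{x-y}{\lambda}\right)
\]
also satisfies $\Delta_\ell W_{\ell,\lambda,y}+f(W_{\ell,\lambda,y})=0$. Here $f(u)=|u|^{\frac43}u$ is $\mathcal C^2$ on $\R$, since $W_\ell>0$ and $f''$ is continuous (it behaves like $|u|^{1/3}$). So I may differentiate twice in the parameters.

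\emph{Step 2: first derivatives.} One checks that $-\partial_\lambda|_{\lambda=1}W_{\ell,\lambda,y}=\Lambda W_\ell$ and $-\partial_{y_j}W_{\ell,\lambda,y}=\partial_j W_\ell$. Differentiating the equation once gives $\Delta_\ell \partial_\lambda W + f'(W)\partial_\lambda W=0$. At the base point this is $L_\ell\Lambda W_\ell=L_\ell\partial_jW_\ell=0$, which recovers \eqref{LW}.

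\emph{Step 3: second derivatives.} Differentiating a second time gives, for parameters $a,b$,
\[
-\Delta_\ell \partial_a\partial_b W - f'(W)\partial_a\partial_b W = f''(W)\,\partial_aW\,\partial_bW .
\]
The main point is to identify the second derivatives at $\lambda=1$, $y=0$.

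For translations this is immediate: $\partial_{y_j}\partial_{y_{j'}}W=\partial_j\partial_{j'}W_\ell$.

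For the mixed derivative, use the commutation $\partial_j\Lambda=\widetilde\Lambda\partial_j=(\Lambda+1)\partial_j$. This gives $\partial_\lambda\partial_{y_j}W|_{\lambda=1}=(\Lambda+1)\partial_jW_\ell=\partial_j\Lambda W_\ell$.

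For the double scaling derivative, write $\mu=\log\lambda$. Then $\partial_\mu W=-\Lambda W$ along the whole family, so $\partial_\mu^2 W=\Lambda^2W$. The chain rule gives $\partial_\lambda^2=\partial_\mu^2-\partial_\mu$ at $\lambda=1$, so the identity from $\partial_\lambda^2$ differs from the desired one by a multiple of the first-order identity $L_\ell\Lambda W_\ell=0$. It is cleaner to work directly with $\mu$ throughout.

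Substituting these identifications, the signs cancel because each first derivative carries a minus sign and they appear in pairs. This yields exactly \eqref{L2}.

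\emph{Step 4: block computation of $H_\ell$.} For $\vec Z=(g,-\ell\partial_1 g)$, compute the two components of $H_\ell\vec Z$. The second component is $\ell\partial_1 g-\ell\partial_1 g=0$. The first component is
\[
-\Delta g - f'(W_\ell)g+\ell^2\partial_1^2 g = -\Delta_\ell g-f'(W_\ell)g = L_\ell g .
\]
Applying this with $g=\Lambda^2W_\ell$, $\partial_j\Lambda W_\ell$ and $\partial_{j'}\partial_jW_\ell$, and using \eqref{L2}, gives \eqref{H2}.

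The main obstacle is the bookkeeping of the second derivative in the scaling parameter (Step 3). Working in $\mu=\log\lambda$ avoids the extra $\Lambda W$ term that $\partial_\lambda^2$ would otherwise introduce.
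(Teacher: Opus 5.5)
Your proof is correct and follows the paper's argument. The paper also differentiates $\Delta_\ell W_{\ell,\lambda}+W_{\ell,\lambda}^{7/3}=0$ to second order in the scaling parameter and in $x$ (equivalently, in your translation parameter $y$) to get \eqref{L2}, and then deduces \eqref{H2}. Your use of $\mu=\log\lambda$, and the block identity that $H_\ell$ maps $(g,-\ell\partial_1 g)$ to $(L_\ell g,0)$, just spell out the bookkeeping the paper leaves to the reader.
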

\begin{proof}
Setting
\[
W_{\ell,\lambda} = \lambda^{-\frac32} W_\ell\big(\lambda^{-1} x\big),
\]
and differentiating the equation
\[
\Delta_\ell W_{\ell,\lambda} + W_{\ell,\lambda}^\frac73=0
\]
at the second order in $x$ or $\lambda$, one easily derives \eqref{L2}.
The identities \eqref{H2} follow.
\end{proof}

\subsection{Notation for general multi-solitons}
For the rest of this paper, we follow the notation of Theorem \ref{th:0}. We fix $K\geq 2$,
and for any $k\in \{1,\ldots,K\}$, we let $\lambda^\infty_k>0$, $\by_k^\infty \in \R^5$, $\epsilon_k=\pm 1$ and
$\ell_k\in (-1,1)$ with $\ell_{m}\neq \ell_k$ for $m\neq k$.

For $\vec G= (G,H)^\trans$, we set
\begin{equation}\label{thetai}
\begin{aligned}
&(\theta_k^\infty G)(t,x) = \frac {\epsilon_k}{(\lambda_k^\infty)^\frac32} G\left(\frac {x -\ell_k t e_1 - \by_k^\infty}{\lambda_k^\infty}\right),\\
& \vec \theta_k^\infty \vec G 
= \begin{pmatrix}\theta_k^\infty G \\[.2cm] 
\displaystyle \frac {\theta_k^\infty} {\lambda_k^\infty} H \end{pmatrix},
\quad 
\vec {\tilde \theta}_k^\infty \vec G 
= \begin{pmatrix}\displaystyle \frac {\theta_k^\infty} {\lambda_k^\infty} G \\[.4cm] 
\theta_k^\infty H \end{pmatrix},\\
&\Lambda_k^\infty G=\frac 32 G + (x-\ell_k t e_1 - \by_k^\infty) \cdot \nabla G.
\end{aligned}
\end{equation}
We set
\begin{equation}\label{defWi}
W_k^\infty =\theta_k^\infty W_{\ell_k}, \quad
\vec W_k^\infty = \vec\theta_k^\infty \vec W_{\ell_k}
=\begin{pmatrix} W_k^\infty \\ - \ell \pun W_k^\infty \end{pmatrix}.
\end{equation}
Now, we introduce time dependent modulation.
For $C^1$ functions $\lambda_k(t)>0$, $\by_k(t)\in \R^5$ to be chosen later, let
\begin{equation}\label{thetak}
\begin{aligned}
& (\theta_k G)(t,x) = \frac {\epsilon_k}{\lambda_k^\frac32(t)} G\left(\frac {x -\ell_k t e_1- \by_k(t)}{\lambda_k(t)}\right),
\\
& \vec \theta_k \vec G = \begin{pmatrix}\theta_k G \\[.2cm] \displaystyle \frac {\theta_k} {\lambda_k } H\end{pmatrix},
\quad
\vec {\tilde \theta}_k \vec G = \begin{pmatrix}\displaystyle \frac {\theta_k}{\lambda_k} G\\[.4cm] \theta_k H \end{pmatrix}.
\end{aligned}
\end{equation}
Let
\begin{align*}
\Lambda_k g & = \frac 32 G+ (x -\ell_k t e_1 -\by_k) \cdot \nabla G,\\
\widetilde \Lambda_k G & = \frac 5 2 G + (x -\ell_k te_1 -\by_k) \cdot \nabla G,\\
\vec \Lambda_k & = \begin{pmatrix} \Lambda_k \\ \widetilde \Lambda_k \end{pmatrix},
\quad 
\vec {\widetilde \Lambda}_k = \begin{pmatrix} \widetilde \Lambda_k \\ \Lambda_k \end{pmatrix}.
\end{align*}
We observe that for a time independent function $G$,  by \eqref{aL}, the following holds
\begin{equation}\label{eq:tL}
\partial_1 \Lambda_k \theta_k G = \widetilde\Lambda_k \frac{\theta_k}{\lambda_k}\partial_1 G,\quad
\partial_1 \nabla \theta_k G = \nabla \frac{\theta_k}{\lambda_k}\partial_1 G,
\end{equation}
\begin{equation}\label{eq:tG}
\begin{aligned}
\partial_t \theta_k G  
&= -\frac{\ell_k}{\lambda_k}\theta_k \pun G-\frac{\dot \lambda_k}{\lambda_k} \theta_k \Lambda G 
- \frac{\dot \by_k}{\lambda_k}\cdot \theta_k \nabla G\\
&= -\ell_k \pun \theta_k G-\frac{\dot \lambda_k}{\lambda_k} \Lambda_k \theta_k G - \dot \by_k \cdot \nabla \theta_k G ,
\end{aligned}
\end{equation}
and for a vector-valued function $\vec G$,
\begin{align}
\begin{aligned}
\partial_t \vec \theta_k \vec G 
&= -\frac{\ell_k}{\lambda_k}\vec \theta_k \pun \vec G-\frac{\dot \lambda_k}{\lambda_k} \vec \theta_k \vec \Lambda \vec G
- \frac{\dot \by_k}{\lambda_k}\cdot \vec \theta_k\nabla \vec G\\
&= -\ell_k \pun \vec\theta_k \vec G-\frac{\dot \lambda_k}{\lambda_k} \vec \Lambda_k \vec \theta_k \vec G
- \dot \by_k \cdot \nabla \vec \theta_k \vec G,
\end{aligned}\label{eq:tg} \\
\begin{aligned}
\partial_t \vec {\tilde \theta}_k \vec G  
&= -\frac{\ell_k}{\lambda_k}\vec{\tilde \theta}_k \pun \vec G
-\frac{\dot \lambda_k}{\lambda_k} \vec {\tilde \theta}_k \vec{\widetilde \Lambda}  \vec G
- \frac{\dot \by_k}{\lambda_k}\cdot \vec {\tilde \theta}_k \nabla \vec G\\
&= -\ell_k \pun \vec{\tilde \theta}_k \vec G-\frac{\dot \lambda_k}{\lambda_k} \vec{\widetilde \Lambda}_k \vec {\tilde \theta}_k \vec G
- \dot \by_k \cdot \nabla \vec {\tilde \theta}_k \vec G.
\end{aligned}\label{eq:th}
\end{align}
Moreover, we set
\begin{equation}\label{defWk}
\begin{aligned}
&W_k = \theta_k W_{\ell_k} ,\quad
\vec W_k = \vec\theta_k \vec W_{\ell_k},\\ 
&\vec Z_k^\Lambda = \vec {\theta}_k \vec Z_{\ell_k}^\Lambda,
\quad \vec Z_k^{(j)} = \vec {\theta}_k \vec Z^{(j)}_{\ell_k},\\
&\vec Z_k^{\Lambda\Lambda} = \vec {\theta}_k \vec Z_{\ell_k}^{\Lambda\Lambda},
\quad \vec Z_k^{(j)\Lambda} = \vec {\theta}_k \vec Z^{(j)\Lambda}_{\ell_k},
\quad \vec Z_k^{(j')(j)} = \vec {\theta}_k \vec Z^{(j')(j)}_{\ell_k},\\
& \vec Z_k^\pm = \vec {\tilde \theta}_k \vec Z_{\ell_k}^\pm.
\end{aligned}
\end{equation}
When there is no risk of confusion, $\sum_{k=1}^K$ will be denoted simply by $\sum_k$.

Let
\begin{equation}\label{eq:ok}
\omega_k(t,x)=(1+ |x-\ell_k t e_1 -\by_k(t)|^2)^{-\frac12},\quad
\omega = \sum_k \omega_k.
\end{equation}
For future reference, by the definition and the decay properties of the function $W$, we observe that
for any $\beta\in \N^5$, 
\begin{equation}\label{eq:dW}
|\partial^\beta \vec W_k|+|\partial^\beta \vec\Lambda_k\vec W_k|
+|\partial^\beta \vec{\widetilde\Lambda}_k\vec W_k| \lesssim \omega_k^{3+|\beta|}.
\end{equation}
We also set
\[
\zeta_k (t,x)= e^{-\mu_0 |x-\ell_k t e_1-\by_k(t)|}, \quad \zeta=\sum_k \zeta_k .
\]
By the definition of $\vec Z_\ell^\pm$ and the decay property \eqref{eq:dY} of the function $Y$, 
we observe that
for any $\beta\in \N^5$,
\begin{equation*}
|\partial^\beta\vec Z_k^\pm (t,x)| \lesssim \exp\Biggl(-\frac{\sqrt{\lambda_0}}{\lambda_k(t)} \Biggl(\frac{1-|\ell_k|}
{(1-\ell_k^2)^\frac12} |x_1-\ell_k t -\by_{k,1}(t)| + |\overline x-\overline \by_k(t)| \Biggr)\Biggr) .
\end{equation*}
Thus, there exists $\mu_0>0$ (depending only on the parameters of the multi-soliton), such that assuming
\[
\left| \frac{\lambda_k(t)}{\lambda_k^\infty} - 1 \right| \leq \frac 12,
\]
it holds for any $\beta\in \N^5$,
\begin{equation}\label{eq:dZ}
|\partial^\beta\vec Z_k^\pm | \lesssim \zeta_k .
\end{equation}
Lastly, we recall from \cite[Claim 2]{MMwave1} that for any
$0 < r_2 \leq r_1$ such that $r_1+r_2>5$, and any $k\neq m$, for $t$ sufficiently large
\begin{equation}\label{eq:C2}
\begin{aligned}
&\mbox{if $r_1>5$ then} \quad \int \omega_k^{r_1}\omega_m^{r_2}\lesssim t^{-r_2},\\
&\mbox{if $r_1\leq 5$ then} \quad \int \omega_k^{r_1}\omega_m^{r_2}\lesssim t^{5-r_1-r_2}.
\end{aligned}
\end{equation}
Moreover, it is clear that, for any $r_1>0$, $r_2>0$, for any $k\neq m$, for $t$ sufficiently large
\begin{equation}\label{eq:C3}
\int \zeta_k^{r_1} \omega_m^{r_2} \lesssim t^{-r_2}.
\end{equation}

\subsection{Definition of a cut-off function} We introduce here a cut-off function used in the
definition of  the energy functional (see \cite[Section 4]{MMwave1}).
Fix
\begin{equation}\label{eq:lb}
\overline \ell = \frac 12 \Bigl( 1+ \max_k (|\ell_k|) \Bigr).
\end{equation}
For 
\[
0<\sigma< \frac 1{100} \min ( 1-\overline \ell, |\ell_k-\ell_m| \, ;\,  k,m\in \{1,\ldots, K\}, k\neq m)>0,
\]
small enough to be fixed, we set
\begin{align*}
\hbox{for $k=1,\dots, K-1$},\quad &\ell_k^{+}=\ell_k+\sigma(\ell_{k+1}-\ell_k),\\
\hbox{for $k=2,\dots, K$},\quad &\ell_k^{-}=\ell_k-\sigma(\ell_{k}-\ell_{k-1}),
\end{align*}
and for $t>0$,
\[
\Omega(t) = ( (\ell_1^+ t,\ell_{2}^- t)\cup\ldots \cup (\ell_{K-1}^+ t,\ell_{K}^- t)) \times \R^4,\quad
\Omega^C(t) = \R^5\setminus \Omega(t).
\]
To simplify notation, we also denote $\ell_1^-=-\infty$ and $\ell_K^+=+\infty$.
Then, we consider the continuous function $\chi(t,x)=\chi(t,x_1)$ defined as follows, for all $t>0$,
\begin{equation}\label{defchiK}
\left\{\begin{aligned}
& \hbox{$\chi(t,x) = \ell_k $ for $x_1\in (\ell_k^- t, \ell_k^+ t)$, $k\in \{1,\ldots,K\}$,}\\
& \chi(t,x) = \frac{x_1}{(1-2\sigma)t} - \frac {\sigma}{1-2 \sigma} (\ell_{k+1}+\ell_k) 
\hbox{ for $x_1 \in [\ell_k^+ t,\ell_{k+1}^-t ]$, $k\in \{1,\ldots,K-1\}$}.
\end{aligned}
\right.
\end{equation}
In particular,
\begin{equation}\label{derchi}\left\{\begin{aligned}
& \partial_t \chi(t,x) =0,\quad \nabla \chi(t,x)=0, \quad \hbox{on $\Omega^C(t)$},\\
 & \pun \chi(t,x)= \frac{1}{(1-2\sigma)t} \quad \hbox{for $x\in \Omega(t)$},\\
 	& \partial_{t} \chi(t,x)= -\frac 1t \frac{x_1}{(1-2\sigma)t} \quad \hbox{for $x\in \Omega(t)$}.
\end{aligned} \right.
\end{equation}

\section{Strong multi-solitons}\label{S:3}

The main objective of this section is to prove a first rigidity result related
to multi-solitons, see Proposition \ref{pr:2} below.
To do this, we closely follow the notation and the techniques introduced in \cite{MMwave2}.
(Note that the techniques used in \cite{MMwave1} are not sufficient here).

\subsection{Non homogeneous linearized equation}

Let $\ell\in (-1,1)$.
Let the functions $F$ and $G$ be defined by
\begin{equation}\label{eq:FG}
\begin{aligned}
&F = W^{\frac 43} + \kappa_\ell \Lambda W , \quad 
G = (1-\ell^2)^{-\frac 12} \kappa_\ell \ell\partial_1 \Lambda W ,\\
&\kappa_{\ell} = - (1-\ell^2) \frac{(W^{\frac 43},\Lambda W)}{\|\Lambda W\|_{L^2}^2}>0 .
\end{aligned}
\end{equation}
Set
\[
\zeta_\ell(t,x) = \left(\frac{x_1-\ell t}{\sqrt{1-\ell^2}},\overline x\right)
\]
and 
\[
f_\ell (t,x) = t^{-3} F ( \zeta_\ell(t,x)),\quad
g_\ell (t,x) = t^{-2} G ( \zeta_\ell(t,x) ),\quad w_\ell(t,x)=W(\zeta_\ell(t,x)).
\]
We recall the properties of the function $v_\ell(t,x)$ constructed in \cite[Lemma 3.1]{MMwave2}.
\begin{lemma}\label{le:as}
There exists a smooth function $v_\ell:(t,x)\in [1,+\infty)\times \R^5\mapsto v_\ell(t,x)$ such that, for all $0<\delta<1$, 
 for all $m\geq 0$,  $t\geq 1$, $x\in \R^5$,
\begin{equation}\label{eq:Av}
\begin{aligned}
|A_\ell^m v_\ell(t,x)| \lesssim_\delta (t+\langle \zeta_\ell\rangle)^{-1} t^{-(1+m)} \langle \zeta_\ell\rangle^{-2+\delta} ,\\
|A_\ell^m\nabla v_\ell(t,x)| \lesssim_\delta (t+\langle \zeta_\ell\rangle)^{-1}t^{-(1+m)} \langle \zeta_\ell\rangle^{-3+\delta} ,\\
|A_\ell^m\nabla\nabla v_\ell(t,x)| \lesssim_\delta (t+\langle \zeta_\ell\rangle)^{-1}t^{-(1+m)} \langle \zeta_\ell\rangle^{-4+\delta},
\end{aligned}
\end{equation}
and
\begin{equation}\label{eq:AE}\begin{aligned}
|A_\ell^m \cE_{\ell}(t,x)|&\lesssim_\delta t^{-(4+m)+\delta} \langle \zeta_\ell\rangle^{-3},\\
|A_\ell^m\nabla \cE_{\ell}(t,x)|&\lesssim_\delta t^{-(4+m)+\delta} \langle \zeta_\ell \rangle^{-4},
\end{aligned}\end{equation}
where
\begin{equation}\label{eq:vl}
\cE_\ell=
\partial_t^2 v_\ell - \Delta v_\ell - \frac 73 w_\ell^{\frac 43} v_\ell 
- f_\ell - g_\ell.
\end{equation}
\end{lemma}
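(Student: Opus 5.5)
The plan is to reduce to the case $\ell=0$ with a Lorentz boost, build $v_\ell$ as an inner profile near the soliton glued to a self-similar outer correction, and then check the error bounds \eqref{eq:AE} term by term.

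\emph{Lorentz reduction.} Set $s=(t-\ell x_1)/\sqrt{1-\ell^2}$ and $y=\zeta_\ell(t,x)$. The operator $\partial_t^2-\Delta-\frac73 w_\ell^{\frac43}$ becomes $\partial_s^2-\Delta_y-\frac73 W^{\frac43}(y)$, and $A_\ell=\partial_t+\ell\pun$ becomes $\sqrt{1-\ell^2}\,\partial_s$ up to a purely spatial derivative. So the bounds on $A_\ell^m$ are bounds on $\partial_s^m$ in the rest frame. We have $t=\sqrt{1-\ell^2}\, s+\ell y_1$, and in the region $|y|\ll t$ we expand
\[
t^{-3}=(1-\ell^2)^{-\frac32}s^{-3}\bigl(1-3\ell y_1 s^{-1}\sqrt{1-\ell^2}^{-1}+\dots\bigr).
\]
The first-order correction is $O(s^{-4}|y|\,W^{\frac43})$. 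The role of $g_\ell=t^{-2}G$, which is proportional to $\ell\,\partial_1\Lambda W$, is then to cancel the term produced by the boost when $\partial_t$ hits the leading part of $v_\ell$. Concretely, I expect $\partial_t(t^{-2}a\Lambda W(\zeta_\ell))$ to produce $-\ell\,t^{-2}\partial_1\Lambda W$-type contributions that match $g_\ell$, given the choice of $\kappa_\ell$. I would verify this bookkeeping first, since it fixes the ansatz.

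\emph{Inner profile.} In the rest frame the leading equation is $-LV=F$ on $\R^5$ (up to constants). The constant $\kappa_\ell$ is chosen exactly so that $(F,\Lambda W)=0$, i.e.\ $F$ is orthogonal to the radial kernel of $L$. By radial symmetry ($F$ is radial), $F$ is also orthogonal to the $\partial_j W$. Hence $LV=-F$ has a radial solution $V$, obtained by variation of constants with the kernel element $\Lambda W\sim|y|^{-3}$. Since $F\sim W^{\frac43}+\kappa\Lambda W\sim|y|^{-3}$ at infinity, the solution behaves like $V\sim|y|^{-1}$ (or with a logarithmic loss), with corresponding derivative bounds. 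The inner ansatz is $v^{\rm in}=t^{-3}\,\cdots$. More precisely, I would match the claimed size $t^{-2}\langle\zeta\rangle^{-2+\delta}(t+\langle\zeta\rangle)^{-1}$, which near the soliton is $t^{-3}\langle\zeta\rangle^{-2+\delta}$. This suggests including the $\Lambda W$ modulation direction with a $t^{-1}$-type coefficient, plus $t^{-3}V$, with the residual slow decay absorbed into $\langle\zeta\rangle^{\delta}$.

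\emph{Outer correction.} The inner profile decays too slowly to be an acceptable approximate solution in the region $|\zeta|\sim t$: $\partial_t^2$ of it is not negligible there. So I would cut $v^{\rm in}$ off at $|\zeta|\le t^{1-}$ and add an outer correction solving the free wave equation $\partial_s^2 u-\Delta u=$ (tail of the source). This outer equation is exactly self-similar, with source homogeneous of degree $-3-3=-6$ (or similar) in $(s,y)$. I would look for $u=s^{-a}\Phi(y/s)$, which reduces to an ODE in $\rho=|y|/s$ with a regular singular point at the light cone $\rho=1$. Solving that ODE gives the factor $(t+\langle\zeta\rangle)^{-1}$ and the bounds on $\partial_s^m$ (each $\partial_s$ gains $s^{-1}$ by homogeneity). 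The $\delta$-loss comes from logarithms at the matching between the inner and outer zones.

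\emph{Error estimates and main obstacle.} Finally, I would compute $\cE_\ell$. It collects the boost corrections, the cut-off commutator terms supported where $|\zeta|\sim t^{1-}$, the terms $\frac73 W^{\frac43}u$ (which are small because $W^{\frac43}\sim|\zeta|^{-4}$ and the outer solution is small near the origin), and $\partial_t^2 v^{\rm in}$. Each of these should be $\lesssim t^{-4+\delta}\langle\zeta\rangle^{-3}$, with one extra power of $\langle\zeta\rangle$ for each spatial derivative; applying $A_\ell^m$ gains $t^{-m}$. I expect the main obstacle to be the precise matching between the inner and outer zones, so that no error term of size $t^{-4}\langle\zeta\rangle^{-2}$ survives, together with the exact identification of $g_\ell$ as the boost-generated term. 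Both depend delicately on the choice of $\kappa_\ell$ and on the slow decay of $W$.
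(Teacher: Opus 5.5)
The paper does not prove this lemma. It quotes it from \cite[Lemma 3.1]{MMwave2}, and Remark \ref{rk:pr} records only the leading-order relation \eqref{eq:Vl}. Your inner/outer plan is reasonable for $\ell=0$, where $G=0$ and $\kappa_0$ does make $F\perp\Lambda W$. For $\ell\neq0$, however, the inner step has a genuine gap, and it sits exactly where $g_\ell$ and the factor $1-\ell^2$ in $\kappa_\ell$ enter.

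\emph{Wrong target size.} For $m=0$, \eqref{eq:Av} gives $t^{-2}\langle\zeta_\ell\rangle^{-2+\delta}$ near the soliton, not $t^{-3}$. This $t^{-2}$ part is forced. By \eqref{eq:Vl}, in the variable $X=(x_1-\ell t,\overline x)$ one has $V_\ell\approx t^{-2}U$ with $L_\ell U=\kappa_\ell\ell\partial_1\Lambda W_\ell$. Using $L_\ell(x_1h)=x_1L_\ell h-2(1-\ell^2)\partial_1 h$, this gives $U=-\frac{\kappa_\ell\ell}{2(1-\ell^2)}x_1\Lambda W_\ell$. Its $|x|^{-2}$ decay is the origin of the exponent $-2+\delta$. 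Your ansatz omits this term. Conversely, any surviving $t^{-1}\Lambda W(\zeta_\ell)$ component would violate \eqref{eq:Av} at $\zeta_\ell=0$.

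\emph{False solvability claim.} The orthogonality you invoke does not hold: $(F,\Lambda W)=\ell^2(W^{\frac43},\Lambda W)$, and $(W^{\frac43},\Lambda W)=-\frac9{14}\int W^{\frac73}\neq0$. So $LV=-F$ has no decaying solution. The true order-$t^{-3}$ problem comes from $\partial_t^2-\ell^2\partial_1^2=A_\ell^2-2\ell\partial_1A_\ell$ acting on $t^{-2}U$. It reads $L_\ell U_3=F_\ell-4\ell\partial_1 U$, with $F_\ell(X)=F(X_1/\sqrt{1-\ell^2},\overline X)$. The identity $(\partial_1(x_1h),h)_{L^2}=\frac12\|h\|_{L^2}^2$ then gives $(F_\ell-4\ell\partial_1U,\Lambda W_\ell)_{L^2}=0$, exactly because of the factor $1-\ell^2$ in $\kappa_\ell$.

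\emph{Rest-frame bookkeeping.} In rest-frame variables the same cancellation comes from re-expanding $t^{-2}G$ with $t=(s+\ell y_1)/\sqrt{1-\ell^2}$; your formula $t=\sqrt{1-\ell^2}\,s+\ell y_1$ is incorrect. This re-expansion produces $-2(1-\ell^2)^{\frac12}\kappa_\ell\ell^2 y_1\partial_1\Lambda W$ at order $s^{-3}$. You also expanded only the $W^{\frac43}$ part of $t^{-3}F$. The $\kappa_\ell\Lambda W$ part leaves an $O(s^{-4}|y|^{-2})$ term, which is not an admissible error.

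The boost also does not make the outer problem radial. Because the source is written in lab time, in $(s,y)$ it is neither radial nor a function of $s$ alone, so for $\ell\neq0$ the outer problem is not an ODE in $|y|/s$. Moreover, $\{t\geq1\}$ contains points with $s=\sqrt{1-\ell^2}\,t-\ell y_1\leq0$, all with $|\zeta_\ell|\gtrsim t$. There \eqref{eq:Av}--\eqref{eq:AE} are still asserted, but $s^{-a}\Phi(y/s)$ is undefined.

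Your remark on $g_\ell$ has a correct version, and it is a second-order identity rather than the first-order relation behind $z_\ell$ in Remark \ref{rk:31}:
$(\partial_t^2-\Delta-\frac73w_\ell^{\frac43})\bigl(\frac{\kappa_\ell}{2t}\Lambda W(\zeta_\ell)\bigr)=g_\ell+\kappa_\ell t^{-3}\Lambda W(\zeta_\ell)$.
Subtracting this term leaves the localized source $t^{-3}W^{\frac43}(\zeta_\ell)\approx(1-\ell^2)^{\frac32}s^{-3}W^{\frac43}(y)$. Near the soliton, its response has secular part $-(1-\ell^2)^{\frac32}\frac{\kappa_0}{2s}\Lambda W$, where $\kappa_0$ is $\kappa_\ell$ at $\ell=0$. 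Since $\kappa_\ell=(1-\ell^2)\kappa_0$ and $t\approx s/\sqrt{1-\ell^2}$, this equals $-\frac{\kappa_\ell}{2t}\Lambda W$ up to $O(t^{-2}|y|\langle y\rangle^{-3})$. The $t^{-1}$ terms therefore cancel and leave exactly the $t^{-2}x_1\Lambda W_\ell$ term above.

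This bookkeeping is what your plan is missing. Without it, the construction either keeps a secular $t^{-1}\Lambda W$ component or is not solvable at order $t^{-3}$. Even with it, the region $|\zeta_\ell|\gtrsim t$, where $s$ is not comparable to $t$, still needs separate care.
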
 
\begin{remark}\label{rk:31}
Lemma \ref{le:as} means that asymptotically in large time, the function $v_\ell$ 
is an approximate solution of the linear non homogeneous problem $\cE_\ell=0$
at the order of $t^{-4+\delta}$, for any $\delta>0$.
Setting
\[
z_\ell=\partial_t v_\ell + \frac{\kappa_\ell}{2t^2}\Lambda W(\zeta_\ell),
\]
we check by direct computation that the pair $(v_\ell, z_\ell)$ satisfies the following system
\begin{equation}\label{eq:vz}
\left\{\begin{aligned}
\partial_t v_\ell & = z_\ell - \frac{\kappa_\ell}{2t^2}\Lambda W(\zeta_\ell) , \\
\partial_t z_\ell & = \Delta v_\ell + \frac 73 W^\frac43(\zeta_\ell) v_\ell
+ \frac{1}{t^3} W^\frac43(\zeta_\ell)
+ \frac{\ell \kappa_\ell}{2 t^2\sqrt{1-\ell^2}} \partial_1 \Lambda W(\zeta_\ell) + \mathcal{E}_\ell.
\end{aligned}\right.
\end{equation}
The pair $(v_\ell,z_\ell)$ will allow us to keep track of the nonlinear interactions between the solitons.
Indeed, the source term $t^{-3} W^\frac43(\zeta_\ell)$ is related to soliton interactions
since a soliton at a distance of order $t$ has a size of order $t^{-3}$
in a neighborhood of the soliton $W(\zeta_\ell)$; see Lemma \ref{le:in} below.
The additional scaling terms in the definitions \eqref{eq:FG} of $F$ and $G$, which involve the constant $\kappa_\ell$,
are related to a spectral issue of the operator $L$ (the function $W^\frac43$ is not orthogonal to the kernel of $L$).
In the system \eqref{eq:vz}, the terms of order $t^{-2}$ in the right-hand side are designed 
to solving the spectral issue while suitably modifying the scaling law; see the definitions of ${\rm Mod}_\bW$ and ${\rm Mod}_\bX$ in Lemma~\ref{le:43}.
This observation justifies the definitions of the functions $f_\ell$, $g_\ell$ and $z_\ell$.
\end{remark}

\begin{remark}\label{rk:pr}
Let us define
\[
\widetilde\cE_\ell
=-\Delta_{\ell} v_\ell - f'(w_\ell) v_\ell - f_\ell - g_\ell .
\]
Using 
\[
\widetilde\cE_\ell=\cE_\ell - (\partial_t^2- \ell^2 \pun^2) v_\ell, \quad
(\partial_t^2- \ell^2 \pun^2)= A_\ell^2 - 2 \ell \pun A_\ell,
\]
and estimates \eqref{eq:Av}-\eqref{eq:AE}, we see that (taking $0<\delta<\frac12$)
\begin{equation}\label{eq:pl}
| \widetilde\cE_\ell | \lesssim t^{-3} \langle \zeta_\ell \rangle^{-\frac 52}.
\end{equation}
In particular, setting $V_{\ell}(t,x)=v_{\ell}(t,x+\ell e_1 t)$, we have 
from \eqref{eq:pl}
\begin{equation}\label{eq:Vl}
\left| L_\ell V_\ell - t^{-2}  \kappa_\ell \ell \partial_1 \Lambda W_{\ell}\right|
\lesssim t^{-3} \langle x\rangle^{-\frac52}.
\end{equation}
This estimate will be useful later. However, while \eqref{eq:Vl} may suggest to use separation of the variables
$(t,x)$ to prove Lemma \ref{le:as}, solving first $L_\ell V_\ell = t^{-2}  \kappa_\ell \ell \partial_1 \Lambda W_{\ell}$
with $V_\ell=t^{-2} U_\ell$ where $L_\ell U_\ell =  \kappa_\ell \ell \partial_1 \Lambda W_{\ell}$,
this is not a correct strategy to solve $\cE_\ell = 0$ (indeed, $U_\ell$ would have a low spatial decay).
We refer to the detailed proof of Lemma 3.1 in \cite{MMwave2}.
\end{remark}

\subsection{Approximate multi-soliton}\label{S:2.2}
Let $K\geq 2$. For all $k\in \{1,\ldots,K\}$, let $\lambda_k^\infty>0,$ ${\by}_k^\infty\in \R^5$, $\epsilon_k = \pm1$ and $\ell_k\in (-1,1)$
such that $\ell_k\neq \ell_m$ for any $k\neq m$.
We consider $\cC^1$ functions $\lambda_k$, ${\by}_k$ defined on a certain interval of time $I$ and satisfying on $I$
\begin{equation}\label{eq:bs1}
|\lambda_k-\lambda_k^\infty| + |\by_k-\by_k^\infty|\ll 1.
\end{equation}
In the next lemma, we recall from \cite{MMwave2} the computation of the main nonlinear interactions terms
between the $K$ solitons (at order $t^{-3}$).
\begin{lemma}[{\cite[Lemma 4.1]{MMwave2}}]\label{le:in}
For $m\neq k$, set
\[
\bs_{k,m} = \frac{\ell_k-\ell_m}{\sqrt{1-|\ell_m|^2}}
\]
and
\[
c_{k} = \frac 73(15)^{\frac 32} \sum_{m\neq k}{\epsilon_m(\lambda_m^\infty)^{\frac 32}}{|\bs_{k,m}|^{-3}}.
\]
Then,
\begin{equation}\label{eq:an}
f\Bigl(\sum_k W_k\Bigr)- \sum_{k} f(W_k) =
t^{-3}\sum_{k} c_k |W_k|^{\frac 43}+\bR_{\Sigma},
\end{equation}
where, for all $t\in I$, $x\in \R^5$,
\[
 |\bR_{\Sigma}| \lesssim t^{-4} \omega^3,\quad
 |\nabla \bR_\Sigma|\lesssim t^{-4} \omega^4.
\]
\end{lemma}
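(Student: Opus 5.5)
Fix a point $(t,x)$ and expand $f(\sum_m W_m)$ near each soliton. The plan is to split $\R^5$ into $K$ regions $D_k=\{x:\ |x-\ell_k t e_1-\by_k|\le \tfrac{\rho}{2} t\}$, with $\rho=\min_{k\neq m}|\ell_k-\ell_m|/2$, together with the complement $D_0$. These regions are disjoint for $t$ large by \eqref{eq:bs1}.

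\emph{Pointwise bounds on the solitons.} For $m\neq k$ we have $|W_m|\lesssim \omega_m^3$ everywhere by \eqref{eq:dW}. On $D_k$, $\omega_m\lesssim t^{-1}$ and also $\omega_m\lesssim \omega$, so $|W_m|\lesssim t^{-3}$ there. For the leading term we Taylor expand $W_m$ around the center of $W_k$: on $D_k$,
\[
W_m(t,x)=W_m(t,\ell_k t e_1+\by_k)+O(|x-\ell_kte_1-\by_k|\,t^{-4}).
\]
The asymptotics $W(y)\sim 15^{3/2}|y|^{-3}$ then give
\[
W_m(t,\ell_kte_1+\by_k)=\epsilon_m(\lambda_m^\infty)^{3/2}15^{3/2}|\bs_{k,m}|^{-3}t^{-3}+O(t^{-4}).
\]
Here the Lorentz-contracted distance is $(\ell_k-\ell_m)t/\sqrt{1-\ell_m^2}$, and replacing $\lambda_m,\by_m,\lambda_m^\infty$ by their limits is allowed because these errors are lower order in $t$ under \eqref{eq:bs1}. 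I will need to check whether the exponent $3/2$ pairs with $\lambda_m$ or with its inverse; this depends only on the scaling convention and affects only the constant $c_k$.

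\emph{Expansion on $D_k$.} Write $\sum_m W_m=W_k+r_k$ with $|r_k|\lesssim t^{-3}$. Then
\[
f(W_k+r_k)-f(W_k)-\sum_{m\ne k}f(W_m)=f'(W_k)r_k+O\bigl(|W_k|^{1/3}r_k^2+|r_k|^{7/3}\bigr)+O(t^{-7}).
\]
This uses $|f''(u)|\lesssim|u|^{1/3}$, the standard inequality $|f(a+b)-f(a)-f'(a)b|\lesssim |a|^{1/3}b^2+|b|^{7/3}$, and $|f(W_m)|\lesssim t^{-7}$ on $D_k$. Inserting the expansion of $r_k$, the term $f'(W_k)r_k$ gives $t^{-3}c_k|W_k|^{4/3}$, with $c_k$ exactly as in the statement because $f'(u)=\frac73|u|^{4/3}$. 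It also gives an error of size $|W_k|^{4/3}|x-\cdots|t^{-4}\lesssim \omega_k^{3}t^{-4}$ and $|W_k|^{4/3}t^{-4}\lesssim\omega_k^4t^{-4}$. The quadratic terms are bounded by $\omega_k t^{-6}+t^{-7}$. On $D_k$ we have $\omega\gtrsim t^{-1}$, so $t^{-6}\omega_k\lesssim t^{-4}\omega^3$ and $t^{-7}\lesssim t^{-4}\omega^3$. On $D_k$ the term $t^{-3}c_m|W_m|^{4/3}$ for $m\neq k$ is $\lesssim t^{-7}$, which is also acceptable.

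\emph{The complement $D_0$.} Here every $\omega_m\lesssim t^{-1}$, so all terms are bounded by $t^{-7}$. Since $\omega\gtrsim \langle x\rangle^{-1}$ is not bounded below by $t^{-1}$ far away, this bound alone is not enough. Instead I bound each term directly by powers of $\omega$: $|f(\sum W_m)|+\sum|f(W_m)|\lesssim \omega^{7}$, and $t^{-3}|W_k|^{4/3}\lesssim t^{-3}\omega^4$. On $D_0$ we have $\omega\lesssim t^{-1}$, so $\omega^7\le t^{-4}\omega^3$ and $t^{-3}\omega^4\le t^{-4}\omega^3$.

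\emph{Gradient estimate.} Differentiate the identity. Each $\nabla$ falling on $W_m$ gains a factor $\omega_m$ by \eqref{eq:dW}. The only delicate point is $f''$, which is singular at $0$ because of the $|u|^{-2/3}$ factor. To handle it, I would write the remainder on $D_k$ as $\int_0^1 (f'(W_k+sr_k)-f'(W_k))\,ds\; r_k$ and differentiate that expression, using the Hölder continuity of $f'$ of order $4/3$. The near-soliton region is the main obstacle. There, one must use $W_k>0$ bounded below on compact sets relative to $t^{-3}$, so that $f''(W_k+sr_k)$ is uniformly bounded. Where $|W_k|\lesssim t^{-3}$, one falls back to crude bounds by $t^{-4}\omega^4$ via the same region argument.
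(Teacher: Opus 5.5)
Your splitting into the balls $D_k$ and the complement, and the pointwise bookkeeping on each region, are sound. This is the natural route; the paper gives no argument of its own and refers to \cite[Lemma 4.1]{MMwave2}. The exponent you left open is the one in the statement: since $|y|\approx|\bs_{k,m}|\,t/\lambda_m$ in the argument of $W$, one gets $W_m(t,\ell_kte_1+\by_k)=\epsilon_m15^{\frac32}\lambda_m^{\frac32}(t)|\bs_{k,m}|^{-3}t^{-3}(1+O(t^{-1}))$.

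This formula involves $\lambda_m(t)$, and the step where you replace it by $\lambda_m^\infty$ ``because these errors are lower order in $t$ under \eqref{eq:bs1}'' fails. Indeed, \eqref{eq:bs1} only gives $|\lambda_m-\lambda_m^\infty|\ll1$, with no decay in $t$. The replacement costs $\frac73 15^{\frac32}|\bs_{k,m}|^{-3}\bigl|\lambda_m^{\frac32}-(\lambda_m^\infty)^{\frac32}\bigr|\,t^{-3}|W_k|^{\frac43}$. Near the center of $W_k$ this is in general of size $t^{-3}|\lambda_m(t)-\lambda_m^\infty|$, whereas the admissible error there is $t^{-4}\omega^3\approx t^{-4}$. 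So your argument, and in fact the statement with the time-independent $c_k$, needs the extra input $|\lambda_m(t)-\lambda_m^\infty|\lesssim t^{-1}$ on $I$; this holds, for instance, under \eqref{eq:c3}. Otherwise you must either keep $\lambda_m(t)$ inside $c_k$ or add $t^{-3}\sum_m|\lambda_m-\lambda_m^\infty|\,\omega^4$ to the bound on $\bR_\Sigma$. The offsets $\by_k-\by_m$, by contrast, are harmless: they are bounded and only produce relative errors $O(t^{-1})$. The same correction is needed in the gradient estimate.

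For the gradient, your sketch puts the difficulty in the wrong place, and your splitting of $D_k$ by the size of $|W_k|$ is unnecessary. The function $f''(u)=\frac{28}9|u|^{-\frac23}u$ is not singular: $|f''(u)|\lesssim|u|^{\frac13}$ and $f''$ is H\"older continuous of order $\frac13$ (it is $f'''$ that blows up at $0$). Also, $W_k$ has the sign of $\epsilon_k$, not a positive sign. Set $\bar r_k=\frac37t^{-3}c_k$. On $D_k$ one has
\[
\nabla\bR_\Sigma=\bigl(f'(W_k+r_k)-f'(W_k)-f''(W_k)r_k\bigr)\nabla W_k+f''(W_k)(r_k-\bar r_k)\nabla W_k+f'(W_k+r_k)\nabla r_k+O(t^{-8}).
\]
Three facts then bound each term by $t^{-4}\omega_k^4$:
\begin{itemize}
\item $|f'(a+b)-f'(a)-f''(a)b|\lesssim|b|^{\frac43}$;
\item $|r_k-\bar r_k|\lesssim t^{-4}(1+|x-\ell_kte_1-\by_k|)$, given the condition on $\lambda_m$ above;
\item $|\nabla r_k|\lesssim t^{-4}$.
\end{itemize}
This works directly, with no lower bound on $|W_k|$.
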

To construct a refined approximate multi-soliton which cancels the main interaction terms $t^{-3} c_k |W_k|^{\frac 43}$ that appear in \eqref{eq:an}, we define correction terms
to be added to the sum of solitons.
Inspired by Remark \ref{rk:31} and \cite[Section 4.2]{MMwave2}, we set
\begin{align*}
v_k(t,x) & = \frac 1{\lambda_k^3} v_{\ell_k}\left( \frac t{\lambda_k} , \frac {x-\by_k}{\lambda_k}\right),\\
z_k(t,x) & = \frac 1{\lambda_k^4} (\partial_tv_{\ell_k})\left( \frac t{\lambda_k} , \frac {x-\by_k}{\lambda_k}\right)
+ \frac{\kappa_{\ell_k}\epsilon_k}{2 \lambda_k^{\frac 12}t^2}\Lambda_k W_k(t,x)
\end{align*}
where the function $v_{\ell}$ is defined in Lemma \ref{le:as}
and $\kappa_\ell$ is defined in \eqref{eq:FG}.
Define
\begin{equation}\label{eq:ak}
a_k = - \frac {c_k\epsilon_k\kappa_{\ell_k}}2,\quad 
\vec v_k = \begin{pmatrix} v_k 
\\[.2cm]z_k \end{pmatrix}
\end{equation}
and
\begin{equation}\label{eq:WW}
\vec\bW =  \begin{pmatrix} \bW
\\[.2cm] \bX \end{pmatrix}
= \begin{pmatrix} \sum_k W_k + \sum_k c_k v_k
\\[.2cm] -\sum_k \ell_k \pun W_k + c_k z_k\end{pmatrix}=
\sum_k \bigl( \vec W_k + c_k\vec v_k \bigr).
\end{equation}
Note that from \eqref{eq:Av} the function $v_k$ satisfies
\begin{equation}\label{eq:vk}
\|v_k\|_{\dot H^1}\lesssim t^{-2},\quad
\|\partial_t v_k + \ell_k \partial_1 v_k\|_{L^\frac{10}3} \lesssim t^{-3}.
\end{equation}
The next lemma states that the function $\vec\bW$ defined above eliminates the 
first order of the soliton interactions of size $t^{-3}$, so that it is an approximate
solution to the multi-soliton problem at order $t^{-4^-}$.

\begin{lemma}[{\cite[Lemma 4.3]{MMwave2}}]\label{le:43}
Assume~\eqref{eq:bs1}.
Then, the function $\vec\bW$ satisfies on $I\times \R^5$ 
\begin{equation}\label{eq:WX}
\left\{\begin{aligned}
\partial_t\bW & = \bX - {\rm Mod}_{\bW} - \bR_{\bW},\\
\partial_t	\bX & 
	= \Delta \bW +|\bW|^{\frac 43} \bW - {\rm Mod}_{\bX}- \bR_{\bX},
\end{aligned}\right.
\end{equation}
where
\begin{align*}
{\rm Mod}_{\bW} &= \sum_k \Biggl( \frac{\dot\lambda_k}{\lambda_k} - \frac{a_k}{\lambda_k^{\frac 12}t^2} \Biggr) \Lambda_k W_k
+ \sum_k \dot {\by}_k \cdot \nabla W_k,\\
{\rm Mod}_{\bX} &= 
- \sum_k \Biggl( \frac{\dot\lambda_k}{\lambda_k} - \frac{a_k}{\lambda_k^{\frac 12}t^2} \Biggr) (\ell_k \cdot \nabla) \Lambda_k W_k 
- \sum_k ({\dot {\by}}_k \cdot \nabla) ( \ell_k \cdot \nabla) W_k,
\end{align*}
and, for all $0<\delta\leq \alpha/4$, it holds on $I\times\R^5$,
\begin{equation}\label{eq:Rr}
\begin{aligned}
\|\nabla \bR_\bW\|_{\dot H^1}+\|\bR_\bW\|_{\dot H^1} &\lesssim_\delta t^{-2} \sum_k\left(|\dot{\lambda}_k|+|\dot{\by}_k|\right),\\
\|\bR_\bX\|_{H^1} &\lesssim_\delta t^{-4+\delta}+t^{-2} \sum_k\left(|\dot{\lambda}_k|+|\dot{\by}_k|\right).
\end{aligned}
\end{equation}
Moreover, for all $0<\delta< 1$, it holds on $I\times\R^5$,
\begin{equation}\label{eq:bW}\begin{aligned}
&|\bW|
\lesssim_\delta \omega^{3}+t^{-1}\omega^{3-\delta} ,\quad
|\bX|\lesssim_\delta \omega^{4}+t^{-2}\omega^{3-\delta} ,\\
&|\nabla\bW - \sum_k \nabla W_k|
+|\bX+\sum_k \ell_k\cdot \nabla W_k|
\lesssim_\delta t^{-2} \omega^{3-\delta}.
\end{aligned}\end{equation}
\end{lemma}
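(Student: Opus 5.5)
The plan is a direct computation: differentiate $\bW$ and $\bX$ in time and compare with $\Delta \bW + |\bW|^{\frac43}\bW$. The leading part comes from the solitons, the correction terms $c_k\vec v_k$ cancel the order $t^{-3}$ interactions, and whatever remains is either a modulation term or a small error.

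\emph{Step 1: the soliton part.} Apply \eqref{eq:tg} to $\vec W_k=\vec\theta_k\vec W_{\ell_k}$. This gives
\[
\partial_t W_k = -\ell_k\pun W_k - \frac{\dot\lambda_k}{\lambda_k}\Lambda_k W_k - \dot\by_k\cdot\nabla W_k,
\]
and an analogous formula for $\partial_t(-\ell_k\pun W_k)$. Using $\Delta_{\ell_k}W_{\ell_k}+W_{\ell_k}^{7/3}=0$, the second one becomes
\[
\partial_t(-\ell_k\pun W_k)=\Delta W_k+f(W_k)+\frac{\dot\lambda_k}{\lambda_k}\ell_k\pun\Lambda_kW_k+(\dot\by_k\cdot\nabla)(\ell_k\pun W_k).
\]
Summing over $k$ and invoking Lemma \ref{le:in}, we replace $\sum_k f(W_k)$ by $f(\sum_kW_k)-t^{-3}\sum_k c_k|W_k|^{4/3}-\bR_\Sigma$.

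\emph{Step 2: the corrections.} Rescale the system \eqref{eq:vz} to $(v_k,z_k)$, treating $\lambda_k,\by_k$ first as frozen. Time derivatives falling on the parameters produce terms bounded by $(|\dot\lambda_k|+|\dot\by_k|)$ times the size of $v_k$, namely $t^{-2}$ in $\dot H^1$; these go into $\bR_\bW$ and $\bR_\bX$.
\begin{itemize}
\item In the first equation, the term $-\frac{\kappa_\ell}{2t^2}\Lambda W(\zeta_\ell)$, multiplied by $c_k$ and rescaled, gives $-\frac{c_k\epsilon_k\kappa_{\ell_k}}{2\lambda_k^{1/2}t^2}\Lambda_kW_k$. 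Since $a_k=-c_k\epsilon_k\kappa_{\ell_k}/2$, this is exactly the shift $\frac{a_k}{\lambda_k^{1/2}t^2}$ in ${\rm Mod}_\bW$.
\item In the second equation, the source $t^{-3}W^{4/3}(\zeta_\ell)$ cancels $-t^{-3}c_k|W_k|^{4/3}$, using the sign $\epsilon_k$ and the scaling.
\item The $t^{-2}\partial_1\Lambda W$ term pairs with $\frac{\dot\lambda_k}{\lambda_k}\ell_k\pun\Lambda_kW_k$ to produce the corresponding combination in ${\rm Mod}_\bX$.
\item The remainder $\mathcal E_\ell$ is controlled by \eqref{eq:AE} and gives $t^{-4+\delta}$ in $H^1$.
\end{itemize}

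\emph{Step 3: the nonlinear remainder.} We also need to expand
\[
|\bW|^{4/3}\bW = f\Bigl(\sum W_k\Bigr)+f'\Bigl(\sum W_k\Bigr)\sum c_kv_k+O\bigl(|v|^2\cdots\bigr).
\]
We then replace $f'(\sum_m W_m)$ by $f'(W_k)$ near each soliton. The difference is controlled by products $\omega_k^{a}\omega_m^{b}$, which the interaction estimates \eqref{eq:C2} reduce to at least $t^{-1}$ extra beyond $|v_k|\lesssim t^{-2}$, and so these terms are $O(t^{-4+\delta})$. The quadratic terms in $v$ are of order $t^{-4}$. Here $f''$ is only Hölder-type of exponent $1/3$ near zero, so this expansion must be handled with pointwise bounds using \eqref{eq:Av}. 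The pointwise bounds \eqref{eq:bW} follow directly from \eqref{eq:Av} and \eqref{eq:dW}.

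The main obstacle is Step 3. One has to show that the cross terms $(f'(\sum W_m)-f'(W_k))v_k$ and the non-smooth second order remainder are $O(t^{-4+\delta})$ in $H^1$, and not merely in $L^2$. For this I would split space into regions near each soliton and away from all of them, using the weak spatial decay $\langle\zeta\rangle^{-2+\delta}$ of $v_\ell$ in each region.
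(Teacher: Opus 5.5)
Your route is the paper's: the paper only says the lemma follows from Lemma~\ref{le:as}, \eqref{eq:vz} and \eqref{eq:an}. Your Steps~1--2 do that bookkeeping correctly: the $\frac{a_k}{\lambda_k^{1/2}t^2}$ shift in ${\rm Mod}_\bW$, the cancellation of $t^{-3}c_k|W_k|^{\frac43}$, and the pairing of the $t^{-2}\pun\Lambda W$ term with $\frac{\dot\lambda_k}{\lambda_k}\ell_k\pun\Lambda_kW_k$.

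\textbf{Parameter terms.} Your claim that these are ``$(|\dot\lambda_k|+|\dot\by_k|)$ times the size of $v_k$'' is not what the computation gives. The time argument of $v_k$ is $s=t/\lambda_k(t)$. With $X=(x-\by_k)/\lambda_k$, the $\dot\lambda_k$-term is therefore
\[
-\frac{\dot\lambda_k}{\lambda_k}\,\lambda_k^{-3}\bigl(3+s\partial_s+X\cdot\nabla\bigr)v_{\ell_k}(s,X),
\]
which is the space-time scaling generator.
\begin{itemize}
\item Since $v_{\ell}$ is transported at speed $\ell$, $\partial_s v_\ell\approx-\ell\pun v_\ell$ is as large as $\nabla v_\ell$. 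So \eqref{eq:Av} only gives $\|s\partial_s v_{\ell_k}\|_{\dot H^1}\lesssim t^{-1}$, and likewise for $X\cdot\nabla v_{\ell_k}$.
\item Bounding these pieces separately yields $\|\bR_\bW\|_{\dot H^1}\lesssim t^{-1}|\dot\lambda_k|$, a factor $t$ worse than \eqref{eq:Rr}. This matters, since $|\dot\lambda_k|\sim t^{-2}$.
\item The fix is to regroup $s\partial_s+X\cdot\nabla=sA_{\ell_k}+(X-\ell_k s e_1)\cdot\nabla$. The operator $A_{\ell_k}$ gains a factor $t^{-1}$ in \eqref{eq:Av}. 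The comoving variable $X-\ell_kse_1$ has size $\lesssim\langle\zeta_{\ell_k}(s,X)\rangle$ and is absorbed by the extra spatial decay of $\nabla v_{\ell_k}$.
\item The same regrouping, now using $A_{\ell_k}^2$ and $A_{\ell_k}\nabla$, is needed for the parameter terms of $\partial_t z_k$ that enter $\bR_\bX$.
\end{itemize}

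\textbf{Cross terms.} Your count for $c_k\bigl(f'(\tsum_l W_l)-f'(W_k)\bigr)v_k$ is off. A gain of $t^{-1}$ over $|v_k|\lesssim t^{-2}$ gives $t^{-3}$, not $t^{-4+\delta}$. Worse, a uniform bound $|v_k|\lesssim t^{-2}$ gives no gain at all near a soliton $m\neq k$, where $f'(\tsum_l W_l)-f'(W_k)\approx f'(W_m)$ is of order one. The correct mechanism has two parts:
\begin{itemize}
\item Near soliton $k$, the difference $f'(\tsum_l W_l)-f'(W_k)$ is $O(t^{-3})$.
\item Near soliton $m$, the spatial decay in \eqref{eq:Av} gives $|v_k|\lesssim t^{-1}(t+\omega_k^{-1})^{-1}\omega_k^{2-\delta}\lesssim t^{-4+\delta}$, because $\omega_k\sim t^{-1}$ there.
\end{itemize}
This is exactly where the $t^{-4+\delta}$ in \eqref{eq:Rr} comes from. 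Equivalently, use $|v_k|\lesssim t^{-2}\omega_k^{2-\delta}$ together with \eqref{eq:C2}, which gives a gain of $t^{-2+\delta}$.

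\textbf{The $H^1$ bound.} What makes your region splitting work is that near soliton $k$ one has $|\tsum_l W_l|\gtrsim|W_k|\gtrsim\omega_k^3$, because $W>0$. Hence $f''$ is Lipschitz there with constant $\lesssim|W_k|^{-2/3}$. Without this, the H\"older bound $|f''(a)-f''(b)|\lesssim|a-b|^{1/3}$ only gives $O(t^{-3})$ for $(f''(\tsum_l W_l)-f''(W_k))\nabla W_k\,v_k$ there. Similarly, the bound $|v|^{4/3}|\nabla W|$ for the gradient of the Taylor remainder only gives $O(t^{-8/3})$; the Lipschitz bound gives $|W_k|^{-2/3}v^2|\nabla W_k|\lesssim t^{-4}$.
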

The proof of this result is a consequence of Lemma \ref{le:as}, \eqref{eq:vz} and \eqref{eq:an}.

\begin{remark}\label{rk:fo}
The definitions of ${\rm Mod}_\bW$ and ${\rm Mod}_\bX$ say formally that at order $t^{-3}$,
\[
\frac{\dot\lambda_k}{\lambda_k} - \frac{a_k}{\lambda_k^{\frac 12}t^2}\approx 0,\qquad  \dot {\by}_k \approx 0.
\]
This will imply a modification of the scaling behavior $\lambda_k$ at order $t^{-1}$
with respect to the constant $\lambda_k^\infty$. See \eqref{eq:c3}.
\end{remark}

\subsection{Modulation around an approximate multi-soliton}
We define the counterpart of the pair $(v_k,z_k)$ with time-independent modulation.
Let
\begin{align*}
v_k^\infty(t,x) & = \frac 1{(\lambda_k^\infty)^3} v_{\ell_k}\left( \frac t{\lambda_k^\infty} , \frac {x-\by_k^\infty}{\lambda_k^\infty}\right),\\
z_k^\infty(t,x) & = \frac 1{(\lambda_k^\infty)^4} (\partial_tv_{\ell_k})\left( \frac t{\lambda_k^\infty} , \frac {x-\by_k^\infty}{\lambda_k^\infty}\right)
+ \frac{\kappa_{\ell_k}\epsilon_k}{2 (\lambda_k^\infty)^{\frac 12}t^2}\Lambda_k^\infty W_k^\infty(t,x)
\end{align*}
and 
\[
\vec v_k^{\, \infty} = \begin{pmatrix} v_k^\infty 
\\[.2cm]z_k^\infty \end{pmatrix}.
\]
We state a property of the refined approximate solution $\vec \bW$, which is a variant
of standard modulation arguments.

\begin{lemma}[{\cite[Lemma 4.4]{MMwave2}}]\label{le:dc} 
There exist $T_0\gg 1$ and $0<\omega_0\ll 1$ such that if $u(t)$ is a solution of~\eqref{wave} which satisfies for 
some time interval $I\subset[T_0,+\infty)$,
\begin{equation}\label{hyp:4}
\sup_{t\in I}
\biggl\|\vec u - \sum_{k} \left( \vec W_k^\infty + c_k\vec v_k^{\, \infty} \right) \biggr\|_{\dot H^1\times L^2}< \omega_0,
\end{equation}
then there exist $\cC^1$ functions $\lambda_k>0$, $\by_k$ on $I$ such that, 
the function $\vec \varepsilon(t)$ being defined by
\[
\vec \varepsilon=\begin{pmatrix}\varepsilon \\ \eta \end{pmatrix},\quad 
\vec u = \begin{pmatrix} u \\ \partial_t u \end{pmatrix} =
\vec \bW + \vec \varepsilon, 
\]
the following hold on $I$.
\begin{enumerate}[label=\emph{(\roman*)}]
\item{Orthogonality and smallness.} For $j=1,\ldots,5$, 
\begin{align}\label{eq:or}
&(\varepsilon,\Lambda_k W_k)_{\dot H_{\ell_k}^1}=
(\varepsilon,\partial_j W_k)_{\dot H_{\ell_k}^1}= 0,
\\
&|\lambda_k -\lambda_k^{\infty}|
+|\by_k -\by_k^\infty|
+\|\vec \varepsilon\, \|_{\dot H^1\times L^2}
\lesssim \biggl\|\vec u -\sum_{k} \left( \vec W_k^\infty + c_k\vec v_k^{\, \infty} \right) \biggr\|_{\dot H^1\times L^2}.
\label{bounds}
\end{align}
\item{Equation of $\vec \varepsilon$.} 
\begin{equation}\label{eq:ee}
\left\{\begin{aligned}
\partial_t \varepsilon & = \eta + {\rm Mod}_{\bW}+\bR_{\bW},\\
\partial_t \eta, & 
= \Delta \varepsilon +\left| \bW + \varepsilon\right|^{\frac 43} (\bW + \varepsilon)
- |\bW|^{\frac 43}\bW + {\rm Mod}_{\bX}+ \bR_{\bX} .
\end{aligned}\right.
\end{equation}
\item{Parameter estimates.}
\begin{equation}
\label{eq:ly}
\biggl|\frac {\dot \lambda_k}{\lambda_k} - \frac{a_k}{\lambda_k^{\frac 12}t^2}\biggr|
+|\dot {\by}_k|
\lesssim \|\vec\varepsilon\,\|_{\dot H^1\times L^2} +t^{-4}.
\end{equation}
\item{Unstable directions.} Let
\begin{equation}\label{eq:zm}
z_k^{\pm} = \big(\vec \varepsilon ,\vec {\tilde\theta}_k \vec Z_{\ell_k}^{\pm}\big)_{L^2},\quad
\mu_k = \frac{\sqrt{\lambda_0}}{\lambda_k}(1- \ell_k^2)^{\frac 12}.
\end{equation}
Then, for any $0<\delta<1$, 
\begin{equation}
\label{eq:zk}
\left| \frac d{dt} z_k^{\pm} \mp \mu_k z_k^{\pm} \right|
\lesssim_\delta \| \vec\varepsilon\, \|_{\dot H^1\times L^2}^2
+ t^{-1} \| \vec\varepsilon\,\|_{\dot H^1\times L^2} + t^{-4+\delta}.
\end{equation}
\end{enumerate}
\end{lemma}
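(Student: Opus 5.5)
The plan is to obtain the modulated decomposition from the implicit function theorem, and then derive the equations in (ii)–(iv) by direct computation from Lemma \ref{le:43} and the spectral identities of Lemma \ref{le:22}.

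\emph{Step 1: decomposition and orthogonality.} Consider the map
\[
(\vec u,\lambda_1,\ldots,\lambda_K,\by_1,\ldots,\by_K)\mapsto \Bigl( (u-\bW,\Lambda_k W_k)_{\dot H^1_{\ell_k}},\ (u-\bW,\partial_j W_k)_{\dot H^1_{\ell_k}}\Bigr)_{k,j}.
\]
Here $\bW$ is built from the parameters $(\lambda_k,\by_k)$, and $t\in I$ is fixed. At the point $\vec u=\sum_k(\vec W_k^\infty+c_k\vec v_k^{\,\infty})$, $\lambda_k=\lambda_k^\infty$, $\by_k=\by_k^\infty$, the map vanishes. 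Its differential in the parameters is block diagonal up to errors of two kinds: cross terms $(\nabla W_m,\nabla W_k)$ with $m\neq k$, which are $O(t^{-1})$ by \eqref{eq:C2}, and contributions of $v_k$, which are $O(t^{-2})$ by \eqref{eq:vk}. Each diagonal block is the Gram matrix of $\Lambda W_\ell,\partial_jW_\ell$ in $\dot H^1_\ell$, and this matrix is invertible. For $T_0$ large, the implicit function theorem therefore gives the parameters, uniformly in $t$, together with \eqref{eq:or} and \eqref{bounds}. Regularity in $t$ is standard: first regularize the data, then pass to the limit. This yields $C^1$ parameters.

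\emph{Step 2: equation of $\vec\varepsilon$ and parameter estimates.} Subtract \eqref{eq:WX} from the equation for $\vec u$; this gives \eqref{eq:ee} immediately. Next, differentiate the orthogonality relations in time. The derivative of $\varepsilon$ contributes $(\eta+{\rm Mod}_\bW+\bR_\bW,\Lambda_kW_k)_{\dot H^1_{\ell_k}}$. The derivative of $\Lambda_kW_k$ contributes $\ell_k$-transport terms together with $O(|\dot\lambda_k|+|\dot\by_k|)\|\varepsilon\|$. The transport term $-\ell_k\partial_1$ combines with $(\eta,\cdot)$ through the structure of $H_{\ell_k}$, and by \eqref{ZW} the resulting expression is $O(\|\vec\varepsilon\|)$. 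The ${\rm Mod}_\bW$ term produces the invertible Gram matrix applied to the vector $(\dot\lambda_k/\lambda_k-a_k\lambda_k^{-1/2}t^{-2},\dot\by_k)$, up to $O(t^{-1})$ cross errors. By \eqref{eq:Rr}, $\bR_\bW$ is bounded by $t^{-2}(|\dot\lambda|+|\dot\by|)$, so it can be absorbed. Inverting the linear system gives \eqref{eq:ly}. The $t^{-4}$ term comes from the mismatch between $a_kt^{-2}$ and the actual size of $\dot\lambda_k$, combined with $t^{-2}$ factors.

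\emph{Step 3: unstable directions.} Compute
\[
\tfrac{d}{dt}z_k^\pm=(\partial_t\vec\varepsilon,\vec Z_k^\pm)+(\vec\varepsilon,\partial_t\vec Z_k^\pm).
\]
Using \eqref{eq:th}, the time derivative of $\vec Z_k^\pm$ is $-\ell_k\partial_1\vec Z_k^\pm$ plus terms of size $(|\dot\lambda_k|+|\dot\by_k|)\|\vec\varepsilon\|\lesssim t^{-2}\|\vec\varepsilon\|+\|\vec\varepsilon\|^2$. For $\partial_t\vec\varepsilon$, linearize the nonlinearity around $W_k$ near soliton $k$. This is valid because of the exponential localization \eqref{eq:dZ}: differences $f'(\bW)-f'(W_k)$ paired with $\zeta_k$ are $O(t^{-1})$ by \eqref{eq:C3} and \eqref{eq:bW}, and the quadratic remainder is $O(\|\vec\varepsilon\|^2)$ by Sobolev. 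The linear part is then $(-JH\ldots)$, and after transposition and \eqref{Zpm} with the scaling $\lambda_k$ it gives exactly $\pm\mu_kz_k^\pm$. The modulation terms vanish at leading order by \eqref{oZ}, with corrections bounded by $(t^{-2}+\|\vec\varepsilon\|+t^{-4})\cdot t^{-1}$ or smaller. Finally, $\bR_\bX$ contributes $t^{-4+\delta}$ plus absorbable terms.

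I expect Step 3 to be the main obstacle, specifically the bookkeeping needed to show that every term other than $\pm\mu_k z_k^\pm$ is genuinely at most $t^{-1}\|\vec\varepsilon\|$. In particular, the ${\rm Mod}$ terms are only $O(t^{-2})$ a priori and not small relative to $\|\vec\varepsilon\|$. The key is that they pair to zero against $\vec Z_k^\pm$ via \eqref{oZ}, while their pairings with $\vec Z_m^\pm$ for $m\neq k$ are small by \eqref{eq:C3}.
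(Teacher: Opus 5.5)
Your proposal is correct and follows essentially the route the paper indicates (the lemma is quoted from \cite{MMwave2}, with the remark that \eqref{eq:ly} and \eqref{eq:zk} follow from \eqref{eq:ee} and \eqref{eq:or} by projection and Lemma~\ref{le:22}): implicit-function-theorem modulation, subtraction of \eqref{eq:WX}, then projection onto the kernel directions and onto $\vec Z_k^\pm$. One correction to your closing remark: by your own \eqref{eq:ly} the coefficients in ${\rm Mod}_{\bW}$, ${\rm Mod}_{\bX}$ are $O(\|\vec\varepsilon\|_{\cE}+t^{-4})$, not merely $O(t^{-2})$, and the cross pairings $(\vec Z_m^\Lambda,\vec Z_k^\pm)_{L^2}$, $m\neq k$, are $O(t^{-3})$ by \eqref{eq:C3}, so these terms are $O(t^{-3}\|\vec\varepsilon\|_{\cE}+t^{-7})$ --- whereas a bound of the form $t^{-2}\cdot t^{-1}$, as literally written in your Step~3, would not fit under the $t^{-4+\delta}$ budget of \eqref{eq:zk}.
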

\begin{remark}
The equation \eqref{eq:ee} of $\vec \varepsilon$, coupled with the orthogonality conditions
\eqref{eq:or} imply estimates \eqref{eq:ly} and \eqref{eq:zk} by projection and direct computations
using Lemma \ref{le:22}.
\end{remark}

\subsection{Notion of strong multi-soliton}

Now, we introduce a notion of \emph{strong multi-soliton}, following the properties that have been proved on the multi-solitons constructed in \cite{MMwave1,MMwave2} and also having in mind what we need on the sequel to 
obtain a classification result.
Let (see \eqref{eq:ak})
\begin{equation}\label{eq:dk}
d_k = -\frac{a_k}{(\lambda_k^\infty)^\frac12}=\frac {c_k \kappa_{\ell_k} \epsilon_k}{2(\lambda_k^\infty)^\frac12}.
\end{equation}

\begin{definition}\label{SMS}
Let $K\geq 2$. For $k\in \{1,\ldots,K\}$, let $\lambda_k^\infty>0$, $\by_k^\infty\in \R^5$, 
$\epsilon_k=\pm 1$ and $\ell_k\in (-1,1)$ with $\ell_k\neq \ell_{m}$ for $k\neq m$.
A solution $(\MS,\partial_t \MS)\in \cC([T,+\infty),\dot H^1\times L^2)$ of \eqref{wave} 
for a certain $T>0$, is called
a \emph{strong multi-soliton} if there exist 
 $\cC^1$ functions $\lambda_k:[T,+\infty)\to(0,+\infty)$,
$\by_k:[T,+\infty)\to\R^5$ such that
\begin{enumerate}[label=\emph{(\roman*)}]
\item Parameter estimates. For any $\delta>0$,
\begin{equation}\label{eq:c3}
\begin{aligned}
\left|\lambda_k(t)-\lambda_k^\infty \left(1+\frac{d_k}{t}\right)\right|
+|\by_k(t)-\by_k^\infty|
& \lesssim_\delta t^{-2+\delta},\\
\left|\frac{\dot\lambda_k }{\lambda_k}(t)+\frac{d_k}{t^2}\right|+|\dot\by_k(t)|
&\lesssim_\delta t^{-3+\delta} .
\end{aligned}
\end{equation}
\item Refined estimates.  
\begin{equation}\label{eq:c2}
\left\|\nabla_{t,x} \left(\MS - \sum_k W_k \right) (t)\right\|_{L^2} \lesssim t^{-2}.
\end{equation}
\item Regularity. It holds
$\nabla_{t,x}\MS\in\cC([T,+\infty),H^1)$ and 
\begin{equation}\label{eq:c4}
\left\|\nabla_{t,x} \left(\MS - \sum_k W_k \right) (t)\right\|_{H^1} \lesssim t^{-\frac32}.
\end{equation}
\end{enumerate}
\end{definition}
\begin{remark}
Of course \eqref{eq:c3} and \eqref{eq:c2} imply that, for all $t$ large,
\begin{equation}\label{eq:c1}
\left\|\nabla_{t,x} \left(\MS - \sum_k W_k^\infty \right) (t)\right\|_{L^2} \lesssim t^{-1},
\end{equation}
but \eqref{eq:c2} is a more precise estimate, saying that modulating the solitons, 
the solution $S$ is close to a sum of solitons at order $t^{-2}$.
This means that the first manifestation of the nonlinear interaction appears at order $t^{-1}$ through the scaling parameter $\lambda_k$. Moreover, to go beyong the order $t^{-2}$, one needs the approximate solution $\WW$.
\end{remark}

\begin{proposition}[\cite{MMwave2}]\label{pr:vp}
Under the assumptions of Theorem \ref{th:0}, there exists
a strong multi-soliton $\MS$ of \eqref{wave} on some time interval $[T,+\infty)$.
Moreover, it satisfies
\begin{equation}\label{eq:oS}
\left\| ( S ,\partial_t S) (t) - \vec \bW(t)\right\|_\cE \lesssim t^{-3+\delta},
\end{equation}
where $\vec \bW$ is defined in \eqref{eq:WW} for the functions $(\lambda_k,\by_k)_k$ corresponding to $\MS$ in
Definition \ref{SMS}.
\end{proposition}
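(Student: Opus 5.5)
The plan follows the construction scheme of \cite{MMwave1,MMwave2}: a backward-in-time compactness argument around the refined approximate multi-soliton $\vec\bW$ of Lemma~\ref{le:43}, controlled by a localized energy functional and a topological argument for the unstable directions. Fix $T_n\to+\infty$ and $0<\delta\ll 1$. For each $n$, let $u_n$ be the solution of \eqref{wave} with final data
\[
\vec u_n(T_n)=\vec\bW(T_n)+\sum_k \alpha_{k,n}\,\vec Z_k^+(T_n).
\]
Here $\vec\bW(T_n)$ is built from the parameters $\lambda_k(T_n)=\lambda_k^\infty(1+d_k/T_n)$, $\by_k(T_n)=\by_k^\infty$, and the $\alpha_{k,n}$ are small real numbers, of size $T_n^{-3+\delta}$, to be chosen. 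After a small correction ensuring the orthogonality conditions \eqref{eq:or} at $t=T_n$, Lemma~\ref{le:dc} applies on a maximal interval $[T^*,T_n]$. On that interval I bootstrap the estimates
\[
\|\vec\varepsilon(t)\|_\cE\le t^{-3+\delta},\qquad |z_k^+(t)|\le t^{-3+\delta/2},\qquad
\bigl|\lambda_k(t)-\lambda_k^\infty(1+d_k/t)\bigr|+|\by_k(t)-\by_k^\infty|\le t^{-2+\delta}.
\]
The goal is to show that $T^*$ can be taken independent of $n$.

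\textbf{Energy step.} This is the core of the argument. I would use the functional
\[
\cH(t)=\int\Bigl(|\nabla\varepsilon|^2+\eta^2-2\bigl(F(\bW+\varepsilon)-F(\bW)-f(\bW)\varepsilon\bigr)\Bigr)+2\int\chi\,(\pun\varepsilon)\,\eta,
\]
with the cut-off $\chi$ of \eqref{defchiK}. Near each soliton we have $\chi=\ell_k$, so by Lemma~\ref{pr:22}, the orthogonality \eqref{eq:or} and a partition of unity, $\cH$ is coercive up to the modes $z_k^\pm$:
\[
\cH\ge\mu\|\vec\varepsilon\|_\cE^2-C\sum_k (z_k^\pm)^2 .
\]
Using \eqref{eq:ee}, \eqref{derchi}, \eqref{eq:Rr} and \eqref{eq:ly}, I expect
\[
\frac{d\cH}{dt}\ge -\frac{C\sigma}{t}\|\vec\varepsilon\|_\cE^2-Ct^{-4+\delta}\|\vec\varepsilon\|_\cE .
\]
The $\Omega(t)$ region gives a term with a favourable sign, or one small in $\sigma$, as in \cite[Section 4]{MMwave1}. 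Integrating backward from $T_n$, where $\vec\varepsilon(T_n)$ is of order $T_n^{-3+\delta}$, improves the bound to $\|\vec\varepsilon\|_\cE\lesssim t^{-3+\delta/2}$. This step is the main obstacle. The slow decay $\omega_k^3$ of $W$ makes the interaction and localization errors only polynomially small, so one has to check that every error term is either $o(t^{-1})\|\vec\varepsilon\|^2$ or sourced by $\bR_\bX$ at order $t^{-4+\delta}$. I would first follow \cite{MMwave2} and use the localized coercivity \eqref{eq:2.30} with weights $\varphi_\gamma$ to absorb the terms generated by $\nabla\chi$.

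\textbf{Modes and parameters.} By \eqref{eq:zk}, the stable modes $z_k^-$ decay forward, so integrating backward gives $|z_k^-|\lesssim t^{-3+\delta/2}$. For the unstable modes $z_k^+$, a standard Brouwer argument (exit at $T^*$ is transversal by \eqref{eq:zk}, and the exit map would be a retraction of the ball onto its boundary) yields a choice of $(\alpha_{k,n})_k$ for which $T^*$ is independent of $n$. For the parameters, \eqref{eq:ly} gives
\[
\frac{\dot\lambda_k}{\lambda_k}=\frac{a_k}{\lambda_k^{1/2}t^2}+O(t^{-3+\delta}),\qquad |\dot\by_k|\lesssim t^{-3+\delta}.
\]
Integrating from $T_n$, and using \eqref{eq:dk} to identify $a_k/(\lambda_k^\infty)^{1/2}=-d_k$, closes the parameter part of the bootstrap and yields \eqref{eq:c3}.

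\textbf{Passage to the limit and the remaining properties.} The uniform bounds give a weak limit of $\vec u_n(T^*)$. By local well-posedness and weak continuity of the flow in $\cE$, this limit generates a solution $S$ satisfying \eqref{eq:oS}. Since $\|\vec v_k\|_\cE\lesssim t^{-2}$ by \eqref{eq:vk}, estimate \eqref{eq:c2} follows from \eqref{eq:oS}. For \eqref{eq:c4}, I would rerun the energy argument on $\partial_j\vec\varepsilon$, $j=1,\ldots,5$, using the differentiated form of \eqref{eq:ee}. The source terms are controlled by the $H^1$ bounds in \eqref{eq:Rr} and by $\|\nabla\bR_\bW\|_{\dot H^1}$. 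The loss from the nonlinearity $f''$, which is only Hölder continuous, would be handled via Hardy's inequality \eqref{z2}, giving the weaker rate $t^{-3/2}$. Finally, $\nabla_{t,x}(v_k,z_k)$ is $O(t^{-2})$ in $H^1$ by \eqref{eq:Av}, so \eqref{eq:c4} follows.
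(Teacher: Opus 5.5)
Your overall scheme (backward compactness from $T_n$ around $\vec\bW$, the localized functional $\cH$, a topological argument on the exponential modes, integration of \eqref{eq:ly}) is the construction of \cite{MMwave2}, which the paper invokes by citation. However, your treatment of the exponential modes is reversed, and that step fails as written.

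\textbf{The exponential modes.} By \eqref{eq:zk}, $\frac{d}{dt}z_k^-=-\mu_k z_k^-+O\bigl(t^{-4+\delta}+t^{-1}\|\vec\varepsilon\|_\cE+\|\vec\varepsilon\|_\cE^2\bigr)$. So for $t<T_n$,
\[
z_k^-(t)=e^{\beta_k(T_n)-\beta_k(t)}z_k^-(T_n)-\int_t^{T_n}e^{\beta_k(s)-\beta_k(t)}\Bigl(\frac{d}{ds}z_k^-+\mu_k z_k^-\Bigr)(s)\,ds .
\]
The factor $e^{\beta_k(s)-\beta_k(t)}\approx e^{\mu_k(s-t)}$ amplifies both the data and the source by up to $e^{\mu_k(T_n-t)}$. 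A mode that decays forward in time grows backward, so backward integration gives no bound on $z_k^-$ that is uniform in $n$.

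Conversely, $z_k^+$ is the backward-stable mode. Integrating $\frac{d}{dt}(e^{-\beta_k}z_k^+)$ on $[t,T_n]$ gives $|z_k^+(t)|\lesssim |z_k^+(T_n)|+t^{-4+\delta}$ directly. On the boundary of your bootstrap region, $\frac{d}{dt}\bigl(t^{6-\delta}(z_k^+)^2\bigr)>0$, so the backward flow enters the region rather than leaving it. Trajectories therefore never exit through the $z_k^+$ boundary, and the exit map in your Brouwer argument does not exist. Meanwhile the $z_k^-$ components, which genuinely escape, are left uncontrolled. Your final data $\sum_k\alpha_{k,n}\vec Z_k^+(T_n)$ tune the wrong coordinates.

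The correct assignment is the one used in \cite{MMwave2}, and in Section~\ref{S:5} here, where $\ba^+$ is treated as stable and the topological argument is run on $\ba_J^-$:
\begin{enumerate}[label=(\alph*)]
\item Perturb the data at $T_n$ along directions with nonzero pairing with $\vec Z_k^-$ and zero pairing with $\vec Z_k^+$, for instance $J\vec Z_k^+$. This works because $(J\vec g,\vec g)_{L^2}=0$ and by \eqref{eq:PM}.
\item Run the Brouwer argument on $(z_k^-)_k$. Their exit is strictly outgoing backward in time, since $\frac{d}{dt}\bigl(t^{6-\delta}\sum_k(z_k^-)^2\bigr)<0$ on the boundary for $t$ large.
\item Obtain $z_k^+$ by direct backward integration of \eqref{eq:zk}.
\end{enumerate}

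\textbf{The energy inequality.} A milder inaccuracy: your claimed bound on $\frac{d}{dt}\cH$ is not correct. The $\Omega(t)$ contribution is
\[
\bh_1=-\frac{1}{(1-2\sigma)t}\int_\Omega\Bigl(\eta^2+(\pun\varepsilon)^2+2\frac{x_1}{t}\pun\varepsilon\,\eta-|\overline\nabla\varepsilon|^2\Bigr).
\]
This has the unfavourable sign for backward integration and full size $t^{-1}\Nint$, not $O(\sigma t^{-1})$. What actually holds is \eqref{time}. It does not break the construction, but you must use the integrating factor. Integrate $-\frac{d}{dt}(t^{1+\delta}\cH)$ on $[t,T_n]$, using \eqref{coer} to pass from $\Nint$ to $\cH$ at the cost of $\frac{C}{t}\sum_k(z_k^\pm)^2$. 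Since the sources are $O(t^{-7+2\delta})$, this still gives $\cH(t)\lesssim (T_n/t)^{1+\delta}\cH(T_n)+t^{-6+2\delta}\lesssim t^{-6+2\delta}$, which is what you need to close the bootstrap on $\|\vec\varepsilon\|_\cE$.
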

\begin{proof}
We check that the multi-soliton constructed in~\cite{MMwave2} satisfies the properties
(i)--(iii) of Definition \ref{SMS}.
Properties (i)-(ii) are consequences of \cite[Proposition 5.1]{MMwave2},
the definitions of $\mathbf{W}$, $\mathbf{X}$,
$\vec a_k$ in \cite[(4.8)--(4.10)]{MMwave2}, the estimates in \cite[Lemma 3.1]{MMwave2}
and the refined estimates on $\dot \lambda_k$ and $\dot \by_k$ from \cite[(4.21)]{MMwave2}.
The value of the parameter  $d_k$ in \eqref{eq:dk} is deduced
from \cite{MMwave2}, see (4.21), the definition of $a_k$ page 1306,
the definition of $\kappa_\ell$ in (3.1) and the definition of $c_k$ in Lemma 4.1.
Formally, it can also be derived from Remark \ref{rk:fo}.

Property (iii) follows from \cite[Section 5.4]{MMwave2}.
We point out that the estimates given in \cite[(4.27)]{MMwave1} would not be enough for our needs.
\end{proof}

\subsection{Pre-classification of multi-solitons}\label{s:3.5}

Here, we prove a preliminary classification result for multi-solitons.
Let $S$ be a strong multi-soliton as given by Proposition \ref{pr:vp}.

\begin{proposition}\label{pr:2}
Under the assumptions of Theorem \ref{th:0}, if 
$u(t)$ is a solution of \eqref{wave} on some time interval $[T,+\infty)$ such that for some $\delta'>0$,
for all $t>T$,
\begin{equation}\label{eq:py}
\left\|\nabla_{t,x} \left(u - \sum_k W_k^\infty \right) (t)\right\|_{L^2} \lesssim t^{-\frac12-\delta'},
\end{equation}
then for all $\delta>0$,
\begin{equation}\label{eq:cy}
\left\|\nabla_{t,x} \left(u - S \right) (t)\right\|_{L^2} \lesssim_\delta t^{-2+\delta}.
\end{equation}
Moreover, $u$ is a strong multi-soliton.
\end{proposition}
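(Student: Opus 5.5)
We will first establish that $u$ satisfies the estimates of a strong multi-soliton, and then compare $u$ with $S$. For $t$ large, assumption \eqref{eq:py} together with $\|c_k\vec v_k^{\,\infty}\|_\cE\lesssim t^{-2}$ (from \eqref{eq:vk}) gives hypothesis \eqref{hyp:4} of Lemma \ref{le:dc} on $I=[T_0,+\infty)$. Lemma \ref{le:dc} then yields modulation parameters $\lambda_k(t),\by_k(t)$ and a decomposition $\vec u=\vec\bW+\vec\varepsilon$. The estimate \eqref{bounds} gives $\|\vec\varepsilon\|_\cE\lesssim t^{-\frac12-\delta'}$, and also $|\lambda_k-\lambda_k^\infty|+|\by_k-\by_k^\infty|\lesssim t^{-\frac12-\delta'}$. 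The core of the argument is then a bootstrap improving the decay of $\|\vec\varepsilon\|_\cE$ from $t^{-\frac12-\delta'}$ to $t^{-3+\delta}$.

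For this improvement we will use the energy functional of \cite{MMwave2}. It is built from the localized energy and the momentum with the cut-off $\chi$ of \eqref{defchiK}, roughly
\[
\cH=\int\Bigl(|\nabla\varepsilon|^2+\eta^2-2\bigl(F(\bW+\varepsilon)-F(\bW)-f(\bW)\varepsilon\bigr)\Bigr)+2\int\chi\,\pun\varepsilon\,\eta .
\]
By Lemma \ref{pr:22} and the orthogonality conditions \eqref{eq:or}, it is coercive up to the unstable directions $z_k^\pm$:
\[
\cH\gtrsim\|\vec\varepsilon\|_\cE^2-C\sum_k\bigl((z_k^+)^2+(z_k^-)^2\bigr).
\]
Its time derivative satisfies
\[
\Bigl|\frac d{dt}\cH\Bigr|\lesssim t^{-1}\|\vec\varepsilon\|_\cE^2+\|\vec\varepsilon\|_\cE\,t^{-4+\delta}.
\]
Here the $t^{-1}$ factor comes from $\partial_t\chi$ and $\nabla\chi$ in $\Omega(t)$, multiplied by a small constant depending on $\sigma$, and the source terms come from \eqref{eq:Rr} and \eqref{eq:ly}.

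The $t^{-1}\|\vec\varepsilon\|^2$ term is the main obstacle. Integrating from $t$ to $+\infty$ (where $\cH\to0$ by the a priori decay), a bound $\|\vec\varepsilon\|\lesssim t^{-a}$ gives $\int_t^\infty s^{-1-2a}\,ds\sim t^{-2a}$, which is not an improvement unless the constant is small. We will handle this as in \cite{MMwave2}, using the smallness in $\sigma$ of the $\chi$-derivative contributions and the localized coercivity \eqref{eq:2.30} to absorb them. This yields $\|\vec\varepsilon\|^2\lesssim \epsilon\sup_{s\ge t}(s/t)^{\ldots}\|\vec\varepsilon(s)\|^2+\ldots$, closing with a rate $t^{-3+\delta}$.

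The unstable components are treated with \eqref{eq:zk}. For $z_k^+$, integrating from $+\infty$ using the a priori decay gives
\[
|z_k^+(t)|\lesssim\int_t^\infty e^{-\mu_k(s-t)}\bigl(\|\vec\varepsilon\|^2+s^{-1}\|\vec\varepsilon\|+s^{-4+\delta}\bigr)\,ds .
\]
For $z_k^-$, the a priori decay $t^{-\frac12-\delta'}$ is exactly what ensures $\|\vec\varepsilon\|^2\ll t^{-1}$, so the quadratic terms in $d\cH/dt$ and in \eqref{eq:zk} are integrable and the bootstrap can start. Iterating a finite number of times improves the decay rate until $\|\vec\varepsilon\|_\cE\lesssim t^{-3+\delta}$.

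Next we recover the parameter estimates. With $\|\vec\varepsilon\|_\cE\lesssim t^{-3+\delta}$, \eqref{eq:ly} gives
\[
\Bigl|\frac{\dot\lambda_k}{\lambda_k}-\frac{a_k}{\lambda_k^{1/2}t^2}\Bigr|+|\dot\by_k|\lesssim t^{-3+\delta}.
\]
Integrating from $+\infty$, and using $\lambda_k\to\lambda_k^\infty$, $\by_k\to\by_k^\infty$ from \eqref{bounds}, gives \eqref{eq:c3} with $d_k$ as in \eqref{eq:dk}. Together with the bound $\|c_kv_k\|\lesssim t^{-2}$, this gives \eqref{eq:c2}. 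Applying the same argument to $S$ (Proposition \ref{pr:vp}), we get for the parameters $(\lambda_k^S,\by_k^S)$ of $S$ and $(\lambda_k^u,\by_k^u)$ of $u$:
\[
|\lambda_k^u-\lambda_k^S|+|\by_k^u-\by_k^S|\lesssim t^{-2+\delta}.
\]
The two refined approximate solutions $\vec\bW$ then differ by $\lesssim t^{-2+\delta}$ in $\cE$, since the profiles $W_k$ are Lipschitz in the parameters. This proves \eqref{eq:cy}.

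Finally, the regularity \eqref{eq:c4} is obtained as in \cite[Section 5.4]{MMwave2}. We will apply the same energy argument to the differentiated equation for $\partial_j\vec\varepsilon$, which is possible since the $H^1$ regularity propagates by local theory. This yields $\|\nabla\vec\varepsilon\|_\cE\lesssim t^{-3/2}$, and therefore \eqref{eq:c4}. Hence $u$ is a strong multi-soliton.
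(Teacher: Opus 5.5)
Your overall architecture matches the paper's: modulation via Lemma~\ref{le:dc}, the functional $\cH$ with cut-off $\chi$, coercivity modulo $z_k^\pm$, $z_k^+$ integrated from $+\infty$, a finite iteration up to $\|\vec\varepsilon\|_\cE\lesssim t^{-3+\delta}$, integration of \eqref{eq:ly}, and the triangle inequality with $\vec S$. However, the step you correctly flag as the main obstacle is resolved by a mechanism that does not exist.

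The $\chi$-contributions are not small in $\sigma$. On $\Omega(t)$ one has $\partial_1\chi=\frac1{(1-2\sigma)t}$ and $\partial_t\chi=-\frac{x_1}{(1-2\sigma)t^2}$. The resulting term $\bh_1$ is therefore of size $\frac1t\cN_\Omega$ with a coefficient of order one. Shrinking $\sigma$ only brings this coefficient down to $1+C\sigma$, never below $1$. Now suppose you only have a two-sided bound $|\frac d{dt}\cH|\le Kt^{-1}\|\vec\varepsilon\|_\cE^2+\cdots$ together with the global coercivity constant $\mu$. Then all you can say is that $t^{K/\mu}\cH$ is nondecreasing up to source terms. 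Integrating from $+\infty$ would need $\|\vec\varepsilon\|_\cE^2=o(t^{-K/\mu})$ with $K/\mu$ not close to $1$, whereas \eqref{eq:py} only gives $O(t^{-1-2\delta'})$ with $\delta'>0$ arbitrarily small. No absorption of the form $\epsilon\sup_{s\ge t}(\cdots)$ is available.

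The missing ingredient is the one-sided inequality with sharp constant \eqref{time}:
$-\frac d{dt}\cH\le\frac{1+\delta}t\cH+Ct^{-7+2\delta}+\frac Ct\sum_k((z_k^-)^2+(z_k^+)^2)$. It is obtained as follows.
\begin{itemize}
\item First, $-\bh_1\le\frac{1+C\sigma}{(1-2\sigma)t}\cN_\Omega$, using $|\chi|<\overline\ell<1$ and $|\frac{x_1}t-\chi|\lesssim\sigma$ on $\Omega$.
\item Second, \eqref{lF} gives $\cN_\Omega\le\cH+C\sum_k((z_k^-)^2+(z_k^+)^2)+o(\|\vec\varepsilon\|_\cE^2)$ with constant exactly $1$ in front of $\cH$. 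No coercivity constant is lost because $\bh_1$ lives on $\Omega$, where $\bW$ is small.
\item Third, $\sigma$ is chosen depending on $\delta$.
\end{itemize}
Multiplying by $t^{1+\delta}$ gives $-(t^{1+\delta}\cH)'\lesssim t^{-6+3\delta}+t^{\delta}\sum_k((z_k^-)^2+(z_k^+)^2)$. This is integrated on $[t,+\infty)$ using $t^{1+\delta}\cH(t)\to0$.

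That limit, valid for $\delta<2\delta'$, is precisely where the hypothesis $t^{-\frac12-\delta'}$ enters; it is not about integrability of quadratic terms, as you suggest. The same bound $\|\vec\varepsilon\|_\cE\lesssim t^{-\frac12}$ also turns $\|\vec\varepsilon\|_\cE^2+t^{-1}\|\vec\varepsilon\|_\cE$ into $t^{-\frac12}\|\vec\varepsilon\|_\cE$ in \eqref{eq:zk}. Meanwhile $z_k^-$ is integrated forward from $T$, leaving a harmless $e^{-\beta_k}$. With these pieces, the induction of Lemma~\ref{le:tc} closes as you sketch.

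Separately, your justification of \eqref{eq:c4} (``$H^1$ regularity propagates by local theory'') presupposes $\nabla_{t,x}u\in H^1$ at some time, which is not among the hypotheses. The paper instead defers this point to \cite[Section 5.4]{MMwave2}.
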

\begin{remark}
Proposition \ref{pr:2} says that any mild multi-soliton (in the sense that \eqref{eq:py} holds),
is a strong multi-soliton.
This result is not proved in \cite{MMwave2}, but its proof will be deduced from the estimates established in \cite{MMwave2}
combined with a simple technical lemma (Lemma \ref{le:tc} below).
\end{remark}
\begin{proof}
We use the framework of the construction of the multi-solitons in \cite{MMwave2},
except that we cannot use the sharp bootstrap estimate \cite[(5.8)]{MMwave2} in the present proof.
Indeed, we deal with a multi-soliton in the sense of \eqref{eq:py}, not with a construction proof
where suitable estimates are proved by bootstraping estimates.
Thus, we will rewrite parts of the proof in \cite[\S 5]{MMwave2}, using the same notation 
and similar estimates, but not using any bootstrap  estimates other than
\begin{equation}\label{eq:bs}
|\lambda_k -\lambda_k^{\infty}|
+|\by_k -\by_k^\infty|
+\|\vec \varepsilon \,\|_{\dot H^1\times L^2} \lesssim t^{-\frac 12 - \delta'},
\end{equation}
for some $\delta'>0$, which is deduced from \eqref{eq:vk}, \eqref{bounds} and \eqref{eq:cy}.
Moreover, by \eqref{eq:ly} and the definitions of ${\rm Mod}_\WW$ and ${\rm Mod}_\XX$, we have
\begin{equation}\label{Idem}
\begin{aligned}
|\partial^\beta{{\rm Mod}}_{\WW}(t)|&\lesssim 
\left(\left\|\vec \varepsilon(t)\right\|_{\dot H^1\times L^2} +t^{-4}\right)
\sum_k \omega_k^{3+|\beta|},\\
|\partial^\beta{{\rm Mod}}_{\XX}(t)|&\lesssim 
\left(\left\|\vec \varepsilon(t)\right\|_{\dot H^1\times L^2} +t^{-4}\right)
\sum_k \omega_k^{4+|\beta|}.
\end{aligned}
\end{equation}
The above estimates will replace \cite[(5.10)]{MMwave2}.
Define the energy functional
\[
\cH = \int |\nabla\varepsilon|^2+|\eta|^2 + 2 \chi (\partial_1 \varepsilon ) \eta
- 2 (F (\WW +\varepsilon )-F (\WW )-f (\WW )\varepsilon )
\]
and the quantities
\begin{align*}
\mathcal{N}_\Omega & = \int_\Omega \left(|\nabla \varepsilon|^2+\eta^2+2 \chi (\pun \varepsilon) \eta \right),\\
\mathcal{N}_{\Omega^C} & = \int_{\Omega^C} \left(|\nabla \varepsilon|^2+\eta^2\right).
\end{align*}
Note that, since $|\chi|<\overline \ell$,
\begin{equation}\label{nintee} 
\begin{aligned}
\Nint & = \overline \ell \int_{\Omega} \left|\frac {\chi}{\overline \ell} \pun \varepsilon + \eta\right|^2 
+\int_{\Omega} |\overline \nabla \varepsilon|^2
+ \int_{\Omega} \left( 1- \frac {\chi^2}{\overline \ell} \right) (\pun \varepsilon)^2 
+ (1-\overline \ell) \int \eta^2\\
 & \geq \overline \ell \int_{\Omega} \left|\frac {\chi}{\overline \ell} \pun \varepsilon + \eta\right|^2 
 + (1-\overline \ell) \int_{\Omega}\left( |\nabla \varepsilon|^2 + \eta^2\right).
\end{aligned}
\end{equation}
We replace \cite[Lemma 5.4]{MMwave2} concerning the time derivative of the functional $\cH$ by the following result.

\begin{lemma}\label{mainprop}
Assume \eqref{eq:bs}.
For all $0<\delta<\frac 1{10}$,
there exists $C,\mu>0$ such that, for all $t$ sufficiently large, the following hold.
\begin{enumerate}[label=\emph{(\roman*)}]
\item\emph{Bound.}
\begin{equation}\label{boun}
|\cH(t)| \leq C {\|\vec \varepsilon(t)\|_{\dot H^1\times L^2}^2}.
\end{equation}
\item\emph{Coercivity.}
\begin{equation}\label{coer}
\cH(t) \geq \mu\|\vec \varepsilon(t)\|_{\dot H^1\times L^2}^2 - C \sum_k \left( (z_k^-)^2+(z_k^+)^2 \right).
\end{equation}
\item\emph{Time variation.}
\begin{equation}\label{time}
- \frac d{dt}  \cH  \leq \frac{1+\delta}{t} \cH
+ C t^{-7+2\delta} + \frac Ct \sum_k \left( (z_k^-)^2+(z_k^+)^2 \right).
\end{equation}
\end{enumerate}
\end{lemma}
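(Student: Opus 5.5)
The proof follows the scheme of \cite[Lemma 5.4]{MMwave2}, with \eqref{eq:bs} and \eqref{Idem} used wherever the construction relied on the sharp bootstrap \cite[(5.8)]{MMwave2} or \cite[(5.10)]{MMwave2}. The three items are handled in the order stated.

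\emph{Bound \eqref{boun}.} Since $|\chi|<\overline\ell<1$, the quadratic part of $\cH$ is bounded by $C\|\vec\varepsilon\|_{\cE}^2$. For the potential part, a Taylor expansion gives
\[
\bigl|F(\WW+\varepsilon)-F(\WW)-f(\WW)\varepsilon\bigr|\lesssim |\WW|^{\frac43}\varepsilon^2+|\varepsilon|^{\frac{10}3}.
\]
By Hölder and Sobolev \eqref{z1}, with $\WW$ bounded in $L^{10/3}$ thanks to \eqref{eq:bW}, this is $\lesssim\|\vec\varepsilon\|_\cE^2$, using also $\|\vec\varepsilon\|_\cE\ll1$ from \eqref{eq:bs}.

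\emph{Coercivity \eqref{coer}.} I would cut $\R^5$ with a partition of unity adapted to $\Omega(t)$ and $\Omega^C(t)$, and localize near each soliton with the weights $\varphi_\gamma$ recentered at $\ell_kte_1+\by_k$. On $\Omega(t)$, \eqref{nintee} gives positivity directly, since $f'(\WW)=O(t^{-4/3})$ there by \eqref{eq:bW}. On the component of $\Omega^C(t)$ containing soliton $k$, $\chi\equiv\ell_k$. Replacing $\WW$ by $W_k$ costs $O(t^{-1})\|\vec\varepsilon\|^2$, using \eqref{eq:bW} and \eqref{eq:C2}. The localized coercivity \eqref{eq:2.30} for $H_{\ell_k}$ then applies after scaling by $\theta_k$. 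The orthogonality conditions \eqref{eq:or} kill the $\Lambda W$ and $\partial_jW$ directions, and only $(z_k^\pm)^2$ remain. Summing the pieces gives \eqref{coer}. This is the standard argument of \cite[Section 4]{MMwave1}, and I expect it to carry over verbatim.

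\emph{Time variation \eqref{time}.} This is the main obstacle. I would differentiate $\cH$ using \eqref{eq:ee}, which splits $\frac{d}{dt}\cH$ into four groups.
\begin{enumerate}[label=(\alph*)]
\item The transport and $\chi$ terms: $\int(\partial_t\chi)\pun\varepsilon\,\eta$ and the terms from $\pun\chi$. These are supported in $\Omega(t)$, where by \eqref{derchi} they are bounded by $\frac{1}{(1-2\sigma)t}\Nint$ up to $O(t^{-1})$ weighted errors. This produces the coefficient $\frac{1+\delta}{t}\cH$ once $\sigma$ is chosen small in terms of $\delta$, and $\Nint$ is compared with $\cH$ via the coercivity step.
\item The nonlinear terms: $\int\partial_t\WW\,(f(\WW+\varepsilon)-f(\WW)-f'(\WW)\varepsilon)$. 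By \eqref{eq:WX}, $\partial_t\WW\approx-\ell_k\pun W_k$ near soliton $k$, and the leading part cancels against the $\chi\pun$ term because $\chi=\ell_k$ there. What remains is $O(t^{-1}\|\vec\varepsilon\|^2)$ from $\dot\lambda_k\sim t^{-2}$, the $v_k$ corrections, and $\|\vec\varepsilon\|^3$. With \eqref{eq:bs}, $\|\vec\varepsilon\|^3\le t^{-1/2-\delta'}\|\vec\varepsilon\|^2$, so this error is of lower order than $t^{-1}\cH$ and must be absorbed there. This absorption is precisely where the mild decay assumption enters.
\item The modulation terms $\mathrm{Mod}_\WW$ and $\mathrm{Mod}_\XX$. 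Each is paired with $L$-type expressions in $\varepsilon$. By \eqref{eq:ly}, \eqref{Idem} and the orthogonality \eqref{eq:or}, the leading pairings vanish, since $H_{\ell_k}\vec Z^\Lambda=0$ and $H_{\ell_k}\vec Z^{(j)}=0$. The remainders are $O(t^{-1}\|\vec\varepsilon\|^2+t^{-4}\|\vec\varepsilon\|)$.
\item The error terms $\bR_\WW$ and $\bR_\XX$. By \eqref{eq:Rr} together with \eqref{eq:ly}, these contribute
\[
\bigl(t^{-4+\delta}+t^{-2}\|\vec\varepsilon\|\bigr)\|\vec\varepsilon\|\le \tfrac{\delta}{t}\|\vec\varepsilon\|^2+Ct^{-7+2\delta}
\]
by Young's inequality.
\end{enumerate}
Finally, I would convert every $\frac Ct\|\vec\varepsilon\|^2$ into $\frac{C}{t}\bigl(\cH+\sum_k(z_k^\pm)^2\bigr)$ using \eqref{coer}. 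Keeping the coefficient of $\frac{1}{t}\cH$ at $1+\delta$ requires smallness from $\sigma$ and from \eqref{eq:bs}. The delicate bookkeeping is making sure no term of size exactly $t^{-1}\|\vec\varepsilon\|^2$ with a large constant survives outside the $\chi$-generated term; the one to watch is the cross-soliton interaction $f'(\WW)-f'(W_k)$ near soliton $k$. I expect it to be controlled by \eqref{eq:C2}, which bounds it by $t^{-4/3}$ pointwise. Otherwise the argument is a line-by-line rewriting of \cite[Lemma 5.4]{MMwave2}.
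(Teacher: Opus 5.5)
Your overall scheme is the paper's (that of \cite[Lemma 5.4]{MMwave2}), and items (i)--(ii) are fine. Item (iii) does not close as written, because your error accounting is incompatible with the sharp coefficient $\frac{1+\delta}{t}$, which is the whole content of \eqref{time}. With a coefficient $1+C$ instead, the integration from $+\infty$ in Lemma \ref{le:tc} would need $\|\vec\varepsilon\|_\cE^2=o(t^{-1-C})$, which \eqref{eq:bs} does not give.

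An error $\frac Ct\|\vec\varepsilon\|_\cE^2$ with $C$ not small becomes, through \eqref{coer}, a term $\frac{C}{\mu t}\cH$. Neither the choice of $\sigma$ nor \eqref{eq:bs} makes $C/\mu$ small. Yet in (b) and (c) you only claim remainders $O(t^{-1}\|\vec\varepsilon\|_\cE^2)$, and your last step performs exactly this conversion.

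What must be shown is that, apart from the $\chi$-transport term $\bh_1$ and the source $t^{-4+\delta}\|\vec\varepsilon\|_\cE$, every remaining term is $o(t^{-1})\|\vec\varepsilon\|_\cE^2$. The paper gets $O(t^{-\frac32}\|\vec\varepsilon\|_\cE^2+t^{-1}\|\vec\varepsilon\|_\cE^{\frac{10}3})$.
\begin{itemize}
\item The gain comes from $\chi\equiv\ell_k$ on $(\ell_k^-t,\ell_k^+t)\times\R^4$. The surviving weights, such as $(\ell_k-\chi)\pun W_k$ or $(\chi-\ell_k)\pun^2\Lambda_kW_k$, live at distance $\gtrsim\sigma t$ from the $k$-th soliton. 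There $\omega_k^4$ has $L^2$ norm $\lesssim t^{-\frac32}$ and $L^{\frac{10}3}$ norm $\lesssim t^{-\frac52}$, while $v_k$ and $z_k$ contribute $t^{-2}$.
\item For the modulation terms this is indispensable, because by \eqref{eq:ly} their coefficients are only $O(\|\vec\varepsilon\|_\cE+t^{-4})$. One needs ${\rm Mod}_\XX+\chi\pun{\rm Mod}_\WW$ and $-\Delta{\rm Mod}_\WW-\chi\pun{\rm Mod}_\XX-f'(\WW){\rm Mod}_\WW$ to be $O(t^{-\frac32}(t^{-4}+\|\vec\varepsilon\|_\cE))$ in $L^2$, resp.\ $L^{\frac{10}7}$. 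This follows from $L_{\ell_k}\Lambda W_{\ell_k}=L_{\ell_k}\partial_jW_{\ell_k}=0$ together with $\chi=\ell_k$ near soliton $k$, not from the orthogonality \eqref{eq:or}.
\item The $\chi$-term must be compared with $\cH$ through the sharp lower bound \eqref{lF}, with coefficient one in front of $\Nint$, after $-\bh_1\le\frac{1+C\sigma}{(1-2\sigma)t}\Nint$. Using \eqref{coer} instead only gives $\Nint\lesssim\cH+\sum_k\bigl((z_k^-)^2+(z_k^+)^2\bigr)$ with a non-sharp constant.
\end{itemize}

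Your treatment of the cubic term is also wrong. Under \eqref{eq:bs} you do not know $\dot\lambda_k\sim t^{-2}$; that is an output of this lemma combined with Lemma \ref{le:tc}. Estimate \eqref{eq:ly} only gives $|\dot\lambda_k|+|\dot\by_k|\lesssim t^{-2}+\|\vec\varepsilon\|_\cE$. So estimating $\int\partial_t\WW\,(f(\WW+\varepsilon)-f(\WW)-f'(\WW)\varepsilon)$ on its own genuinely produces an error $\|\vec\varepsilon\|_\cE^3$. But $\|\vec\varepsilon\|_\cE^3\le t^{-\frac12-\delta'}\|\vec\varepsilon\|_\cE^2$ is \emph{larger} than $t^{-1}\|\vec\varepsilon\|_\cE^2$ when $\delta'<\frac12$, and $\delta'>0$ is arbitrary in \eqref{eq:py}. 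So it cannot be absorbed into $\frac\delta t\cH$.

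The missing step is an exact cancellation. Since $\partial_t\varepsilon=\eta+{\rm Mod}_\WW+\bR_\WW$ and $\partial_t\WW=\XX-{\rm Mod}_\WW-\bR_\WW$, the ${\rm Mod}_\WW$ contributions of $\partial_t\varepsilon$ and of $\partial_t\WW$ combine into the linear quantity $2\int\varepsilon\,(-\Delta-f'(\WW)){\rm Mod}_\WW$, and similarly for $\bR_\WW$. Thus only $\XX$ multiplies $f(\WW+\varepsilon)-f(\WW)-f'(\WW)\varepsilon$. After that, superquadratic terms appear only in three ways:
\begin{itemize}
\item with small weights, namely $(\ell_k-\chi)\pun W_k$, $z_k$ or $\pun v_k$;
\item with the factor $\pun\chi=\frac1{(1-2\sigma)t}$ on $\Omega$, giving $t^{-1}\|\vec\varepsilon\|_\cE^{\frac{10}3}$;
\item as $\frac1t\|\vec\varepsilon\|_\cE^3$ through \eqref{lF}.
\end{itemize}
Hence this lemma uses only the smallness of $\|\vec\varepsilon\|_\cE$. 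The rate in \eqref{eq:bs} is needed later, in Lemma \ref{le:tc}, to ensure $t^{1+\delta}\cH(t)\to0$.
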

\begin{remark}
Recall that the functional $\cH$ is a key ingredient in the construction proof of \cite{MMwave1}.
\end{remark}
\begin{proof}
The estimate \eqref{boun} is the same as the one in \cite{MMwave2}.
We summarize how to obtain \eqref{coer} and \eqref{time} from the computations in the proof
of Lemma 5.4 in \cite{MMwave2}.

For \eqref{coer}, we only mention that the proof follows from the following analogue of \cite[(5.17)]{MMwave2}
\begin{equation}\label{lF} 
\cH(t) \geq 
 {\cN_{\Omega}}(t) + \mu {\cN_{\Omega^C}}(t)   -C t^{-4\gamma} \left\|\vec \varepsilon \,\right\|_{\dot H^1\times L^2}^2 
- C \left\|\vec \varepsilon\,\right\|_{\dot H^1\times L^2}^3
- C \sum_k \left( (z_k^-)^2+(z_k^+)^2 \right),
\end{equation}
($\gamma$ is any sufficiently small positive number),
the only difference being that after using \eqref{eq:2.30},
we do not insert the bootstrap estimate for the directions $(z_k^\pm)_k$
as was done for the term $\bbf_1$.
From \eqref{nintee}, we see that ${\cN_{\Omega}}(t) + \mu {\cN_{\Omega^C}}(t) \gtrsim \|\vec \varepsilon(t)\|_\cE^2$, and thus using also \eqref{eq:bs}, \eqref{lF} implies \eqref{coer}
for $t$ sufficiently large.

To prove \eqref{time}, we follow closely the proof in \cite{MMwave2}, by using only the weak bootstrap estimate \eqref{eq:bs}.
First, we differentiate $\cH$ and we decompose
\begin{align*}
\frac d{dt} \cH& = \int \partial_t \left(|\nabla\varepsilon|^2+|\eta|^2- 2 (F (\WW +\varepsilon )-F (\WW )-f (\WW )\varepsilon ) \right)\\&\quad
 + 2 \int \chi \partial_t( \pun \varepsilon \eta ) 
+ 2 \int (\partial_t \chi) \pun \varepsilon \eta = {\bg_1} + {\bg_2} + {\bg_3},
\end{align*}
\emph{Estimate on $\bg_1$.}
Differentiating and integrating by parts, we have
\begin{align*}
{\bg_1} & = 2 \int (\partial_t \varepsilon) \left( -\Delta \varepsilon - \left( f(\WW + \varepsilon) - f(\WW) \right)\right) +2 \int (\partial_t \eta) \eta\\
& \quad -2 \int (\partial_t \WW) \left( f(\WW + \varepsilon) - f(\WW)-f'(\WW) \varepsilon \right) .
\end{align*}
Using~\eqref{eq:WX} and~\eqref{eq:ee}
\begin{align*}
{\bg_1} & = 2 \int \left(-\Delta \varepsilon -f'(\WW)\varepsilon\right){\rm Mod}_\WW + 2 \int \eta{\rm Mod}_\XX \\
&\quad +2\int \left(-\Delta \varepsilon -f'(\WW)\varepsilon\right){\bR}_{\WW} + \int \eta {\bR}_\XX \\
& \quad -2 \int \XX \left( f(\WW + \varepsilon) - f(\WW)-f'(\WW) \varepsilon\right) 
 ={\bg_{1,1}} + {\bg_{1,2}}+ {\bg_{1,3}}.
\end{align*}
By integration by parts,
\[
{\bg_{1,1}} = 2 \int \varepsilon \left(-\Delta {\rm Mod}_\WW  -f'(\WW) {\rm Mod}_\WW \right)+ 2 \int \eta{\rm Mod}_\XX.
\]
By integration by parts, the Cauchy Schwarz inequality and then \eqref{eq:Rr}, \eqref{eq:ly},
\begin{align*}
|{\bg_{1,2}}| &\lesssim (\|\bR_\WW\|_{\dot H^1}+\|\bR_\XX\|_{L^2}) \|\vec \varepsilon\|_{\cE}\\
&\lesssim \left(t^{-4+\delta} + t^{-2} \sum_k \left(|\dot \lambda_k| + |\dot \by_k|\right) \right) \|\vec \varepsilon\|_{\cE}\\
&\lesssim \left(t^{-4+\delta} + t^{-2} \|\vec \varepsilon\|_{\cE} \right) \|\vec \varepsilon\|_{\cE}.
\end{align*}
By $\sum_k\|z_k\|_{L^{\frac {10}3}}\lesssim t^{-2}$ (see \eqref{eq:vk}),
\begin{align*}
& \left|{\bg_{1,3}} - 2 \int \left( \sum_k \ell_k \partial_1 W_k\right) \left( f(\WW + \varepsilon) - f(\WW)-f'(\WW) \varepsilon\right)\right|\\
& \quad \lesssim 
\int \left(\sum_k |z_k|\right) \left( |\varepsilon|^{\frac 73} +\varepsilon^2 |\WW|^{\frac 13}\right)
\lesssim \|z_k\|_{L^{\frac {10}3}} \|\varepsilon\|_{L^{\frac {10}3}}^2 \lesssim t^{-2} \|\vec \varepsilon\|_{\cE}^2.
\end{align*}
Thus,
\begin{align*} 
{\bg_1} & = 2 \int \varepsilon \left( -\Delta {\rm Mod}_{\WW} - f'(\WW) {\rm Mod}_{\WW} \right) +2 \int \eta{\rm Mod}_{\XX} \\
& \quad + 2 \int \left( \sum_k \ell_k \pun W_k\right) \left( f(\WW + \varepsilon) - f(\WW)-f'(\WW) \varepsilon\right)\\
& \quad + O(t^{-4+\delta} \|\vec \varepsilon\|_\cE)
+ O(t^{-2}  \|\vec \varepsilon\|_\cE^2).
\end{align*}

\emph{Estimate on $\bg_2$.} Using \eqref{eq:ee},
\begin{align*}
{\bg_2} & = 2 \int (\chi \partial_1 \partial_t \varepsilon) \eta+ 2 \int (\chi \partial_1 \varepsilon) \partial_t \eta \\
& = 2 \int (\chi \partial_1 \eta) \eta + 2 \int (\chi \partial_1 \varepsilon) \left[ \Delta \varepsilon + \left( f(\WW + \varepsilon) - f(\WW) \right) \right]
 \\& \quad+2 \int (\chi \partial_1 {\rm Mod}_{\WW}) \eta + 2 \int (\chi \partial_1 \varepsilon) {\rm Mod}_{\XX}
 +2 \int (\chi \partial_1 {\bR}_{\WW}) \eta + 2 \int (\chi \partial_1 \varepsilon) {\bR}_{\XX}
\end{align*}
By integration by parts and~\eqref{derchi}
\begin{equation*}
 2 \int (\chi \partial_1 \eta) \eta + 2 \int (\chi \partial_1 \varepsilon) \Delta \varepsilon 
   =- \frac 1{(1-2\sigma) t} \int_{\Omega}
 \left(\eta^2 + (\partial_1 \varepsilon)^2
 - |\overline \nabla \varepsilon|^2 \right).
\end{equation*}
Then,
\begin{align*}
 \int (\chi \partial_1 \varepsilon) \left( f(\WW + \varepsilon) - f(\WW) \varepsilon\right) 
 & = \int \chi \partial_1 ( F(\WW + \varepsilon) - F(\WW)-f(\WW) \varepsilon )\\
&\quad - \int \chi (\partial_1 \WW) \left( f(\WW + \varepsilon) - f(\WW)-f'(\WW) \varepsilon\right).
\end{align*}
Integrating by parts and using~\eqref{derchi},
\[ -\int \chi \partial_1 ( F(\WW + \varepsilon) - F(\WW)-f(\WW) \varepsilon )\\
 = \frac 1{(1-2\sigma) t} \int_{\Omega} \left( F(\WW + \varepsilon) - F(\WW)-f(\WW) \varepsilon\right).
\]
Thus, by~\eqref{eq:BS} and $\|\WW\|_{L^{\frac{10}3}(\Omega)}\lesssim t^{-\frac 32}$,
we obtain
\begin{align*}
\left| \int \chi \partial_1 ( F(\WW + \varepsilon) - F(\WW)-f(\WW) \varepsilon )\right|
&\lesssim t^{-1} \left(\|\varepsilon\|_{L^{\frac{10}3}}^{\frac{10}3}+\|\varepsilon\|_{L^{\frac{10}3}}^2
\|\WW\|_{L^{\frac{10}3}(\Omega)}^{\frac43}\right)\\
&\lesssim t^{-1} \|\vec \varepsilon\|_{\cE}^{\frac{10}3}+ t^{-3}\|\vec\varepsilon\|_{\cE}^2.
\end{align*}
By~\eqref{eq:Av} and~\eqref{eq:BS}
\begin{align*}
&\left| \int \chi \left(\partial_1 \WW -\partial_1 \sum W_k\right) \left( f(\WW + \varepsilon) - f(\WW)-f'(\WW) \varepsilon\right)\right|
\\ &\quad= \left| \int \chi \partial_1 \left(\sum c_k v_k\right) \left( f(\WW + \varepsilon) - f(\WW)-f'(\WW) \varepsilon\right)\right|
\\&\quad
\lesssim \left( \sum_k \|\partial_1 v_k\|_{L^\frac{10}3} \right) \|\varepsilon\|_{L^{\frac {10}3}}^2
\lesssim t^{-2}\|\vec\varepsilon\|_{\cE}^2.
\end{align*}
Lastly, integrating by parts, 
\[
 2 \int (\chi \partial_1 \varepsilon) {\rm Mod}_{\XX} 
 = -2 \int (\chi \varepsilon) \partial_1 {\rm Mod}_{\XX} + O\left(t^{-7}\right),
\]
since by~\eqref{eq:BS},~\eqref{derchi} and~\eqref{Idem}
\begin{align*}
\left|\int (\partial_1\chi)\varepsilon {\rm Mod}_{\XX}\right|
&\lesssim t^{-1}\left(t^{-4}+\|\vec \varepsilon\|_\cE \right) \int_{\Omega} |\varepsilon| \left( \sum_k |W_k|^{\frac 43}\right)\\
&\lesssim t^{-1}\left(t^{-4}+\|\vec \varepsilon\|_\cE \right) \|\varepsilon\|_{L^{\frac {10}3}} \left(\sum_k \|W_k\|_{L^{\frac{40}{21}}(\Omega)} \right)^{\frac 43}\\
&\lesssim t^{-\frac32}\left(t^{-4}+\|\vec\varepsilon\|_\cE\right)\|\vec\varepsilon\|_\cE.
\end{align*}
We complete the estimate of $\bg_2$ by observing that \eqref{eq:Rr} and \eqref{eq:ly} yield
\begin{align*}
\left| \int (\chi \partial_1 {\bR}_{\WW}) \eta \right|+\left|\int (\chi \partial_1 \varepsilon) {\bR}_{\XX}\right|
&\lesssim \|\eta\|_{L^2} \|\partial_1 {\bR}_{\WW}\|_{L^2} + \|\partial_1 \varepsilon\|_{L^2} \|{\bR}_{\XX}\|_{L^2}\\
&\lesssim \left( t^{-4+\delta}+t^{-2} \|\vec\varepsilon\|_\cE\right) \|\vec\varepsilon\|_\cE.
\end{align*}
Therefore,
\begin{align*}
{\bg_2} & = 
 -\frac 1{(1-2\sigma) t} \int_{\Omega}
 \left(\eta^2 + (\partial_1 \varepsilon)^2
 - |\overline \nabla \varepsilon|^2 \right)\nonumber\\
& \quad - 2\int \chi\left( \sum_k \partial_1 W_k\right)\left( f(\WW + \varepsilon) - f(\WW)-f'(\WW) \varepsilon\right)\nonumber\\
& \quad +2 \int (\chi \partial_1 {\rm Mod}_{\WW}) \eta - 2 \int \varepsilon \chi \partial_1 {\rm Mod}_{\XX}\\
&\quad + O(t^{-4+\delta} \|\vec \varepsilon\|_\cE )
+ O( t^{-\frac32}  \|\vec \varepsilon\|_\cE^2)
+ O\Big(t^{-1}  \|\vec \varepsilon\|_\cE^\frac{10}3\Big).
\end{align*}
\emph{Estimate on $\bg_3$.} By \eqref{derchi}, we observe that
\begin{align*}
{\bg_3} = -\frac 2{(1-2\sigma)t} \int_{\Omega} \frac{x_1}{t} (\partial_1 \varepsilon) \eta.
\end{align*}
Gathering the above estimates, we obtain
\begin{align*}
\frac d{dt} \cH
& = {\bh_1}+{\bh_2}+{\bh_3}+{\bh_4}\\
&\quad + O(t^{-4+\delta} \|\vec \varepsilon\|_\cE) + 
O(t^{-\frac32}  \|\vec \varepsilon\|_\cE^2 )
+ O\Big(t^{-1}  \|\vec \varepsilon\|_\cE^\frac{10}3\Big),
\end{align*}
where
\begin{align*}
{\bh_1} &= -\frac 1{(1-2\sigma) t} \int_{\Omega}
 \left(\eta^2 + (\partial_1 \varepsilon)^2 +2\frac{x_1}{t} (\partial_1 \varepsilon) \eta
 - |\overline \nabla \varepsilon|^2 \right), \\ 
{\bh_2}& = 2 \int \left(\sum_k \left(\ell_k - \chi\right) \partial_1 W_k\right)
\left( f(\WW + \varepsilon) - f(\WW)-f'(\WW) \varepsilon\right),\\
{\bh_3} &= 2 \int \eta\left({\rm Mod}_{\XX} + \chi \partial_1 {\rm Mod}_{\WW}\right),\\
{\bh_4} &= 2 \int \varepsilon \left( -\Delta {\rm Mod}_{\WW} - \chi \partial_1 {\rm Mod}_{\XX}- f'(\WW) {\rm Mod}_{\WW} \right).
\end{align*}
Compared to the proof in \cite[Lemma 5.4]{MMwave2}, we treat the term $\bh_1$ exactly in the same way,
proving that
\[
-\bh_1 \leq \frac{1+C\sigma}{(1-2\sigma)t}\cN_\Omega.
\]
Moreover, we check that
\[
|\bh_2| \lesssim 
\Big(\sum_k \left\|\left(\ell_k - \chi\right) \partial_1 W_k\right\|_{L^\frac{10}3}\Big)
\left(\|\bW\|_{L^\frac{10}3}^\frac13\|\varepsilon\|_{L^\frac{10}3}^2+\|\varepsilon\|_{L^\frac{10}3}^\frac73\right)
\lesssim t^{-\frac52} \|\vec\varepsilon\|_\cE^2.
\]
For the other two terms, we check using \eqref{Idem} and computations in \cite{MMwave2} that
\begin{multline*}
\| {\rm Mod}_{\XX} + \chi \pun  {\rm Mod}_{\WW} \|_{L^2} 
+\|-\Delta {\rm Mod}_{\WW} - \chi \partial_1  {\rm Mod}_{\XX} 
-f'(\bW)  {\rm Mod}_{\WW} \|_{L^\frac{10}7}\\
\lesssim t^{-\frac32} \left(t^{-4}+\|\vec \varepsilon\|_{\cE}\right).
\end{multline*}
Thus,
\begin{equation*}
|\bh_3|+ |\bh_4| \lesssim t^{-\frac32} \left(t^{-4}+\|\vec\varepsilon\|_\cE \right) \|\vec\varepsilon\|_\cE.
\end{equation*}

In conclusion, using also \eqref{lF}, we obtain
\[
- \frac d{dt}  \cH  \leq \frac{1+C\sigma}{t} \cH  
+ C t^{-4+\delta} \|\vec\varepsilon\|_\cE  + \frac Ct \sum_k \left( (z_k^-)^2+(z_k^+)^2 \right).
\]
Using \eqref{coer} again, this implies \eqref{time} taking $\sigma$ small enough depending of $\delta$.
\end{proof}
\begin{remark}
In \eqref{time}, the term $C t^{-7+2\delta}$ is due to the error of size $t^{-4+\delta}$ of the approximate solution which behaves as a source term for the equation of $\vec \varepsilon$.
\end{remark}

To finish the proof of Proposition \ref{pr:2}, we will use the following technical lemma.
For the lemma, we only need to assume that the functions $\mu_k$ defined in \eqref{eq:zm}
are continuous and satisfy
for any $t\geq T$,
\begin{equation}\label{eq:mu}
\mu_k(t) \geq \mu_0>0,
\end{equation}
which here is ensured by \eqref{eq:bs}.
Moreover, we set
\[
\beta_k(t) = \int_T^t \mu_k.
\]

\begin{lemma}\label{le:tc}
Let $K\geq 1$.
Assume that there exist constants $\delta\in(0,\frac 14)$, $C_0,C_1>0$, $T\geq 1$ and differentiable functions 
\begin{align*}
N : [T,+\infty) & \to [0,+\infty),\\
\cH : [T,+\infty) & \to \R,\\
z_k^\pm:[T,+\infty) & \to \R,
\end{align*}
such that, for all $t\geq T$,
\begin{align}
N^2 & \leq C_0 \cH + C_1 \sum_k \left( (z_k^-)^2+(z_k^+)^2 \right), \label{eq:m2} \\
|\cH| &\leq C_0 N^2,\label{eq:m3}\\
\left| \frac d{dt} z_k^{-} + \mu_k z_k^{-} \right| &\leq C_0 \left(  N^2 + t^{-1} N+ t^{-4+\delta} \right),\label{eq:m4} \\
\left| \frac d{dt} z_k^{+} - \mu_k z_k^{+} \right| &\leq C_0 \left(  N^2 + t^{-1} N+ t^{-4+\delta} \right),\label{eq:m5} \\
- \frac d{dt} \cH & \leq \frac{1+\delta}{t} \cH
+ C_0 t^{-7+2\delta} + \frac{C_0}t \sum_k \left( (z_k^-)^2+(z_k^+)^2 \right). \label{eq:m6}
\end{align}
Moreover, for a constant $\delta'>\delta$, assume that
\begin{equation}\label{eq:m1}
N^2 + \sum_k \left( (z_k^+)^2 + (z_k^-)^2 \right) \leq C_0 t^{-1- \delta'}.
\end{equation}
Then, there exists $C>0$ such that for all $t\geq T$,
\begin{equation}\label{eq:m7}
N^2\leq C t^{-6+2\delta},\quad \sum_k \left( (z_k^+)^2 + (z_k^-)^2 \right) \leq C  t^{-8+2\delta}.
\end{equation}
\end{lemma}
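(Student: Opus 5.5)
The plan is to bootstrap decay rates. Assuming $N^2\lesssim t^{-a}$ and $\sum_k ((z_k^+)^2+(z_k^-)^2)\lesssim t^{-b}$, we improve $(a,b)$ in finitely many steps until $a=6-2\delta$ and $b=8-2\delta$. The starting point is \eqref{eq:m1}, which gives $a=b=1+\delta'$. Each step has two parts: integrating the ODEs \eqref{eq:m4}--\eqref{eq:m5} for $z_k^\pm$ in the appropriate time directions, and integrating the differential inequality \eqref{eq:m6} for $\cH$ backward from $+\infty$.

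\emph{Step 1: unstable directions.} Set $S(t)=C_0(N^2+t^{-1}N+t^{-4+\delta})$, so that under the current bounds $S(t)\lesssim t^{-a}+t^{-1-a/2}+t^{-4+\delta}$.
\begin{itemize}
\item For $z_k^+$, write $\frac{d}{dt}(e^{-\beta_k}z_k^+)=e^{-\beta_k}O(S)$. Since $z_k^+(t)\to 0$ by \eqref{eq:m1} and $e^{-\beta_k}$ decays, integration on $[t,+\infty)$ gives
\[
|z_k^+(t)|\le \int_t^{+\infty}e^{-(\beta_k(s)-\beta_k(t))}S(s)\,ds.
\]
Using $\mu_k\ge\mu_0$ from \eqref{eq:mu}, this is $\lesssim \sup_{s\ge t}S(s)$.
\item For $z_k^-$, integrate forward from $T$:
\[
|z_k^-(t)|\le e^{-\mu_0(t-T)}|z_k^-(T)|+\int_T^t e^{-\mu_0(t-s)}S(s)\,ds .
\]
The elementary bound $\int_T^t e^{-\mu_0(t-s)}s^{-c}\,ds\lesssim_c t^{-c}$ (split the integral at $s=t/2$) shows this is $\lesssim t^{-c}$ whenever $S\lesssim t^{-c}$. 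The exponential term is harmless.
\end{itemize}
Hence the new exponent is $b'=\min(2a,\,2+a,\,8-2\delta)$.

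\emph{Step 2: energy.} From \eqref{eq:m6},
\[
\frac{d}{dt}\bigl(t^{1+\delta}\cH\bigr)=t^{1+\delta}\Bigl(\frac{d}{dt}\cH+\frac{1+\delta}{t}\cH\Bigr)\ge -C_0t^{-6+3\delta}-C_0t^{\delta}\sum_k\bigl((z_k^-)^2+(z_k^+)^2\bigr).
\]
By \eqref{eq:m3} and \eqref{eq:m1}, $|t^{1+\delta}\cH|\lesssim t^{\delta-\delta'}\to0$; this is exactly where $\delta'>\delta$ is used. Integrating on $[t,+\infty)$ gives
\[
\cH(t)\lesssim t^{-1-\delta}\int_t^{+\infty}\bigl(s^{-6+3\delta}+s^{\delta-b}\bigr)\,ds\lesssim t^{-6+2\delta}+t^{-b},
\]
which is valid because $b\ge 1+\delta'>1+\delta$ at every stage. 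Inserting this into \eqref{eq:m2} gives $N^2\lesssim t^{-6+2\delta}+t^{-b}$, so the new exponent is $a'=\min(6-2\delta,b)$.

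\emph{Iteration and the main difficulty.} Alternating the two steps, the exponents strictly increase while they are below their caps. Starting from $a=1+\delta'$, Step 1 gives $b\ge\min(2+2\delta',3+\delta')$, which is an increment of at least $\min(a,2)>0$. After finitely many rounds we reach $a=6-2\delta$, and then $b=\min(12-4\delta,\,8-2\delta,\,8-2\delta)=8-2\delta$, which is \eqref{eq:m7}. The delicate point is the cross term $t^{-1}N$ in \eqref{eq:m4}--\eqref{eq:m5}. It caps the gain for $z^\pm$ at $t^{-1-a/2}$, so $z^2$ reaches $t^{-8+2\delta}$ only once $N^2\lesssim t^{-6+2\delta}$. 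For this reason Step 1 must be re-run after $a$ saturates, rather than both rates being closed simultaneously. The other point requiring care is the justification of the limits at $+\infty$ in Step 2 and in the $z^+$ integration. Both are supplied by \eqref{eq:m1} together with the strict inequality $\delta'>\delta$.
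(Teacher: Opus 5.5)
Your proposal is correct and follows essentially the same route as the paper. Both proofs alternate the same two steps: integrating the $z_k^\pm$ equations (forward from $T$ for $z_k^-$, backward from $+\infty$ for $z_k^+$), and integrating $t^{1+\delta}\cH$ backward from $+\infty$, with $\delta'>\delta$ killing the boundary term; both rerun the $z_k^\pm$ step once $N^2\lesssim t^{-6+2\delta}$. The only difference is packaging: the paper runs an induction on $N^2\le C_n t^{-6+2\delta}+C_n t^{-1-n}$, gaining one power per step via $N\lesssim t^{-1/2}$, whereas you track the exponent pair $(a,b)$ directly.
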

\begin{remark}
The reason why this lemma is needed here and not in the construction proof in \cite{MMwave2} is that
we cannot rely on a strong bootstrap assumption, but, in view of obtaining the strongest possible classification result, 
only on the weak assumption \eqref{eq:bs}.
Note that \eqref{eq:bs} seems the weakest possible assumption in view of solving a differential equation of the form
$-\cH' \leq 1^+ \frac{\cH}{t} + O(t^{-K})$ from $t\to+\infty$, where $\cH\approx \|\vec \varepsilon\|_\cE^2$.
\end{remark}
\begin{remark}
Estimate \eqref{eq:m7} is optimal in view of the source terms of size $t^{-7+2\delta}$ in \eqref{eq:m6}
and of size $t^{-4+\delta}$ in \eqref{eq:m4} and \eqref{eq:m5}.
Recall that these source terms are due to the use of an approximate multi-soliton at the order $t^{-4^-}$
for the decomposition of the solution.
\end{remark}
\begin{proof}[Proof of Lemma \ref{le:tc}]
We prove the first estimate of \eqref{eq:m7} by induction, using the following induction assumption,
for $n\geq 0$,
\begin{equation}\label{eq:m8}
N^2\leq C_n t^{-6+2\delta} + C_n t^{-1-n } .
\end{equation}
We only need to prove \eqref{eq:m8} for $n=5$ since it implies $N^2\leq C_5 t^{-6+2\delta}$.

Observe first that \eqref{eq:m1} implies \eqref{eq:m8} for $n=0$.
Assuming \eqref{eq:m8} for some $0\leq n \leq 4$, we prove it for $n+1$.
By $N\lesssim t^{-\frac 12}$, we have 
\[
N^2 + t^{-1} N+ t^{-4+\delta} \lesssim t^{-\frac 12} N + t^{-4+\delta}
\lesssim t^{-\frac 72 +\delta} + t^{-1-\frac n2} +  t^{-4+\delta}
\lesssim t^{-1-\frac n2}.
\]
Thus, by \eqref{eq:m4}, one has
\[
\left| \left( e^{\beta_k} z_k^- \right)' \right| \lesssim t^{-1-\frac n2} e^{\beta_k} ,
\]
and integrating on $[T,t]$,  using $|z_k^-(T)|\lesssim 1$ from \eqref{eq:m1},
\[
|z_k^-(t)| \lesssim e^{-\beta_k}  + t^{-1-\frac n2}.
\]
Similarly, by \eqref{eq:m5}, one has
\[
\left| \left( e^{-\beta_k}  z_k^+ \right)' \right| \lesssim t^{-1-\frac n2} e^{-\beta_k} ,
\]
and integrating on $[t,+\infty)$, using $|z_k^+(t)|\lesssim 1$ from \eqref{eq:m1},
\[
|z_k^+(t)| \lesssim t^{-1-\frac n2} \lesssim t^{-1-\frac n2}.
\]
Inserting the above estimates for $|z_k^\pm|$ in \eqref{eq:m6}, we obtain
\[
-\left( t^{1+\delta} \cH \right)' \lesssim t^{-6+3\delta} + t^{-2-n+\delta}.
\]
Integrating on $[t,+\infty)$, using $\lim_{t\to +\infty} t^{1+\delta}\cH(t)=0$ 
(since $0<\delta<\delta'$),
we obtain
\[
\cH(t) \lesssim t^{-6+2\delta} + t^{-2-n}.
\]
From \eqref{eq:m2} and the estimates on $z_k^\pm$, this implies
\[
N^2(t) \lesssim t^{-6+2\delta} + t^{-2-n}.
\]
This finishes the induction argument and thus proves \eqref{eq:m7} for $N$.

Using this, we obtain
\[
N^2 + t^{-1} N+ t^{-4+\delta} \lesssim t^{-4+\delta}.
\]
Thus, by \eqref{eq:m4} and \eqref{eq:m5}, one has
\begin{equation*}
\left| \left( e^{\beta_k} z_k^- \right)' \right| \lesssim t^{-4+\delta} e^{\beta_k} ,\quad
\left| \left( e^{-\beta_k}  z_k^+ \right)' \right| \lesssim t^{-4+\delta} e^{-\beta_k} ,
\end{equation*}
and proceeding as before, we obtain
\eqref{eq:m7} for $z_k^\pm$.
\end{proof}

Now, setting
$N=\|\vec \varepsilon\, \|_\cE$,
we check that the hypothesis of Lemma \ref{le:tc} are satisfied.
Indeed, \eqref{coer} implies \eqref{eq:m2}, \eqref{boun} implies \eqref{eq:m3},
\eqref{eq:zk} implies \eqref{eq:m4} and \eqref{eq:m5}.
Moreover, \eqref{time} and \eqref{eq:bs} respectively imply
\eqref{eq:m6} and \eqref{eq:m1}.
Applying Lemma \ref{le:tc}, we obtain
\[
\|\vec \varepsilon\|_\cE\lesssim t^{-3+\delta},
\]
strongly improving the estimate on $\vec\varepsilon$.
Moreover, by integrating \eqref{eq:ly}, we obtain the estimate \eqref{eq:c3} on the parameters $\lambda_k$ and $\by_k$.

At this stage, we have the solution $\vec S$, defined in Proposition \ref{pr:vp} and satisfying \eqref{eq:oS} for
a corresponding approximate solution $\vec \WW[S]$, with certain parameters $(\lambda_k[S], \by_k[S])$ satisfying \eqref{eq:c3}.
Now, we have just proved that any solution $u$ of equation \eqref{wave} such that \eqref{eq:py} holds 
also satisfies
\begin{equation}\label{eq:o2}
\| \vec u - \vec \bW[u] \|_\cE \lesssim t^{-3+\delta},
\end{equation}
for an approximate solution $\vec \WW[u]$, with certain parameters $(\lambda_k[u], \by_k[u])$ also satisfying \eqref{eq:c3}.
The estimate
\begin{equation*}
\| \vec u -\vec S\|_\cE \lesssim
\| \vec u - \vec \WW[u]\|_\cE+\|  \vec \WW[u] - \vec \WW[S]\|_\cE+\| \vec S - \vec \WW[S]\|_\cE
\end{equation*}
with
\[
|\lambda_k[u]-\lambda_k[S]|+|\by_k[u]-\by[S]|\lesssim t^{-2+\delta},
\]
then implies $\| \vec u -\vec S\|_\cE \lesssim t^{-2+\delta}$, which is \eqref{eq:cy}.

Finally, the additional regularity \eqref{eq:c4} on the solution $\vec u$ is obtained as in \cite[Section 5.4]{MMwave2}.
\end{proof}

\begin{corollary}\label{co:in}
Under the assumptions of Theorem \ref{th:0}, if 
$u(t)$ is a solution of \eqref{wave} on some time interval $[T,+\infty)$ and such that for some $\delta>0$,
\begin{equation}\label{eq:pz}
\left\|\nabla_{t,x} \left(u - \sum_k W_k^\infty \right) (t)\right\|_{L^2} \lesssim t^{-\frac12-\delta}
\end{equation}
for any $t\geq T$ large, then there exists a constant $C>0$ such that for all $R>0$
large enough,
\begin{equation}\label{eq:RR}
\liminf_{t\to -\infty} \|\nabla u(t)\|_{L^2(|x|>|t|+R)} \geq C R^{-\frac 52}.
\end{equation}
\end{corollary}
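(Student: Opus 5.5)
The idea is to combine Proposition \ref{pr:2} with the inelasticity result of \cite{MMwave2}. By Proposition \ref{pr:2}, any solution $u$ satisfying \eqref{eq:pz} is a strong multi-soliton in the sense of Definition \ref{SMS}. It satisfies \eqref{eq:o2}, namely $\|\vec u-\vec\bW[u]\|_\cE\lesssim t^{-3+\delta}$, for parameters $(\lambda_k[u],\by_k[u])$ obeying \eqref{eq:c3}. It also has the $H^1$ regularity \eqref{eq:c4}. These are exactly the properties of the multi-soliton constructed in \cite{MMwave2} that the inelasticity (non-existence of pure multi-soliton in negative time) argument there relies on. So the plan is not to reprove inelasticity. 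Instead, I will check that its proof in \cite{MMwave2} uses only the asymptotic description as $t\to+\infty$ (the decomposition around $\vec\bW$, the parameter laws, and the regularity), and not the particular way the solution was constructed.

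First, I restate the inelasticity result of \cite{MMwave2} in the form: for any solution $u$ defined on $[T,+\infty)$ which, as $t\to+\infty$, satisfies \eqref{eq:o2} with parameters as in \eqref{eq:c3} and has the regularity \eqref{eq:c4}, $u$ extends globally for negative times and satisfies \eqref{eq:RR}. The constant $C$ in \eqref{eq:RR} depends only on the soliton parameters $(\lambda_k^\infty,\by_k^\infty,\epsilon_k,\ell_k)$.

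Second, I recall the mechanism in \cite{MMwave2}. At large positive time, the explicit first-order term $\sum_k c_k\vec v_k$ in $\vec\bW$ carries a nonzero radiative tail. This tail comes from the $t^{-3}W^{\frac43}$ interaction sources and the functions $F,G$ of \eqref{eq:FG}, and its size is fixed by the coefficients $c_k$, which are nonzero. Since $\|\vec u-\vec\bW\|_\cE\lesssim t^{-3+\delta}$ is much smaller than this tail in the exterior region $\{|x|>t+R\}$, the tail is also a lower bound for $u$ there. The energy (channel) estimates for the linear wave equation outside light cones then propagate this exterior energy backward to $t\to-\infty$. This gives the $R^{-\frac52}$ lower bound, and finite speed of propagation controls the nonlinear part in the exterior region.

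The main obstacle is to confirm that nothing in that chain used the bootstrap estimates of the construction beyond what Proposition \ref{pr:2} now provides. The key input is the $t^{-3+\delta}$ closeness to $\vec\bW$ together with \eqref{eq:c3}. For example, the exterior estimates require $\|\vec u-\vec\bW\|_\cE$ to be small compared with the tail at order $t^{-2}$ near the light cone. Since Proposition \ref{pr:2} gives exactly the same bounds as \cite{MMwave2}, I expect the argument to transfer verbatim, so the corollary follows.
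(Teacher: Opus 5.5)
Your plan follows the paper's proof: Proposition \ref{pr:2} gives \eqref{eq:o2} for $u$, and the channel-of-energy argument of \cite[\S 6]{MMwave2} is then run using only this closeness to $\vec\bW[u]$ and the properties of $\vec\bW[u]$, not the way the solution was constructed. The one quantitative step the paper makes explicit and you leave vague is the choice of scale: \eqref{eq:o2} is evaluated at $t_R=R^{\frac{11}{12}}$, so that $t_R\ll R$ and the error $t_R^{-3+\delta}$ is negligible against the target $R^{-\frac52}$. Also, you do not need to claim that $u$ extends globally for negative times.
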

\begin{proof}
Let $R>1$ large enough and $t_R=R^{\frac{11}{12}}$.
By \eqref{eq:o2}, one has
\begin{equation*}
\|(u,\partial_t u)(t_R)-\vec \WW[u](t_R)\|_\cE \lesssim t_R^{-6+4\delta} \lesssim R^{-\frac{77}{15}}.
\end{equation*}
Following \cite[\S 6]{MMwave2}, this is enough to prove \eqref{eq:RR},
using the properties of the approximate solution $\vec \bW[u]$ and the method of channels of energy.
\end{proof}

\section{Decomposition around a strong multi-soliton}\label{S:4}
We fix a strong multi-soliton $S$ and we follow the notation of Definition \ref{SMS},
in particular, the functions $\vec \bW = \vec\bW[S]$, $\lambda_k=\lambda_k[S]$ and $\by_k=\by_k[S]$ are now fixed from the choice of $S$.
We consider another multi-soliton $\vec u(t)$ of \eqref{wave} satisfying \eqref{eq:hy} and thus by Proposition \ref{pr:2}
satisfying \eqref{eq:cy}.

In this section, instead of decomposing $\vec u(t)$ around a suitably modulated
approximate solution $\vec\bW[u]$ as in the previous section, we decompose it around the
strong multi-soliton $S$, using additive modulation (first and second order approximations of a modulation
by scaling and translation). This decomposition will be useful both for the proof of the
multi-existence result Theorem \ref{th:1} (see Section \ref{S:5}) and the one of the classification result Theorem \ref{th:2}
(see Section~\ref{S:6}).

\subsection{Ordering the solitons}\label{S:4.1}
As in the previous section, we set
\begin{equation}\label{eq:MK}
\mu_k^\infty= \frac{\sqrt{\lambda_0}}{\lambda_k^\infty}(1-\ell_k^2)^{\frac 12},\quad
\mu_k = \frac{\sqrt{\lambda_0}}{\lambda_k}(1-\ell_k^2)^{\frac 12},\quad
\beta_k = \int_T^t \mu_k.
\end{equation}
For technical reasons, we need to reorder the solitons $\{W_k\}_k$ according to the asymptotic behavior of the functions $\{\mu_k\}_k$.
While the velocities $\{\ell_k\}_k$ are two-by-two different, 
which is crucial to split the soliton trajectories,
we cannot exclude the possibility of having $\mu_k^\infty=\mu_{m}^\infty$ for some $k\neq m$.
Because of the
asymptotic of $\lambda_k(t)$ in \eqref{eq:c3}, 
in case of equality of $\mu_k^\infty$, we also need to order the $d_k$.
Recall that by \eqref{eq:c3} (with $0<\delta<\frac14$)
\[
\lambda_k(t)=\lambda_k^\infty \left(1+\frac{d_k}{t}\right) +O(t^{-\frac74})
\]
and thus, 
\begin{equation}\label{eq:mm}
\mu_k(t)
=\mu_k^\infty
\left(1-\frac{d_k}{t}\right) +O(t^{-\frac74}).
\end{equation}
By integration, we obtain
\[
\beta_k(t) =\mu_k^\infty \left(t-d_k \ln t\right) + C_k +O(t^{-\frac34})
\]
where $C_k=C_k(T)$ is an integration constant.
In particular, 
\[
e^{-\beta_k(t)} = D_k t^{\nu_k} e^{-\mu_k^\infty t} (1+O(t^{-\frac34}))
\]
where
\[
\nu_k = \mu_k^\infty d_k,\quad D_k=D_k (T) = e^{- C_k(T)}.
\]
Now, without losing generality, we change the order of the solitons to ensure that for all $k=1,\ldots,K-1$,
\begin{equation}\label{eq:er}
\mbox{$\mu_k^\infty \leq \mu_{k+1}^\infty$ and if $\mu_k^\infty=\mu_{k+1}^\infty$ then 
$\nu_k \geq \nu_{k+1} $.}
\end{equation}
Let
\begin{equation}\label{eq:n0}
\nu_0 = \frac18 \min \left\{ 1, \nu_k-\nu_{k+1} , \mbox{for $k$ such that
$\mu_k^\infty=\mu_{k+1}^\infty$ and $\nu_k > \nu_{k+1}$}\right\}>0.
\end{equation}
Moreover, for a given $m\in\{1,\ldots,K\}$, we set
\begin{align*}
I_m & = \left\{ 1\leq k\leq K : \mu_k^\infty< \mu_m^\infty \mbox{ or } \mu_k^\infty=\mu_m^\infty \mbox{ and then }
\nu_k\geq \nu_m \right\}, \\
J_m & = \left\{1,\ldots,K\right\}\setminus I_m.
\end{align*}
Note that if for some $k\neq m$, $\mu_k^\infty = \mu_{m}^\infty$ and $\nu_k = \nu_{m}$, then
the way the solitons can be ordered is not unique and $I_m$ is not necessarily equal to $\{1,\ldots,m\}$.
By the definition of $\mu_0$, we see that the following property holds true
\begin{equation}\label{eq:Jm}
\mbox{if $k\in J_m$ then either $\mu_k^\infty>\mu_m^\infty$ or $\mu_k^\infty=\mu_m^\infty$
and then $\nu_k\leq \nu_m - 8 \nu_0$.}
\end{equation}

\subsection{Decomposition of the solution}\label{s:4.1} 

Let $m\in\{1,\ldots,K\}$ and $A\in\R$. 
Here, $A$ is a free parameter related to the exponential stable direction for
the soliton $W_m$. It will be used only in the proof of the construction result (Section \ref{S:5}).
Set
\[
\vec E_\mm= \begin{pmatrix} E_\mm \\ F_\mm\end{pmatrix}
= e^{-\beta_m} \vec {\theta}_\mm J \vec Z_{\ell_\mm}^+ 
= e^{-\beta_m} J \vec Z_m^+,
\]
where $\vec Z_m^+$ is defined in \eqref{defWk}.
Define
\[
\vec a =\begin{pmatrix} a \\ b \end{pmatrix}= \vec u - \vec \MS - A\vec E_\mm - \vec Q,
\]
where
\[
\vec Q =\begin{pmatrix} Q  \\[.1cm]R  \end{pmatrix} = \vec Q^{\rm (I)} + \vec Q^{\rm (II)},
\]
with
\begin{align*}
\vec Q^{\rm (I)} & =\begin{pmatrix} Q^{\rm (I)} \\[.1cm]R^{\rm (I)} \end{pmatrix} = \sum_k \vec Q_k^{\rm (I)},\\
\vec Q_k^{\rm (I)} & =\begin{pmatrix} Q_k^{\rm (I)} \\[.2cm] R_k^{\rm (I)}\end{pmatrix}= \alpha_k^\Lambda \vec Z_{k}^\Lambda + \sum_{j} \alpha_k^{(j)} \vec Z_{k}^{(j)}
\end{align*}
and
\begin{align*}
\vec Q^{\rm (II)} &  =\begin{pmatrix} Q^{\rm (II)} \\[.1cm] R^{\rm (II)} \end{pmatrix}
= \sum_k \vec Q_k^{\rm (II)} ,\\ 
\vec Q_k^{(\rm II)} & =\begin{pmatrix} Q_k^{\rm (II)} \\[.2cm] R_k^{\rm (II)}\end{pmatrix}
= \frac 12(\alpha_k^\Lambda)^2 \vec Z_{k}^{\Lambda\Lambda}
+  \sum_{j} \alpha_k^\Lambda \alpha_k^{(j)} \vec Z_{k}^{(j)\Lambda}
+ \frac 12\sum_{j,j'} \alpha_k^{(j')}\alpha_k^{(j)} \vec Z_{k}^{(j')(j)}.
\end{align*}
The parameters $\{\alpha_k^\Lambda\}_k$ and $\{\alpha_k^{(j)}\}_{j,k}$ are uniquely chosen so that
\begin{equation}\label{eq:OR}
 \left( a , \Lambda_k W_k \right)_{\dot H^1_{\ell_k}}=0,\quad
 \left( a , \partial_j W_k\right)_{\dot H^1_{\ell_k}}=0.
\end{equation}
The existence and uniqueness of such parameters $\{\alpha_k^\Lambda\}_k$ and $\{\alpha_k^{(j)}\}_{j,k}$
for large $t$ follows from straightforward arguments (see also the proof of \eqref{eq:ke} in LEmma \ref{le:bL}).
We also define
\begin{equation}\label{defq}
q = \sum_k |\alpha_k^\Lambda|+\sum_{j,k} |\alpha_k^{(j)}|.
\end{equation}
In order to derive the equation of $\vec a$, we start by computing the equations of $\vec E_m$ and $\vec Q$.

First, setting
\[
\vec M_E=
\frac{\dot\lambda_m}{\lambda_m} \vec \Lambda_m \vec E_m
+\dot\by_m \cdot \nabla \vec E_m = \begin{pmatrix}M_E\\M_F\end{pmatrix},
\]
we observe that by \eqref{eq:tg} and \eqref{eq:MK},
\begin{align*}
\partial_t \vec E_m 
&= - \mu_m \vec E_m
- \frac{\ell_m}{\lambda_m} e^{-\beta_m}\vec { \theta}_m \pun J \vec Z_{\ell_m}^- - \vec M_E \\
&= \frac1{\lambda_m} e^{-\beta_m}\vec { \theta}_m \left(-\sqrt{\lambda_0}(1-\ell_m^2)^\frac12 J \vec Z_{\ell_m}^+ 
- \ell_m \pun J\vec Z_{\ell_m}^+ \right)-\vec M_E.
\end{align*}
Thus, using \eqref{Zpm},
\begin{equation*}
\partial_t \vec E_m 
= \frac1{\lambda_m} e^{-\beta_m}\vec { \theta}_m \left( J H_{\ell_m} J \vec Z_{\ell_m}^+ 
- \ell_m \pun J\vec Z_{\ell_m}^+ \right)-\vec M_E.
\end{equation*}
But we also remark from the definition of $H_{\ell_m}$ in \eqref{Hbeta} that
\[
J H_{\ell_m} = \begin{pmatrix}
0 & {\rm Id} \\ \Delta + f'(W_{\ell_m}) & 0
\end{pmatrix} + \begin{pmatrix}
\ell_{m} \pun & 0 \\ 0 & \ell_{m} \pun 
\end{pmatrix},
\]
and thus
\begin{align*}
\partial_t \vec E_m 
& = \frac1{\lambda_m} e^{-\beta_m}\vec { \theta}_m 
\begin{pmatrix}
0 & {\rm Id} \\ \Delta + f'(W_{\ell_m}) & 0
\end{pmatrix} J\vec Z_{\ell_m}^+
-\vec M_E .
\end{align*}
Using the identity
\[
\frac 1{\lambda_m} \vec \theta_m \begin{pmatrix}
0 & {\rm Id} \\ \Delta + f'(W_{\ell_m}) & 0
\end{pmatrix}
= \begin{pmatrix}
0 & {\rm Id} \\ \Delta + f'(W_m) & 0
\end{pmatrix} \vec \theta_m,
\]
we obtain
\[
\partial_t \vec E_m 
 =\begin{pmatrix}
0 & {\rm Id} \\ \Delta + f'(W_m) & 0
\end{pmatrix} \vec E_m
-\vec M_E .
\]
This means that
\begin{equation}\label{ee:EF}
\begin{cases}
\partial_t E_\mm = F_\mm - M_E \\
\partial_t F_\mm = \Delta E_\mm + f'(W_m) E_\mm - M_F.
\end{cases}
\end{equation}

Second, using \eqref{eq:tg}, we observe that
\begin{equation*}
\partial_t \vec Q^{\rm (I)} = - \sum_k \ell_k \pun \vec Q_k^{\rm (I)} + \vec P^{\rm (I)} - \vec M_Q^{\rm (I)},
\end{equation*}
where
\begin{align*}
\vec P^{\rm (I)} & = \sum_k \frac{d}{dt}\alpha_k^\Lambda\vec Z_k^\Lambda+\sum_{j,k}\frac{d}{dt}\alpha_k^{(j)}\vec Z_k^{(j)}
=\begin{pmatrix} P_1^{\rm (I)} \\ P_2^{\rm (I)} \end{pmatrix},\\
\vec M_Q^{\rm (I)} & = \sum_k\frac{\dot\lambda_k}{\lambda_k} \vec\Lambda_k \vec Q_k^{\rm (I)}
+\sum_{k} \dot\by_k\cdot \nabla \vec Q_k^{\rm (I)}
=\begin{pmatrix} M_Q^{\rm (I)}\\[4pt] M_R^{\rm (I)} \end{pmatrix}.
\end{align*}
For each $k$, using the expressions of $\vec Z_\ell^\Lambda$ and $\vec Z_\ell^{(j)}$ and then \eqref{LW}, we have
\begin{align*}
-\ell_k \pun Q_k^{\rm (I)} & =R_k^{\rm (I)},\\ 
- \ell_k\pun R_k^{\rm (I)} & = \Delta Q_k^{\rm (I)} + f'(W_k) Q_k^{\rm (I)},
\end{align*}
and so
\begin{equation*}
\partial_t \vec Q^{\rm (I)} = 
\sum_k \begin{pmatrix}
0 & 1 \\ \Delta + f'(W_k) & 0
\end{pmatrix} \vec Q_k^{\rm (I)} +\vec P^{\rm (I)} - \vec M_Q^{\rm (I)}.
\end{equation*}
This means that
\begin{equation}\label{ee:QR}
\begin{cases}
\partial_t Q^{\rm (I)} = R^{\rm (I)} + P_1^{\rm (I)} - M_Q^{\rm (I)} , \\
\partial_t R^{\rm (I)} = \Delta Q^{\rm (I)} + \sum_k f'(W_k) Q_k^{\rm (I)} + P_2^{\rm (I)} - M_R^{\rm (I)}.
\end{cases}
\end{equation}
Third, we compute
\[
\partial_t \vec Q^{\rm (II)} = - \sum_k \ell_k \partial_1 \vec Q_k^{\rm (II)} + \vec P^{\rm (II)} - \vec M_Q^{\rm (II)},
\]
where
\begin{align*}
\vec P^{\rm (II)} &
= \frac12\sum_k \frac{d}{dt}(\alpha_k^\Lambda)^2 \vec Z_{k}^{\Lambda\Lambda}
+ \sum_{k,j} \frac{d}{dt}(\alpha_k^\Lambda \alpha_k^{(j)}) \vec Z_{k}^{(j)\Lambda}
+ \frac12\sum_{k,j,j'} \frac{d}{dt}( \alpha_k^{(j')}\alpha_k^{(j)}) \vec Z_{k}^{(j)(j')}
=\begin{pmatrix} P_1^{\rm (II)} \\ P_2^{\rm (II)} \end{pmatrix},\\
\vec M_Q^{\rm (II)} & = \sum_k\frac{\dot\lambda_k}{\lambda_k} \vec\Lambda_k \vec Q_k^{\rm (II)}
+\sum_{k} \dot\by_k \cdot \nabla \vec Q_k^{\rm (II)} 
 =\begin{pmatrix} M_Q^{\rm (II)}\\[4pt] M_R^{\rm (II)} \end{pmatrix}.
\end{align*}
For each $k$, using the expressions of $\vec Z_\ell^{\Lambda\Lambda}$, $\vec Z_\ell^{(j)\Lambda}$, $\vec Z_\ell^{(j')(j)}$, and \eqref{L2}, we have
\begin{align*}
-\ell_k \pun Q_k^{\rm (II)} & =R_k^{\rm (II)},\\ 
- \ell_k\pun R_k^{\rm (II)} & = \Delta Q_k^{\rm (II)} + f'(W_k) Q_k^{\rm (II)}+ \frac12(\alpha_k^\Lambda)^2 (\Lambda_k W_k)^2 f''(W_k)
\\
& \quad+ \lambda_k \sum_j \alpha_k^\Lambda \alpha_k^{(j)} \Lambda_k W_k \partial_j W_k f''(W_k)  +\frac12\lambda_k^2 \sum_{j,j'} \alpha_k^{(j)}\alpha_k^{(j')}  \partial_{j'} W_k \partial_j W_k f''(W_k)\\
& = \Delta Q_k^{\rm (II)} + f'(W_k) Q_k^{\rm (II)} + \frac 12  f''(W_k) \big(Q_k^{\rm (I)}\big)^2 ,
\end{align*}
and so, summing in $k$,
\begin{equation}\label{ee:QR2}
\begin{cases}
\partial_t Q^{\rm (II)} = R^{\rm (II)} + P_1^{\rm (II)} - M_Q^{\rm (II)} , \\
\partial_t R^{\rm (II)} = \Delta Q^{\rm (II)} + \sum_k f'(W_k) Q_k^{\rm (II)} +\frac12\sum_k f''(W_k) \big(Q_k^{\rm (I)}\big)^2 
+ P_2^{\rm (II)} - M_R^{\rm (II)}.
\end{cases}
\end{equation}
Defining
\[
\vec P = \vec P^{\rm (I)} + \vec P^{\rm (II)}=\begin{pmatrix} P_1 \\ P_2 \end{pmatrix},
\quad \vec M_Q=\vec M_Q^{\rm (I)}+ \vec M_Q^{\rm (II)}=\begin{pmatrix} M_Q \\ M_R  \end{pmatrix},
\]
we obtain
\begin{equation}\label{ee:QRf}
\begin{cases}
\partial_t Q = R + P_1  - M_Q  , \\
\partial_t R = \Delta Q + \sum_k f'(W_k) Q_k + P_2 - M_R 
+\frac12\sum_k f''(W_k) \big(Q_k^{\rm (I)}\big)^2 .
\end{cases}
\end{equation}

Therefore, using that $\MS$ and $u$ are solutions of \eqref{wave},
and replacing 
\begin{equation}\label{eq:dS}
\vec u = \vec\MS+A\vec E_m + \vec Q + \vec a,
\end{equation}
we obtain the following system
\begin{equation}\label{eq:vv}
\begin{cases}
\partial_t a  = b - P_1 + A M_E + M_Q , \\
\partial_t b  = \Delta a + f(\MS+A E_\mm +Q+a) - f(\MS) - f'(W_\mm ) AE_\mm\\  \qquad\quad
-\tsum_k f'(W_k) Q_k - \frac12 \tsum_k f''(W_k) \big(Q_k^{\rm (I)}\big)^2 - P_2+ A M_F + M_R.
\end{cases}
\end{equation}
Recall that the function $\vec \MS$ and the corresponding parameters $\lambda_k=\lambda_k[S]$ and $\by_k=\by_k[S]$ were fixed,
so that terms in $\vec M_E$ and $\vec M_Q$ are not modulation terms related to the decomposition of $\vec u$. 
The modulation terms related to the solution $\vec u$ are contained in $\vec P$.
In this section \eqref{eq:vv} is both a system for the pair $(a,b)$ and
for the parameters $\{\alpha_k^\Lambda\}_k$, $\{\alpha_k^{(j)}\}_{j,k}$
through the orthogonality relations \eqref{eq:OR} (see Lemma \ref{le:bL}).

We will need another form of the second equation
\begin{equation}\label{eq:34}
\partial_t b = \Delta a + f'(\MS) a + \pp_1 a + \pp_2 + \pp_3 - P_2+ A M_F + M_R ,
\end{equation}
where
\begin{equation}\label{eq:pp}
\begin{aligned}
\pp_1 & = f'(\MS +AE_\mm+Q) - f'(\MS) , \\
\pp_2 & = f(\MS+AE_\mm +Q+a)
- f(\MS+AE_\mm +Q) - f'(\MS +AE_\mm +Q) a ,\\
\pp_3 & = f(\MS+AE_\mm +Q)
- f(\MS) - f'(W_m) AE_\mm - \tsum_k f'(W_k) Q_k \\ & \quad - \frac12 \tsum_k f''(W_k) \big(Q_k^{\rm (I)}\big)^2.
\end{aligned}
\end{equation}
For future reference, we note that
\begin{align}
|\pp_1| & \lesssim \big(|\MS|^{\frac 13}+ |A|^\frac13 |E_m|^\frac13+|Q|^\frac13\big)(|A| |E_m|+|Q|), \label{eq:o10}\\
|\pp_2| & \lesssim \big(|\MS|^\frac13 + |A|^\frac13 |E_m|^\frac13+|Q|^\frac13+|a|^\frac13\big) a^2.\label{eq:ov0}
\end{align}
Moreover, we rewrite
\begin{equation}\label{eq:p4}
\begin{aligned}
\pp_3 & = f(\MS+AE_\mm +Q)- f(\MS) - f'(\MS) (AE_\mm+Q)
-\tfrac 12 f''(\MS) (AE_\mm+Q)^2 \\
&\quad +( f'(\MS) - f'(W_m) ) AE_\mm + \tsum_k(f'(\MS) - f'(W_k))Q_k  \\
&\quad + \frac 12 f''(S) (A^2E_m^2+2AE_m Q) 
+ \frac 12 f''(S) \Big( Q^2 - \tsum_k  \big(Q_k^{\rm(I)}\big)^2 \Big) \\
&\quad + \frac 12\tsum_k ( f''(\MS) -f''(W_k)) \big(Q_k^{\rm(I)}\big)^2 .
\end{aligned}
\end{equation}
From the decomposition above, we deduce the following pointwise estimate on $\pp_3$
\begin{equation}\label{eq:o20}
\begin{aligned}
|\pp_3|
&\lesssim |A E_m|^\frac73 +|Q|^\frac73+ \big(|\MS|^\frac13+|W_m|^\frac13\big) |A| |E_m| |\MS-W_m| \\
&\quad + \tsum_k \big(|\MS|^\frac13+|W_k|^\frac13\big) |Q_k| |\MS-W_k| 
+ |S|^\frac13 |AE_m|(|AE_m|+|Q|)\\
&\quad +|S|^\frac13 \Bigl(|Q^{\rm(II)}|(|Q^{\rm(I)}|+|Q^{\rm(II)}|)+\sum_{l\neq k} |Q_k^{\rm (I)}||Q_{l}^{\rm(I)}|\Bigr)
+\sum_k \big(Q_k^{\rm(I)}\big)^2|S-W_k|^\frac13.
\end{aligned}
\end{equation}
\begin{remark}
We see in \eqref{eq:o20} that the term $\vec Q^{\rm (II)}$ (second order additive modulation terms) in the definition of $\vec Q$ 
allows to remove terms of order $q^2$
in the estimate of the quantity $p_3$. This will be essential to obtain sufficiently precise energy estimates in the classification proof.
\end{remark}
Defining
\[
\vec {\mathcal L} = \begin{pmatrix}0 & 1 \\ \Delta + f'(\MS) & 0\end{pmatrix}
\]
the system \eqref{eq:vv} of $\vec a = (a,b)^\trans$ takes the following condensed form
\begin{equation}\label{eq:vb}
\partial_t \vec a = \vec {\mathcal L} \vec a + \vec \pp - \vec P + A \vec M_E + \vec M_Q
\end{equation}
where
\[
\vec \pp = \begin{pmatrix} 0 \\ \pp_1 a + \pp_2 + \pp_3 \end{pmatrix}
=\vec \pp_1 a + \vec \pp_2 + \vec \pp_3 .
\]

In the next subsections, we provide estimates on the decomposition
$\big(\vec a, \{\alpha_k^\Lambda\}_k, \{ \alpha_k^{(j)}\}_{j,k}\big)$ of any solution $\vec u$,
assuming that $\MS$ is a fixed strong multi-soliton in the sense of Definition \ref{SMS} and under the following weak bootstrap assumptions for $t$ large,
\begin{equation}\label{BPweak}
q\ll 1,\quad \|\vec a\|_\cE\ll 1.
\end{equation}
We will keep track of the dependency on the parameter $A$ so that the estimates will also be relevant in the case where $A=0$.

\subsection{Equations of the kernel directions}
\begin{lemma}\label{le:bL}
Assuming \eqref{BPweak}, it holds
\begin{equation}\label{eq:ke}
\left| \frac{d}{dt} \alpha_k^\Lambda \right| + \left| \frac{d}{dt} \alpha_k^{(j)} \right|
\lesssim \|\vec a\|_{\mathcal E} + t^{-2} q + |A| t^{-2} e^{-\beta_m} .
\end{equation}
\end{lemma}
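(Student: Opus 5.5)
We differentiate the orthogonality relations \eqref{eq:OR} in time and use the first equation of \eqref{eq:vv} for $\partial_t a$. The coefficients $\frac{d}{dt}\alpha_k^\Lambda$, $\frac{d}{dt}\alpha_k^{(j)}$ enter only through $P_1$. This yields a linear system for the vector of derivatives, whose matrix is a small perturbation of an invertible Gram-type matrix.

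\textbf{Step 1: the differentiated orthogonality relations.} For $\Phi\in\{\Lambda_k W_k,\partial_j W_k\}$ we write
\[
0=\frac{d}{dt}(a,\Phi)_{\dot H^1_{\ell_k}}
=(\partial_t a,\Phi)_{\dot H^1_{\ell_k}}+(a,\partial_t\Phi)_{\dot H^1_{\ell_k}},
\]
and substitute $\partial_t a=b-P_1+AM_E+M_Q$.

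\textbf{Step 2: the linear system.} The first component of $\vec P^{\rm (I)}$ is $\sum_k\dot\alpha_k^\Lambda\Lambda_kW_k+\sum_{j,k}\dot\alpha_k^{(j)}\partial_jW_k$, up to the harmless factors $\epsilon_k\lambda_k^{-3/2}$ and powers of $\lambda_k$ coming from $\theta_k$. Pairing it against the $\Phi$'s produces a matrix. Its diagonal blocks are the Gram matrices of $\{\Lambda W_{\ell_k},\partial_jW_{\ell_k}\}$ in $\dot H^1_{\ell_k}$. These Gram matrices are invertible: $\Lambda W$ is radial and the $\partial_jW$ are odd in distinct variables, so the diagonal blocks are in fact diagonal with positive entries. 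The off-diagonal blocks ($k\neq m$) are $O(t^{-3/2})$ by \eqref{eq:C2}, since $\int\nabla W_k\cdot\nabla W_m\lesssim\int\omega_k^4\omega_m^4\lesssim t^{-3}$.

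The first component of $\vec P^{\rm (II)}$ contributes terms $\alpha\,\dot\alpha$ times second-order functions. These are $O(q)$ perturbations of the matrix. Under \eqref{BPweak} the full matrix is therefore invertible with uniformly bounded inverse. The same argument, applied at fixed time to the unknowns $\alpha$ themselves, gives the existence and uniqueness of the decomposition by the implicit function theorem.

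\textbf{Step 3: bounding the remaining terms.}
\begin{itemize}
\item The terms $(b,\Phi)$ and $(a,\partial_t\Phi)$ are $\lesssim\|\vec a\|_\cE$, since $\partial_t\Phi$ is a combination of $\ell_k\partial_1\Phi$ and terms with coefficients $\dot\lambda_k,\dot\by_k=O(t^{-2})$, all bounded in $\dot H^1$.
\item For $M_Q=\sum_k\bigl(\frac{\dot\lambda_k}{\lambda_k}\Lambda_kQ_k+\dot\by_k\cdot\nabla Q_k\bigr)$, we use \eqref{eq:c3} to get $|\dot\lambda_k|+|\dot\by_k|\lesssim t^{-2}$. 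Since $\|Q_k\|_{\dot H^1}\lesssim q$, this contributes $\lesssim t^{-2}q$.
\item For $AM_E$, the same parameter bounds give $\|M_E\|_{\dot H^1}\lesssim t^{-2}e^{-\beta_m}$, because $\vec Z^+$ is exponentially localized. This contributes $|A|t^{-2}e^{-\beta_m}$.
\end{itemize}

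The main subtlety is that $\vec E_m$ itself does not appear in $\partial_t a$. It appears only through its modulation part $M_E$, because the linear part of $\partial_t\vec E_m$ is already accounted for in \eqref{ee:EF}. Likewise, the linear flow of $\vec Q$ cancels exactly by \eqref{ee:QRf}. This is why no term of size $q$ without a $t^{-2}$ factor survives.

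\textbf{Step 4: conclusion.} Inverting the matrix from Step 2 gives \eqref{eq:ke}.
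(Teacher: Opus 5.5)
Your proposal is correct and follows essentially the same route as the paper. You differentiate \eqref{eq:OR} and substitute $\partial_t a=b-P_1+AM_E+M_Q$. You then isolate the diagonal Gram term: cross-soliton interactions are $O(t^{-3})$ by \eqref{eq:C2}, same-soliton cross terms vanish by symmetry, and $\vec P^{\rm (II)}$ contributes $O(q)$ corrections. The remaining $b$, $\partial_t\Phi$, $M_E$ and $M_Q$ terms are bounded as you indicate; note that for $M_Q$ the bound really needed is $\|\Lambda_k Q_k\|_{\dot H^1}+\|\nabla Q_k\|_{\dot H^1}\lesssim q$, which holds for the same reason.
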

\begin{proof}
We start by differentiating the first orthogonality condition \eqref{eq:OR}
and then we use the first line of \eqref{eq:vv} and \eqref{eq:tG}
\begin{align*}
0 = \frac{d}{dt} \left( a, \Lambda_k W_k \right)_{{\dot H^1_{\ell_k}}}
& = \left(\partial_t a, \Lambda_k W_k \right)_{{\dot H^1_{\ell_k}}}
+\left( a, \partial_t \Lambda_k W_k \right)_{\dot H^1_{\ell_k}}\\
& = - \left( b, \Delta_{\ell_k}\Lambda_k W_k \right)_{L^2}
- \ell_k \left( a, \pun \Lambda_k W_k \right)_{\dot H^1_{\ell_k}}\\
&\quad - \frac{\dot \lambda_k}{\lambda_k} (a, \Lambda_k^2 W_k)_{\dot H^1_{\ell_k}} 
- \dot \by_k \cdot (a,\nabla \Lambda_k W_k)_{\dot H^1_{\ell_k}}\\
&\quad -(P_1,\Lambda_k W_k)_{\dot H^1_{\ell_k}}
+ A (M_E,\Lambda_k W_k)_{\dot H^1_{\ell_k}}+(M_Q,\Lambda_k W_k)_{\dot H^1_{\ell_k}}.
\end{align*}
First, it is clear that
\[
\left|\left( b, \Delta_\ell \Lambda_k W_k \right)_{L^2}\right|
+ \left| \left( a, \pun \Lambda_k W_k \right)_{\dot H^1_{\ell_k}}\right| \lesssim \|\vec a\|_\cE.
\]
Moreover, by \eqref{eq:c3},
\[
\left|\frac{\dot \lambda_k}{\lambda_k} (a, \Lambda^2 W_k)_{\dot H^1_{\ell_k}}\right|
+\left| \dot \by_k \cdot (a,\nabla\Lambda_k W_k)_{\dot H^1_{\ell_k}}\right|
\lesssim t^{-2} \|\vec a\|_\cE.
\]
Then, we deal with the term
\begin{align*}
(P_1,\Lambda_k W_k)_{\dot H^1_{\ell_k}} & = 
 \sum_l \frac{d}{dt}\alpha_l^\Lambda \left(\Lambda_l W_l,\Lambda_k W_k\right)_{\dot H^1_{\ell_k}}
+ \sum_{j,l}\frac{d}{dt}\alpha_l^{(j)} \lambda_l \left(\partial_j W_l,\Lambda_k W_k\right)_{\dot H^1_{\ell_k}}\\
& \quad  + \frac 12 \sum_l \frac{d}{dt} (\alpha_l^\Lambda)^2 \left(\Lambda_l^2 W_l,\Lambda_k W_k\right)_{\dot H^1_{\ell_k}}
+ \sum_{j,l} \lambda_l \frac{d}{dt} ( \alpha_l^\Lambda \alpha_l^{(j)} )
 \left(\partial_j\Lambda_l W_l,\Lambda_k W_k\right)_{\dot H^1_{\ell_k}}
\\
&\quad + \frac 12 \sum_{j',j,l} \lambda_l^2 \frac{d}{dt} (\alpha_l^{(j')}\alpha_l^{(j)})  \left(\partial_{j'}\partial_j W_l,\Lambda_k W_k\right)_{\dot H^1_{\ell_k}}
\end{align*}
For any $l\neq k$, by \eqref{eq:C2},
\[
\left|\left(\Lambda_l W_l,\Lambda_k W_k\right)_{\dot H^1_{\ell_k}}\right|\lesssim t^{-3},
\]
while for any $k$, by symmetry and \eqref{eq:C2},
\[
\left|\left(\partial_j W_l,\Lambda_k W_k\right)_{\dot H^1_{\ell_k}}\right|\lesssim t^{-3}.
\]
Therefore, by the definition of $q$ in \eqref{defq},
\[
\left| (P_1,\Lambda_k W_k)_{\dot H^1_{\ell_k}} -
\frac{d}{dt}\alpha_k^\Lambda \left(\Lambda_k W_k,\Lambda_k W_k\right)_{\dot H^1_{\ell_k}}\right|
\lesssim (q+t^{-3}) \left(\sum_{l} \left|\frac{d}{dt}\alpha_l^\Lambda\right|+  \sum_{j,k}\left|\frac{d}{dt}\alpha_k^{(j)}\right|\right).
\]
Last, again by \eqref{eq:c3},
\[
\left| A (M_E, \Lambda_k W_k )_{\dot H^1_{\ell_k}}\right|+\left|(M_Q,\Lambda_k W_k)_{\dot H^1_{\ell_k}}\right|
\lesssim t^{-2} ( |A| e^{-\beta_m}+q).
\]
In conclusion, we obtain the following estimate for any $k$,
\begin{align*}
\left|\frac{d}{dt}\alpha_k^\Lambda\right|
&\lesssim (q+t^{-3}) \left(\sum_{l} \left|\frac{d}{dt}\alpha_l^\Lambda\right|+  \sum_{j,k}\left|\frac{d}{dt}\alpha_k^{(j)}\right|\right)
+\|\vec a\|_\cE + t^{-2}(q+|A| e^{-\beta_m}).
\end{align*}

Proceeding similarly with the relation $\left( a,  \partial_j W_k \right)_{\dot H^1_{\ell_k}}=0$,
we obtain for any $k$ and any $j$,
\begin{align*}
\left|\frac{d}{dt}\alpha_k^{(j)}\right|
&\lesssim (q+t^{-3}) \left(\sum_{l} \left|\frac{d}{dt}\alpha_l^\Lambda\right|+  \sum_{j,k}\left|\frac{d}{dt}\alpha_k^{(j)}\right|\right)
+\|\vec a\|_\cE + t^{-2}(q+|A| e^{-\beta_m}).
\end{align*}
Combining all these estimates, for $t$ sufficiently large (using \eqref{BPweak}), we obtain \eqref{eq:ke}.
\end{proof}

\subsection{Equations of the exponential directions}
We set
\begin{equation}\label{eq:ap}
\alpha_k^\pm = \left(\vec a, \vec Z_k^\pm \right)_{L^2}
\end{equation}
and
\begin{equation}\label{eq:BB}
B_k  = \sqrt{(\alpha_k^-)^2 + (\alpha_k^+)^2},\quad B = \sqrt{\sum_k B_k^2}.
\end{equation}
\begin{lemma}\label{le:bp}
Assuming \eqref{BPweak}, it holds
\begin{equation}\label{eq:zo}
\left| \frac{d}{dt} \alpha_k^\pm \mp \mu_k \alpha_k^\pm \right| 
\lesssim \|\vec a\|_{\mathcal E}^2 + q^2 + t^{-2} (\|\vec a\|_{\mathcal E}+q) + |A| t^{-2} e^{-\beta_m} .
\end{equation}
\end{lemma}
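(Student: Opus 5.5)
We differentiate $\alpha_k^\pm=(\vec a,\vec Z_k^\pm)_{L^2}$ in time and use the condensed equation \eqref{eq:vb}:
\[
\frac{d}{dt}\alpha_k^\pm = (\vec{\mathcal L}\vec a,\vec Z_k^\pm)_{L^2} + (\vec a,\partial_t\vec Z_k^\pm)_{L^2}
+ (\vec\pp,\vec Z_k^\pm)_{L^2} - (\vec P,\vec Z_k^\pm)_{L^2} + A(\vec M_E,\vec Z_k^\pm)_{L^2}+(\vec M_Q,\vec Z_k^\pm)_{L^2}.
\]

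\emph{Main linear part.} By \eqref{eq:th}, $\partial_t \vec Z_k^\pm = -\ell_k\pun \vec Z_k^\pm + O(|\dot\lambda_k|+|\dot\by_k|)$, where the error is exponentially localized by \eqref{eq:dZ}. Write $f'(\MS)=f'(W_k)+(f'(\MS)-f'(W_k))$. The combination $(\vec{\mathcal L}_k\vec a,\vec Z_k^\pm)-\ell_k(\vec a,\pun\vec Z_k^\pm)$, with $\vec{\mathcal L}_k$ built on $W_k$, is rescaled via $\vec{\tilde\theta}_k$. By the identity for $JH_{\ell_k}$ used in the computation of $\partial_t\vec E_m$, it equals $(\vec a, -\frac1{\lambda_k}\vec{\tilde\theta}_k (H_{\ell_k}J)\vec Z_{\ell_k}^\pm)$ up to transposition, since $J^T=-J$ and $H_{\ell_k}$ is self-adjoint. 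Then \eqref{Zpm} gives exactly $\pm\mu_k\alpha_k^\pm$. The errors are bounded as follows.
\begin{itemize}
\item The modulation errors are $\lesssim t^{-2}\|\vec a\|_\cE$ by \eqref{eq:c3}.
\item The potential error $\int |f'(\MS)-f'(W_k)||a|\zeta_k$ is $\lesssim (\|\MS-\sum_l W_l\|_{\dot H^1}+\sum_{l\ne k}\|W_l\zeta_k\|)\|\vec a\|_\cE \lesssim t^{-2}\|\vec a\|_\cE$, using \eqref{eq:c2}, \eqref{eq:C3} and Hölder/Sobolev.
\end{itemize}

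\emph{Nonlinear and source terms.} For $\vec\pp_1 a$ we use \eqref{eq:o10}. The $Q$-contribution is $\lesssim q\|\vec a\|_\cE\lesssim q^2+\|\vec a\|^2_\cE$. The $AE_m$-contribution must be handled without a $|A|e^{-\beta_m}\|\vec a\|$ term on the right. We absorb it as $|A|e^{-\beta_m}\|\vec a\|\lesssim |A|^2e^{-2\beta_m}+\|\vec a\|^2$, and note that the target bound allows only $|A|t^{-2}e^{-\beta_m}$. Since in the regime of interest $|A|e^{-\beta_m}\ll t^{-2}$ (from \eqref{BPweak}-type smallness), it suffices to write $|A|^2e^{-2\beta_m}\le |A|t^{-2}e^{-\beta_m}$ for large $t$. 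I flag this as a point to check. For $\vec\pp_2$, \eqref{eq:ov0} gives $\lesssim\|\vec a\|_\cE^2$.

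For $\vec\pp_3$, the estimate \eqref{eq:o20} is paired against the exponentially decaying $\vec Z_k^\pm$:
\begin{itemize}
\item $|Q|^{7/3}$ gives $q^{7/3}$.
\item Terms with $|\MS-W_k|$ near soliton $k$ give $t^{-2}q$ by \eqref{eq:c2} and \eqref{eq:C3}.
\item The cross terms $Q_k^{\rm(I)}Q_l^{\rm(I)}$ with $l\ne k$ are exponentially small times $q^2$.
\item The $Q^{\rm(II)}$ terms are $O(q^3)$.
\item The $AE_m$ terms give $|A|e^{-\beta_m}(|A|e^{-\beta_m}+q+t^{-2})$.
\end{itemize}

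\emph{Modulation projections.} The term $(\vec P,\vec Z_k^\pm)$ is the delicate one, because Lemma \ref{le:bL} only gives $|\dot\alpha|\lesssim\|\vec a\|_\cE+\dots$, which is too large. Here we use the orthogonality \eqref{oZ}: $(\vec Z_k^\Lambda,\vec Z_k^\pm)=(\vec Z_k^{(j)},\vec Z_k^\pm)=0$. Hence the leading part of $\vec P^{\rm (I)}$ projects to zero for the same $k$, and for $l\neq k$ the overlap is $\lesssim t^{-3}$. The $\vec P^{\rm (II)}$ part carries an extra factor $q$. Thus this term is $\lesssim (q+t^{-3})(\|\vec a\|_\cE+t^{-2}q+|A|t^{-2}e^{-\beta_m})$, which is acceptable. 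Finally, $A\vec M_E$ and $\vec M_Q$ are $\lesssim t^{-2}(|A|e^{-\beta_m}+q)$ by \eqref{eq:c3}. Collecting all these bounds gives \eqref{eq:zo}.

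The main obstacle is the exact cancellation of the linear part to produce $\pm\mu_k\alpha_k^\pm$, together with the orthogonality needed to kill $(\vec P,\vec Z_k^\pm)$.
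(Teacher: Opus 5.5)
Your proposal is correct and follows the paper's proof essentially step by step. It uses the same cancellation $-H_{\ell_k}J\vec Z_{\ell_k}^\pm=\pm\sqrt{\lambda_0}(1-\ell_k^2)^{1/2}\vec Z_{\ell_k}^\pm$ after replacing $f'(\MS)$ by $f'(W_k)$, the same use of \eqref{oZ} together with \eqref{eq:ke} for $(\vec P,\vec Z_k^\pm)$, and the same term-by-term bounds on $\vec\pp_1 a,\vec\pp_2,\vec\pp_3,\vec M_E,\vec M_Q$. The absorption $A^2e^{-2\beta_m}\lesssim |A|t^{-2}e^{-\beta_m}$ that you flagged is also done silently in the paper; it holds because $\beta_m$ grows linearly in $t$, so $|A|e^{-\beta_m}\le t^{-2}$ for $t$ large depending on $A$, not because of \eqref{BPweak}.
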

\begin{proof}
Using \eqref{eq:th} and \eqref{eq:vb} we obtain
\begin{align*}
\frac{d}{dt} \left(\vec a, \vec Z_k^\pm \right)_{L^2}
&= \left(\partial_t \vec a, \vec Z_k^\pm  \right)_{L^2}
+\left(\vec a, \partial_t\vec Z_k^\pm\right)_{L^2}\\
&=
\left(\vec{\mathcal L}\vec a, \vec Z_k^\pm\right)_{L^2}
- \ell_k \left(\vec a, \pun \vec Z_k^\pm \right)_{L^2}
-\frac{\dot \lambda_k}{\lambda_k}\left(\vec a, \vec{\widetilde \Lambda} \vec Z_k^\pm\right)_{L^2}
 - \dot \by_k \cdot \left(\vec a, \nabla \vec Z_k^\pm\right)_{L^2}\\
&\quad + \left(\vec \pp,\vec Z_k^\pm \right)_{L^2}
-\left(\vec P , \vec Z_k^\pm \right)_{L^2}
+A \left(\vec M_E , \vec Z_k^\pm\right)_{L^2}
+\left(\vec M_Q , \vec Z_k^\pm \right)_{L^2}.
\end{align*}
Using now \eqref{Hbeta} and \eqref{Zpm}, we have
\begin{align*}
&\left(\vec{\mathcal L}\vec a, \vec Z_k^\pm\right)_{L^2}
- \ell_k \left(\vec a, \pun \vec Z_k^\pm \right)_{L^2}\\
&\qquad =\frac 1{\lambda_k}\left( \vec a, \vec{\tilde \theta}_k(-H_{\ell_k}J\vec Z_{\ell_k}^\pm\right)_{L^2}
+ \left(\begin{pmatrix}0\\(f'(\MS)-f'(W_k))a\end{pmatrix}, \vec Z_k^\pm \right)_{L^2}\\
&\qquad = \pm \mu_k \left(\vec a, \vec Z_k^\pm \right)_{L^2}
+ \left(\begin{pmatrix}0\\(f'(\MS)-f'(W_k))a\end{pmatrix}, \vec Z_k^\pm \right)_{L^2}.
\end{align*}
Now, we estimate the error terms one by one, starting with the last term on the right-hand side
of the above estimate.
Using 
\[
\left| f'(\MS)-f'(W_k )\right| \lesssim (|\MS|^\frac13+|W_k|^\frac13)|\MS-W_k|
\lesssim (|\MS|^\frac13+|W_k|^\frac13)\left(|\MS-\tsum_l W_l|+\tsum_{l\neq k} |W_l|\right)
\]
and so by \eqref{eq:dZ}, \eqref{eq:C3} and \eqref{eq:c2},
\begin{align*}
& \left\|(f'(\MS)-f'(W_k)) \zeta_k \right\|_{L^\frac{10}7}\\
&\quad \lesssim \left( \|\MS \zeta_k\|_{L^\frac56}^\frac13 + \|W_k\zeta_k\|_{L^\frac56}^\frac13\right)
\biggl(\big\|(\MS-\sum_l W_l)\zeta_k^\frac23\big\|_{L^\frac{10}3}+\sum_{l\neq k} \big\|W_l\zeta_k^\frac23\big\|_{L^\frac{10}3}\biggr)
\lesssim t^{-2}.
\end{align*}
Thus,
\begin{equation*}
\left|\left(\begin{pmatrix}0\\(f'(\MS)-f'(W_k ))a \end{pmatrix}, \vec Z_k^\pm \right)_{L^2} \right|
\lesssim \left\|(f'(\MS)-f'(W_k)) \zeta_k\right\|_{L^\frac{10}7}
\|a\|_{L^\frac{10}3} \lesssim t^{-2} \|\vec a\|_\mathcal{E}.
\end{equation*}
Then, it follows from \eqref{eq:dZ} and \eqref{eq:c3} that
\begin{equation*}
\left| \frac{\dot \lambda_k}{\lambda_k}\left(\vec a,\vec{\widetilde\Lambda}\vec Z_k^\pm\right)_{L^2}\right|
+\left| \dot \by_k \cdot \left(\vec a, \nabla \vec Z_k^\pm\right)_{L^2}\right|
\lesssim t^{-2} \left(\| a \|_{L^\frac{10}3}+\|b\|_{L^2}\right)\lesssim t^{-2} \|\vec a\|_\cE.
\end{equation*}
Besides, by \eqref{eq:dZ} and \eqref{eq:o10} we have
\begin{equation*}
\left|\left(\vec \pp_1 a, \vec Z_k^\pm \right)_{L^2}\right|
\lesssim \|p_1 \zeta_k\|_{L^\frac{10}7}\|a\|_{L^\frac{10}3}
\lesssim (|A| e^{-\beta_m}+q) \|\vec a\|_\cE.
\end{equation*}
Using \eqref{eq:ov0}, we have
\begin{equation*}
\left|\left(\vec \pp_2, \vec Z_k^\pm \right)_{L^2}\right|
 \lesssim \|a\|_{L^\frac{10}3}^2 \lesssim \|\vec a\|_\cE^2.
\end{equation*}
By \eqref{eq:o20} and \eqref{eq:c2}, we have
\begin{align*}
\left|\left(\vec \pp_3, \vec Z_k^\pm\right)_{L^2}\right|
& \lesssim A^2 e^{-2\beta_m} + t^{-\frac23} q^2 + q^\frac73 + t^{-2} (|A|e^{-\beta_m}+q)\\
& \lesssim A^2 e^{-2\beta_m} + q^2 + t^{-2} (|A|e^{-\beta_m}+q) .
\end{align*}
Summarizing for the error term in $\pp$, we have obtained
\begin{align*}
\left|\left(\vec \pp, \vec Z_k^\pm \right)_{L^2} \right|
& \lesssim (|A| e^{-\beta_m}+q) \|\vec a\|_{\mathcal E} + \|\vec a\|_{\mathcal E}^2 
+A^2 e^{-2\beta_m} + q^2 + t^{-2} (|A|e^{-\beta_m}+q) \\
& \lesssim \|\vec a\|_{\mathcal E}^2 + q^2 + t^{-2}q+ |A|t^{-2}e^{-\beta_m}.
\end{align*}
Then, using \eqref{oZ} and \eqref{eq:C3} we have, for $j$, $k\neq l$,
\[
\left(\vec Z_k^\Lambda, \vec Z_k^\pm\right)=0,\quad
\left(\vec Z_k^{(j)}, \vec Z_k^\pm\right)=0,\quad
\left|\left(\vec Z_l^\Lambda, \vec Z_k^\pm\right)\right|\lesssim t^{-3},\quad
\left|\left(\vec Z_l^{(j)}, \vec Z_k^\pm\right)\right|\lesssim t^{-4},
\]
and thus
\[
\left| \left(\vec P^{\rm (I)} , \vec Z_k^\pm \right)_{L^2} \right| 
\lesssim t^{-3} \sum_l \left( \left| \frac{d}{dt} \alpha_l^\Lambda \right| + \left| \frac{d}{dt} \alpha_l^{(j)} \right| \right).
\]
By \eqref{eq:ke}, this implies the estimate
\[
\left| \left(\vec P^{\rm (I)} , \vec Z_k^\pm \right)_{L^2} \right| 
\lesssim t^{-3} \left(\|\vec a\|_{\mathcal E} + t^{-2} q + |A| t^{-2} e^{-\beta_m}\right).
\]
For $\vec P^{\rm (II)}$, we have
\begin{align*}
\left| \left(\vec P^{\rm (II)}, \vec Z_k^\pm \right)_{L^2} \right| 
&\lesssim \sum_l \left( \left| \frac{d}{dt} \alpha_l^\Lambda \right| + \left| \frac{d}{dt} \alpha_l^{(j)} \right| \right)\\
&\lesssim q \left(\|\vec a\|_{\mathcal E} + t^{-2} q + |A| t^{-2} e^{-\beta_m}\right).
\end{align*}
By \eqref{eq:c3} and the definition of $E_m$,
\[
\left|A \left(\vec M_E ,\vec Z_k^\pm\right)_{L^2}\right|
\lesssim |A| t^{-2} e^{-\beta_m}.
\]
Last, by \eqref{eq:c3}, we have
\[
\left| \left(\vec M_Q , \vec Z_k^\pm \right)_{L^2} \right| \lesssim t^{-2} q.
\]
All the error terms have been estimated and \eqref{eq:zo} is proved.
\end{proof}

\subsection{Definition of the energy functional}

We define the following energy functional $\cH$, which is adapted to the equation \eqref{eq:vv} of $(a,b)$
\begin{equation*}
\mathcal{H} = \int |\nabla a|^2 + b^2 +2 \chi (\pun a) b
- 2 \left( F(\MS{+}AE_\mm{+}Q{+}a) - F(\MS{+}AE_\mm{+}Q) - f(\MS{+}AE_\mm{+}Q) a \right)
\end{equation*}
where the function $\chi$ is defined in \eqref{defchiK}.
Note the following straightforward estimate
\begin{equation}\label{eq:eH}
|\cH|\lesssim \|\vec a\|_\cE^2.
\end{equation}
Moreover, we set
\[
\mathcal{N}_\Omega =\int_\Omega \left(|\nabla a|^2+b^2+2 \chi (\pun a) b \right),
\]
and
\[
\mathcal{N}_{\Omega^C} =\int_{\Omega^C} \left(|\nabla a|^2+b^2\right).
\]
Notation here is similar to the one introduced in Section \ref{s:3.5}, but this should not lead to confusion.
Note that as in \eqref{nintee},
\begin{equation}\label{nint} 
\Nint \geq \overline \ell \int_{\Omega} \left|\frac {\chi}{\overline \ell} \pun a + b\right|^2 
 + (1-\overline \ell) \int_{\Omega}\left( |\nabla a|^2 + b^2\right).
\end{equation}

\subsection{Coercivity of the energy}
\begin{lemma}\label{le:co}
Assuming \eqref{BPweak}, for some $\mu,C>0$, it holds
\begin{equation}\label{eq:cH}
\mathcal{H}\geq (1-C \sigma) \mathcal{N}_\Omega + \mu \mathcal N_{\Omega^C} 
- C B^2.
\end{equation}
\end{lemma}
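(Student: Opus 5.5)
The plan is to follow the proof of the analogous estimate \eqref{lF} in Section \ref{s:3.5}, which comes from \cite[(5.17)]{MMwave2}, with $\vec\varepsilon$ replaced by $\vec a$ and $\WW$ replaced by $\MS+AE_m+Q$.

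\textbf{Step 1: split the energy.} Write $\cH=\Nint+\cN_{\Omega^C}+2\int_{\Omega^C}\chi(\pun a)b-2\int \mathcal F$, with
\[
\mathcal F=F(\MS+AE_m+Q+a)-F(\MS+AE_m+Q)-f(\MS+AE_m+Q)a .
\]
Taylor's formula gives
\[
\Bigl|\mathcal F-\tfrac12 f'(\MS+AE_m+Q)a^2\Bigr|\lesssim |a|^{\frac{10}3}.
\]
By Sobolev \eqref{z1} and \eqref{BPweak}, the contribution of $|a|^{10/3}$ is $O(\|\vec a\|_\cE^{10/3})=o(\|\vec a\|_\cE^2)$.

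Next, replace $f'(\MS+AE_m+Q)$ by $\sum_k f'(W_k)$. The error is bounded pointwise by $(|\MS|^{1/3}+\dots)(|\MS-\sum_k W_k|+|AE_m|+|Q|)+\sum_{k\neq l}|W_k|^{4/3}\mathbf 1_{\{\ldots\}}$. Using \eqref{eq:c2}, \eqref{eq:C2}, H\"older with $\|a\|_{L^{10/3}}$, and \eqref{BPweak}, its contribution is $o(1)\|\vec a\|_\cE^2$. We may also use $|A|e^{-\beta_m}\ll 1$ for $t$ large.

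\textbf{Step 2: localize near each soliton.} On $\Omega^C$, $\chi\equiv\ell_k$ on the $k$-th strip $(\ell_k^-t,\ell_k^+t)\times\R^4$, and $\nabla\chi=0$ there. I introduce cutoffs $\varphi_{\gamma,k}$ centered at the $k$-th soliton, as in Lemma \ref{pr:22}(ii). I then apply the localized coercivity \eqref{eq:2.30} to $g=a$, $h=b$, after rescaling by $\vec\theta_k$ at each soliton.

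The orthogonality conditions \eqref{eq:OR} kill the terms $\pshb{a}{\Lambda W}$ and $\pshb{a}{\partial_jW}$ in \eqref{eq:2.30}. The remaining terms $\psl{\vec a}{\vec Z^\pm}$ are exactly $\alpha_k^\pm$, so they are controlled by $B^2$.

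Away from the soliton centers, $f'(W_k)$ is small, of order $t^{-4\gamma}$ in the relevant weighted sense. There the quadratic form $|\nabla a|^2+b^2+2\ell_k\pun a\, b$ dominates $(1-|\ell_k|)(|\nabla a|^2+b^2)$, and a standard partition-of-unity argument glues the localized estimates together. This gives
\[
\int_{\Omega^C}\bigl(|\nabla a|^2+b^2+2\chi\pun a\, b\bigr)-\sum_k\int f'(W_k)a^2\ \ge\ \mu\,\cN_{\Omega^C}-Ct^{-4\gamma}\|\vec a\|_\cE^2-CB^2 .
\]
The potential terms on $\Omega$ are $O(t^{-c})\|\vec a\|_\cE^2$, since $W_k$ is small there.

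\textbf{Step 3: absorb the errors.} By \eqref{nint}, $\Nint+\mu\cN_{\Omega^C}\gtrsim\|\vec a\|_\cE^2$. All $o(1)\|\vec a\|_\cE^2$ errors can therefore be absorbed, giving
\[
\cH\ge(1-o(1))\Nint+\mu'\cN_{\Omega^C}-CB^2 .
\]
To get the precise factor $(1-C\sigma)$ in front of $\Nint$, the gluing near the transition regions of width $\sim\sigma t$ has to be done with some care, since that is where $\chi$ varies.

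The main obstacle is this localization near the boundaries of $\Omega$, together with the slow polynomial decay of $W$. The cutoff errors and the tails $f'(W_k)$ in the transition regions have to be shown small compared to $\Nint$, with loss at most $C\sigma\Nint$. I would handle this exactly as in \cite[Lemma 4.2]{MMwave1}: choose the localization weights adapted to $\chi$, and use Hardy \eqref{z2} to control the tail of $f'(W_k)a^2\lesssim\omega_k^4a^2$.
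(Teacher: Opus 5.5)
Your proposal is correct and follows essentially the paper's route: the localized coercivity \eqref{eq:2.30} glued around each soliton as in \cite{MMwave1}, the orthogonality \eqref{eq:OR} removing the kernel terms, the $\alpha_k^\pm$ producing $-CB^2$, and all nonlinear and interaction errors being $o(1)\|\vec a\|_\cE^2$ and absorbed via \eqref{BPweak} and \eqref{nint}. There are two small corrections. First, the pointwise Taylor remainder is $\lesssim (|\MS+AE_m+Q|^{\frac13}+|a|^{\frac13})|a|^3$, not $|a|^{\frac{10}3}$; the integrated bound $O(\|\vec a\|_\cE^3)$ is unaffected. Second, since you already obtain $(1-o(1))\Nint$, the factor $1-C\sigma$ is automatic for $t$ large, so the gluing needs no extra care beyond the $O(t^{-4\gamma})\|\vec a\|_\cE^2$ cut-off losses coming from $\varphi_\gamma\lesssim t^{-\gamma}$ at distance $\gtrsim \sigma t$ from each soliton.
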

\begin{proof}
By the localized coercivity property \eqref{eq:2.30}, the closeness of $S$ to a sum of solitons \eqref{eq:c2}, and standard localization arguments
(see for example \cite[Proof of (4.21)]{MMwave1}), we have, for some constants $C,\mu>0$, for $t$ large,
\begin{align*}
& \int |\nabla a|^2 + b^2 - \int f'(\MS) a^2 
+ 2\int \chi \pun a b\geq (1-\tfrac 12 \sigma) \cN_\Omega + 2 \mu \cN_{\Omega^C}\\
&\qquad 
 - C \sum_k\biggl( \psh{a}{\Lambda_k W_k}^2
+\sum_{j=1}^5 \psh {a}{\partial_j W_k}^2
+ \psl {\vec {a}}{\vec Z_k^{+}}^2 + \psl {\vec {a}}{\vec Z_k^{-}}^2\biggr) .
\end{align*}
Thus, by \eqref{eq:OR} and \eqref{oZ}, we obtain
\begin{align*}
& \int |\nabla a|^2 + b^2 - \int f'(\MS) a^2 
+ 2\int \chi \pun a b
\geq ((1-\tfrac 12 \sigma)\cN_\Omega  + 2 \mu \cN_{\Omega^C} - C B^2.
\end{align*}
Now, we estimate
\begin{align*}
& \left|\int \left( F(\MS{+}AE_\mm{+}Q{+}a) - F(\MS{+}AE_\mm{+}Q) - f(\MS{+}AE_\mm{+}Q) a
-f'(\MS) a^2\right)\right|\\
& \quad \lesssim
\int \left|F(\MS{+}AE_\mm{+}Q{+}a) - F(\MS{+}AE_\mm{+}Q) - f(\MS{+}AE_\mm{+}Q) a - 
f'(\MS{+}AE_\mm{+}Q)a^2\right|\\
&\qquad+ \int \left|f'(\MS{+}AE_\mm{+}Q)-f'(\MS)\right| a^2 \\
&\quad \lesssim \int \left(|\MS|^\frac13+|AE_m|^\frac13+|Q|^\frac13\right)|a|^3
+ \int |\MS|^{\frac13} \left(|AE_m|+|Q|\right) a^2\\
&\quad \lesssim (|A| e^{-\beta_m} + q + \|\vec a\|_\cE ) \|\vec a\|_\cE^2.
\end{align*}
The result then follows from the bootstrap estimate \eqref{BPweak} and \eqref{nint}.
\end{proof}

\subsection{Variation of the energy}
\begin{lemma}\label{le:vE}
Suppose \eqref{BPweak}. Then, the following estimates hold, for $\delta>0$ small.
\begin{enumerate}[label=\emph{(\roman*)}]
\item Estimate on the variation of the energy.
\begin{equation}\label{eq:di}\begin{aligned}
& - \frac d{dt} \left( t^{1+\delta} \mathcal H \right)
\\ &\quad \lesssim t^{1+\delta} \left(\|\vec a\|_\cE^2+|A|t^{-2} e^{-\beta_m} + q^\frac73+t^{-\frac 23} q^2 +q\|\vec a\|_\cE + t^{-\frac32} \|\vec a\|_\cE  +t^{-2} q\right)\|\vec a\|_\cE + t^\delta  B^2.
\end{aligned}\end{equation}
\item Refined estimate on the variation of the energy.
\begin{equation}\label{eq:ri}
\begin{aligned}
&- \frac d{dt} \left( t^{1+\delta} \mathcal H \right) + t^{1+\delta} \sum_{l=1}^4\br_l\\
&\quad  \lesssim t^{1+\delta} \left(\|\vec a\|_\cE^2+|A|t^{-2} e^{-\beta_m} + q^\frac73+t^{-\frac 23} q^2 +q\|\vec a\|_\cE + t^{-\frac32} \|\vec a\|_\cE  +t^{-\frac52} q\right)\|\vec a\|_\cE 
+ t^\delta B^2.
\end{aligned}
\end{equation}
where
\begin{equation}\label{eq:rr}
\begin{aligned}
\br_1 & = \sum_{k} r_{1,k} ,\quad r_{1,k} =- \frac{d_k}{t^2}\alpha_k^\Lambda\int a (\Delta_{\ell_k} + f'(W_k)) \Lambda_k^2 W_k,\\
\br_2 & = \sum_{j,k} r_{2,j,k} ,\quad r_{2,k} =- \frac{d_k}{t^2}\alpha_k^{(j)}\lambda_k \int a (\Delta_{\ell_k} + f'(W_k)) \Lambda_k\partial_j W_k,\\
\br_3 & = \sum_k r_{3,k}, \quad r_{3,k}= c_k \alpha_k^\Lambda \int (\ell_k \pun a + b) f''(W_k) v_k \Lambda_k W_k, \\
\br_4 & = \sum_{j,k} r_{4,j,k}, \quad r_{4,j,k}= c_k \alpha_k^{(j)} \lambda_k \int (\ell_k \pun a + b) f''(W_k)  v_k \partial_j W_k .
\end{aligned}
\end{equation}
\end{enumerate}
\end{lemma}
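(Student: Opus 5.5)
The argument follows the proof of Lemma \ref{mainprop}, now applied to the system \eqref{eq:vv}/\eqref{eq:34} for $\vec a$ instead of \eqref{eq:ee}. First compute
\[
-\frac d{dt}(t^{1+\delta}\cH)=-(1+\delta)t^\delta\cH-t^{1+\delta}\frac d{dt}\cH,
\]
and differentiate $\cH$ as $\bg_1+\bg_2+\bg_3$: time derivatives of the quadratic/potential part, of $2\int\chi\pun a\, b$, and the term with $\partial_t\chi$.

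In $\bg_1$, integration by parts and the system \eqref{eq:vv} produce several pieces:
\begin{itemize}
\item the linear terms $2\int(-\Delta a-f'(S+AE_m+Q)a)(-P_1+AM_E+M_Q)$ and $2\int b(\pp_3-P_2+AM_F+M_R)$;
\item the term $-2\int\partial_t(S+AE_m+Q)\,\big(f(\cdot+a)-f(\cdot)-f'(\cdot)a\big)$.
\end{itemize}
In the last term, $\partial_t S$ is replaced by $-\sum_k\ell_k\pun W_k$ plus errors using \eqref{eq:c4}, and $\partial_t(AE_m+Q)$ is bounded by $|A|e^{-\beta_m}+q+|\dot\alpha|$, using \eqref{ee:EF}, \eqref{ee:QRf} and Lemma \ref{le:bL}. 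Exactly as for $\bh_1,\bh_2$ in Lemma \ref{mainprop}, the $\pun W_k$ part combines with $\bg_2,\bg_3$. The virial-type term then gives
\[
-\bh_1\le\frac{1+C\sigma}{(1-2\sigma)t}\cN_\Omega,
\]
and $\bh_2=O(t^{-5/2}\|\vec a\|_\cE^2)$. Lemma \ref{le:co} converts $\frac{1+C\sigma}{t}\cN_\Omega$ into $\frac{1+\delta}{t}\cH+Ct^{-1}B^2$ for $\sigma$ small depending on $\delta$. This cancels $-(1+\delta)t^\delta\cH$ and leaves the $t^\delta B^2$ term. The cubic terms from $\pp_1a,\pp_2$ give the $\|\vec a\|_\cE^3$ contribution via \eqref{eq:o10}, \eqref{eq:ov0} and Sobolev.

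Next come the source terms paired with $a,b$. The $\pp_3$ contribution is controlled through the pointwise bound \eqref{eq:o20}, with weighted Hölder estimates \eqref{eq:C2}, \eqref{eq:C3} and \eqref{eq:c2}, \eqref{eq:c4}. This yields $\|\pp_3\|_{L^2}\lesssim A^2e^{-2\beta_m}+q^{7/3}+t^{-2/3}q^2+t^{-2}(q+|A|e^{-\beta_m})$. The $|S-W_k|$ factors cost $t^{-2}$ in $L^{10/3}$, or better near the solitons thanks to the $\omega$-localization of $\vec Z_k$. For the modulation terms $P$, the plan is to project: by \eqref{eq:OR}, $(a,\vec Z_k^\Lambda)$-type pairings against $H_{\ell_k}$ nearly vanish, since \eqref{ZW} gives $H_{\ell_k}\vec Z=0$. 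The remaining terms are $|\dot\alpha|$ times cross-interaction errors $t^{-3/2}$, or come from \eqref{eq:ke} times $q$. The $M_E,M_Q$ terms contribute $t^{-2}(|A|e^{-\beta_m}+q)\|\vec a\|_\cE$ by \eqref{eq:c3}. This proves \eqref{eq:di}.

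For \eqref{eq:ri}, the coarse $t^{-2}q\|\vec a\|_\cE$ terms are not discarded but expanded one order further. The term $M_Q^{\rm (I)}$ with $\dot\lambda_k/\lambda_k=-d_k/t^2+O(t^{-3+\delta})$ from \eqref{eq:c3}, paired with $(\Delta+f'(W_k))a$, gives exactly $\br_1,\br_2$. The contribution $(f'(S)-f'(W_k))Q_k^{\rm (I)}$ in $\pp_3$, with $S\approx\sum W_l+\sum c_lv_l$ by \eqref{eq:oS}, has main part $f''(W_k)c_kv_kQ_k^{\rm (I)}$; paired with $b$, and with $\chi\pun a$ from $\bg_2$ where $\chi=\ell_k$ near $W_k$, it gives $\br_3,\br_4$. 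The nonlocal part $\sum_{l\ne k}W_l$ near $W_k$ contributes $t^{-3}$ and is absorbed into the remainder. All remaining pieces are $O(t^{-5/2}q\|\vec a\|_\cE)$, using the refined error in \eqref{eq:c3} and $\|S-\vec\bW\|_\cE\lesssim t^{-3+\delta}$.

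The main obstacle is this bookkeeping: verifying that every $t^{-2}q$ term other than $\br_1,\dots,\br_4$ actually gains $t^{-1/2}$. This depends on the precise expansion of $S$ and of $\lambda_k$ from Proposition \ref{pr:vp}.
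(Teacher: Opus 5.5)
Your outline is correct and follows the paper's proof essentially step by step. It uses the same splitting $\bg_1+\bg_2+\bg_3$, regrouped into the virial term controlled via Lemma~\ref{le:co}, the term $(\partial_t+\chi\pun)(S+AE_m+Q)\,p_2$ handled through \eqref{eq:c4}, the kernel cancellation for the $\vec P$ terms, and the identification of $\br_1,\br_2$ from $\dot\lambda_k/\lambda_k\approx -d_k t^{-2}$ in $\vec M_Q^{\rm (I)}$ and of $\br_3,\br_4$ from $f''(W_k)c_kv_kQ_k^{\rm (I)}$ in $p_{3,3}$ paired with $\chi\pun a+b$. Two small precisions: the cancellation for $\vec P$ comes from \eqref{LW} together with $R_k^{\rm (I)}=-\ell_k\pun Q_k^{\rm (I)}$ (so that $P_2+\chi\pun P_1$ carries a factor $\chi-\ell_k$), not from the orthogonality \eqref{eq:OR}; and the operator $\Delta_{\ell_k}+f'(W_k)$ in $\br_1,\br_2$ appears only after you combine $\Delta M_Q^{\rm (I)}$ with $\chi\pun M_R^{\rm (I)}$ from $\bg_2$.
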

\begin{remark}
With respect to \eqref{eq:di}, the estimate \eqref{eq:ri} is more precise as it identifies
in the expression of $\frac d{dt} \cH$ all the terms (denoted by $\br_l$, for $l=1,2,3,4$) that are of order $t^{-2} q \|\vec a \|_\cE$.
This will be useful in the proof of the classification result, since then such terms are critical and cannot be considered as error terms.

The functions $(\Delta_{\ell_k} + f'(W_k)) \Lambda_k^2 W_k$ and
$(\Delta_{\ell_k} + f'(W_k)) \lambda_k \Lambda_k \partial_j W_k$
involved in the definitions of $\br_1$ and $\br_2$ can be expressed more explicitly using relations \eqref{L2}.
\end{remark}
\begin{proof}
This proof is similar to the one of \eqref{time} in Lemma \ref{mainprop}.
We decompose
\begin{align*}
\frac{d}{dt}\mathcal{H}
&=
 \int \partial_t \left(|\nabla a|^2 + b^2 
- 2 \left( F(\MS{+}AE_\mm{+}Q{+}a) - F(\MS{+}AE_\mm{+}Q) - f(\MS{+}AE_\mm{+}Q) a \right)\right)\\
&\quad + 2 \int \chi \partial_t ( \pun a b)
+ 2 \int (\partial_t\chi) \pun a b
= \bg_1+\bg_2+\bg_3.
\end{align*}

\emph{Computation of $\bg_1$.}
We compute
\begin{align*}
\bg_1 & = 2 \int \left(\nabla\partial_t a \cdot \nabla a + \partial_t b b\right)
-2 \int \partial_t a \left( f(\MS{+}AE_\mm{+}Q{+}a)-f(\MS{+}AE_\mm{+}Q)\right) \\
&\quad -2 \int \partial_t (\MS{+}AE_\mm{+}Q) p_2.
\end{align*}
Using \eqref{eq:vv}, the definition of $\pp_3$ and integrating by parts, we have
\begin{align}
\bg_1 & = 
2 \int b \pp_3 - 2 \int b (P_2-AM_F-M_R) + 2 \int a\Delta(P_1-AM_E-M_Q) \nonumber\\
&\quad 
+ 2 \int (P_1-AM_E-M_Q)\left( f(\MS{+}AE_\mm{+}Q{+}a)-f(\MS{+}AE_\mm{+}Q)\right)\label{eq:g1}\\ 
&\quad
-2 \int \partial_t (\MS{+}AE_\mm{+}Q) p_2.\nonumber
\end{align}

\emph{Computation of $\bg_2$.}
Using \eqref{eq:vv}, we have
\begin{align*}
\bg_2 & = 2 \int \chi \partial_t \pun a b
+ 2 \int \chi \pun a \partial_t b\\
& = 2 \int \chi \pun b b
+ 2 \int \chi \pun a \left(\Delta a + f(\MS{+}AE_\mm{+}Q{+}a)-f(\MS{+}AE_\mm{+}Q)\right)
+ 2 \int \chi\pun a p_3\\
&\quad - 2 \int \chi \pun a (P_2-AM_F-M_R) -2\int \chi b \pun (P_1-AM_E-M_Q).
\end{align*}
After integration by parts, we obtain
\begin{align*}
\bg_2
& = - \int \pun \chi \left(b^2+(\pun a)^2-|\overline\nabla a|^2\right)\\
&\quad -2\int (\pun \chi) \left(F(\MS{+}AE_\mm{+}Q{+}a)-F(\MS{+}AE_\mm{+}Q)-f(\MS{+}AE_\mm{+}Q)a\right)\\
&\quad -2\int\chi\pun (\MS{+}AE_\mm{+}Q) \left( f(\MS{+}AE_\mm{+}Q{+}a)-f(\MS{+}AE_\mm{+}Q)-f'(\MS{+}AE_\mm{+}Q)a\right)\\
&\quad + 2 \int \chi\pun a p_3+ 2 \int \pun \chi a (P_2-AM_F-M_R)\nonumber\\
&\quad + 2 \int \chi a \pun(P_2-AM_F-M_R) -2\int \chi b \pun (P_1-AM_E-M_Q).
\end{align*}
Using \eqref{derchi}, we obtain
\begin{align}
\bg_2
& = -\frac{1}{(1-2\sigma)t} \int_\Omega \left(b^2+(\pun a)^2-|\overline\nabla a|^2\right)\nonumber\\
&\quad -\frac{2}{(1-2\sigma)t} \int_\Omega \left(F(\MS{+}AE_\mm{+}Q{+}a)-F(\MS{+}AE_\mm{+}Q)-f(\MS{+}AE_\mm{+}Q)a\right)\nonumber\\
&\quad -2\int\chi\pun (\MS{+}AE_\mm{+}Q) \left( f(\MS{+}AE_\mm{+}Q{+}a)-f(\MS{+}AE_\mm{+}Q)-f'(\MS{+}AE_\mm{+}Q)a\right)\nonumber\\
&\quad + 2 \int \chi\pun a p_3+ \frac2{(1-2\sigma)t} \int_\Omega a (P_2-AM_F-M_R)\nonumber\\
&\quad + 2 \int \chi a \pun(P_2-AM_F-M_R) -2\int \chi b \pun (P_1-AM_E-M_Q).\label{eq:g2}
\end{align}

\emph{Computation of $\bg_3$.}
Using \eqref{derchi}, we have
\begin{equation}\label{eq:g3}
\bg_3 = - \frac{2}{(1-2\sigma)t}\int_\Omega \frac{x_1}{t} \pun a b.
\end{equation}

Gathering the expressions obtained in \eqref{eq:g1}, \eqref{eq:g2} and \eqref{eq:g3}, we reorganize
\begin{equation*}
\frac{d}{dt}\mathcal{H}
= \bh_1+2 \sum_{i=2}^6 \bh_i,
\end{equation*}
where
\begin{align*}
\bh_1&=-\frac{1}{(1-2\sigma)t} \int_\Omega \left(b^2+(\pun a)^2+2\frac{x_1}t\pun a b-|\overline\nabla a|^2\right),\\
\bh_2&= -\frac{1}{(1-2\sigma)t} \int_\Omega \left(F(\MS{+}AE_\mm{+}Q{+}a)-F(\MS{+}AE_\mm{+}Q)-f(\MS{+}AE_\mm{+}Q)a\right),\\
\bh_3&= - \int (\partial_t+\chi\pun)(\MS{+}AE_m{+}Q) p_2,\\
\bh_4&= \int a \left( \Delta(P_1-AM_E-M_Q) + \chi \pun (P_2-AM_F-M_R)\right),\\
&\quad + \int (P_1-AM_E-M_Q)\left( f(\MS{+}AE_\mm{+}Q{+}a)-f(\MS{+}AE_\mm{+}Q)\right),\\
\bh_5&= - \int b \left( P_2-AM_F-M_R +\chi \pun (P_1-AM_E-M_Q)\right),\\
\bh_6&= \int (\chi \pun a + b) \pp_3 + \frac1{(1-2\sigma)t} \int_\Omega a (P_2-AM_F-M_R).
\end{align*}
We now estimate the terms $\bh_j$.

\emph{Estimate of $\bh_1$.}
We claim
\begin{equation}\label{eq:h1}
\bh_1 \geq -\frac{1+C\sigma}{t} \Nint.
\end{equation}
Indeed, we have
\[
\bh_1 \geq - \frac{1}{(1-2\sigma)t} \int_\Omega \left(b^2+(\pun a)^2
+2\frac{x_1}{t} \pun a b\right),
\]
and so, by \eqref{nint} and the definition of $\chi$ in \eqref{defchiK},
\begin{align*}
-((1-2\sigma)t) {\bf h_1} 
& \leq \Nint+ 2 \int_{\Omega} \left( \frac{x_1}t- \chi\right) \pun a b\\
& \leq \Nint + C\sigma \int_\Omega \left(|\pun a|^2 + b^2\right) \leq (1+C \sigma) \Nint.
\end{align*}

\emph{Estimate of $\bh_2$.}
We claim
\begin{equation}\label{eq:h2}
|\bh_2|\lesssim t^{-3} \|\vec a\|_\cE^2+ t^{-1} \|\vec a\|_\cE^\frac{10}3.
\end{equation}
By standard estimates, we have
\begin{align*}
|\bh_2|& \lesssim \frac 1t\int_\Omega a^2 \left( |\MS|^\frac43+|AE_m|^\frac43+|Q|^\frac43+|a|^\frac43 \right)\\
&\lesssim \frac 1t\|a\|_{L^\frac{10}3}^2 \left(\|\MS\|_{L^\frac{10}3(\Omega)}^\frac43
+|A|\|E_m\|_{L^\frac{10}3(\Omega)}^\frac43+\|Q\|_{L^\frac{10}3}^\frac43+\|a\|_{L^\frac{10}3}^\frac43\right).
\end{align*}
Moreover, by \eqref{eq:c2} and $|W_k|\lesssim \omega_k^3$ (see \eqref{eq:dW})
\[
\|\MS\|_{L^\frac{10}3(\Omega)}
\lesssim \|\MS-\tsum_k W_k\|_{L^\frac{10}3}+\|\tsum_k W_k\|_{L^\frac{10}3(\Omega)}
\lesssim t^{-\frac32}.
\]
By $|E_m|\lesssim \zeta_m$, we obtain $\|E_m\|_{L^\frac{10}3(\Omega)}\lesssim t^{-10}$ and by \eqref{eq:dW},
$\|Q\|_{L^\frac{10}3(\Omega)}\lesssim t^{-\frac32} q$.
Thus, we have proved \eqref{eq:h2}.

\emph{Estimate of $\bh_3$.}
We claim
\begin{equation}\label{eq:h3}
|\bh_{3}|\lesssim t^{-\frac32}  \|\vec a\|_\cE^2  .
\end{equation}
We decompose $\bh_3$ as follows
\begin{align*}
\bh_3&= - \int (\partial_t+\chi\pun)\MS p_2 - A \int (\partial_t+\chi\pun) E_m p_2 - \int (\partial_t+\chi\pun)Q p_2\\
&= \bh_{3,1}+\bh_{3,2}+\bh_{3,3}.
\end{align*}
First, we estimate, using \eqref{eq:c4},
\begin{align*}
\|(\partial_t+\chi\pun ) \MS \|_{L^\frac{10}3}
&\lesssim \|\partial_t (\MS-\tsum_k W_k) \|_{L^\frac{10}3}
+\|(\partial_t+\chi\pun ) \tsum_k W_k \|_{L^\frac{10}3}
+\| \pun (\MS-\tsum_k W_k) \|_{L^\frac{10}3}\\
& \lesssim \|\nabla\partial_t (\MS-\tsum_k W_k) \|_{L^2}
+\|(\partial_t+\chi\pun ) \tsum_k W_k \|_{L^\frac{10}3}
+\|\nabla \pun (\MS-\tsum_k W_k) \|_{L^2}\\
&\lesssim t^{-\frac32} + \tsum_k \|(\partial_t+\ell_k \pun ) W_k \|_{L^\frac{10}3}
+ \tsum_k \|(\chi - \ell_k) \pun W_k \|_{L^\frac{10}3}.
\end{align*}
We have by \eqref{eq:tG},
\begin{equation*}
(\partial_t+\ell_k \pun ) W_k = -\frac{\dot \lambda_k}{\lambda_k}  \Lambda_k W_k - \dot \by_k\cdot  \nabla W_k,
\end{equation*}
and thus by \eqref{eq:c3}, we obtain
\begin{equation*}
\| (\partial_t+\ell_k \pun ) W_k \|_{L^\frac{10}3} \lesssim t^{-2}.
\end{equation*}
Moreover, by \eqref{defchiK} and \eqref{eq:dW},
\begin{equation*}
\|(\chi - \ell_k) \pun W_k\|_{L^\frac{10}3}^\frac{10}3
\lesssim 
\|\pun W_k \|_{L^\frac{10}3(x_1\not\in (\ell_k^-t,\ell_k^+t))}^\frac{10}3
\lesssim \int_{r>\sigma^2 t} r^{-\frac{40}3} r^4 dr \lesssim t^{-\frac{25}3}.
\end{equation*}
Thus, we have proved
\begin{equation}\label{eq:00}
\|(\partial_t+\chi\pun ) \MS \|_{L^\frac{10}3}
\lesssim t^{-\frac32}.
\end{equation}
Second, we observe that
\[
\left| p_2 \right|
\lesssim |a|^2 (|\MS|^\frac13+|E_\mm|^\frac13+|Q|^\frac13) + |a|^\frac73,
\]
and thus, also using \eqref{BPweak},
\begin{equation}\label{eq:z3}
 \left\| p_2\right\|_{L^\frac{10}7} 
  \lesssim \left(\|\MS\|_{L^\frac{10}3}^\frac13+\|E_\mm\|_{L^\frac{10}3}^\frac13
+\|Q\|_{L^\frac{10}3}^\frac13  + \|a\|_{L^\frac{10}3}^\frac13\right) \|a\|_{L^\frac{10}3}^2 
\lesssim \|\vec a\|_\cE^2 .
\end{equation}
Combining \eqref{eq:00} and \eqref{eq:z3}, we have proved
\begin{equation*}
|\bh_{3,1}|\lesssim t^{-\frac32} \|\vec a\|_\cE^2 .
\end{equation*}
To estimate $\bh_{3,2}$ and $\bh_{3,3}$, using \eqref{eq:z3}, we only need to estimate
$\|(\partial_t+\chi\pun ) E_m \|_{L^\frac{10}3}$ and $\|(\partial_t+\chi\pun ) Q \|_{L^\frac{10}3}$.
Note that
\begin{align*}
\|(\partial_t+\chi\pun ) E_m \|_{L^\frac{10}3} & \lesssim 
\| \partial_t E_m \|_{L^\frac{10}3}+\| \pun E_m \|_{L^\frac{10}3} \lesssim e^{-\beta_m},\\
\|(\partial_t+\chi\pun ) Q \|_{L^\frac{10}3} & \lesssim 
\tsum_k \|(\partial_t+\ell_k\pun ) Q_k \|_{L^\frac{10}3}
+\tsum_k \|(\chi-\ell_k) Q_k \|_{L^\frac{10}3}\lesssim t^{-\frac32}q,
\end{align*}
which is sufficient to complete the proof of \eqref{eq:h3}.

\emph{Estimate of $\bh_4$.} We claim
\begin{equation}\label{eq:h4}
|\bh_4|\lesssim 
 \left(\|\vec a\|_\cE^2+|A|t^{-2} e^{-\beta_m} +q\|\vec a\|_\cE+ t^{-\frac32} \|\vec a\|_\cE +t^{-2} q\right)\|\vec a\|_\cE
\end{equation}
and
\begin{align}
&\left| \bh_{4} 
-\int a \sum_k \frac{d_k}{t^2}\left(\alpha_k^\Lambda \Theta_k^{\Lambda,\Lambda}
+ \sum_j \alpha_k^{(j)} \Theta_k^{\Lambda,j}\right)\right|\nonumber \\
&\quad 
\lesssim  \left(\|\vec a\|_\cE^2+|A|t^{-2} e^{-\beta_m} +t^{-2} q^2 + q \|\vec a\|_\cE+ t^{-\frac32} \|\vec a\|_\cE +t^{-\frac52} q\right)\|\vec a\|_\cE \label{eq:h4b}
\end{align}
where
\begin{align*}
\Theta_k^{\Lambda,\Lambda} = -\Delta_{\ell_k}\Lambda_k^2W_k
-f'(W_k)\Lambda_k^2W_k,\quad
\Theta_k^{\Lambda,j} = -\Delta_{\ell_k}\Lambda_k\partial_j W_k
-f'(W_k)\Lambda_k\partial_j W_k.
\end{align*}

We split $\bh_4$ into two terms
\begin{equation*}
\bh_4
= \int a \Delta_{4,1} + \int (P_1-AM_E-M_Q)\Delta_{4,2}
=\bh_{4,1}+\bh_{4,2}.
\end{equation*}
where
\begin{align*}
\Delta_{4,1} & = \Delta(P_1-AM_E-M_Q) + \chi \pun (P_2-AM_F-M_R)
+f'(\MS) (P_1-AM_E-M_Q)\\
\Delta_{4,2} & = f(\MS{+}AE_\mm{+}Q{+}a)-f(\MS{+}AE_\mm{+}Q)-f'(\MS)a .
\end{align*}
Again, we split
\begin{align*}
\Delta_{4,1} & = \left[\Delta P_1^{\rm (I)} + \chi \pun P_2^{\rm (I)} + f'(\MS) P_1^{\rm (I)}\right]
+\left[\Delta P_1^{\rm (II)} + \chi \pun P_2^{\rm (II)} + f'(\MS) P_1^{\rm (II)}\right]\\
& \quad
-\left[A (\Delta M_E + \chi \pun M_F + f'(\MS) M_E)\right]\\
& \quad
 - \left[\Delta M_Q^{\rm (I)} + \chi \pun M_R^{\rm (I)} + f'(\MS) M_Q^{\rm (I)}\right] 
- \left[\Delta M_Q^{\rm (II)} + \chi \pun M_R^{\rm (II)} + f'(\MS) M_Q^{\rm (II)}\right]\\
&=\Delta_{4,1,1}+\Delta_{4,1,2}+\Delta_{4,1,3}+\Delta_{4,1,4}+\Delta_{4,1,5}.
\end{align*}
For the term $\Delta_{4,1,1}$, it is important to use some cancellations related to the properties of the operator
$-\Delta-f'(W)$. By the definitions of $\vec Z_\ell^\Lambda$ and $\vec Z_\ell^{(j)}$,
\begin{align*}
\Delta_{4,1,1} & =
\sum_k \frac{d}{dt}\alpha_k^\Lambda \left(\Delta_{\ell_k} \Lambda_k W_k + f'(W_k) \Lambda_k W_k \right)
+\sum_{j,k}\lambda_k \frac{d}{dt}\alpha_k^{(j)}\left(\Delta_{\ell_k} \partial_j W_k + f'(W_k) \partial_j W_k \right)\\
&\quad + \sum_k \frac{d}{dt}\alpha_k^\Lambda \left(f'(\MS)-f'(W_k)\right) \Lambda_k W_k 
+\sum_{j,k}\lambda_k\frac{d}{dt}\alpha_k^{(j)}\left(f'(\MS)-f'(W_k)\right) \partial_j W_k\\
&\quad - \sum_{k} \ell_k \frac{d}{dt}\alpha_k^{\Lambda} (\chi-\ell_k) \pun^2 \Lambda_k W_k
- \sum_{j,k}\ell_k\lambda_k \frac{d}{dt}\alpha_k^{(j)} (\chi-\ell_k) \pun^2 \partial_j W_k.
\end{align*}
By \eqref{LW}, the first line on the right-hand side vanishes. 
For the second line, we note that
\[
\left|f'(\MS)-f'(W_k)\right| \left( |\Lambda_k W_k|+|\partial_j W_k| \right)
\lesssim (|\MS|^\frac13+|W_k|^\frac13)\left(|\MS-\tsum_lW_l|+\tsum_{l\neq k}|W_l|\right)\omega_k^3
\]
and thus, using \eqref{eq:c2} and \eqref{eq:C2},
\begin{align*}
&\|\left(f'(\MS)-f'(W_k)\right) \Lambda_k W_k \|_{L^\frac{10}7}
+\|\left(f'(\MS)-f'(W_k)\right) \partial_j W_k\|_{L^\frac{10}7}\\
& \quad \lesssim 
\left(\|\MS\|_{L^\frac{10}3}+\|W_k\|_{L^\frac{10}3}\right)^\frac13
 \left(\|\MS-\tsum_lW_l\|_{L^\frac{10}3}+\tsum_{l\neq k}\|\omega_k\omega_l\|_{L^5}^3\right)
\lesssim t^{-2}.
\end{align*}
Moreover, by the definition of $\chi$, we have
\begin{align*}
\|(\chi-\ell_k) \pun^2 \Lambda_k W_k\|_{L^\frac{10}7} +\|(\chi-\ell_k) \pun^2 \partial_j W_k\|_{L^\frac{10}7}
& \lesssim \|\omega_k^5\|_{L^\frac{10}7(x\not\in(\ell_k^- t,\ell_k^+t))}\lesssim t^{-\frac32}.
\end{align*}
Thus, using \eqref{eq:ke}, we obtain
\begin{equation*}
\|\Delta_{4,1,1}\|_{L^\frac{10}7}
\lesssim t^{-\frac32} \left( \|\vec a\|_\cE+t^{-2}q+|A|t^{-2}e^{-\beta_m}\right).
\end{equation*}
Using the definition of $\vec P^{\rm (II)}$ and \eqref{eq:ke}, we have directly
\[
\|\Delta_{4,1,2}\|_{L^\frac{10}7}
\lesssim \left(\sum_k \left| \frac{d}{dt} \alpha_k^\Lambda \right| + \sum_{j,k} \left| \frac{d}{dt} \alpha_k^{(j)} \right|\right)q
\lesssim \left(\|\vec a\|_{\mathcal E} + t^{-2} q + |A| t^{-2} e^{-\beta_m}\right) q.
\]
Then, we estimate using the definition of $\vec M_E$ and \eqref{eq:c3}
\begin{equation*}
\|\Delta_{4,1,3}\|_{L^\frac{10}7}\lesssim |A| t^{-2} e^{-\beta_m}.
\end{equation*}
For $\Delta_{4,1,4}$, we use \eqref{eq:c3} to estimate
\[
\|\Delta_{4,1,4}\|_{L^\frac{10}7} \lesssim t^{-2}q \sum_k \|\omega_k^5\|_{L^\frac{10}7}\lesssim t^{-2}q.
\]
To prove a refined estimate on $\Delta_{4,1,4}$ (involving explicit correction terms of order $t^{-2} q$),
we decompose
\begin{align*}
\Delta_{4,1,4} 
&= - \sum_k\frac{\dot\lambda_k}{\lambda_k}\left(\Delta \Lambda_k Q_k^{\rm (I)}
+ \chi \pun \widetilde \Lambda_k R_k^{\rm (I)}
+f'(S) \Lambda_k Q_k^{\rm (I)} \right)\\
&\quad - \sum_{k} \dot\by_k \cdot\left(\Delta \nabla Q_k^{\rm (I)}
+ \chi \pun \nabla R_k^{\rm (I)} +f'(S) \nabla Q_k^{\rm (I)} \right).
\end{align*}
To identify the correction terms, we further decompose,
using $\pun \widetilde \Lambda_k R_k^{\rm (I)} = - \ell_k\pun^2\Lambda_k Q_k^{\rm (I)}$,
\begin{align*}
\Delta_{4,1,4} 
&= - \sum_k\frac{\dot\lambda_k}{\lambda_k}\left(\Delta_{\ell_k} \Lambda_k Q_k^{\rm (I)}
+f'(W_k) \Lambda_k Q_k^{\rm (I)} \right)\\
&\quad -\sum_k \frac{\dot\lambda_k}{\lambda_k} (\chi-\ell_k) \pun \widetilde \Lambda_k R_k^{\rm (I)} 
-\sum_k \frac{\dot\lambda_k}{\lambda_k} (f(S)-f(W_k))\Lambda_k Q_k^{\rm (I)} \\
&\quad - \sum_{k} \dot\by_k \cdot\left(\Delta \nabla Q_k^{\rm (I)}
+ \chi \pun \nabla R_k^{\rm (I)} +f'(S) \nabla Q_k^{\rm (I)} \right).
\end{align*}
We start by estimating error terms on the right-hand side.
By the definition of $\chi$ and \eqref{eq:c2}, we have
\begin{align*}
\| (\chi-\ell_k) \pun \widetilde \Lambda_k R_k^{\rm (I)} \|_{L^\frac{10}7} &\lesssim t^{-\frac12} q\\
\| (f(S)-f(W_k))\Lambda_k Q_k^{\rm (I)} \|_{L^\frac{10}7} &\lesssim t^{-2} q.
\end{align*}
Moreover, by \eqref{eq:c3}, we have $|\dot\by_k |\lesssim t^{-\frac52}$ (take $\delta\in(0,\frac12)$), and so
\[
\| \dot\by_k \cdot\left(\Delta \nabla Q_k^{\rm (I)}
+ \chi \pun \nabla R_k^{\rm (I)} +f'(S) \nabla Q_k^{\rm (I)} \right)\|_{L^{\frac{10}7}}\lesssim t^{-\frac52} q.
\]
We define the functions
\begin{align*}
\Theta_k^{\Lambda,\Lambda} & = -(\Delta_{\ell_k}+f'(W_k))\Lambda_k^2W_k,\\
\Theta_k^{\Lambda,j} &= -\lambda_k(\Delta_{\ell_k}+f'(W_k))\Lambda_k\partial_j W_k .
\end{align*}
Using the definition of $\vec Q_k^{\rm (I)}$, the previous estimates and \eqref{eq:c3} on $\dot\lambda_k/\lambda_k$, we have proved that
\begin{align*}
\left\|\Delta_{4,1,4} 
+\sum_k\frac{d_k}{t^2}\left(\alpha_k^\Lambda\Theta_k^{\Lambda,\Lambda}
+\sum_{j}  \alpha_k^{(j)}\Theta_k^{\Lambda,j}\right) \right\|_{L^\frac{10}7} 
\lesssim t^{-\frac52} q.
\end{align*}
Lastly, by \eqref{eq:c3},
\[
|\Delta_{4,1,5}|\lesssim   \sum_k\left( \left|\frac{\dot\lambda_k }{\lambda_k}(t) \right|+|\dot\by_k(t)|\right) q^2
\lesssim t^{-2} q^2.
\]
In conclusion,
\[
|\bh_{4,1}|\lesssim \|a\|_{L^\frac{10}3} \|\Delta_{4,1}\|_{L^\frac{10}7}
\lesssim  \left( t^{-\frac32}  \|\vec a\|_\cE  +q\|\vec a\|_\cE  +t^{-2}q+|A|t^{-2}e^{-\beta_m}\right)\|\vec a\|_\cE
\]
and more precisely,
\begin{align*}
&\left| \bh_{4,1} 
+\int a \sum_k \frac{d_k}{t^2}\left(\alpha_k^\Lambda \Theta_k^{\Lambda,\Lambda}
+ \sum_j \alpha_k^{(j)} \Theta_k^{\Lambda,j}\right)\right|\\
&\quad 
\lesssim  \left( t^{-\frac32}  \|\vec a\|_\cE  + q \|\vec a\|_\cE   +t^{-\frac52}q
+t^{-2} q^2+|A|t^{-2}e^{-\beta_m}\right)\|\vec a\|_\cE.
\end{align*}
Now, we deal with $\bh_{4,2}$. 
We observe that
\begin{equation*}
|\Delta_{4,2}|\lesssim 
\left(|\MS|^\frac13 +|AE_m|^\frac13+|Q|^\frac13\right)\left(|AE_m|+|Q|+|a|\right)|a|.
\end{equation*}
Thus, using \eqref{eq:c3} and then \eqref{eq:ke},
\begin{align*}
|\bh_{4,2}|
&\lesssim \int (|P_1| +|AM_E| +|M_Q| ) 
\left(|\MS|^\frac13 +|AE_m|^\frac13+|Q|^\frac13\right)\left(|AE_m|+|Q|+|a|\right)|a|\\
&\lesssim \left(\|P_1\|_{L^\frac{10}3}+|A|\|M_E\|_{L^\frac{10}3}+\|M_Q\|_{L^\frac{10}3}\right)
\left(\|\MS\|_{L^\frac{10}3}+\|AE_m\|_{L^\frac{10}3}+\|Q\|_{L^\frac{10}3}\right)^\frac13\\
&\qquad \times\left(\|AE_m\|_{L^\frac{10}3}+\|Q\|_{L^\frac{10}3}+\|a\|_{L^\frac{10}3}\right)\|a\|_{L^\frac{10}3}\\
&\lesssim \biggl(\sum_k \left|\frac{d}{dt}\alpha_k^\Lambda\right|+\sum_{k,l}\left|\frac{d}{dt}\alpha_k^{(j)}\right|
+|A|t^{-2}e^{-\beta_m} + t^{-2} q\biggr)\left(|A|e^{-\beta_m}+q+\|\vec a\|_\cE\right)\|\vec a\|_\cE\\
&\lesssim \biggl(\|\vec a\|_\cE+|A|t^{-2} e^{-\beta_m} + t^{-2} q\biggr)\left(|A|e^{-\beta_m}+q+\|\vec a\|_\cE\right)\|\vec a\|_\cE
\\
&\lesssim \left(A^2 e^{-2\beta_m}+t^{-2}q^2+q \|\vec a\|_\cE  +\|\vec a\|_\cE^2\right)\|\vec a\|_\cE.
\end{align*}
Combining the estimates on $\bh_{4,1}$ and $\bh_{4,2}$, we obtain \eqref{eq:h4} and \eqref{eq:h4b}.

\emph{Estimate of $\bh_5$.} We claim
\begin{equation}\label{eq:h5}
|\bh_5|\lesssim \left( t^{-\frac32} \|\vec a\|_\cE + |A|t^{-2} e^{-\beta_m} +t^{-\frac72} q   \right) \|\vec a\|_{\cE}.
\end{equation}
Set
\begin{align*}
\Delta_5 & = P_2+\chi \pun P_1 -A(M_F+\chi \pun M_E) - (M_R+\chi \pun M_Q) \\
& =\Delta_{5,1}+\Delta_{5,2}+\Delta_{5,3} .
\end{align*}
and start by
\[
|\bh_5|=\left| \int b \Delta_5 \right| \lesssim \|b\|_{L^2} \|\Delta_5\|_{L^2}
\lesssim \|\vec a\|_\cE \|\Delta_5\|_{L^2}.
\]
By the definition of $\vec P$, we have
\begin{align*}
\Delta_{5,1}
&= P_2+\chi \pun P_1\\
&=\sum_k \frac{d}{dt}\alpha_k^\Lambda (\chi-\ell_k) \pun \Lambda_k W_k
+ \sum_{j,k} \lambda_k \frac{d}{dt}\alpha_k^{(j)}(\chi-\ell_k) \pun \partial_j W_k \\
&\quad +\frac12 \sum_k \frac{d}{dt}(\alpha_k^\Lambda)^2 (\chi-\ell_k) \pun \Lambda_k^2 W_k 
+ \frac 12  \sum_{j,k}  \lambda_k\frac{d}{dt}(\alpha_k^\Lambda\alpha_k^{(j)}) (\chi-\ell_k) \pun \partial_j\Lambda_k W_k \\
&\quad
+ \sum_{j,j',k} \lambda_k^2 \frac{d}{dt}(\alpha_k^{(j')}\alpha_k^{(j)})(\chi-\ell_k) \pun \partial_{j'}\partial_j W_k
\end{align*}
and so
\[
| P_2+\chi \pun P_1 |
\lesssim \biggl(\sum_k \left|\frac{d}{dt}\alpha_k^\Lambda\right| 
+ \sum_{j,k}\left|\frac{d}{dt}\alpha_k^{(j)}\right|\biggr) |\chi-\ell_k| \omega_k^4.
\]
By \eqref{eq:ke},
\begin{equation*}
\left| \frac{d}{dt} \alpha_k^\Lambda \right| + \left| \frac{d}{dt} \alpha_k^{(j)} \right|
\lesssim \|\vec a\|_{\mathcal E} + t^{-2} q + |A| t^{-2} e^{-\beta_m} .
\end{equation*}
By the definition of $\chi$,
\[
\| (\chi-\ell_k) \omega_k^4 \|_{L^2} \lesssim t^{-\frac32}.
\]
Thus,
\[
\|\Delta_{5,1}\|_{L^2} \lesssim t^{-\frac32} \|\vec a\|_\cE +t^{-\frac72} q + |A|t^{-\frac72} e^{-\beta_m}.
\]
Moreover, by \eqref{eq:c3},
\begin{equation*}
\|\Delta_{5,2}\|_{L^2}  \lesssim \|\pun M_E\|_{L^2}+ \|M_F\|_{L^2} \lesssim |A|t^{-2}e^{-\beta_m}.
\end{equation*}
For $\Delta_{5,3}$, we observe from the definition of 
$M_Q^{\rm (I)}$ and then \eqref{eq:tL} that
\begin{align*}
\partial_1 M_Q^{\rm (I)} 
&= \sum_k\frac{\dot\lambda_k}{\lambda_k}
\left(\alpha_k^\Lambda\partial_1\Lambda_k\theta_k\Lambda W_{\ell_k}
+\sum_j \alpha_k^{(j)}\partial_1\Lambda_k\theta_k\partial_jW_{\ell_k}\right)\\
&\quad +\sum_{ k} \dot\by_k\cdot\left(\alpha_k^\Lambda\partial_1\nabla\theta_k\Lambda W_{\ell_k}
+\sum_j\alpha_k^{(j)}\partial_1\nabla\theta_k\partial_jW_{\ell_k}\right)\\
&= \sum_k\frac{\dot\lambda_k}{\lambda_k}
\left(\alpha_k^\Lambda\widetilde\Lambda_k\frac{\theta_k}{\lambda_k}\partial_1\Lambda W_{\ell_k}
+\sum_j\alpha_k^{(j)}\widetilde\Lambda_k\frac{\theta_k}{\lambda_k}\partial_1\partial_jW_{\ell_k}\right)\\
&\quad +\sum_{k} \dot\by_k\cdot\left(\alpha_k^\Lambda\nabla\frac{\theta_k}{\lambda_k}\partial_1\Lambda W_{\ell_k}
+\sum_j\alpha_k^{(j)} \nabla\frac{\theta_k}{\lambda_k}\partial_1\partial_jW_{\ell_k}\right).
\end{align*}
Thus, from the definition of $M_R^{\rm (I)}$, it holds
\begin{align*}
\chi \partial_1 M_Q^{\rm (I)}  + M_R^{\rm (I)}
&= \sum_k (\chi-\ell_k) \frac{\dot\lambda_k}{\lambda_k}
\left(\alpha_k^\Lambda\widetilde\Lambda_k\frac{\theta_k}{\lambda_k}\partial_1\Lambda W_{\ell_k}
+\sum_j\alpha_k^{(j)}\widetilde\Lambda_k\frac{\theta_k}{\lambda_k}\partial_1\partial_jW_{\ell_k}\right)\\
&\quad +\sum_{k}(\chi-\ell_k) \dot\by_k\cdot\left(\alpha_k^\Lambda\nabla\frac{\theta_k}{\lambda_k}\partial_1\Lambda W_{\ell_k}
+\sum_j\alpha_k^{(j)} \nabla\frac{\theta_k}{\lambda_k}\partial_1\partial_jW_{\ell_k}\right)
\end{align*}
and so, by \eqref{eq:c3},
\[
|\chi \partial_1 M_Q^{\rm (I)}  + M_R^{\rm (I)}|\lesssim t^{-2} q \sum_k |\chi-\ell_k| \omega_k^4.
\]
By the definition of $\chi$ in \eqref{defchiK}, it follows that
\[
\| \chi \partial_1 M_Q^{\rm (I)}  + M_R^{\rm (I)} \|_{L^2} \lesssim t^{-\frac 72} q.
\]
Proceeding similarly, for $\chi \partial_1 M_Q^{\rm (II)}  + M_R^{\rm (II)}$, we obtain
\[
\| \chi \partial_1 M_Q  + M_R \|_{L^2} \lesssim t^{-\frac 72} q^2.
\]

Therefore, we have obtained
\[
\| \Delta_5 \|_{L^2} \lesssim t^{-\frac32} \|\vec a\|_\cE + |A|t^{-2} e^{-\beta_m} +t^{-\frac 72} q .
\]
and \eqref{eq:h5} follows.

\emph{Estimate of $\bh_6$.} We claim
\begin{equation}\label{eq:h6}
|\bh_6|\lesssim  \left(t^{-\frac32} \|\vec a\|_\cE+ |A| t^{-2} e^{-\beta_m}  + q^\frac73+t^{-\frac 23} q^2 + t^{-2} q\right)   \|\vec a\|_{\cE}
\end{equation}
and
\begin{equation}\label{eq:h6b}\begin{aligned}
&\left|\bh_6- \sum_k \int (\ell_k\partial_1 a+b) f''(W_k) c_k v_k Q_k^{\rm (I)}\right|
\\&\quad \lesssim  \left(t^{-\frac32} \|\vec a\|_\cE+ |A| t^{-2} e^{-\beta_m}   + q^\frac73+t^{-\frac 23} q^2 + t^{-\frac52} q \right) \|\vec a\|_{\cE}.
\end{aligned}\end{equation}
For the first term $\bh_{6,1} = \int (\chi \pun a + b) \pp_3$, we note that 
\[
\left| \bh_{6,1}\right| \lesssim\|\vec a\|_{\cE}\|\pp_3\|_{L^2}.
\]
Moreover, we recall that by \eqref{eq:p4},
\begin{equation*}
\pp_3 = p_{3,1}+p_{3,2}+p_{3,3}+p_{3,4}+p_{3,5}+p_{3,6},
\end{equation*}
where
\begin{align*}
p_{3,1} & = f(\MS+AE_\mm +Q)- f(\MS) - f'(\MS) (AE_\mm+Q)
-\tfrac 12 f''(\MS) (AE_\mm+Q)^2 \\
p_{3,2} & = ( f'(\MS) - f'(W_m) ) AE_\mm \\ 
p_{3,3} & = \tsum_k(f'(\MS) - f'(W_k))Q_k \\
p_{3,4} & = \frac 12 f''(S) (A^2E_m^2+2AE_m Q) \\
p_{3,5} & = \frac 12 f''(S) \Big( Q^2 - \tsum_k  \big(Q_k^{\rm(I)}\big)^2 \Big) \\
p_{3,6} & = \frac 12\tsum_k ( f''(\MS) -f''(W_k)) \big(Q_k^{\rm(I)}\big)^2 .
\end{align*}
We observe that
\begin{align*}
|\pp_{3,1}|
&\lesssim |A E_m|^\frac73 +|Q|^\frac73,\\
|\pp_{3,2}|
&\lesssim \left(|\MS-\tsum_l W_l| + \tsum_{l\neq m} |W_l|\right) \big(|\MS|^\frac13+|W_m|^\frac13\big) |A| |E_m|,\\
|\pp_{3,3}|
&\lesssim \tsum_k \left(|\MS-\tsum_l W_l|+ \tsum_{l\neq k} |W_l|\right) \big(|\MS|^\frac13+|W_k|^\frac13\big) |Q_k|,\\
|p_{3,4}|&\lesssim |S|^\frac13 |AE_m|(|AE_m|+|Q|),\\
|p_{3,5}|&\lesssim |S|^\frac13 \Bigl(|Q^{\rm(II)}|(|Q^{\rm(I)}|+|Q^{\rm(II)}|)+\sum_{k\neq m}|Q_k^{\rm (I)}||Q_{m}^{\rm(I)}|\Bigr),\\
|p_{3,6}|&\lesssim \sum_k \big(Q_k^{\rm(I)}\big)^2|S-\sum_l W_l|^\frac13
+\sum_k \big(Q_k^{\rm(I)}\big)^2 \sum_{l\neq k}|W_l|^\frac13,
\end{align*}
which implies using \eqref{eq:C2}, \eqref{eq:C3}, \eqref{eq:c3} and \eqref{eq:c2} that
\begin{align*}
&\|\pp_{3,1}\|_{L^2}\lesssim |A|^\frac73 e^{-\frac73\beta_m} + q^\frac73 ,\\
&\|\pp_{3,2}\|_{L^2}\lesssim |A| t^{-2} e^{-\beta_m} ,\\
&\|\pp_{3,3}\|_{L^2}\lesssim t^{-2} q ,\\
&\|\pp_{3,4}\|_{L^2}\lesssim A^2 e^{-2\beta_m} + q |A| e^{-\beta_m}, \\
&\|\pp_{3,5}\|_{L^2}\lesssim q^3 + t^{-3} q^2 , \\
&\|\pp_{3,6}\|_{L^2}\lesssim t^{-\frac23} q^2
\end{align*}
and so
\[
\|\pp_{3}\|_{L^2}\lesssim |A| t^{-2} e^{-\beta_m} + q^\frac73+t^{-\frac 23} q^2 + t^{-2} q .
\]
Therefore,
\begin{equation*}
\left| \int (\chi \pun a + b) \pp_3\right| \lesssim \left(|A| t^{-2} e^{-\beta_m} + q^\frac73+t^{-\frac 23} q^2 + t^{-2} q \right) \|\vec a\|_{\cE}.
\end{equation*}
To obtain a refined estimate, we turn back to the estimate of the term $p_{3,3}$, writing
\begin{align*}
p_{3,3} &=\sum_k(f'(\MS)  - f'(W_k+c_k v_k))Q_k^{\rm (I)} + \sum_k(f'(W_k+c_k v_k) - f'(W_k))Q_k^{\rm (I)} \\
&\quad +\tsum_k(f'(\MS) - f'(W_k))Q_k^{\rm (II)}.
\end{align*}
We have
\begin{align*}
&| (f'(\MS) - f'(W_k+c_k v_k))Q_k^{\rm (I)} | \\
&\quad  \lesssim 
\left(|\MS-\tsum_l (W_l+c_l v_l)|
+ \tsum_{l\neq k} |W_l+c_lv_l|\right) \big(|\MS|^\frac13+|W_k|^\frac13+|v_k|^\frac13\big) |Q_k^{\rm (I)}|
\end{align*}
and so by \eqref{eq:Av}, \eqref{eq:oS},
\[
\| (f'(\MS) - f'(W_k+c_k v_k))Q_k^{\rm (I)} \|_{L^2} \lesssim t^{-\frac52} q.
\]
Moreover, by Taylor expansion and \eqref{eq:Av},
\[
|(f'(W_k+c_k v_k) - f'(W_k)  -f''(W_k) c_k v_k )Q_k^{\rm (I)}|
\lesssim |v_k|^\frac43 |Q_k^{\rm (I)}| \lesssim t^{-\frac83}q \omega_k^{-5}.
\]
Besides,
\[
|f'(\MS) - f'(W_k))Q_k^{\rm (II)}|\lesssim 
\left(|\MS-\tsum_l W_l|+ \tsum_{l\neq k} |W_l|\right) q^2 \omega_k^3.
\]
Therefore,
\[
\|p_{3,3} - \tsum_k f''(W_k) c_k v_k Q_k^{\rm (I)} \|_{L^2}
\lesssim t^{-\frac52}q +t^{-2} q^2.
\]
Thus, we have proved
\begin{equation*}
|\bh_{6,1}|\lesssim \left(|A| t^{-2} e^{-\beta_m}   + q^\frac73+t^{-\frac 23} q^2 + t^{-2} q\right)\|\vec a\|_\cE,
\end{equation*}
and
\begin{equation*}\left|\bh_{6,1} - \sum_k \int (\ell_k\partial_1 a+b) f''(W_k) c_k v_k Q_k^{\rm(I)}\right| 
 \lesssim \left(|A| t^{-2} e^{-\beta_m}   + q^\frac73+t^{-\frac 23} q^2 + t^{-\frac52}q\right)
\|\vec a\|_\cE.
\end{equation*}

We now estimate the second term $\bh_{6,2}$ in $\bh_6$. First, we note that
\begin{equation*}
\left|\bh_{6,2}\right|
\lesssim t^{-1} \|a\|_{L^\frac{10}3} \left(\|P_2\|_{L^\frac{10}7(\Omega)}+\|AM_F\|_{L^\frac{10}7(\Omega)}+\|M_R\|_{L^\frac{10}7(\Omega)}\right).
\end{equation*}
Moreover, using \eqref{eq:dZ}, \eqref{eq:C3} and \eqref{eq:ke}
\begin{align*}
\|P_2\|_{L^\frac{10}7(\Omega)} & \lesssim \biggl(\sum_k\biggl|\frac{d}{dt}\alpha_k^\Lambda\biggr|
+\biggl|\frac{d}{dt}\alpha_k^{(j)}\biggr|\biggr)\sum_k \|\omega_k^4\|_{L^\frac{10}7(\Omega)}
\lesssim t^{-\frac12} \left(\|\vec a\|_\cE+t^{-2} q + |A| t^{-2} e^{-\beta_m}\right),
\\
\|AM_F\|_{L^\frac{10}7(\Omega)} & \lesssim |A| e^{-\beta_m}\left(|\dot\lambda_m|+|\dot\by_m|\right)
\|\zeta_m\|_{L^\frac{10}7(\Omega)} \lesssim |A| t^{-10} e^{-\beta_m},\\
\|M_R\|_{L^\frac{10}7(\Omega)} & \lesssim \sum_k\left(|\dot\lambda_k|+|\dot\by_k|\right)
\biggl(\sum_k\left|\alpha_k^\Lambda\right|+\left|\alpha_k^{(j)}\right|\biggr)\sum_k\|\omega_k^4\|_{L^\frac{10}7(\Omega)}
\lesssim t^{-\frac 52} q .
\end{align*}
Therefore, we obtain
\begin{equation*}
\left|\bh_{6,2}\right|\lesssim \left|\frac1{t} \int_\Omega a (P_2-AM_F-M_R)\right|
\lesssim t^{-\frac32} \|\vec a\|_\cE\left(\|\vec a\|_\cE+t^{-2} q + |A| t^{-2} e^{-\beta_m}\right).
\end{equation*}
Combining the estimates above, we have proved \eqref{eq:h6} and \eqref{eq:h6b}.

Gathering the estimates \eqref{eq:h1}, \eqref{eq:h2}, \eqref{eq:h3}, \eqref{eq:h4}, \eqref{eq:h5} and \eqref{eq:h6}, we find
\[
\frac{d\mathcal H}{dt} 
\geq -\frac{1+C\sigma}{t} \Nint 
- C \left(\|\vec a\|_\cE^2+|A|t^{-2} e^{-\beta_m} + q^\frac73 + t^{-\frac23} q + q \|\vec a\|_\cE + t^{-\frac32} \|\vec a\|_\cE +t^{-2} q\right)\|\vec a\|_\cE
\]
Therefore, using \eqref{eq:cH}, and taking $\sigma$ small enough, depending on $\delta$,
\begin{align*}
-\frac{d\mathcal H}{dt} & \leq \frac{1+\delta}{t} \mathcal H 
+ C \left(\|\vec a\|_\cE^2+|A|t^{-2} e^{-\beta_m} + q^\frac73 + t^{-\frac23} q + q \|\vec a\|_\cE   + t^{-\frac32} \|\vec a\|_\cE  +t^{-2} q\right)\|\vec a\|_\cE\\
&\quad + \frac Ct B^2,
\end{align*}
which is rewritten as \eqref{eq:di}.

Proceeding similarly, but replacing \eqref{eq:h4} and \eqref{eq:h6} by the refined estimates \eqref{eq:h4b} and \eqref{eq:h6b}, we obtain \eqref{eq:ri}.
\end{proof}

\section{Construction of a family of strong multi-solitons}\label{S:5}

In this section, we prove the existence result of Theorem \ref{th:1}, following closely \cite{Co}
and using the framework of Section \ref{S:4}.

\begin{proposition}\label{pr:31}
Let $\MS$ be a strong multi-soliton of \eqref{wave} on $[T,+\infty)$ in the sense of Definition \ref{SMS}.
Let $\mm\in \{1,\ldots,K\}$ and
\begin{equation}\label{eq:Em}
\vec E_\mm(t,x) = e^{-\beta_m(t)} \vec {\theta}_\mm J \vec Z_{\ell_\mm}^+(t,x)
= e^{-\beta_m(t)} J \vec Z_k^+(t,x)
\end{equation}
where
\begin{equation}\label{eq:mk}
\mu_m(t) = \frac{\sqrt{\lambda_0}}{\lambda_m(t)} (1- \ell_m^2)^\frac12,\quad 
\beta_m(t) = \int_T^t \mu_m(s) ds.
\end{equation}
Then, for any $A\in \R$, there exist $t_1\geq T$ and a solution $u$ of \eqref{wave} defined on $[t_1,+\infty)$ 
 such that for all $t\geq t_1$,
\begin{equation}\label{eq:31a}
\left\|\vec u(t) - \vec \MS(t) -A \vec E_\mm(t)
\right\|_{\dot H^1\times L^2}\lesssim |A| t^{-1} e^{-\beta_m(t)}.
\end{equation}
Moreover, the solution $u$ is a strong multi-soliton
and it satisfies \emph{(ii)-(iii)} of Definition \ref{SMS}
with the same functions $\lambda_k$, $\by_k$ than for the multi-soliton $\MS$.
\end{proposition}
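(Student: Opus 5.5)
The plan is to follow the compactness/backward-integration scheme of \cite{Co}, using the decomposition of Section \ref{S:4} with the fixed strong multi-soliton $\MS$ and the parameter $A$. We fix $A\neq 0$; the case $A=0$ is trivial with $u=\MS$. We take an increasing sequence $S_n\to+\infty$ and, for each $n$, consider the solution $u_n$ of \eqref{wave} with data at $t=S_n$
\[
\vec u_n(S_n)=\vec\MS(S_n)+A\vec E_m(S_n)+\sum_k \xi_{k,n}^+\,\vec Y_k^+(S_n),
\]
where the $\vec Y_k^+$ are fixed functions built from the $\vec Z_k^\pm$ (using \eqref{oZ} and \eqref{eq:PM}) and chosen so that $\alpha_l^+(S_n)=\xi^+_{l,n}$, $\alpha_l^-(S_n)\approx 0$, $\vec a(S_n)=O(|\xi_n|)$ and $q(S_n)=O(|\xi_n|)$. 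The unknown vector $\xi_n^+\in\R^K$ is selected by a topological (Brouwer) argument to control the unstable directions $\alpha_k^+$ backwards in time. The goal is a uniform estimate on $[t_1,S_n]$:
\[
\|\vec a(t)\|_\cE+q(t)\lesssim |A|t^{-1}e^{-\beta_m(t)},\qquad B(t)\lesssim |A|t^{-1}e^{-\beta_m(t)}\ \ (\text{or better}),
\]
with $t_1$ independent of $n$. A weak limit of $\vec u_n(t_1)$, combined with local well-posedness and weak continuity of the flow, then yields $u$ satisfying \eqref{eq:31a}, since $\|\vec Q\|_\cE\lesssim q$.

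The bootstrap runs in the weighted variable $e^{\beta_m}t$. We assume on $[t^*,S_n]$ that $\|\vec a\|_\cE\le t^{-1}|A|e^{-\beta_m}\cdot M$, $q\le M t^{-1}|A|e^{-\beta_m}$, $|\alpha_k^-|\le M t^{-3/2}|A|e^{-\beta_m}$ and $|\alpha_k^+|\le t^{-3/2}|A|e^{-\beta_m}$. Here the ordering \eqref{eq:er}--\eqref{eq:Jm} matters: for $k\in J_m$ we have $e^{-\beta_k}\ll e^{-\beta_m}$, while for $k\in I_m$ the rate $\mu_k$ is at most $\mu_m$ (up to the logarithmic correction). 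The stable directions $\alpha_k^-$ are integrated backward from $S_n$ using \eqref{eq:zo}: $(e^{\beta_k}\alpha_k^-)'$ is bounded by $e^{\beta_k}$ times the source. For $k\in I_m$ the source $\lesssim t^{-2}|A|e^{-\beta_m}$ integrates to acceptable size, and for $k\in J_m$ one uses that $\beta_k-\beta_m$ grows. The parameters $q$ are controlled by integrating \eqref{eq:ke} from $S_n$, where $q(S_n)\approx 0$; the key input is that $\|\vec a\|_\cE$ decays like $e^{-\beta_m}$, so $\int_t^{S_n}\|\vec a\|_\cE\lesssim \mu_m^{-1}t^{-1}|A|e^{-\beta_m(t)}$. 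The energy $\|\vec a\|_\cE$ is controlled through Lemma \ref{le:co} and \eqref{eq:di}: integrating $-\frac{d}{dt}(t^{1+\delta}\cH)$ from $t$ to $S_n$, every source term is $\lesssim t^{1+\delta}(t^{-2}|A|e^{-\beta_m})\|\vec a\|_\cE$ or smaller, and since the integrand grows like $e^{-2\beta_m}$ backwards the integral gains nothing in $t$ but yields $\cH(t)\lesssim t^{-2}M|A|^2e^{-2\beta_m}\cdot(\text{small})+B^2$. Coercivity \eqref{eq:cH} then closes the $\|\vec a\|_\cE$ bound, with $B^2$ absorbed using the $\alpha^\pm$ bounds. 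We take $t_1$ large and $|A|e^{-\beta_m}$ small to absorb the quadratic terms $\|\vec a\|^2,q^2$, which is possible by translating in time.

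The main obstacle is the unstable directions $\alpha_k^+$, which cannot be integrated backward. Here we use the standard exit argument. Suppose that for every choice of $\xi^+_n$ in the ball of radius $S_n^{-3/2}|A|e^{-\beta_m(S_n)}$ the bootstrap fails at some $t^*(\xi)>t_1$. At the exit time the bound on $\alpha^+$ is saturated, and \eqref{eq:zo} shows that the normalized quantity $\sum_k(t^{3/2}e^{\beta_m}\alpha_k^+)^2$ has strictly negative derivative there; this uses $\mu_k\ge\mu_m-O(t^{-1})$ for $k\in J_m$, and for $k\in I_m$ a slightly modified weight $e^{\beta_k}$ instead of $e^{\beta_m}$. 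Transversality makes $\xi\mapsto t^{3/2}e^{\beta_m}\alpha^+(t^*(\xi))$ a continuous retraction of the ball onto its boundary, which contradicts Brouwer's theorem.

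For the final claims, $u$ satisfies (ii)--(iii) of Definition \ref{SMS} with the same $\lambda_k,\by_k$. Indeed, $\|\vec u-\vec\MS\|_\cE\lesssim e^{-\beta_m}\ll t^{-2}$, and the $H^1$ regularity \eqref{eq:c4} is propagated as in Proposition \ref{pr:vp}, or alternatively by applying Proposition \ref{pr:2} and comparing the parameters. Property (i) is inherited directly.
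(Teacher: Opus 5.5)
\paragraph{The backward-stable and backward-unstable directions are interchanged.}
This breaks both your integration step and your topological step. By \eqref{eq:zo}, $\frac{d}{dt}\alpha_k^+\approx\mu_k\alpha_k^+$ and $\frac{d}{dt}\alpha_k^-\approx-\mu_k\alpha_k^-$. So when you solve backward from $S_n$, the $\alpha_k^-$ grow and the $\alpha_k^+$ decay.

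For $k\in I_m$, $\alpha_k^-$ grows backward no faster than $e^{-\beta_m}$, up to powers of $t$. Your backward integration works there; this is the paper's control of $\ba_I^-$.

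For $k\in J_m$, however, your claim that the growth of $\beta_k-\beta_m$ helps is backwards. Integrating $(e^{\beta_k}\alpha_k^-)'$ on $[t,S_n]$ gives
\[
|\alpha_k^-(t)|\lesssim e^{\beta_k(S_n)-\beta_k(t)}|\alpha_k^-(S_n)|+|A|\,e^{-\beta_k(t)}\int_t^{S_n}s^{-2}e^{(\beta_k-\beta_m)(s)}\,ds .
\]
When $\mu_k^\infty>\mu_m^\infty$, the integral is dominated by $s$ near $S_n$. The bound is then of order $|A|S_n^{-2}e^{-\beta_m(S_n)}e^{\beta_k(S_n)-\beta_k(t)}$, which is exponentially larger than $e^{-\beta_m(t)}$ and not uniform in $n$. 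Any error in $\alpha_k^-(S_n)$ (you only impose $\approx 0$) is amplified by the same factor. Since \eqref{eq:zo} only bounds the source by $|A|e^{-\beta_m}$ times powers of $t$, there is no way around this.

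So the $K_m$ directions $\ba_J^-$ are exactly the ones that need free parameters and a Brouwer argument. This is what the paper does. It perturbs the data by $\sum_{k\in J_m}h_{n,k}^-\vec E_k(T_n)$, plus kernel directions making $q(T_n)=0$, so as to prescribe $\ba_J^-(T_n)=\ba_{in}^-$. It then shows, using \eqref{eq:Jm}, that $N=t^{6\nu_0}e^{2\beta_m}\sum_{k\in J_m}(\alpha_k^-)^2$ satisfies $\frac{d}{dt}N<0$ on $\{N=A^2\}$.

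\paragraph{The topological argument on $\alpha^+$ is unnecessary and incorrect.}
Integrating $(e^{-\beta_k}\alpha_k^+)'$ on $[t,S_n]$ gives $|\alpha_k^+(t)|\lesssim|A|t^{-2}e^{-\beta_m(t)}$ directly; this is the paper's control of the stable directions. The outgoing condition you state is false:
\[
\frac{d}{dt}\bigl(t^{3/2}e^{\beta_m}\alpha_k^+\bigr)^2\approx2\bigl(\mu_m+\mu_k+\tfrac{3}{2t}\bigr)\bigl(t^{3/2}e^{\beta_m}\alpha_k^+\bigr)^2>0 .
\]
Hence no backward exit through the $\alpha^+$ constraint can occur, and the map you describe is not a retraction.

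\paragraph{Your bootstrap weights also cannot close.}
First, the bound $|\alpha_k^-|\le Mt^{-3/2}|A|e^{-\beta_m}$ fails even for $k=m$. Since $(e^{\beta_m}\alpha_m^-)'=O(|A|t^{-2})$, backward integration only yields $|\alpha_m^-|\lesssim|A|t^{-1}e^{-\beta_m}$.

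Second, once the Brouwer argument is moved to $\ba_J^-$, suppose some $k\in J_m$ has $\mu_k^\infty=\mu_m^\infty$. Then
\[
\frac{d}{dt}\bigl(t^{2\gamma}e^{2\beta_m}(\alpha_k^-)^2\bigr)\approx\frac{2(\gamma-(\nu_m-\nu_k))}{t}\,t^{2\gamma}e^{2\beta_m}(\alpha_k^-)^2 .
\]
So $\gamma$ must be smaller than the gap $\nu_m-\nu_k$, which can be as small as $8\nu_0\le 1$. This is why the paper runs \eqref{eq:BS} with the weights $t^{-3\nu_0},t^{-2\nu_0},t^{-\nu_0}$ rather than $t^{-1}$ or $t^{-3/2}$.
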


Before proving Proposition \ref{pr:31}, we justify that it implies Theorem \ref{th:1}.

\begin{proof}[Proof of Theorem \ref{th:1}, assuming Proposition \ref{pr:31}]

Fix $\MS$ a strong multi-soliton of \eqref{wave} and let
${\bf A}=(A_1,\ldots,A_K)\in\R^K$.

\emph{Construction of $\MS_{\bf A}$ by induction.}
By Proposition \ref{pr:31} applied to 
$\MS$, $m=1$ and $A=A_1$, there exist $t_1\geq T$ and a solution $\vec S_{A_1}$ of \eqref{wave} on $[t_1,+\infty)$ such that for all $t\geq t_1$,
\begin{equation}\label{eq:A1}
\|\vec \MS_{A_1}(t) - \vec \MS(t) - A_1 \vec E_1(t)\|_{\cE} \lesssim |A_1| t^{-1} e^{-\beta_1}.
\end{equation}
Moreover, $\MS_{A_1}$ is a strong multi-soliton.

Now, we apply  Proposition \ref{pr:31} to the multi-soliton $\MS_{A_1}$, $m=2$ and $A=A_2$.
It follows that there exist $t_2\geq t_1$ and a solution $\MS_{A_1,A_2}$ of \eqref{wave} on $[t_2,+\infty)$ such that for all $t\geq t_2$,
\[
\|\vec \MS_{A_1,A_2}(t) - \vec \MS_{A_1}(t) - A_2 \vec E_2(t)\|_{\cE} \lesssim |A_2| t^{-1} e^{-\beta_2}.
\]

By induction, for any $k=2,\ldots,K$, we obtain $t_k\geq t_{k-1}\geq\cdots\geq T$ and a solution $\MS_{A_1,\ldots,A_k}$ of \eqref{wave} on $[t_k,+\infty)$ such that for all $t\geq t_k$,
\[
\|\vec \MS_{A_1,\ldots,A_k}(t) - \vec \MS_{A_1,\ldots,A_{k-1}}(t) - A_k \vec E_k(t)\|_{\cE}
\lesssim |A_k| t^{-1} e^{-\beta_k}.
\]
In particular, the solution $\vec\MS_{\bf A}=\vec\MS_{A_1,\ldots,A_K}$ satisfies (ii)-(iii) of Definition \ref{SMS}.

At each step of the above construction, \emph{i.e.} for every choice of the parameter
$(A_1,\ldots,A_k)$, for $k\in \{1,\ldots,K\}$, we have fixed one of the solution $\varphi_{(A_1,\ldots,A_k)}$
provided by Proposition \ref{pr:31}.

\emph{Injectivity of ${\mathbf A}\mapsto \vec\MS_{\mathbf A}$.}
Let $\tilde{\bf A}=(\tilde A_1,\ldots,\tilde A_K )\in \R^K$ be such that $\tilde{\bf A}\neq {\bf A}$.
For the sake of contradiction, assume that $\vec\MS_{\tilde {\bf A}}=\vec\MS_{\bf A}$.
Denote
\[
k_0 = \min\left\{ k\in \{1,\ldots, K\} \mbox{ such that } \tilde A_{k_0}\neq A_{k_0}\right\}.
\]
From the steps of the construction of the solution $\vec\MS_{ A_1,\ldots, A_K}$, we observe that
\begin{align*}
\vec\MS_{\bf A}
& = \vec\MS_{A_1,\ldots,A_{K-1}} + A_K \vec E_K + \vec a_K\\
& = \vec\MS_{A_1,\ldots,A_{K-2}} + A_{K-1} \vec E_{K-1} + A_K \vec E_K + \vec a_{K-1} + \vec a_K\\
& = \cdots
= \vec\MS_{A_1,\ldots,A_{k_0}} + A_{k_0} \vec E_{k_0} +\sum_{k=k_0+1}^K A_k \vec E_k 
+ \sum_{k=k_0}^K \vec a_k,
\end{align*}
where the functions $\vec a_k$ satisfy $\|\vec a_k\|_{\cE}\lesssim t^{-1}e^{-\beta_k}$
for all $t\geq t_K$.
The same identity holds for the solution $\vec\MS_{\tilde A_1,\ldots,\tilde A_K}$, 
\[
\vec\MS_{\tilde {\bf A}}
=\vec\MS_{\tilde A_1,\ldots,\tilde A_{k_0-1}} + \tilde A_{k_0} \vec E_{k_0} +\sum_{k=k_0+1}^K \tilde A_k \vec E_k 
+ \sum_{k=k_0}^K \vec{\tilde a}_k.
\]
(By convention, if $k_0=1$, then $\vec\MS_{\tilde A_1,\ldots,\tilde A_{k_0-1}}=\vec \MS$.)
By the definition of $k_0$, we have $\tilde A_{k_0} \neq A_{k_0}$ but $(A_1,\ldots,A_{k_0-1})=(\tilde A_1,\ldots,\tilde A_{k_0-1})$,
 and thus $\vec\MS_{A_1,\ldots,A_{k_0-1}}=\vec\MS_{\tilde A_1,\ldots,\tilde A_{k_0-1}}$.
Since we have assumed that $\vec\MS_{\tilde {\bf A}}=\vec\MS_{\bf A}$, we obtain
\begin{equation}\label{eq:pr}
(A_{k_0} -\tilde A_{k_0}) \vec E_{k_0}
= \sum_{k=k_0+1}^K (\tilde A_k-A_k) \vec E_k + \sum_{k=k_0}^K (\vec a_k-\vec{\tilde a}_k).
\end{equation}
Projecting \eqref{eq:pr} on $\vec E_{k_0}$, and using $|\langle \vec E_{k_0},\vec E_k\rangle|\lesssim t^{-10}e^{-2\beta_{k_0}}$,
for $k\geq k_0+1$, we find
\[
|A_{k_0} -\tilde A_{k_0}|\lesssim t^{-1},
\]
for all $t\geq t_K$. This implies $A_{k_0} =\tilde A_{k_0}$, a contradiction with the definition of $k_0$.
\end{proof}

Now, the rest of this section is devoted to the proof of Proposition \ref{pr:31}.
The proof uses a compactness argument based on uniform estimates.

\begin{proposition}\label{pr:33}
Let $\vec \MS$ be a strong multi-soliton on some time interval $[T,+\infty)$ 
in the sense of Definition \ref{SMS}.
Let $m\in \{1,\ldots,K\}$ and $A\in \R$.
Let $T_n=n$.
There exist $n_0\geq 1$, $t_0>0$, and for any $n\geq n_0$ there exist reals $(h_{n,k}^-)_{k\in J_m}\in \R^{K_m}$,
$(h_{n,k}^\Lambda)_{k}\in \R^K$, $(h_{n,k}^{(j)})_{j,k}\in (\R^{5})^K$ such that
if $\vec u_n$ is the solution of \eqref{wave} with
\begin{equation}\label{eq:id}
\vec u_n (T_n) = \vec\MS(T_n) + A \vec E_\mm(T_n)
+ \sum_{k\in J_m} h_{n,k}^- \vec E_k (T_n)
+ \sum_k h_{n,k}^\Lambda \vec Z_{k}^\Lambda + \sum_{j,k} h_{n,k}^{(j)} \vec Z_{k}^{(j)}
\end{equation}
then
\begin{equation}\label{eq:ue}
\forall t\in [t_0,T_n],\quad
\left\|\vec u_n(t) - \vec \MS(t) -A \vec E_\mm(t)
\right\|_{\dot H^1\times L^2} \lesssim |A| t^{-1}e^{-\beta_m}.
\end{equation}
\end{proposition}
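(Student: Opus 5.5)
We argue by a backward bootstrap on $[t_0,T_n]$, in the spirit of \cite{Co}, using the decomposition of Section \ref{S:4} with the fixed strong multi-soliton $\MS$ and the given $m$, $A$. For $\vec u_n$ we write $\vec u_n=\vec\MS+A\vec E_m+\vec Q+\vec a$ as in \eqref{eq:dS}, with modulation parameters $\alpha_k^\Lambda,\alpha_k^{(j)}$ fixed by \eqref{eq:OR}, and with $\alpha_k^\pm$ defined by \eqref{eq:ap}. The free data serve two purposes. The coefficients $(h^\Lambda_{n,k},h^{(j)}_{n,k})$ are chosen, by a fixed point or implicit function argument near $0$, so that $\vec a(T_n)$ satisfies \eqref{eq:OR} with $\alpha_k^\Lambda(T_n)=\alpha_k^{(j)}(T_n)=0$; then $\vec a(T_n)=\sum_{k\in J_m}h^-_{n,k}\vec E_k(T_n)+O(\text{second order})$. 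The coefficients $h^-_{n,k}$, $k\in J_m$, control the final values $\alpha_k^-(T_n)$ through the nondegeneracy \eqref{eq:PM} of $(J\vec Z_k^+,\vec Z_k^-)$. For all other directions we impose $\alpha_k^\pm(T_n)=0$, which is automatic up to small errors by \eqref{oZ} and the exponential separation of the $\vec Z_k^\pm$.

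The bootstrap estimates on $[t^*,T_n]$ are
\[
\|\vec a\|_\cE\le |A|t^{-1}e^{-\beta_m},\quad q\le |A|t^{-1}e^{-\beta_m},\quad |\alpha_k^\pm|\le |A|t^{-\frac32}e^{-\beta_m}.
\]
Note that $A\vec E_m$ itself has size $|A|e^{-\beta_m}$, so the whole point is to gain the factor $t^{-1}$.

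\emph{Step 1: modulation parameters.} By Lemma \ref{le:bL}, $|\dot\alpha|\lesssim \|\vec a\|_\cE+t^{-2}q+|A|t^{-2}e^{-\beta_m}$. Integrating backward from $T_n$, where $\alpha=0$, gives $q\lesssim \int_t^{T_n}|A|s^{-1}e^{-\beta_m(s)}ds\lesssim |A|t^{-1}e^{-\beta_m}$, since $\beta_m$ grows linearly. This is only borderline, so a small constant must be gained. We gain it by using instead the bound on $\|\vec a\|_\cE$ with a small constant $\epsilon$, or alternatively by bootstrapping $q$ with a large constant $C_q$ and $\|\vec a\|_\cE$ with a smaller constant.

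\emph{Step 2: exponential directions.} Lemma \ref{le:bp} gives $|\dot\alpha_k^\pm\mp\mu_k\alpha_k^\pm|\lesssim |A|^2t^{-2}e^{-2\beta_m}+|A|t^{-2}e^{-\beta_m}$.
\begin{itemize}
\item[(a)] For $\alpha_k^+$ we integrate backward from $T_n$ with zero data; the factor $e^{\beta_k}$ gives $|\alpha_k^+|\lesssim |A|t^{-2}e^{-\beta_m}$.
\item[(b)] For $\alpha_k^-$ with $k\in I_m$ (so $\mu_k^\infty\le\mu_m^\infty$, or equality with $\nu_k\ge\nu_m$), backward integration of $(e^{\beta_k}\alpha_k^-)'$ from zero data also yields $|A|t^{-2}e^{-\beta_m}$. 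This uses that $e^{\beta_k-\beta_m}$ is nonincreasing up to polynomial factors; the ordering \eqref{eq:er} is exactly what is needed here, and the borderline equal case requires the $d_k$ ordering. This includes $k=m$, since the $\vec E_m$ dynamics is already exact by \eqref{ee:EF}.
\item[(c)] For $k\in J_m$ the directions $\alpha_k^-$ grow backward in time relative to $e^{-\beta_m}$ by \eqref{eq:Jm}. These are handled by a topological (Brouwer/shooting) argument on $(h^-_{n,k})_{k\in J_m}$. If the bootstrap first fails through $\alpha^-_{J_m}$ reaching the boundary, the map from the final data to the exit point is continuous and transversal, by an outgoing computation on $\frac{d}{dt}\bigl(e^{2\beta_m}t^3\sum_{J_m}(\alpha_k^-)^2\bigr)$. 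On the boundary sphere this map equals the identity, which is impossible. Hence some choice of $h^-_n$ keeps the estimate up to $t_0$.
\end{itemize}

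\emph{Step 3: energy.} By Lemma \ref{le:co}, Lemma \ref{le:vE}\eqref{eq:di}, and the above bounds, $-\frac{d}{dt}(t^{1+\delta}\cH)\lesssim t^{1+\delta}|A|^2t^{-3}e^{-2\beta_m}+t^\delta B^2$. Integrating from $T_n$, where $\cH(T_n)=O(B(T_n)^2)$ is tiny, gives $\cH\lesssim |A|^2t^{-3}e^{-2\beta_m}$. Hence $\|\vec a\|_\cE^2\lesssim \cH+B^2\lesssim |A|^2t^{-3}e^{-2\beta_m}$, which is strictly better than the bootstrap bound and closes it.

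\emph{Main obstacle.} I expect the main obstacle to be making the factors consistent in the borderline terms. The source $|A|t^{-2}e^{-\beta_m}\|\vec a\|_\cE$ in \eqref{eq:di}, and the $q$-integration, both land exactly at size $t^{-1}e^{-\beta_m}$. Closing with room will likely require a hierarchy of constants, and a careful treatment of the equal-$\mu$ solitons through the $t^{\nu_k}$ factors in $e^{-\beta_k}$.
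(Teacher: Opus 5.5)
Your architecture is the paper's: a backward bootstrap on $[t_0,T_n]$ in the decomposition of Section~\ref{S:4}, data at $T_n$ with $q(T_n)=0$, backward integration of $\ba^+$ and $\ba_I^-$, the weighted energy $t^{1+\delta}\cH$, integration of the kernel parameters from $T_n$, and Brouwer for $\ba_J^-$. However, the rates you bootstrap cannot be closed.

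\emph{Step 2(b) fails for $k=m$.} The forcing $|A|t^{-2}e^{-\beta_m}$ in \eqref{eq:zo} is not removed by \eqref{ee:EF}, because $\vec E_m$ solves the flow linearized at $W_m$, not at $\MS$. The equation of $\vec a$ therefore keeps two sources near $W_m$, each of size $|A|t^{-2}e^{-\beta_m}$: the term $A\vec M_E$ (with $\dot\lambda_m/\lambda_m\approx -d_m t^{-2}$) and the term $p_{3,2}=(f'(\MS)-f'(W_m))AE_m$. Then $\bigl(e^{\beta_m}\alpha_m^-\bigr)'=O(|A|s^{-2})$, so backward integration gives only $|\alpha_m^-|\lesssim |A|t^{-1}e^{-\beta_m}$, which is exactly the paper's bound for $\ba_I^-$. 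With \eqref{eq:zo} as stated, the same holds for every $k\in I_m$ with $\mu_k^\infty=\mu_m^\infty$. Consequently $B^2$ enters the coercivity \eqref{eq:cH} at level $|A|^2t^{-2}e^{-2\beta_m}$. Step 3 then returns $\|\vec a\|_\cE\lesssim |A|t^{-1}e^{-\beta_m}$ with no gain rather than $t^{-3/2}$, and Step 1 remains exactly borderline.

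\emph{The decisive failure is Step 2(c).} It breaks when $J_m$ contains some $k$ with $\mu_k^\infty=\mu_m^\infty$, which the paper cannot exclude. In that case \eqref{eq:Jm} only gives $\nu_k\le \nu_m-8\nu_0$, a gap that may be small. By \eqref{eq:mm}, $\mu_m-\mu_k=(\nu_k-\nu_m)t^{-1}+O(t^{-7/4})$. Hence in $\frac{d}{dt}\bigl(t^3e^{2\beta_m}(\alpha_k^-)^2\bigr)$ the linear coefficient is $(3-2(\nu_m-\nu_k))t^{-1}$, which has the wrong sign unless $\nu_m-\nu_k>\frac32$. Moreover, on the sphere the forcing allowed by \eqref{eq:zo} is of size $A^2t^{-1/2}$, which dominates any $O(A^2t^{-1})$ term. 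So there is no exit property. In general a weight $t^{2p}$ needs roughly $p<\min(1,\nu_m-\nu_k)$.

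\emph{The missing idea is the paper's staggered rates.} The paper bootstraps the weak rates \eqref{eq:BS}: $t^{-3\nu_0}$ for $\ba^\pm$, $t^{-2\nu_0}$ for $\|\vec a\|_\cE$, and $t^{-\nu_0}$ for $q$, with $\nu_0$ from \eqref{eq:n0}. Then $N=t^{6\nu_0}e^{2\beta_m}\sum_{k\in J_m}(\alpha_k^-)^2$ satisfies $\frac{d}{dt}N\le -8\nu_0t^{-1}N+CA^2t^{-2+3\nu_0}$. Every other line improves by a power of $t$, with no constants to tune: $\ba^+,\ba_I^-$ improve to $t^{-1}$, $\|\vec a\|_\cE$ to $t^{-3\nu_0}$, and $q$ to $t^{-2\nu_0}$.

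Be aware that these rates only yield $\|\vec u_n-\vec\MS-A\vec E_m\|_\cE\lesssim |A|t^{-2\nu_0}e^{-\beta_m}$, not the $t^{-1}$ written in \eqref{eq:ue}. A polynomial gain over $e^{-\beta_m}$ is all that the later arguments use (injectivity in Theorem~\ref{th:1}, and the limits in Section~\ref{S:6}). Insisting on $t^{-1}$, as you do, is precisely what creates your borderline terms.
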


We prove Proposition \ref{pr:33} in the next subsections.
We assume $A\neq 0$, since otherwise the statement is obvious with $\vec u_n=\vec S$.
We consider the solution $\vec u_n$ of \eqref{wave} defined by its initial data at $T_n$ given in \eqref{eq:id},
where $(h_k^-)_{k\in J_m}$, $(h_k^\Lambda)_{k}$, $(h_k^{(j)})_{j,k}$ are free parameters to be fixed.

\subsection{Initialisation}
Let 
\[
\ba^+=(\alpha_k^+)_{1\leq k\leq K},\quad
\ba_{I}^- = (\alpha_k^-)_{k\in I_m},\quad 
\ba_J^- = (\alpha_k^-)_{k\in J_m}.
\]
We denote by $K_m$ the cardinality of the set $J_m$.
In the case where $J_m$ is empty, the part of the proof concerning $\ba_J^-$ is simply to be ignored.

\begin{lemma}\label{le:fd}
For $n$ large enough, for all $\ba_{in}^-\in \R^{K_m}$, there exist unique $(h_{n,k}^-)_{k\in J_m}\in \R^{K_m}$,
$(h_{n,k}^\Lambda)_{k}\in \R^K$, $(h_{n,k}^{(j)})_{j,k}\in (\R^{5})^K$ such that 
\begin{equation}\label{eq:bi}
\|(h_{n,k}^-)_{k\in J_m}\| \lesssim \|\ba_{in}^-\|, \quad
\|(h_{n,k}^\Lambda)_{k}\| \lesssim T_n^{-3}\|\ba_{in}^-\|, \quad
\|(h_{n,k}^{(j)})_{j,k}\| \lesssim T_n^{-3}\|\ba_{in}^-\|
\end{equation}
and, for the initial data defined in \eqref{eq:id}, it holds
\begin{equation}\label{eq:ii}
\begin{aligned}
& \ba_{J}^-(T_n) = \ba_{in}^-,\\
& \left( u_n (T_n) - \MS(T_n) - A E_\mm(T_n), \Lambda_k W_k \right)_{\dot H^1_{\ell_k}}=0,\\
& \left( u_n (T_n) - \MS(T_n) - A E_\mm(T_n), \partial_j W_k\right)_{\dot H^1_{\ell_k}}=0.
\end{aligned}
\end{equation}
Moreover, the decomposition of $\vec u(T_n)$ given by Section \ref{s:4.1} satisfies
\begin{equation}\label{eq:ci}
\| \vec a(T_n)\|_\cE \lesssim  \|\ba_{in}^-\|,\quad |\ba^+(T_n)|+|\ba_I^-(T_n)|\lesssim T_n^{-3} \|\ba_{in}^-\| ,\quad q(T_n)=0.
\end{equation}
\end{lemma}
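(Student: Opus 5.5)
The plan is to reduce the statement to a finite-dimensional linear system at time $T_n$ and invert it by a perturbation argument. The idea is to impose $\alpha_k^\Lambda(T_n)=\alpha_k^{(j)}(T_n)=0$ for all $k,j$ in the decomposition of Section \ref{s:4.1}, so that $q(T_n)=0$ and $\vec Q(T_n)=0$. At that time the decomposition then reads
\[
\vec a(T_n)=\sum_{k\in J_m} h_{n,k}^-\vec E_k+\sum_k h_{n,k}^\Lambda \vec Z_k^\Lambda+\sum_{j,k}h_{n,k}^{(j)}\vec Z_k^{(j)}.
\]
Since the map $(\alpha_k^\Lambda,\alpha_k^{(j)})\mapsto$ orthogonality conditions \eqref{eq:OR} is locally uniquely solvable, requiring \eqref{eq:OR} for this $\vec a$ with zero $\alpha$'s is exactly the second and third lines of \eqref{eq:ii}. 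By uniqueness, the decomposition of $\vec u_n(T_n)$ then has $q(T_n)=0$.

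Collecting the unknowns into $\mathbf h=((h^-_{n,k})_{k\in J_m},(h^\Lambda_{n,k})_k,(h^{(j)}_{n,k})_{j,k})$, the conditions to impose are linear in $\mathbf h$:
\begin{align*}
\bigl(\vec a,\vec Z_k^-\bigr)_{L^2}&=\alpha^-_{in,k} \quad (k\in J_m),\\
(a,\Lambda_kW_k)_{\dot H^1_{\ell_k}}&=0,\\
(a,\partial_jW_k)_{\dot H^1_{\ell_k}}&=0.
\end{align*}
The key step is to compute the matrix of this system and show it is a small perturbation of an invertible block-diagonal matrix.
\begin{itemize}
\item[(a)] Diagonal entries. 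Since $E_k=e^{-\beta_k}J\vec Z_k^+$, \eqref{eq:PM} gives $(\vec E_k,\vec Z_k^-)_{L^2}=-2e^{-\beta_k}\frac{\sqrt{\lambda_0}}{\lambda_k\sqrt{1-\ell_k^2}}\|Y_{\ell_k}\|^2_{L^2}$ up to scaling factors, which is nonzero. The kernel block $\bigl((\vec Z_l^\Lambda,\vec Z_l^{(j)}),(\Lambda_kW_k,\partial_jW_k)\bigr)_{\dot H^1_{\ell_k}}$ restricted to $l=k$ is the Gram matrix of $\Lambda W_{\ell_k},\partial_jW_{\ell_k}$, which is invertible by parity.
\item[(b)] Cross terms between different solitons are $O(t^{-3})$ by \eqref{eq:C2}, \eqref{eq:C3}.
\item[(c)] Mixed terms for the same soliton. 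We have $(\vec Z_k^\Lambda,\vec Z_k^-)=(\vec Z_k^{(j)},\vec Z_k^-)=0$ by \eqref{oZ}. The remaining entries $(E_k,\Lambda_kW_k)_{\dot H^1_{\ell_k}}$ are proportional to $(L_{\ell_k}Y_{\ell_k},\Lambda W_{\ell_k})$-type quantities, which vanish since $L_\ell\Lambda W_\ell=0$. If an entry does not vanish exactly, I would keep it as an upper-triangular block, which is still invertible.
\end{itemize}

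To handle the factor $e^{-\beta_k(T_n)}$, I would rescale the unknowns by setting $\tilde h_k^-=e^{-\beta_k}h_k^-$; it is the rescaled quantity that satisfies \eqref{eq:bi}. Indeed the paper's bound $\|(h^-)\|\lesssim\|\ba^-_{in}\|$ is consistent with $e^{-\beta}$ being bounded above, so the estimate holds after possibly absorbing that factor. The matrix is then $D+O(T_n^{-3})$ with $D$ invertible, and is therefore invertible for $n$ large. Since the right-hand side is $(\ba_{in}^-,0,0)$, the solution satisfies $|h^-|\lesssim\|\ba^-_{in}\|$. The kernel components are driven only through the $O(T_n^{-3})$ couplings, which gives $|h^\Lambda|+|h^{(j)}|\lesssim T_n^{-3}\|\ba_{in}^-\|$.

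It remains to check \eqref{eq:ci}. The bound on $\|\vec a(T_n)\|_\cE$ follows directly from the formula for $\vec a(T_n)$ and \eqref{eq:bi}. The estimate for $\alpha_k^+(T_n)$ and $\alpha_k^-(T_n)$ with $k\in I_m$ uses $(J\vec Z_k^+,\vec Z_k^+)=0$ by antisymmetry of $J$, orthogonality \eqref{oZ} against the kernel directions, and the $O(T_n^{-3})$ cross terms between different solitons. I expect the main obstacle to be step (c) together with these same-soliton pairings, in particular confirming which entries vanish exactly rather than only being small.
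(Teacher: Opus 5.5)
Your set-up is the standard linear-algebra argument that the paper omits: a square linear system at $T_n$, the rescaling $\tilde h_k^-=e^{-\beta_k(T_n)}h_{n,k}^-$, a block-triangular leading matrix plus $O(T_n^{-3})$ cross-soliton couplings, and $q(T_n)=0$ by uniqueness of the decomposition. The gap is step (c), which is exactly what you use to get the $T_n^{-3}$ bounds on the kernel parameters.

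The relation $L_\ell\Lambda W_\ell=L_\ell\partial_jW_\ell=0$ only turns $-\Delta_\ell$ into $f'(W_\ell)$; it does not make the pairing vanish. The first component of $J\vec Z_\ell^+$ is $Y_\ell e^{c_\ell x_1}$ with $c_\ell=\ell\sqrt{\lambda_0}/\sqrt{1-\ell^2}$, so
\[
(E_k,\partial_1W_k)_{\dot H^1_{\ell_k}}=\frac{e^{-\beta_k}}{\lambda_k}\int Y_{\ell_k}\sinh(c_{\ell_k}x_1)\,f'(W_{\ell_k})\,\partial_1W_{\ell_k}\,dx .
\]
Since $x_1\partial_1W_{\ell_k}<0$ for $x_1\neq0$, this integrand has the strict sign of $-\ell_k$, so the entry is nonzero for every $k\in J_m$ with $\ell_k\neq0$. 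Likewise $(E_k,\Lambda_kW_k)_{\dot H^1_{\ell_k}}=e^{-\beta_k}\int Y_{\ell_k}e^{c_{\ell_k}x_1}f'(W_{\ell_k})\Lambda W_{\ell_k}\,dx$ has no reason to vanish.

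So you are in your triangular fallback. Invertibility survives, because the $\alpha_k^-$-rows have zero same-soliton kernel entries by \eqref{oZ}. But the $\partial_1W_k$-row (and in general the $\Lambda_k$-row) now carries an $O(1)$ coefficient in front of $\tilde h_k^-\approx\alpha^-_{in,k}/C_k$, where $C_k=(J\vec Z_k^+,\vec Z_k^-)_{L^2}\neq0$. Hence $h^{(1)}_{n,k}$, and generically $h^\Lambda_{n,k}$, are of size $|\alpha^-_{in,k}|$, not $T_n^{-3}\|\ba^-_{in}\|$. Your sentence that the kernel components are driven only through $O(T_n^{-3})$ couplings contradicts your own fallback.

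In the same way, $\alpha_k^-(T_n)\approx C_ke^{-\beta_k(T_n)}h^-_{n,k}$ forces $|h^-_{n,k}|\sim e^{\beta_k(T_n)}|\alpha^-_{in,k}|$. Your remark that the unscaled bound holds because $e^{-\beta}$ is bounded above runs in the wrong direction.

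Consequently, with $\vec E_k$ carrying the factor $e^{-\beta_k}$ as in \eqref{eq:id}, the bounds in \eqref{eq:bi} cannot all hold as written. The bound on $h^-$ is false. The $T_n^{-3}$ bound on $(h^{(j)}_{n,k})$ fails as soon as some $k\in J_m$ has $\ell_k\neq0$. There is no reason for the one on $(h^\Lambda_{n,k})$.

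What your argument does prove is
\[
e^{-\beta_k(T_n)}|h^-_{n,k}|+\|(h^\Lambda_{n,k})_k\|+\|(h^{(j)}_{n,k})_{j,k}\|\lesssim\|\ba^-_{in}\|.
\]
That is enough for \eqref{eq:ci}, which is the only part used afterwards (in \eqref{eq:ci2}). Your last paragraph already has the right mechanism. The $T_n^{-3}$ gain for $\ba^+(T_n)$ and $\ba^-_I(T_n)$ does not come from smallness of the kernel parameters. It comes from three exact facts:
\begin{itemize}
\item $(J\vec Z_k^+,\vec Z_k^+)_{L^2}=0$;
\item $(\vec Z_k^\Lambda,\vec Z_k^\pm)_{L^2}=(\vec Z_k^{(j)},\vec Z_k^\pm)_{L^2}=0$;
\item there is no $\vec E_k$ term for $k\in I_m$.
\end{itemize}
So only cross-soliton pairings of size $O(T_n^{-3})$ survive. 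Drop the vanishing claim in (c), state the weaker bounds above, and conclude \eqref{eq:ci} this way.
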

\begin{proof}[Sketch of the proof]
The proof of this result follows from standard linear algebra and is omitted (see for instance \cite[Proof of Lemma 3.6]{Co}).
Note that by \eqref{eq:id} and \eqref{eq:ii}, we have $q=0$ and
\[
\vec a(T_n) =u_n (T_n) - \MS(T_n) - A E_\mm(T_n)
= \sum_{k\in J_m} h_{n,k}^- \vec E_k (T_n)
+ \sum_k h_{n,k}^\Lambda \vec Z_{k}^\Lambda + \sum_{j,k} h_{n,k}^{(j)} \vec Z_{k}^{(j)},
\]
which implies \eqref{eq:ci} using the estimates in \eqref{eq:bi}.
\end{proof}

\subsection{Bootstrap estimate}
Let $\ba_{in}^- \in \R^{K_m}$ to be chosen later such that 
\begin{equation}\label{eq:rb}
|\ba_{in}^-|\leq  |A| T_n^{-3 \nu_0} e^{-\beta_m(T_n)}.
\end{equation}
(Recall that $\nu_0\in (0,\frac18)$ is defined in \eqref{eq:n0}.)
We consider the solution $\vec u_n$ of \eqref{wave} corresponding to the initial data at $T_n$ defined in \eqref{eq:id} with the parameters
$(h_{n,k}^-)_{k\in J_m}\in \R^{K_m}$,
$(h_{n,k}^\Lambda)_{k}\in \R^K$, $(h_{n,k}^{(j)})_{j,k}\in (\R^{5})^K$ given by Proposition~\ref{pr:33}.
In particular, by \eqref{eq:bi} and \eqref{eq:ii}, we have
\begin{equation}\label{eq:ci2}
\begin{aligned}
& |\ba^+(T_n)|+|\ba_I^-(T_n)|\lesssim  |A| T_n^{-3-3 \nu_0} e^{-\beta_m(T_n)} ,\\
& \| \vec a(T_n)\|_\cE \lesssim |A| T_n^{-3 \nu_0} e^{-\beta_m(T_n)}, \quad q(T_n)=0, \\
& |\ba_{J}^-(T_n)|\leq |A| T_n^{-3 \nu_0} e^{-\beta_m(T_n)}.
\end{aligned}
\end{equation}
From the initial data $\vec u_n(T_n)$ at $t=T_n$, we solve equation \eqref{wave} backwards in time, and more precisely we prove that the solution $\vec u_n$
exists and satisfies uniform estimates on the interval $[t_0,T_n]$ for some $t_0$ large enough independent of $n$.
We use the decomposition of $\vec u_n$ and bootstrap estimates.

\begin{definition}
Let $T(\ba_{in}^-)$ be the infimum of $T\geq t_0$ such that, for all $t\in [T,T_n]$, the following properties hold
\begin{equation}\label{eq:BS}
\begin{aligned}
|\ba^+|+|\ba_I^-|&\leq  |A| t^{-3 \nu_0} e^{-\beta_m},\\
\|\vec a\|_\cE &\leq  |A| t^{-2 \nu_0} e^{-\beta_m},\\
|q| & \leq |A| t^{-\nu_0} e^{-\beta_m},\\
|\ba_J^-|&\leq  |A| t^{-3 \nu_0} e^{-\beta_m}.
\end{aligned}
\end{equation}
\end{definition}
\begin{remark}
Note that for simplicity, there is no bootstrap constant to be fixed in \eqref{eq:BS}. 
We will improve the bootstrap estimate by taking $T$ sufficiently large instead of adjusting large bootstrap constants.
By \eqref{eq:ci2} and by a continuity argument, we observe that if 
$T(\ba_{in}^-)=T_n$ then $|\ba_{in}^-|=|\ba_J^-(T_n)|= |A| T_n^{-3 \nu_0} e^{-\beta_m(T_n)}$
(see Section \ref{s:5.7}).
\end{remark}

We will close the bootstrap estimates in the same order as they are presented in \eqref{eq:BS}.
In the estimates below, we work on the time interval $[T(\ba_{in}^-),T_n]$,
where $T(\ba_{in}^-)\geq t_0$ is defined above.

\subsection{Control of all the stable directions}
For any $k$, using \eqref{eq:BS} and \eqref{eq:zo} we have, for $t\in [T(\ba_{in}^-),T_n]$, for $t_0$ large enough,
\begin{equation*}
\left| \frac{d}{dt} \alpha_k^+ - \mu_k \alpha_k^+ \right|
 \lesssim  |A| t^{-2-\nu_0} e^{-\beta_m} + |A| t^{-2} e^{-\beta_m} \lesssim |A| t^{-2} e^{-\beta_m}.
\end{equation*}
Thus, $|(e^{-\beta_k} \alpha_k^+)'|\lesssim |A| t^{-2} e^{-(\beta_m+\beta_k)}$. By integration
on $[t,T_n]$, and then using the bound on the initial data \eqref{eq:ci2}, we obtain 
\begin{align*}
|\alpha_k^+(t)|&\lesssim e^{\beta_k - \beta_k(T_n)} |\alpha_k^+(T_n)| + |A| t^{-2} e^{-\beta_m}\\
&\lesssim |A| T_n^{-3-3\nu_0} e^{-\beta_m(T_n)}  + |A| t^{-2} e^{-\beta_m}
\lesssim |A| t^{-2} e^{-\beta_m}.
\end{align*}

\subsection{Control of lower unstable directions}
For $k\in I_m$, using \eqref{eq:zo} and \eqref{eq:BS}, we have
\begin{equation*}
\left| \frac{d}{dt} \alpha_k^- + \mu_k \alpha_k^- \right| \lesssim |A| t^{-2}e^{-\beta_m}.
\end{equation*}
Thus $|(e^{\beta_k} \alpha_k^-)'|\lesssim |A| t^{-2} e^{\beta_k-\beta_m}$, and by integration
on $[t,T_n]$, and then \eqref{eq:ci2}, we obtain 
\[
|\alpha_k^-(t)|\lesssim e^{-\beta_k + \beta_k(T_n)} |\alpha_k^-(T_n)| + |A| t^{-1} e^{-\beta_m}
\lesssim |A| t^{-1} e^{-\beta_m}.
\]

In conclusion for $\ba^+$ and $\ba^-_I$, since $\nu_0\leq \frac 18$, we obtain for $t$ large,
\begin{equation}\label{eq:01}
|\ba^+|+|\ba_I^-|\leq C |A| t^{-1} e^{-\beta_m} \leq \frac12 |A| t^{-3 \nu_0} e^{-\beta_m},
\end{equation}
which strictly improves the first line of \eqref{eq:BS}.

\subsection{Control of the infinite dimensional part}
Using \eqref{eq:di} and \eqref{eq:BS}, we have
\[
- \frac{d}{dt}\left(t^{1+\delta} \cH\right)
\leq C |A|^2 t^\delta t^{-6\nu_0} e^{-2\beta_m}.
\]
Note that \eqref{eq:ci2} implies
\[
|\cH(T_n)| \lesssim \|\vec a(T_n)\|_\cE^2 \lesssim |A|^2 T_n^{-6 \nu_0} e^{-2\beta_m(T_n)}.
\]
Thus, by integration on $[t,T_n]$,
\[
\cH \lesssim t^{-1-\delta}T_n^{1+\delta} |\cH(T_n)| + |A|^2 t^{-1-6\nu_0} e^{-2\beta_m}
\lesssim |A|^2 t^{-6\nu_0} e^{-2\beta_m}.
\]
Using now \eqref{nint}, \eqref{eq:cH} and \eqref{eq:BS}, we obtain
\[
\|\vec a\|_\cE \lesssim |A| t^{-3\nu_0} e^{-\beta_m}.
\]
Thus, for $t$ sufficiently large,
\begin{equation}\label{eq:02}
\|\vec a\|_\cE \leq \frac 12 |A| t^{-2\nu_0} e^{-\beta_m},
\end{equation}
which strictly improves the second line of \eqref{eq:BS}.

\subsection{Control of the kernel directions}
Now, we use \eqref{eq:ke} and \eqref{eq:BS},
\[
\left| \frac{d}{dt} \alpha_k^\Lambda \right| + \left| \frac{d}{dt} \alpha_k^{(j)} \right|
\leq C |A| t^{-2\nu_0} e^{-\beta_m}.
\]
By integration and $q(T_n)=0$ (see \eqref{eq:ci2}),
\[
\left|  \alpha_k^\Lambda \right| + \left|  \alpha_k^{(j)} \right|
\leq C |A| t^{-2\nu_0} e^{-\beta_m},
\]
and so, for $t$ sufficiently large,
\begin{equation}\label{eq:03}
|q|\leq \frac 12 |A|t^{-\nu_0} e^{-\beta_m}.
\end{equation}

By \eqref{eq:01}, \eqref{eq:02} and \eqref{eq:03}, we have strictly improved all the estimates in \eqref{eq:BS},
except the one on $\ba_J^-$. If $J_m$ is  empty then the proof is complete.
Otherwise, we need to control the instability directions contained in $\ba_J^-$ 
using a topological argument.

\subsection{Conclusion by a topological argument}\label{s:5.7}
Assume that $J_m$ is not empty.
Set
\[
N = t^{6\nu_0} e^{2\beta_m} \sum_{k \in J_m}  (\alpha_k^-)^2 .
\]
Differentiating and then using \eqref{eq:mm}, \eqref{eq:zo} and \eqref{eq:BS} we find
\begin{align*}
\frac{d}{dt} N
& = 6\nu_0 t^{-1} N + 2\mu_m N
+2 t^{6\nu_0} e^{2\beta_m} \sum_{k \in J_m} \alpha_k^- \frac{d}{dt}\alpha_k^- \\
& \leq 2 \sum_{k\in J_m} \left(3\nu_0 t^{-1}+ \mu_m^\infty - \nu_m t^{-1} + \mu_k^\infty -  \nu_k t^{-1} + C t^{-\frac74}\right) (\alpha_k^-)^2
 \\
&\quad + C |A| t^{-2+6\nu_0} e^{\beta_m} \sum_{k\in J_m} |\alpha_k^-|.
\end{align*}
Now, we recall \eqref{eq:Jm} which says for $k \in J_m$, 
either $\mu_k^\infty> \mu_m^\infty$ or $\mu_k^\infty=\mu_m^\infty$ and then $\nu_k\leq \nu_m-8\nu_0$.
We also use again \eqref{eq:BS} for the last term to obtain, for $t$ large,
\begin{equation*}
\frac{d}{dt} N \leq - 8\nu_0 t^{-1} N + C A^2 t^{-2+3\nu_0}.
\end{equation*}
Now take $t_0$ large enough so that $C t_0^{-2+3\nu_0} \leq 4 \nu_0 t_0^{-1}$.
As a consequence, if $t\geq t_0$ and $N=|A|^2$ then one has
\begin{equation}\label{eq:at}
\frac{d}{dt} N (t) \leq - 4 \nu_0 t^{-1}  A^2 <0.
\end{equation}
By a standard contradiction argument involving Brouwer's  Fixed-Point Theorem (see for example \cite{CMM} or \cite[\S 3.3.7]{Co}),
we obtain the existence of at least one value of $\ba_{in}^-\in \R^{K_m}$ satisfying \eqref{eq:rb} and such that $T(\ba_{in}^-)=t_0$.

\subsection{Compactness argument}
The construction of an exact multi-soliton using a sequence of approximate solutions 
satisfying the uniform estimates of Proposition \ref{pr:33} follows the same strategy 
by compactness as in \cite[\S 5]{MMwave2}. We omit the details here.

Once the construction is performed in the space $\dot H^1 \times L^2$, the solution
is shown to be a strong multi-soliton by using Proposition \ref{pr:2}.

\section{Classification of multi-solitons}\label{S:6}

In this section, we prove Theorem \ref{th:2}, following \cite{Co} for 
the strategy of the proof and using the general setting of Section \ref{S:4} for the technical ingredients specific to the energy-critical wave equation (largely inspired from \cite{MMwave1,MMwave2} and also \cite{CMkg}).

We consider a strong multi-soliton $\vec S$
and the family of strong multi-solitons $\vec S_{\bf A}$ generated by $\vec S$ as constructed in Section \ref{S:5}.
Let $\vec u$ be a multi-soliton of \eqref{wave} on some time interval $[T,+\infty)$ and satisfying
the estimate \eqref{eq:hy} for some $\delta>0$. 
Then, by Proposition \ref{pr:2}, we know that $u$ is a strong multi-soliton and in particular,
it satisfies
\begin{equation}\label{eq:S4}
\left\|\nabla_{t,x} \left(u -\MS\right) (t)\right\|_{L^2} \lesssim t^{-2+\delta}.
\end{equation}
We use exactly the same notation as in Section \ref{s:4.1}, in particular the decomposition of $\vec u$ in \eqref{eq:dS},
but with $A=0$. Indeed, the parameter $A$ in Section \ref{s:4.1} is relevant for the construction in Section \ref{S:5} but
it is useless in the classification proof, now that the family of multi-solitons $\vec S_{\bf A}$ has been constructed.

\subsection{Summarizing estimates}
For convenience, we gather here the estimates proved in Section \ref{S:4}
(specifically, estimates \eqref{eq:ke}, \eqref{eq:zo}, \eqref{eq:eH}, \eqref{eq:cH}, \eqref{eq:ri}), in the special case where $A=0$.
Assuming \eqref{BPweak} and $A=0$, it holds
\begin{align}\label{eq:ke0}
&\left| \frac{d}{dt} \alpha_k^\Lambda \right| + \left| \frac{d}{dt} \alpha_k^{(j)} \right|
\lesssim \|\vec a\|_{\mathcal E} + t^{-2} q ,\\
\label{eq:zo0}
&\left| \frac{d}{dt} \alpha_k^\pm \mp \mu_k \alpha_k^\pm \right| 
\lesssim \|\vec a\|_{\mathcal E}^2 + q^2 + t^{-2} (\|\vec a\|_{\mathcal E}+q) ,
\\
&| \mathcal{H}| \lesssim \|\vec a\|_\cE^2,\label{eq:HH}
\\
\label{eq:cH0}
&\mathcal{H}\geq \mu \|\vec a\|_\cE^2- C B^2 ,\\
\label{eq:ri0}
&- \frac d{dt} \left( t^{1+\delta} \mathcal H \right) + t^{1+\delta} \sum_l \br_l\\
&\quad \lesssim t^{1+\delta} \left(\|\vec a\|_\cE^2 + q^\frac73+t^{-\frac 23} q^2 +q\|\vec a\|_\cE + t^{-\frac32} \|\vec a\|  +t^{-\frac52} q\right)\|\vec a\|_\cE 
+ t^\delta B^2,\nonumber
\end{align}
where the terms $\br_l$ are defined in \eqref{eq:rr}.

Furthermore, by \eqref{eq:S4}, we have
\begin{equation}\label{eq:im}
\|\vec a\|_\cE + B + q\lesssim t^{-2+\delta}.
\end{equation}
\subsection{Correction terms for the energy variation}
We need to add correction terms to the energy to compensate exactly the sum $\sum_l \br_l$ in \eqref{eq:ri0}
(such terms are of size $t^{-2} q\|\vec a\|_\cE$).
Note that such a correction is not needed in the construction proof, since it is based on exponential bootstrap estimates.

\begin{lemma}\label{le:vG}
Assume \eqref{BPweak}.
It holds
\begin{equation}\label{eq:br}
|\br_1|+|\br_2|+|\br_3|+|\br_4|\lesssim t^{-2} q \|\vec a\|_\cE.
\end{equation}
Moreover, let
\begin{align*}
&\cG_{1,k} = - \frac{d_k}{t^2} \alpha_k^\Lambda \int(b-\ell_k \pun a) \Lambda_k^2 W_k,\quad
\cG_{2,k} = - \frac{d_k}{t^2} \lambda_k \sum_j \alpha_k^{(j)} \int(b-\ell_k \pun a) \Lambda_k\partial_j W_k,\\
&\cG_{3,k} = c_k\alpha_k^\Lambda\int a f''(W_k) v_k \Lambda_k W_k,\quad 
\cG_{4,k} = c_k \lambda_k \sum_j \alpha_k^{(j)}\int a f''(W_k) v_k \partial_j W_k . 
\end{align*}
Then,
\begin{align}
\sum_{j=1}^4 \left|\cG_{j,k}\right| & \lesssim t^{-2} q\|\vec a\|_\cE,\label{eq:eG}\\
\left| \frac d{dt} \sum_{j=1}^4\cG_{j,k} - \sum_{j=1}^4 \br_{j,k} \right| 
& \lesssim t^{-2} \|\vec a\|_\cE^2 +t^{-4} q^2+t^{-2} q^3.\label{eq:50} 
\end{align}
\end{lemma}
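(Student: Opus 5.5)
The proof splits into the size bounds \eqref{eq:br}, \eqref{eq:eG}, which are direct, and the derivative identity \eqref{eq:50}, which is the real content.

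\emph{Size bounds.} For \eqref{eq:br} and \eqref{eq:eG} I will use Hölder's inequality together with the decay of the profiles. By \eqref{L2}, the functions $(\Delta_{\ell_k}+f'(W_k))\Lambda_k^2W_k$ and $(\Delta_{\ell_k}+f'(W_k))\Lambda_k\partial_jW_k$ are of the form $f''(W_k)\times(\text{products of }\Lambda_kW_k,\partial_jW_k)$. They are therefore bounded by $\omega_k^{5}$, hence lie in $L^{10/7}$, and pair with $a\in L^{10/3}$. Since $|d_k|/t^2\lesssim t^{-2}$, this gives $|r_{1,k}|+|r_{2,j,k}|\lesssim t^{-2}q\|\vec a\|_\cE$.

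For $r_{3,k},r_{4,j,k}$, I will use \eqref{eq:Av}: $|v_k|\lesssim t^{-2}\omega_k^{2-\delta}$. Hence $f''(W_k)v_k\Lambda_kW_k$ is $O(t^{-2})$ in $L^2$, and pairing with $\ell_k\pun a+b$ gives the claim. The same estimates handle $\cG_{j,k}$, using $\Lambda_k^2W_k\in L^2$, which holds since its decay is $\omega_k^3$ in dimension $5$. For $\cG_{3,k},\cG_{4,k}$ I use $a\in L^{10/3}$ against an $L^{10/7}$ weight of size $t^{-2}$.

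\emph{Derivative identity.} For \eqref{eq:50} I differentiate each $\cG_{j,k}$ and use the equation \eqref{eq:vv} with $A=0$, namely $\partial_t a=b-P_1+M_Q$ and $\partial_t b=\Delta a+f'(S)a+\dots$ from \eqref{eq:34}. The derivative falls on the coefficient, on $(a,b)$, or on the profile.

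\begin{itemize}
\item \emph{Coefficients.} Derivatives of $d_k/t^2$, $\lambda_k$ and $c_k$ give $O(t^{-3}q\|\vec a\|_\cE)$. Derivatives of $\alpha_k^\Lambda,\alpha_k^{(j)}$ are controlled by \eqref{eq:ke0}; they produce $t^{-2}(\|\vec a\|_\cE+t^{-2}q)\|\vec a\|_\cE$, which is fine. One caveat: $t^{-3}q\|\vec a\|_\cE$ is not literally among the allowed terms, so it must be absorbed by Young's inequality, $t^{-3}q\|\vec a\|\le t^{-2}\|\vec a\|^2+t^{-4}q^2$.
\item \emph{Main terms in $\cG_{1,k}$.} For $\cG_{1,k}$, using $\partial_t(b-\ell_k\pun a)=\Delta a+f'(S)a-\ell_k\pun b+\dots$ and the transport of the profile $\partial_t\Lambda_k^2W_k=-\ell_k\pun\Lambda_k^2W_k+O(t^{-2})$ from \eqref{eq:tG}, the $\pun b$ terms cancel after integrating by parts. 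What remains is $\int a(\Delta-\ell_k^2\pun^2+f'(S))\Lambda_k^2W_k$. Replacing $f'(S)$ by $f'(W_k)$ costs $t^{-2}\|\vec a\|$ by \eqref{eq:c2} and \eqref{eq:C2}. This leaves exactly $\int a(\Delta_{\ell_k}+f'(W_k))\Lambda_k^2W_k$, which recovers $r_{1,k}$ with the correct sign; $\cG_{2,k}$ is handled identically.
\item \emph{Main terms in $\cG_{3,k},\cG_{4,k}$.} For $\cG_{3,k}$, the term $\partial_t a=b+\dots$ combines with the transport $\partial_t(f''(W_k)v_k\Lambda_kW_k)=-\ell_k\pun(\cdot)+O(\cdot)$. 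Integrating by parts produces $\int(b+\ell_k\pun a)f''(W_k)v_k\Lambda_kW_k=r_{3,k}/(c_k\alpha_k^\Lambda)$. The relevant bound here is $A_{\ell_k}v_k=O(t^{-3})$ from \eqref{eq:Av}.
\end{itemize}

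\emph{Remaining error terms.} These are of three kinds. The $P_1,M_Q$ contributions are $O(\|\vec a\|+t^{-2}q)$ times $q$, times $t^{-2}$. The nonlinear terms $p_1a,p_2,p_3$ from \eqref{eq:34}, paired against $t^{-2}q$ weights, give $t^{-2}q(q\|\vec a\|+\|\vec a\|^2+q^2+t^{-2}q)$, since $\|p_3\|\lesssim q^2+t^{-2}q$ with $A=0$. All of these are bounded by the right-hand side of \eqref{eq:50} after Young's inequality.

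The main obstacle is the exact cancellation in $\cG_{1,k},\cG_{2,k}$. The profile must be handled through the transport $A_{\ell_k}$ structure, and the error estimates must avoid losing the factor $t^{-2}$. In particular, the term $t^{-2}q\|p_3\|$ has to be shown to fall within $t^{-4}q^2+t^{-2}q^3$.
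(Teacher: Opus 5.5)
\textbf{There is a genuine gap.} Your size bounds \eqref{eq:br}, \eqref{eq:eG} are fine. So is your derivation of $\br_{1,k},\dots,\br_{4,k}$: the $\pun b$ terms cancel after transporting the profiles, and $f'(\MS)$ is replaced by $f'(W_k)$. The failure is in the modulation terms of \eqref{eq:50}.

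\emph{Why Young's inequality does not help.} You estimate the $P_1,P_2$ contributions by $t^{-2}q(\|\vec a\|_\cE+t^{-2}q)$ and claim Young puts them under the right-hand side. But $t^{-2}q\|\vec a\|_\cE$ is not bounded by $t^{-2}\|\vec a\|_\cE^2+t^{-4}q^2+t^{-2}q^3$. Take $\|\vec a\|_\cE\approx t^{-1}q$ with $q\lesssim t^{-2+\delta}$: it equals $t^{-3}q^2$, which dominates all three terms. Such a term is of the same order as the main term $t^{-1}\cH$. Inserted into \eqref{eq:rj}, it would replace $t^{-\frac72}q^2$ by $t^{-\frac52}q^2$, and the absorption step in Lemma \ref{le:td} fails (one gets $t^{\frac12}v_0^2$ instead of $t^{-\frac12}v_0^2$).

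\emph{These terms are really present.}
In $\frac d{dt}\cG_{1,k}$, the equation gives $\ell_k\pun P_1-P_2=2\ell_k\pun P_1^{\rm (I)}+\dots$. The part of $2\ell_k\pun P_1^{\rm (I)}$ attached to $W_k$ is $2\ell_k\bigl(\frac d{dt}\alpha_k^\Lambda\,\pun\Lambda_kW_k+\lambda_k\sum_j\frac d{dt}\alpha_k^{(j)}\pun\partial_jW_k\bigr)$. Against $\Lambda_k^2W_k$, parity kills every pairing except $\int\pun^2W_k\Lambda_k^2W_k=\int(\pun\Lambda_kW_k)^2$. This leaves a term proportional to $\ell_k\frac{\lambda_kd_k}{t^2}\alpha_k^\Lambda\frac d{dt}\alpha_k^{(1)}\int(\pun\Lambda_kW_k)^2$.
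In $\frac d{dt}\cG_{3,k}$, the $-P_1$ in $\partial_ta=b-P_1+M_Q$ produces $-c_k\alpha_k^\Lambda\lambda_k\frac d{dt}\alpha_k^{(1)}\int\pun W_kf''(W_k)v_k\Lambda_kW_k$, with $v_k=O(t^{-2})$.
By \eqref{eq:ke0}, $\frac d{dt}\alpha_k^{(1)}$ is generically of size $\|\vec a\|_\cE$. So both terms have the critical size $t^{-2}q\|\vec a\|_\cE$. This is why \eqref{eq:50} is stated only for the sum over $j$: it is false for each $\cG_{j,k}$ separately, and your term-by-term plan cannot prove it.

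\textbf{The missing idea is the cross-cancellation} between $\cG_{1,k}$ and $\cG_{3,k}$, and likewise between $\cG_{2,k}$ and $\cG_{4,k}$. It needs two ingredients you never use.
\begin{enumerate}
\item \emph{Leading-order value of the $v_k$-integral.} After rescaling, \eqref{H2} gives $\int\pun W_\ell f''(W_\ell)V_\ell\Lambda W_\ell=\int(L_\ell\pun\Lambda W_\ell)V_\ell=\int\pun\Lambda W_\ell\,L_\ell V_\ell$. The refined property \eqref{eq:Vl} of the profile, $L_\ell V_\ell=t^{-2}\kappa_\ell\ell\pun\Lambda W_\ell+O(t^{-3}\langle x\rangle^{-\frac52})$, turns this into $t^{-2}\kappa_\ell\ell\int(\pun\Lambda W_\ell)^2+O(t^{-3})$. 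The bounds $|v_k|\lesssim t^{-2}$ and $A_{\ell_k}v_k=O(t^{-3})$ alone are not enough. The same argument with $\Lambda^2W_\ell$ gives $O(t^{-3})$ by symmetry, which disposes of the $\frac d{dt}\alpha_k^\Lambda$ piece.
\item \emph{The exact value of $d_k$.} By \eqref{eq:dk}, $\lambda_kd_k=\frac12c_k(\lambda_k^\infty)^{\frac12}\kappa_{\ell_k}\epsilon_k+O(t^{-1})$. Hence the two leading contributions, both of the form $\ell_kc_k\kappa_{\ell_k}\epsilon_k(\lambda_k^\infty)^{\frac12}t^{-2}\alpha_k^\Lambda\frac d{dt}\alpha_k^{(1)}\int(\pun\Lambda W_{\ell_k})^2$, cancel exactly in the sum.
\end{enumerate}
Without this computation your argument only gives \eqref{eq:50} with an extra $t^{-2}q\|\vec a\|_\cE$ on the right. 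That is exactly the critical term the corrections $\cG_{j,k}$ were introduced to remove.
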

\begin{proof}
The proofs of \eqref{eq:br} and \eqref{eq:eG} are straightforward.
Now, we prove
\begin{align}
\left| \frac d{dt} \cG_{1,k} - \br_{1,k}
- \ell_k  c_k \kappa_{\ell_k} \epsilon_k (\lambda_k^\infty)^\frac12  t^{-2} \alpha_k^\Lambda \frac{d}{dt}\alpha_k^{(1)}\int (\partial_1 \Lambda  W_\ell )^2 \right| & \lesssim t^{-2} \|\vec a\|_\cE^2 +t^{-4} q^2, \label{eq:51} \\
\left| \frac d{dt} \cG_{2,k} - \br_{2,k} - \ell_k  c_k \kappa_{\ell_k} \epsilon_k (\lambda_k^\infty)^\frac12  t^{-2}  \alpha_k^{(1)} \frac{d}{dt}\alpha_k^\Lambda\int (\partial_1 \Lambda W_\ell)^2\right| & \lesssim t^{-2} \|\vec a\|_\cE^2 +t^{-4} q^2, \label{eq:52} \\
\left| \frac d{dt} \cG_{3,k} - \br_{3,k} 
+ \ell_k  c_k \kappa_{\ell_k} \epsilon_k(\lambda_k^\infty)^\frac12 t^{-2} \alpha_k^\Lambda \frac{d}{dt}\alpha_k^{(1)}\int (\partial_1 \Lambda  W_\ell )^2\right| & \lesssim t^{-2} \|\vec a\|_\cE^2 +t^{-4} q^2, \label{eq:53}\\
\left| \frac d{dt} \cG_{4,k} - \br_{4,k} 
+ \ell_k   c_k \kappa_{\ell_k} \epsilon_k(\lambda_k^\infty)^\frac12 t^{-2}  \alpha_k^{(1)} \frac{d}{dt}\alpha_k^\Lambda\int (\partial_1 \Lambda W_\ell)^2\right| & \lesssim t^{-2} \|\vec a\|_\cE^2 +t^{-4} q^2 \label{eq:54}. 
\end{align}
Note that summing up \eqref{eq:51}-\eqref{eq:54} implies \eqref{eq:50} after two cancellations.

First, we prove \eqref{eq:51}. We differentiate
\begin{align*}
\frac{d}{dt} \cG_{1,k} & = 
2 \frac{d_k}{t^3} \alpha_k^\Lambda \int(b-\ell_k \pun a) \Lambda_k^2 W_k
- \frac{d_k}{t^2}\frac{d}{dt }\alpha_k^\Lambda \int(b-\ell_k \pun a) \Lambda_k^2 W_k \\
&\quad - \frac{d_k}{t^2} \alpha_k^\Lambda \int (\partial_t b-\ell_k \pun \partial_t a) \Lambda_k^2 W_k
- \frac{d_k}{t^2} \alpha_k^\Lambda \int(b-\ell_k \pun a) \partial_t \Lambda_k^2 W_k.
\end{align*}
The first term is controlled as follows
\[
\left|\frac{d_k}{t^3} \alpha_k^\Lambda \int(b-\ell_k \pun a) \Lambda_k^2 W_k \right|
\lesssim t^{-3} q \|\vec a\|_\cE.
\]
The second term is controlled using \eqref{eq:ke0},
\[
\left| \frac{d_k}{t^2}\frac{d}{dt }\alpha_k^\Lambda \int(b-\ell_k \pun a) \Lambda_k^2 W_k \right|
\lesssim t^{-2}\left( \|\vec a\|_\cE + t^{-2} q \right) \|\vec a\|_\cE.
\]
For the third term, we use \eqref{eq:vv}-\eqref{eq:34} (with $A=0$), which says that
\[
\partial_t b-\ell_k \pun \partial_t a
= \Delta a + f'(S) a 
- \ell_k \pun b + p_1 a + p_2 + p_3 + \ell_k \pun P_1 - P_2 - \ell_k \pun M_Q  + M_R.
\]
A few of those terms are handled as follows (using \eqref{eq:C2})
\begin{equation*}
t^{-2} |\alpha_k^\Lambda| \int (|p_1| |a| + |p_2| + |p_3| + |\pun M_Q| + |M_R|) |\Lambda_k^2 W_k| 
\lesssim t^{-2} q \left(\|\vec a\|_\cE^2 +q^2 +t^{-2} q\right).
\end{equation*}
Moreover, we have by the definition of $\vec P$,
\[
|(\ell_k \pun P_1 - P_2) - 2\ell_k \pun P_1^{\rm (I)} |
\lesssim \sum_l\left(\left| \frac{d}{dt} \alpha_l^\Lambda\right|
+\left| \frac{d}{dt} \alpha_l^{(j)}\right|\right) \left(\sum_{l\neq k} \omega_l^4 + q\sum_{l} \omega_l^4\right)
\]
and thus, using \eqref{eq:C2},
\begin{align*}
& t^{-2} |\alpha_k^\Lambda|\int \left|(\ell_k\pun P_1 - P_2 )- 2\frac{d}{dt}\alpha_k^\Lambda\ell_k \pun \Lambda W_k
- 2 \ell_k \lambda_k{\tsum_j} \frac{d}{dt}\alpha_k^{(j)}\partial_1\partial_j W_k \right| |\Lambda^2_k W_k|\\
&\quad
\lesssim t^{-2} |\alpha_k^\Lambda| 
\left(\left| \frac{d}{dt} \alpha_l^\Lambda\right|+\left| \frac{d}{dt} \alpha_l^{(j)}\right|\right)
\left( \int \sum_{l\neq k} \omega_l^4 \omega_k^3 + q\right)
\lesssim t^{-2}q\left( \|\vec a\|_\cE + t^{-2} q\right)(t^{-2}+q).
\end{align*}
Besides, by \eqref{eq:c2},
\[
t^{-2} |\alpha_k^\Lambda| \int |f'(S)-f'(W_k)| |a| |\Lambda^2_k W_k|\lesssim t^{-4} q \|\vec a \|_\cE.
\]
Therefore,
\begin{align*}
&\biggl| \frac{d_k}{t^2} \alpha_k^\Lambda \int (\partial_t b - \ell_k \pun \partial_t a) \Lambda_k^2 W_k
-\frac{d_k}{t^2} \alpha_k^\Lambda \int (\Delta a + f'(W_k) a - \ell_k \pun b) \Lambda_k^2 W_k\\
&\quad -2\ell_k \frac{d_k}{t^2} \alpha_k^\Lambda \int \left(\frac{d}{dt}\alpha_k^\Lambda \pun \Lambda W_k
+\lambda_k{\tsum_j} \frac{d}{dt}\alpha_k^{(j)}\partial_1\partial_j W_k\right)\Lambda_k^2 W_k \biggr|
\lesssim t^{-2}  \|\vec a\|_\cE^2 + t^{-4} q^2 +t^{-2} q^3.
\end{align*}
Since $\int \pun \Lambda W_k \Lambda_k^2 W_k=0$,
$\int \partial_1^2 W_k \Lambda_k^2 W_k
=  \int (\partial_1 \Lambda_k W_k)^2$, and for $j\neq 1$, 
$\int \partial_1\partial_j W_k \Lambda_k^2 W_k=0$, we obtain
\begin{align*}
&\biggl| \frac{d_k}{t^2} \alpha_k^\Lambda \int (\partial_t b - \ell_k \pun \partial_t a) \Lambda_k^2 W_k
-\frac{d_k}{t^2} \alpha_k^\Lambda \int (\Delta a + f'(W_k) a - \ell_k \pun b) \Lambda_k^2 W_k\\
&\quad - 2\ell_k \frac{\lambda_k d_k}{t^2} \alpha_k^\Lambda \frac{d}{dt}\alpha_k^{(1)}\int (\partial_1 \Lambda_k W_k)^2 \biggr|
\lesssim t^{-2}  \|\vec a\|_\cE^2 + t^{-4} q^2 +t^{-2} q^3.
\end{align*}
Moreover, by \eqref{eq:dk} and \eqref{eq:c3},
\[
\left| \lambda_k d_k - \tfrac 12c_k (\lambda_k^\infty)^\frac12 \kappa_{\ell_k} \epsilon_k \right|
\lesssim t^{-1},
\]
and so
\begin{align*}
&\biggl| \frac{d_k}{t^2} \alpha_k^\Lambda \int (\partial_t b - \ell_k \pun \partial_t a) \Lambda_k^2 W_k
-\frac{d_k}{t^2} \alpha_k^\Lambda \int (\Delta a + f'(W_k) a - \ell_k \pun b) \Lambda_k^2 W_k\\
&\quad -  \ell_k  c_k \kappa_{\ell_k} \epsilon_k (\lambda_k^\infty)^\frac12  t^{-2} \alpha_k^\Lambda \frac{d}{dt}\alpha_k^{(1)}\int (\partial_1 \Lambda W_{\ell_k})^2 \biggr|
\lesssim t^{-2}  \|\vec a\|_\cE^2 + t^{-4} q^2 +t^{-2} q^3.
\end{align*}
For the fourth term, we use \eqref{eq:tG} 
\[
\partial_t \Lambda_k^2 W_k = -\ell_k \pun \Lambda_k^2 W_k - \frac{\dot \lambda_k}{\lambda_k} \Lambda_k^3 W_k
-\dot \by_k \cdot \nabla \Lambda_k W_k
\]
and then \eqref{eq:c3}, leading by integration by parts to
\begin{align*}
\left|\frac{d_k}{t^2} \alpha_k^\Lambda \int(b-\ell_k \pun a) \partial_t \Lambda_k^2 W_k 
-\ell_k  \frac{d_k}{t^2} \alpha_k^\Lambda \int \pun(b-\ell_k \pun a)  \Lambda_k^2 W_k \right| \lesssim 
t^{-4} q \|\vec a\|_\cE.
\end{align*}
Thus, gathering the above estimates and using \eqref{eq:im}, \eqref{eq:dk} we have proved \eqref{eq:51}.

The proof of \eqref{eq:52} is similar and omitted.

Now, we prove \eqref{eq:53}. Differentiate
\begin{align*}
\frac{d}{dt} \cG_{3,k} & = 
 c_k \frac{d}{dt} \alpha_k^\Lambda\int a f''(W_k) v_k \Lambda_k W_k 
+  c_k\alpha_k^\Lambda\int \partial_t a f''(W_k) v_k \Lambda_k W_k \\
&\quad +   c_k\alpha_k^\Lambda\int a \partial_t (f''(W_k) v_k \Lambda_k W_k ).
\end{align*}
The first term is controlled using \eqref{eq:vk} and \eqref{eq:ke}
\[
\left| \frac{d}{dt} \alpha_k^\Lambda\int a f''(W_k) v_k \Lambda_k W_k \right| 
\lesssim t^{-2} \left( \|\vec a\|_\cE + t^{-2} q \right) \|\vec a\|_\cE.
\]
For the second term, we use the first line of \eqref{eq:vv} (with $A=0$) so that
\begin{align*}
  c_k\alpha_k^\Lambda\int \partial_t a f''(W_k) v_k \Lambda_k W_k
& =   c_k\alpha_k^\Lambda\int b  f''(W_k) v_k \Lambda_k W_k \\
& \quad +   c_k\alpha_k^\Lambda\int (-P_1+M_Q)  f''(W_k) v_k \Lambda_k W_k 
\end{align*}
where using \eqref{eq:vk} and \eqref{eq:c3},
\[
\left| \alpha_k^\Lambda\int  M_Q   f''(W_k) v_k \Lambda_k W_k \right| 
\lesssim t^{-4} q^2 .
\]
Using \eqref{eq:tG}, \eqref{eq:vk} and \eqref{eq:c3}, we have for the last term
\[
\left| \alpha_k^\Lambda\int a \partial_t (f''(W_k) v_k \Lambda_k W_k )
+ \alpha_k^\Lambda\int a  \ell_k \pun (f''(W_k) v_k \Lambda_k W_k ) \right|
\lesssim t^{-4} q \|\vec a\|_\cE.
\]
Thus,
\begin{equation*}
\left| \frac{d}{dt} \cG_{3,k} -\br_{3,k}
+  c_k\alpha_k^\Lambda\int P_1 f''(W_k) v_k \Lambda_k W_k\right|
 \lesssim t^{-2} \left( \|\vec a\|_\cE + t^{-2} q \right) \|\vec a\|_\cE.
\end{equation*}
Lastly, we treat the term $c_k\alpha_k^\Lambda\int P_1 f''(W_k) v_k \Lambda_k W_k$ above.
By the expression of $P_1=P_1^{\rm(I)}+P_1^{\rm(II)}$, we have
\begin{equation*}
\left| c_k\alpha_k^\Lambda\int P_1 f''(W_k) v_k \Lambda_k W_k
- c_k\alpha_k^\Lambda\int P_1^{\rm (I)} f''(W_k) v_k \Lambda_k W_k\right|
\lesssim t^{-2} q^2 (\|\vec a\|_\cE+t^{-2} q).
\end{equation*}
Thus, we are reduced to estimate the term
\begin{align*}
 c_k\alpha_k^\Lambda\int P_1^{\rm (I)} f''(W_k) v_k \Lambda_k W_k
&  = c_k \alpha_k^\Lambda \frac{d}{dt} \alpha_k^\Lambda \int \Lambda_k W_k f''(W_k) v_k \Lambda_k W_k \\
& \quad + c_k \alpha_k^\Lambda \lambda_k \frac{d}{dt} \alpha_k^{(j)} \int \partial_1 W_k f''(W_k) v_k \Lambda_k W_k.
\end{align*}
We estimate first the second term in the second member of \eqref{eq:tr},
By change of variable and 
setting $V_{\ell}(t,x)=v_{\ell}(t,x+\ell e_1 t)$ (see Remark \ref{rk:pr}), we have 
\begin{equation}\label{eq:tr}
\lambda_k \int (\partial_1 W_k f''(W_k) v_k \Lambda_k W_k)(t)
=\epsilon_k \lambda_k^{-\frac 32}
\int (\partial_1 W_{\ell_k} f''(W_{\ell_k}) V_{\ell_k} \Lambda W_{\ell_k})\left(\frac{t}{\lambda_k}\right).
\end{equation}
We recall from \eqref{H2} that
\[
-\Delta_\ell \pun \Lambda W_\ell - f'(W_\ell) \pun \Lambda W_\ell
=f''(W_\ell) \pun W_\ell \Lambda W_\ell.
\]
Thus, from \eqref{eq:Vl}
\begin{align*}
\int \partial_1 W_{\ell_k} f''(W_{\ell_k}) V_{\ell_k}\left(\frac{t}{\lambda_k}\right) \Lambda W_{\ell_k}
&=\int ( -\Delta_{\ell_k}\pun \Lambda W_{\ell_k} - f'(W_{\ell_k}) \pun \Lambda W_{\ell_k}) V_{\ell_k}\left(\frac{t}{\lambda_k}\right)\\
&=\left(\frac{t}{\lambda_k}\right)^{-2}  \kappa_{\ell_k} \ell_k \int (\partial_1 \Lambda W_{\ell_k})^2
+O(t^{-3}).
\end{align*}
Similarly, since
\begin{align*}
\int \Lambda W_{\ell_k} f''(W_{\ell_k}) V_{\ell_k}\left(\frac{t}{\lambda_k}\right) \Lambda W_{\ell_k}
&=\int ( -\Delta_{\ell_k}  \Lambda^2 W_{\ell_k} - f'(W_{\ell_k}) \Lambda^2 W_{\ell_k}) V_{\ell_k}\left(\frac{t}{\lambda_k}\right) \\
&=\left(\frac{t}{\lambda_k}\right)^{-2}  \kappa_{\ell_k} {\ell_k} \int \Lambda^2 W_{\ell_k} \partial_1 \Lambda W_{\ell_k}
+O(t^{-3}) = O(t^{-3}),
\end{align*}
by symmetry, we have for the first term in the second member of \eqref{eq:tr},
\begin{equation*}
\left| \alpha_k^\Lambda \frac{d}{dt} \alpha_k^\Lambda \int \Lambda_k W_k f''(W_k) v_k \Lambda_k W_k\right|
\lesssim t^{-3} q (\|\vec a\|_\cE+t^{-2} q).
\end{equation*}
We pass from $\lambda_k$ to $\lambda_k^\infty$ using \eqref{eq:c3} .
This finishes the proof of \eqref{eq:53}.
The proof of \eqref{eq:54} is similar.
\end{proof}

Let
\begin{equation}\label{eq:KK}
\cK = \cH - \cG, \quad \cG = \sum_{j,k} \cG_{j,k}.
\end{equation}
As a consequence of \eqref{eq:ri0} and Lemma \ref{le:vG}, we obtain the following properties for the modified energy $\mathcal K$.
\begin{lemma}
Assuming \eqref{eq:im}, it holds
\begin{equation}\label{eq:rj}
- \frac d{dt} \left( t^{1+\delta} \mathcal K \right)  
\lesssim t^{1+\delta}  \left(t^{-\frac32} \|\vec a\|_\cE^2
+ t^{-\frac 72} q^2\right)
+ t^\delta \sum_k B_k^2.
\end{equation}
Moreover, for constants $C_0,C_1>0$,
\begin{equation}\label{eq:CO}
\|\vec a\|_\cE^2 \leq C_1 \mathcal K + C_0 t^{-4} q^2+ C_0 t^{-2} q^3 + C_0 B^2.
\end{equation}
\end{lemma}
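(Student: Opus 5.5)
The two estimates follow from the definitions $\cK=\cH-\cG$ and $\cG=\sum_{j,k}\cG_{j,k}$, combined with \eqref{eq:ri0}, \eqref{eq:cH0} and Lemma \ref{le:vG}; \eqref{eq:im} lets us absorb every nonlinear error into the stated right-hand sides.

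\emph{Proof of \eqref{eq:rj}.} First write
\[
-\frac{d}{dt}\big(t^{1+\delta}\cK\big) = -\frac{d}{dt}\big(t^{1+\delta}\cH\big) + (1+\delta)t^{\delta}\cG + t^{1+\delta}\frac{d}{dt}\cG .
\]
By \eqref{eq:50}, $t^{1+\delta}\frac{d}{dt}\cG = t^{1+\delta}\sum_l \br_l + O\big(t^{1+\delta}(t^{-2}\|\vec a\|_\cE^2+t^{-4}q^2+t^{-2}q^3)\big)$. Adding this to \eqref{eq:ri0}, the critical terms $\br_l$ cancel exactly, and we get
\[
-\frac{d}{dt}\big(t^{1+\delta}\cK\big) \lesssim t^{1+\delta}\Big(\|\vec a\|_\cE^3+q^{\frac73}\|\vec a\|_\cE+t^{-\frac23}q^2\|\vec a\|_\cE+q\|\vec a\|_\cE^2+t^{-\frac32}\|\vec a\|_\cE^2+t^{-\frac52}q\|\vec a\|_\cE+t^{-4}q^2+t^{-2}q^3\Big) + t^{\delta}\,|\cG| + t^\delta B^2 .
\]
The term $t^\delta|\cG|\lesssim t^{-1+\delta}q\|\vec a\|_\cE$ comes from \eqref{eq:eG}.

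Each term is then reduced to $t^{-\frac32}\|\vec a\|_\cE^2+t^{-\frac72}q^2$ using \eqref{eq:im}, i.e.\ $\|\vec a\|_\cE,q\lesssim t^{-2+\delta}$:
\begin{itemize}
\item $\|\vec a\|_\cE^3 \lesssim t^{-2+\delta}\|\vec a\|_\cE^2$ and $q\|\vec a\|_\cE^2\lesssim t^{-2+\delta}\|\vec a\|_\cE^2$.
\item $q^{\frac73}\|\vec a\|_\cE\lesssim t^{-\frac83+\frac43\delta}\,q\|\vec a\|_\cE$ and $t^{-\frac23}q^2\|\vec a\|_\cE\lesssim t^{-\frac83+\delta}\,q\|\vec a\|_\cE$.
\item By Young's inequality, $t^{-s}q\|\vec a\|_\cE\le t^{-\frac32}\|\vec a\|_\cE^2+t^{\frac32-2s}q^2$. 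This gives $\lesssim t^{-\frac72}q^2$ provided $s\ge\frac52$, which holds for $s=\frac52$ and for $s=\frac83-\delta$ with $\delta$ small.
\item The term $t^{-1}q\|\vec a\|_\cE$ coming from $|\cG|$ is handled by the same Young inequality with $s=1$: first bound $q\le t^{-2+\delta}$ once, so that $t^{-1}q\|\vec a\|_\cE\lesssim t^{-3+\delta}\|\vec a\|_\cE$, and then pair again.
\item $t^{-4}q^2\le t^{-\frac72}q^2$, and $t^{-2}q^3\lesssim t^{-4+\delta}q^2$.
\end{itemize}
These bookkeeping checks are routine. The only delicate point is the $t^\delta|\cG|$ term, which costs a factor $t^{-1}$ instead of $t^{-2}$. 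If the direct pairing above turns out to be too lossy, the fallback is to use that $\frac{d}{dt}(t^{1+\delta})\cG$ is a lower-order correction and to trade $\delta$.

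\emph{Proof of \eqref{eq:CO}.} From \eqref{eq:cH0}, $\mu\|\vec a\|_\cE^2\le \cK+|\cG|+CB^2$. By \eqref{eq:eG} and Young's inequality, $|\cG|\lesssim t^{-2}q\|\vec a\|_\cE\le \frac{\mu}{2}\|\vec a\|_\cE^2 + C t^{-4}q^2$. Absorbing the first term on the left gives \eqref{eq:CO}, with $C_1=2/\mu$ and the harmless extra term $C_0t^{-2}q^3$ included. If one keeps the $q^3$ part of $\cG$ separately, it is there that $C_0t^{-2}q^3$ arises.

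The main obstacle is the cancellation of the critical terms $\br_l$, which are of size $t^{-2}q\|\vec a\|_\cE$; this is already supplied by Lemma \ref{le:vG}. What remains is to verify that every leftover error is strictly better than critical, i.e.\ that it carries at least $t^{-\frac32}$ relative to $\|\vec a\|_\cE^2$ and at least $t^{-\frac72}$ relative to $q^2$.
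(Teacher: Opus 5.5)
Your strategy is the paper's. You expand $-\frac{d}{dt}(t^{1+\delta}\cK)=-\frac{d}{dt}(t^{1+\delta}\cH)+t^{1+\delta}\frac{d}{dt}\cG+(1+\delta)t^{\delta}\cG$, cancel the critical terms $\br_l$ with \eqref{eq:50}, and reduce the leftover errors using \eqref{eq:im} and Young's inequality. Note that \eqref{eq:im} also gives \eqref{BPweak} for $t$ large, which \eqref{eq:ri0} and Lemma \ref{le:vG} need. Your proof of \eqref{eq:CO} is the paper's and is correct. Your bullets for $\|\vec a\|_\cE^3$, $q\|\vec a\|_\cE^2$, $q^{\frac73}\|\vec a\|_\cE$, $t^{-\frac23}q^2\|\vec a\|_\cE$, $t^{-\frac52}q\|\vec a\|_\cE$, $t^{-4}q^2$ and $t^{-2}q^3$ are fine.

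The one step that fails as written is the term $(1+\delta)t^\delta\cG$. You state $t^\delta|\cG|\lesssim t^{-1+\delta}q\|\vec a\|_\cE$, but \eqref{eq:eG} gives $|\cG|\lesssim t^{-2}q\|\vec a\|_\cE$. With your weaker bound, the term is $t^{1+\delta}\cdot t^{-2}q\|\vec a\|_\cE$ (not $t^{-1}q\|\vec a\|_\cE$ inside the bracket, as you later write). That is exactly the critical size $t^{-2}q\|\vec a\|_\cE$ that $\cG$ was introduced to remove. It cannot be bounded by $t^{-\frac32}\|\vec a\|_\cE^2+t^{-\frac72}q^2$ in general: take $\|\vec a\|_\cE=t^{-1}q$. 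Your proposed remedy does not repair this. Inserting $q\lesssim t^{-2+\delta}$ leaves a term linear in $\|\vec a\|_\cE$, which can never be dominated by the right-hand side of \eqref{eq:rj}, since that side is quadratic in $(\|\vec a\|_\cE,q,B)$. The ``fallback'' is not an argument.

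The fix is just to use \eqref{eq:eG} as stated. Then $t^\delta|\cG|\lesssim t^{1+\delta}\cdot t^{-3}q\|\vec a\|_\cE$, and your Young inequality with $s=3$ gives $t^{-3}q\|\vec a\|_\cE\le t^{-\frac32}\|\vec a\|_\cE^2+t^{-\frac92}q^2$. Equivalently, as in the paper, this term is absorbed into the $t^{-\frac52}q\|\vec a\|_\cE$ error already present in \eqref{eq:ri0}. So there is no delicate point in this lemma beyond the cancellation supplied by Lemma \ref{le:vG}.
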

\begin{proof}
Assuming \ref{BPweak} for now, and
combining \eqref{eq:ri0} and \eqref{eq:eG}, \eqref{eq:50} in Lemma \ref{le:vG}, we obtain
\begin{align*}
- \frac d{dt} \left( t^{1+\delta} \mathcal K \right)
& = - \frac d{dt} \left( t^{1+\delta} \mathcal H \right)
+t^{1+\delta} \frac d{dt}  \mathcal G  + (1+\delta) t^\delta \mathcal G\\
& \lesssim t^{1+\delta} \left[\left(\|\vec a\|_\cE^2    + q^\frac73+t^{-\frac 23} q^2 +q\|\vec a\|_\cE + t^{-\frac32} \|\vec a\|   +t^{-\frac52} q\right)\|\vec a\|_\cE 
+ t^{-4} q^2+t^{-2} q^3\right] \\ & \quad + t^\delta B^2.
\end{align*}
Now, we use the stronger estimate \eqref{eq:im}.
In particular, we get
\[
\|\vec a\|_\cE^2   + q^\frac73+t^{-\frac 23} q^2 +q\|\vec a\|_\cE + t^{-\frac32} \|\vec a\|   +t^{-\frac52} q
+t^{-2} q^3
\lesssim t^{-\frac32} \|\vec a\|   +t^{-\frac52} q,
\]
and thus
\begin{align*}
- \frac d{dt} \left( t^{1+\delta} \mathcal K \right)  
& \lesssim t^{1+\delta} \left( t^{-\frac32} \|\vec a\|_\cE^2 + t^{-\frac 52} q \|\vec a\|_\cE
+ t^{-\frac72} q^2 \right) + t^\delta B^2\\
& \lesssim t^{1+\delta} \left( t^{-\frac32} \|\vec a\|_\cE^2
+ t^{-\frac 72} q^2 \right) + t^\delta B^2,
\end{align*}
which proves the estimate \eqref{eq:rj}.

By \eqref{eq:cH0} and then \eqref{eq:eG}, we have
\[
\|\vec a\|_\cE^2 \leq C_1 \cH + C_2 B^2
\leq C_1 \cK + C_1 \cG + C_2 B^2
\leq C_1 \cK + C_3 t^{-2} q \|\vec a\|_\cE + C_2B^2.
\]
The estimate \eqref{eq:CO} follows.
\end{proof}

\subsection{Technical lemmas}
The next two lemmas are independent of what precedes, except that the notation is similar to facilitate comprehension.
Moreover, we consider given functions $\mu_k$ as in Section \ref{S:4.1} satisfying \eqref{eq:er},
and we set $\beta_k(t)=\int_T^t \mu_k(s) ds$.
\begin{lemma}\label{le:td}
Let $K\geq 1$.
Assume that there exist constants $\delta\in(0,\frac 14)$, $C_0,C_1>0$, $T\geq 1$ and differentiable functions 
\begin{align*}
N : [T,+\infty) & \to [0,+\infty),\\
\cK : [T,+\infty) & \to \R,\\
\alpha_k^\pm:[T,+\infty) & \to \R, \ k\in \{1,\ldots,K\},\\
\alpha_k^\Lambda:[T,+\infty) & \to \R, \ k\in \{1,\ldots,K\},\\
\alpha_k^{(j)}:[T,+\infty) & \to \R^j, \ k\in \{1,\ldots,K\},
\end{align*}
such that for $k\in \{1,\ldots,K\}$, for all $t\geq T$,
\begin{align}
N^2 & \leq C_1 \cK + C_0 t^{-4} q^2 + C_0 B^2  , \label{eq:z2} \\
\cK & \leq C_0 ( N^2 + t^{-2} qN ),\label{eq:z8}\\
\left| \frac d{dt} \alpha_k^{-} + \mu_k \alpha_k^{-} \right|
&\leq C_0 t^{-2} \left(  N +  q  \right),\label{eq:z4} \\
\left| \frac d{dt} \alpha_k^{+} - \mu_k \alpha_k^{+} \right|
&\leq C_0 t^{-2} \left(   N +  q \right),\label{eq:z5} \\
\left| \frac d{dt} \alpha_k^{\Lambda}\right| + \left| \frac d{dt} \alpha_k^{(j)} \right|
&\leq C_0 \left(  N + t^{-2} q \right),\label{eq:z9} \\
- \frac d{dt} \cK & \leq (1+\delta) t^{-1} \cK
+  C_0   t^{-\frac 72} q^2
  + C_0 t^{-1} B^2, \label{eq:z6}
\end{align}
where
\[
q = \sum_k |\alpha_k^\Lambda|+\sum_{j,k} |\alpha_k^{(j)}|,\quad
B= \sum_k |\alpha_k^-| + |\alpha_k^+|.
\]
If, in addition, for all $t\geq T$,
\begin{equation}\label{eq:z1}
N^2 + B^2 + q^2  \leq C_0 t^{-3},
\end{equation}
then, there exists $C>0$ such that for all $t\geq T$,
\begin{equation}\label{eq:z7}
N^2+B^2+q^2\leq C e^{-\beta_1}.
\end{equation}
\end{lemma}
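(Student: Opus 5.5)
The plan is a bootstrap that improves polynomial decay step by step until it becomes exponential at the slowest rate $\mu_1^\infty$. The model is the induction in the proof of Lemma \ref{le:tc}, but here there is no source term. Set $X(t)=N^2+B^2+q^2$. I aim to show that if $X\lesssim t^{-p}$ on $[T,\infty)$, then $X\lesssim t^{-p-\epsilon}$ for a fixed $\epsilon>0$, or else $X\lesssim e^{-\beta_1}$. Iterating this gives arbitrary polynomial decay. A final argument with exponential weights then reaches $e^{-\beta_1}$.

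\textbf{Polynomial improvement.} Assume $N+q+B\lesssim t^{-p/2}$ with $p\ge 3$.

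1. The stable directions: integrate \eqref{eq:z5} from $t$ to $+\infty$, using $e^{-\beta_k}\alpha_k^+\to 0$. This gives $|\alpha_k^+|\lesssim t^{-2-p/2}$.

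2. The unstable directions: integrate \eqref{eq:z4} from $T$ to $t$. This gives $|\alpha_k^-|\lesssim e^{-\beta_k}+t^{-2-p/2}$.

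3. The kernel directions: integrate \eqref{eq:z9} from $t$ to $\infty$, using $q\to 0$. This gives $q\lesssim \int_t^\infty N+t^{-1-p/2}$. This bound is weak, since it only yields $q\lesssim t^{1-p/2}$, which is worse than the hypothesis. So $q$ has to be handled jointly with $N$.

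4. The energy: integrate \eqref{eq:z6} in the form $-(t^{1+\delta}\cK)'\lesssim t^{\delta-5/2}q^2+t^{\delta}B^2$ from $t$ to $\infty$. The boundary term vanishes because $t^{1+\delta}\cK\lesssim t^{1+\delta-3}\to 0$ by \eqref{eq:z8} and \eqref{eq:z1}. This gives $\cK\lesssim t^{-5/2}\sup_{s\ge t}q^2+t^{-1}\sup_{s\ge t}B^2$. Combined with \eqref{eq:z2}, it yields
\[
N^2\lesssim t^{-5/2}\sup_{s\ge t}q^2+\sup_{s\ge t}B^2+e^{-2\beta_1}.
\]

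\textbf{Closing the loop between $q$ and $N$.} I define $Q(t)=\sup_{s\ge t}s^{r}q(s)$ for a suitable power $r$. Inserting $N\lesssim t^{-5/4}Q t^{-r}+t^{-2-p/2}+e^{-\beta_1}$ into step 3 gives
\[
q\lesssim t^{-1/4-r}Q+t^{-1-p/2}+e^{-\beta_1}.
\]
The factor $t^{-1/4}$ is what makes this close, for $T$ large. Taking $r=1+p/2$ gives $q\lesssim t^{-1-p/2}$ up to exponential terms. Then $N\lesssim t^{-2-p/2}$ and $B\lesssim t^{-2-p/2}$, so $X\lesssim t^{-p-2}$, which is an improvement. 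Each $\sup$ is finite thanks to the previous step of the induction.

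\textbf{Exponential step.} Once $X\lesssim t^{-M}$ for $M$ large, I redo the same integrations with the weight $e^{\beta_1}$. Because the solitons are ordered as in \eqref{eq:er}, $\mu_k\ge \mu_1-Ct^{-1}$, so $e^{-\beta_k}\lesssim t^{C}e^{-\beta_1}$. The homogeneous parts of the $\alpha_k^-$ are therefore $O(t^{C}e^{-\beta_1})$.

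The $\alpha^+$, $\cK$ and $q$ equations are linear with small coefficients and no source terms. So $Y=\sup_{s\ge t}e^{\beta_1(s)}s^{-C'}X(s)$ satisfies $Y\lesssim 1+o(1)Y$ once $t$ is large. The key point is that integrating $\int_t^\infty e^{-\beta_1}$ keeps the factor $e^{-\beta_1(t)}$, with at most polynomial losses; this is how I expect to bound $Y$.

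\textbf{Main obstacle.} I expect the hard part to be the exact exponent. I must remove the polynomial factor $t^{C}$ and obtain $e^{-\beta_1}$ itself, not $t^{C}e^{-\beta_1}$. The $\alpha_1^-$ equation has forcing $t^{-2}(N+q)$, which is integrable against $e^{\beta_1}t^{C}$ only if $C<1$. My plan is to track the leading term $\alpha_1^-\approx c\,e^{-\beta_1}$ with a sharp constant, and to show that every other quantity is $O(t^{-1/4}e^{-\beta_1})$ by the same closing argument. When $\mu_k^\infty=\mu_1^\infty$, \eqref{eq:er} gives $\nu_k\le\nu_1$, so $e^{-\beta_k}\lesssim e^{-\beta_1}$ without any polynomial loss. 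In the remaining cases $\mu_k^\infty>\mu_1^\infty$, so $e^{-\beta_k}\lesssim e^{-\beta_1}$ as well.
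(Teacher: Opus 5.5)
There is a genuine gap in your exponential step, and that step is the heart of the lemma. Your quantity $Y(t)=\sup_{s\ge t}e^{\beta_1(s)}s^{-C'}X(s)$ is not known to be finite. Polynomial decay of $X$ to any order gives no control on an exponentially weighted supremum over $[t,+\infty)$. So the inequality $Y\lesssim 1+o(1)Y$ is vacuous, since it holds with $Y=+\infty$, and no finite number of polynomial improvements produces exponential decay.

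Taking the supremum over a compact interval $[T,t]$ instead does not work either. The quantities $q$, $\alpha_k^+$ and $\cK$ are recovered by integrating from $+\infty$, so their values on $[T,t]$ depend on the solution beyond $t$, where you only have polynomial information. The same flaw already appears in your polynomial step: with $r=1+p/2$, $\sup_{s\ge t}s^{r}q(s)$ is not known to be finite when you only have $q\lesssim s^{-p/2}$. That instance is fixable by taking $r=p/2$, which gains a factor $t^{-1/4}$ on $q$ per step. The exponential step is not fixable by adjusting weights.

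The missing idea is the two-sided device used in the paper.
\begin{itemize}
\item A backward supremum with polynomial weight, $v_0(t)=\sup_{s\ge t}s^{1+2\delta}\bigl(N(s)+s^{-2}q(s)\bigr)$, is finite directly by \eqref{eq:z1}, so your polynomial iteration is not needed. It controls $q\lesssim s^{-2\delta}v_0$, $|\alpha_k^+|\lesssim s^{-2-2\delta}v_0$, and $\cK$ through backward integration.
\item A forward supremum with exponential weight over a compact interval, $f_0(t)=\sup_{T\le\tau\le t}\tau^{-2-4\delta}e^{\beta_1(\tau)}v_0^2(\tau)$, is trivially finite for each $t$.
\item The Duhamel formula for $\alpha_k^-(s)$ is split at $t\le s$. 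The square of the piece over $[T,t]$ is bounded by $T^{-2}e^{-\beta_1(s)}f_0(t)$, and that of the piece over $[t,s]$ by $s^{-4-4\delta}v_0^2(t)$.
\item Feeding these bounds into the estimates for $\cK$ and $N$ gives $f_0(t)\lesssim 1+T^{-2}f_0(t)$. This closes precisely because $f_0(t)<\infty$.
\end{itemize}

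You have also misidentified the difficulty. The lemma only claims $N^2+B^2+q^2\lesssim e^{-\beta_1}$, that is, the \emph{half} rate $e^{-\beta_1/2}$ for $N$, $B$ and $q$. At this rate, \eqref{eq:er} gives a strict exponential gap $\mu_k-\tfrac12\mu_1\ge\tfrac12\mu_1-o(1)>0$ for every $k$. That gap is what makes $\int_T^t\tau^{-1}e^{\beta_k(\tau)-\frac12\beta_1(\tau)}\,d\tau$ dominated by its endpoint with the small factor $T^{-1}$, and it makes polynomial losses harmless.

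Your ``main obstacle'' (removing $t^{C}$, tracking $\alpha_1^-\approx c\,e^{-\beta_1}$, the borderline integrability of $t^{-2}(N+q)$ against $e^{\beta_1}$) concerns the sharp rate. That rate is the content of Lemma~\ref{le:te}, applied with $m=1$ and $\beta_0=\tfrac12\beta_1$, not of this lemma.

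A minor point: the bound $\cK\lesssim t^{-1}\sup_{s\ge t}B^2$ is not correct as written, since $\int_t^\infty s^{\delta}\,ds$ diverges. You must use the actual decay of $B^2$ in that integral.
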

\begin{proof}
Set
\[
v_0(t) = \sup_{s\geq t} \left( s^{1+2\delta} \left(N(s)+s^{-2}q(s)\right)\right).
\]
By integrating \eqref{eq:z9} on $[s,+\infty)$, for $s>T$, and using \eqref{eq:z1}, we obtain
\[
|\alpha_k^\Lambda(s)| + |\alpha_k^{(j)}(s)|
\lesssim \int_s^{+\infty} (N(\tau)+\tau^{-2}q(\tau)) d\tau \leq \int_s^{+\infty} \tau^{-1-2\delta} v_0(\tau)  d\tau
\lesssim s^{-2\delta} v_0(s).
\]
Thus,
\begin{equation}\label{eq:U3}
q(s) \lesssim s^{-2\delta} v_0(s).
\end{equation}
By \eqref{eq:U3}, we also have
\[
s^{-2+4\delta} q^2(s) \lesssim s^{-2} v_0^2(s)
\]
and thus
\begin{equation}\label{eq:qQ}
\sup_{s\geq t} \left(s^{-2+4\delta} q^2(s) \right)
\lesssim t^{-2} v_0^2(t).
\end{equation}
The equation \eqref{eq:z4} of $\alpha_k^-$ implies
\[
\left| \frac d{dt} \left( e^{\beta_k} \alpha_k^- \right) \right| \lesssim t^{-2} e^{\beta_k} (N+q).
\]
By integrating on $[T,s]$, for $s>T$, denoting $C_k(T)=e^{\beta_k(T)} |\alpha_k^-(T)|$, we get
\[
|\alpha_k^-(s)| \lesssim C_k(T) e^{-\beta_k(s)} +  e^{-\beta_k(s)} \int_T^s \tau^{-2} e^{\beta_k(\tau)}
(N(\tau)+q(\tau)) d\tau
\]
and thus, using the definition of $v_0$ and \eqref{eq:U3}, we obtain
\[
|\alpha_k^-(s)| \lesssim C_k(T) e^{-\beta_k(s)} +  e^{-\beta_k(s)} \int_T^s \tau^{-2-2\delta} e^{\beta_k(\tau)}
v_0(\tau) d\tau.
\]
Let
\[
f_0(t) = \sup_{T\leq \tau \leq t} \left(\tau^{-2-4\delta} e^{\beta_1(\tau)} v_0^2 (\tau)\right).
\]
Let $T\leq t\leq s$. Using the Chasles relation
\begin{align*}
(\alpha_k^-(s))^2
& \lesssim C_k^2(T) e^{-2\beta_k(s)} +  e^{-2\beta_k(s)} \left(\int_T^t \tau^{-2-2\delta} e^{\beta_k(\tau)}
v_0(\tau) d\tau\right)^2\\
&\quad +e^{-2\beta_k(s)} \left(\int_t^s \tau^{-2-2\delta} e^{\beta_k(\tau)}v_0(\tau) d\tau\right)^2\\
& \lesssim C_k^2(T) e^{-2\beta_k(s)} 
+ e^{-2\beta_k(s)} f_0(t) \left(\int_T^t \tau^{-1} e^{\beta_k(\tau)-\frac 12 \beta_1(\tau)}d\tau\right)^2
+ s^{-4-4\delta} v_0^2(t).
\end{align*}
Thus, for $T\leq t\leq s$,
\begin{equation}\label{eq:U1}
(\alpha_k^-(s))^2
 \lesssim C_k^2(T) e^{-2\beta_k(s)}
 + T^{-2} e^{-\beta_1(s)} f_0(t)+ s^{-4-4\delta}  v_0^2(t) .
\end{equation}
Using the equation of $\alpha_k^+$
\[
\left| \frac d{dt} \left( e^{-\beta_k} \alpha_k^+ \right) \right| \leq t^{-2} e^{-\beta_k} (N+q).
\]
By integration on $[s,+\infty)$ for $s>T$, using \eqref{eq:z1}, we obtain
\[
|\alpha_k^+(s)| \leq  e^{\beta_k(s)} \int_s^{+\infty} \tau^{-2} e^{-\beta_k(\tau)}
(N(\tau)+q(\tau)) d\tau
\]
and, using the definition of $v_0$ and \eqref{eq:U3}
\begin{equation}\label{eq:U2}
|\alpha_k^+(s)| \leq  e^{\beta_k(s)} \int_s^{+\infty} \tau^{-2-2\delta} e^{-\beta_k(\tau)}
v_0(\tau) d\tau
\lesssim s^{-2-2\delta} v_0(s).
\end{equation}

Let $T\leq t\leq s$.
By \eqref{eq:U1} and \eqref{eq:U2}, one has
\begin{equation}\label{eq:U5}
B^2(s)\lesssim 
\sum_k C_k^2(T) e^{-2\beta_k(s)}
 + T^{-2}  e^{-\beta_1(s)} f_0(t) + s^{-4-4\delta}  v_0^2(t) .
\end{equation}
We insert estimate \eqref{eq:U3}  into \eqref{eq:z6}, to obtain
\[
- \frac d{dt}(t^{1+\delta} \cK) 
\lesssim t^{-\frac 52+\delta} q^2 + t^{\delta} B^2 
\lesssim t^{-\frac52-3 \delta} v_0^2 + t^{\delta} B^2.
\]
By integration on $[t,+\infty)$, for $t\geq T$, using \eqref{eq:z1}, we find
\[
t^{1+\delta}\cK(t)
\lesssim t^{-\frac32-3\delta} v_0^2(t) +\int_t^{+\infty} s^{\delta} B^2(s) ds.
\]
Multiplying by $t^{1+3\delta}$, we find
\[
t^{2+4\delta}\cK(t) \lesssim t^{-\frac12} v_0^2(t) +\int_t^{+\infty} s^{1+4\delta} B^2(s) ds,
\]
and taking the sup over $[t,+\infty)$, we find
\[
\sup_{s\geq t} \left( s^{2+4\delta}\cK(s)\right)
\lesssim t^{-\frac12} v_0^2(t)
+\int_t^{+\infty} s^{1+4\delta} B^2(s) ds.
\]
Using \eqref{eq:U5}, we obtain
\begin{align*}
\sup_{s\geq t} \left( s^{2+4\delta}\cK(s)\right)
&\lesssim t^{-\frac12} v_0^2(t) +  \sum_k C_k^2(T) \int_t^{+\infty} s^{1+4\delta} e^{-2\beta_k(s)} ds \\
&\quad 
+ T^{-2} f_0(t)\int_t^{+\infty} s^{1+4\delta} e^{-\beta_1(s)} ds 
+ v_0^2(t) \int_t^{+\infty} s^{-3}  ds.
\end{align*}
We simplify
\begin{equation}\label{eq:U4}
\sup_{s\geq t} \left( s^{2+4\delta}\cK(s)\right)
\lesssim t^{-\frac12} v_0^2(t) + t^{1+4\delta} \sum_k C_k^2(T)e^{-2\beta_k(t)}
+ T^{-2} t^{1+4\delta}  f_0(t) e^{-\beta_1(s)}.
\end{equation}
Now, by \eqref{eq:z2} and \eqref{eq:qQ}, \eqref{eq:U5}, \eqref{eq:U4},
\[
\sup_{s\geq t} \left(s^{2+4\delta} N^2(s) \right)
\lesssim t^{-\frac12} v_0^2(t) + t^{2+4\delta} \sum_k C_k^2(T)e^{-2\beta_k(t)}
+T^{-2} t^{2+4\delta}  f_0(t) e^{-\beta_1(t)}.
\]
Therefore, by the definition of $v_0$ and \eqref{eq:qQ}, we obtain
\[
v_0^2(t) \lesssim 
t^{-\frac12} v_0^2(t) + t^{2+4\delta} \sum_k C_k^2(T)e^{-2\beta_k(t)}
+ T^{-2}t^{2+4\delta}  f_0(t) e^{-\beta_1(t)},
\]
and so, for $T$ sufficiently large
\[
v_0^2(t) \lesssim t^{2+4\delta} \sum_k C_k^2(T)e^{-2\beta_k(t)}
+ T^{-2} t^{2+4\delta}  f_0(t) e^{-\beta_1(t)}.
\]
Recalling now the definition of $f_0$, we obtain
\[
f_0(t) \lesssim \sup_{T\leq \tau \leq t} \left(\sum_k C_k^2(T)e^{-\beta_k(\tau)}
+T^{-2} f_0(\tau)\right)
\lesssim 1 + T^{-2}f_0(t).
\]
Therefore, for $T>0$ large enough, we have $f_0(t)\lesssim 1$.
It follows that
\[
  N^2(t) +  t^{-4} q^2(t) \lesssim 
t^{-2-4\delta} v_0^2(t) \lesssim e^{-\beta_1(t)},
\]
which is sufficient to finish the proof of \eqref{eq:z7} using \eqref{eq:U5}
and integrating \eqref{eq:z9} again.
\end{proof}
 
For convenience, we set
\[
\beta_0(t) = \frac 12 \beta_1(t).
\]

\begin{lemma}\label{le:te}
Let $m\in \{1,\ldots,K+1\}$.
Assume the same conditions as in Lemma \ref{le:td}, except that instead of \eqref{eq:z1},
suppose
\begin{equation}\label{eq:*1}
N^2 + B^2 + q^2 \leq C_0 e^{-2\beta_{m-1}(t)}.
\end{equation}
If $m\geq 2$, assume further that for all $k\in \{1,\ldots,m-1\}$,
\begin{equation}\label{eq:*9}
\lim_{t\to+\infty} e^{\beta_k(t)} \alpha_k^-(t)=0.
\end{equation}
Then, if $m\in \{1,\ldots,K\}$,
\begin{equation}\label{eq:*2}
N^2 + B^2 + q^2 \lesssim e^{-2\beta_{m}(t)},
\end{equation}
 and if $m=K+1$, $N= B= q=0$.
\end{lemma}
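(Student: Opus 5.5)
Our plan is to rerun the bootstrap of Lemma \ref{le:td}, replacing the polynomial weights by exponential ones adapted to $\beta_{m-1}$ and $\beta_m$. For $m=1$ the hypothesis \eqref{eq:*1} reads $N^2+B^2+q^2\lesssim e^{-\beta_1}$ (since $\beta_0=\frac12\beta_1$), which is implied by Lemma \ref{le:td}. We then have to show that this bound self-improves to $e^{-2\beta_1}$. For general $m$ we assume \eqref{eq:*1} and \eqref{eq:*9} and aim at $e^{-2\beta_m}$. By \eqref{eq:er}, the rates $\mu_k^\infty$ are nondecreasing in $k$, so $e^{-\beta_k}\lesssim t^{C}e^{-\beta_{m-1}}$ for $k\le m-1$ and $e^{-\beta_k}\lesssim t^Ce^{-\beta_m}$ for $k\geq m$, up to polynomial factors controlled by the $\nu_k$ and the ordering.

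\emph{Step 1 (directions $\alpha_k^\pm$).} For $\alpha_k^+$ we integrate \eqref{eq:z5} on $[t,\infty)$, which gives $|\alpha_k^+|\lesssim t^{-2}\sup_{s\ge t}(N+q)$. For $\alpha_k^-$ with $k\le m-1$ we use \eqref{eq:*9} and integrate $(e^{\beta_k}\alpha_k^-)'$ from $t$ to $+\infty$. Since $\beta_k\le\beta_{m-1}$ up to logarithmic terms, the integral $\int_t^\infty e^{\beta_k}s^{-2}(N+q)$ converges by \eqref{eq:*1}, and we get $|\alpha_k^-|\lesssim t^{-2}\sup_{s\geq t}(N+q)$. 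For $k\ge m$ we integrate from $T$, which gives
\[
|\alpha_k^-(s)|\lesssim C_k(T)e^{-\beta_k(s)}+e^{-\beta_k(s)}\int_T^s\tau^{-2}e^{\beta_k}(N+q)\,d\tau.
\]
The first term is already of size $e^{-\beta_m}$. The second term we treat as in \eqref{eq:U1}, with the auxiliary function $f_0$ now built on the weight $e^{2\beta_m}$ instead of $e^{\beta_1}$. Since $\beta_k\ge\beta_m$ asymptotically, the factor $e^{\beta_k-\beta_m}$ is integrable against $\tau^{-2}$ up to polynomial factors. When $\mu_k^\infty=\mu_m^\infty$, the margin $\nu_k\le\nu_m$ from \eqref{eq:Jm} is used to absorb the powers of $t$.

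\emph{Step 2 (energy and kernel, then closing).} We set $v(t)=\sup_{s\ge t}e^{\beta_{m-1}(s)}s^{1+2\delta}(N+s^{-2}q)$, which is finite by \eqref{eq:*1}. From \eqref{eq:z9} we get $q\lesssim s^{-2\delta}e^{-\beta_{m-1}}v$. Integrating \eqref{eq:z6} backward from $+\infty$, using \eqref{eq:z8} and the boundary condition $t^{1+\delta}\cK\to0$, bounds $\cK$ by $t^{-1/2}v^2e^{-2\beta_{m-1}}$ plus $\int s^\delta B^2$. With Step 1 this yields $v^2\lesssim t^{-1/2}v^2+(\text{terms in }e^{-2\beta_m})$. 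Absorbing the first term for $T$ large gives $v\lesssim 1$ with explicit gain, hence $N^2+q^2\lesssim e^{-2\beta_m}$ once the $e^{-2\beta_m}$ contributions dominate. Then $B^2$ follows from Step 1.

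\emph{The case $m=K+1$.} Here we show that every contribution vanishes. With all $\alpha_k^-$ normalized by \eqref{eq:*9} there are no free constants $C_k(T)$. The inequality then becomes homogeneous, $v\le Ct^{-1/2}v$, in weighted norms with the weight $e^{\beta_K}$ or any stronger exponential weight. Iterating with increasing exponential weights, or directly absorbing, gives $v=0$, hence $N=B=q=0$.

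The main obstacle is Step 1 for $k\ge m$ when several solitons share the same $\mu^\infty$. There the gain between $e^{-\beta_{m-1}}$ and $e^{-\beta_m}$ is only a power of $t$, so the ordering \eqref{eq:er}, the gap $\nu_0$, and the small $t^{-2}$ factors must be balanced carefully so that the $f_0$-type argument still closes. To handle this we would first try to insert an extra factor $t^{\nu_0}$ into the weights.
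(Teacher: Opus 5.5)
Your architecture matches the paper's. You use a future sup with the exponential weight $e^{\beta_{m-1}}$, integrate $\alpha_k^-$ for $k\le m-1$ from $+\infty$ using \eqref{eq:*9}, and use a past-sup auxiliary function with weight $e^{2\beta_m}$ for $\alpha_k^-$ with $k\ge m$. Then you integrate \eqref{eq:z6} backward and absorb.

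The gap is in the quantity you absorb. You set $v(t)=\sup_{s\ge t}e^{\beta_{m-1}(s)}s^{1+2\delta}(N+s^{-2}q)$ and assert it is finite by \eqref{eq:*1}. But \eqref{eq:*1} only gives $e^{\beta_{m-1}}N\lesssim 1$, so $e^{\beta_{m-1}(s)}s^{1+2\delta}N(s)$ may grow like $s^{1+2\delta}$. No hypothesis supplies extra polynomial decay beyond the exponential rate. With $v=+\infty$ the inequality $v^2\lesssim t^{-1/2}v^2+\cdots$ says nothing. For $m=K+1$, where $N=B=q=0$ must come from the homogeneous bound $v\le Ct^{-1/2}v$, the proof collapses entirely. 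Once the weight is exponential, you cannot also put a growing power on $N$. The paper instead uses $\tilde v(t)=\sup_{s\ge t}e^{\beta_{m-1}(s)}\bigl(N(s)+s^{-1/2}q(s)\bigr)$, which \eqref{eq:*1} does bound. The smallness for absorption then comes from three sources. First, \eqref{eq:z9} gives $q\lesssim e^{-\beta_{m-1}}\tilde v$, a gain of $s^{-1/2}$ over the weight. Second, \eqref{eq:z4}--\eqref{eq:z5} carry $t^{-2}$ coefficients. Third, \eqref{eq:z2} contains only $t^{-4}q^2$. Together these give $\tilde v^2\lesssim t^{-1}\tilde v^2+\cdots$.

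Your Step 1 bound for $k\le m-1$ does not follow as written. You claim $|\alpha_k^-|\lesssim t^{-2}\sup_{s\ge t}(N+q)$ with an unweighted sup, but $\int_t^\infty s^{-2}e^{\beta_k(s)}ds=+\infty$. What holds is $|\alpha_k^-(s)|\lesssim s^{-1}e^{-\beta_{m-1}(s)}\tilde v(s)$, using that $e^{\beta_k-\beta_{m-1}}$ is nonincreasing up to a constant. Your preliminary claim $e^{-\beta_k}\lesssim t^Ce^{-\beta_{m-1}}$ for $k\le m-1$ points the wrong way and fails when $\mu_k^\infty<\mu_{m-1}^\infty$.

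The obstacle you single out is not one, and your hedge ``up to polynomial factors'' would actually be fatal. The past-sup loop closes because, for $k\ge m$ and $T\le t\le s$, $e^{-\beta_k(s)}\int_T^t\tau^{-2}e^{\beta_k(\tau)-\beta_m(\tau)}d\tau\lesssim T^{-1}e^{-\beta_m(s)}$. This yields $\tilde f\lesssim\sum_{k\ge m}C_k^2(T)+T^{-2}\tilde f$. It requires $e^{\beta_k-\beta_m}$ to be nondecreasing up to a constant with no polynomial loss; a loss $\tau^{c}$ with $c\ge 1$ would destroy the factor $T^{-1}$. The ordering \eqref{eq:er} gives exactly this. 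By \eqref{eq:mm}, $\mu_k-\mu_m=(\mu_k^\infty-\mu_m^\infty)-(\nu_k-\nu_m)t^{-1}+O(t^{-7/4})\ge -Ct^{-7/4}$ for $k\ge m$, and symmetrically $\mu_k-\mu_{m-1}\le Ct^{-7/4}$ for $k\le m-1$. No gap between $\beta_{m-1}$ and $\beta_m$ is used anywhere; if $\beta_m-\beta_{m-1}$ is bounded, the conclusion is just \eqref{eq:*1}. Inserting $t^{\nu_0}$ into an exponential weight would only reproduce the finiteness problem above.
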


\begin{proof}

Set
\[
\tilde v(t) = \sup_{s\geq t} \left( e^{\beta_{m-1}(s)} \left(N(s)+s^{-\frac 12} q(s)\right)\right).
\]
By integration of \eqref{eq:z9} on $[s,+\infty)$, for $s>T$, we obtain
\begin{equation*}
|\alpha_k^\Lambda(s)| + |\alpha_k^{(j)}(s)|
\lesssim \int_s^{+\infty} (N(\tau)+\tau^{-2}q) d\tau  \lesssim \int_s^{+\infty} e^{-\beta_{m-1}(\tau)} \tilde v(\tau)  d\tau\ \lesssim e^{-\beta_{m-1}(s)} \tilde v(s).
\end{equation*}
Thus,
\begin{equation}\label{eq:V3}
q \lesssim  e^{-\beta_{m-1} } \tilde v .
\end{equation}
In particular,
\begin{equation}\label{eq:vi}
\sup_{s\geq t} \left(s^{-1}  e^{ 2 \beta_{m-1}(s)}q^2(s) \right)
\lesssim  t^{-1}\tilde v^2(t).
\end{equation}

Let $k\in \{1,\ldots,K\}$ with $k\leq m-1$ (this case is void for $m=1$).
From \eqref{eq:z4},
\[
\frac{d}{dt}\left( e^{\beta_k} \alpha_k^- \right)(s)
\lesssim s^{-2} e^{\beta_k(s)} \left( N + q \right)(s).
\]
Using \eqref{eq:*9}, we integrate this differential inequality on $[s,+\infty)$ and obtain
\[
|\alpha_k^-(s)| \lesssim e^{-\beta_k(s)}
\int_s^\infty \tau^{-2} e^{\beta_k(\tau)} \left( N + q \right)(\tau) d\tau.
\]
By the definition of $\tilde v$ and \eqref{eq:vi},
\begin{equation}\label{eq:V0}
|\alpha_k^-(s)| \leq  e^{-\beta_k(s)} \int_s^{+\infty} \tau^{-2} e^{\beta_k(\tau)}e^{-\beta_{m-1}(\tau)} \tilde v(\tau) d\tau
\lesssim s^{-1} e^{-\beta_{m-1}(s)}\tilde v(s),\quad k\leq  m-1.
\end{equation}

Let $k\in \{1,\ldots,K\}$ with $k\geq m$ (this case is void for $m=K+1$).
By \eqref{eq:z4}, the definition of $\tilde v$ and \eqref{eq:V3}, we obtain after integration on $[T,s]$,
for $s\geq T$,
\[
|\alpha_k^-(s)| \lesssim C_k(T) e^{-\beta_k(s)}
+  e^{-\beta_k(s)} \int_T^s \tau^{-2} e^{\beta_k(\tau)}e^{-\beta_{m-1}(\tau)} \tilde v(\tau) d\tau,
\]
where $C_k(T)=e^{\beta_k(T)} |\alpha_k^{-}(T)|$.
Let $T\leq t\leq s$,
using the Chasles relation
\begin{align*}
(\alpha_k^-(s))^2
& \lesssim C_k^2(T) e^{-2\beta_k(s)} 
+  e^{-2\beta_k(s)} \left(\int_T^t \tau^{-2} e^{\beta_k(\tau)}
e^{-\beta_{m-1}(\tau)}
\tilde v(\tau) d\tau\right)^2\\
&\quad +e^{-2\beta_k(s)} \left(\int_t^s \tau^{-2} e^{\beta_k(\tau)}
e^{-\beta_{m-1}(\tau)} \tilde v(\tau)d\tau\right)^2.
\end{align*}
Let
\[
\tilde f(t) = \sup_{T\leq \tau \leq t} \left( e^{2\beta_m(\tau)-2\beta_{m-1}(\tau)} \tilde v^2 (\tau)\right).
\]
Thus, for $T\leq t\leq s$,
\begin{equation}\label{eq:V1}
(\alpha_k^-(s))^2
 \lesssim C_k^2(T) e^{-2\beta_k(s)}
 + T^{-2} e^{-2\beta_m(s)} \tilde f(t)+ t^{-2} e^{-2 \beta_{m-1}(s)} \tilde v^2(t)
,\quad k\geq m.
\end{equation}
Moreover, using \eqref{eq:z5}, \eqref{eq:*1} and \eqref{eq:V3}, for any $k\in\{1,\ldots,K\}$, we have
\begin{equation}\label{eq:V2}
|\alpha_k^+(s)| \leq  e^{\beta_k(s)} \int_s^{+\infty} \tau^{-2} e^{-\beta_k(\tau)}
e^{-\beta_{m-1}(\tau)} \tilde v(\tau) d\tau
\lesssim s^{-2} e^{- \beta_{m-1}(s)} \tilde v(s).
\end{equation}
Let $T\leq t\leq s$.
By \eqref{eq:V0}, \eqref{eq:V1} and \eqref{eq:V2}, one has
\begin{equation}\label{eq:V5}
B^2(s)\lesssim 
\sum_{k\geq m} C_k^2(T) e^{-2\beta_k(s)}
 + T^{-2} e^{-2\beta_m(s)} \tilde f(t)+ t^{-2} e^{- 2 \beta_{m-1}(s)} \tilde v^2(t).
\end{equation}
Note that above, and hereafter, the term $\sum_{k\geq m} C_k^2(T) e^{-2\beta_k(s)}$ is zero
in the special case where $m=K+1$.

Now, we insert the estimate \eqref{eq:V3}  into \eqref{eq:z6}, to obtain
\[
- \frac d{dt}(t^{1+\delta} \cK) 
\lesssim t^{-\frac 52+\delta} q^2 + t^{\delta} B^2 
\lesssim t^{-\frac 52+\delta} e^{-2\beta_{m-1}(t)} \tilde v^2(t) + t^{\delta} B^2.
\]
By integrating 
on $[t,+\infty)$, for $t\geq T$ and using \eqref{eq:z8}, \eqref{eq:*1}, we find
\[
t^{1+\delta}\cK(t)
\lesssim t^{-\frac52+\delta} e^{-2 \beta_{m-1}(t)}\tilde v^2(t)
+\int_t^{+\infty} s^{\delta} B^2(s) ds .
\]
Multiplying by $t^{-1-\delta} e^{2\beta_{m-1}(t)}$, we find
\begin{align*}
e^{2 \beta_{m-1}(t)}\cK(t)
&\lesssim t^{-\frac72} \tilde v^2(t)
+t^{-1-\delta} e^{2\beta_{m-1}(t)} \int_t^{+\infty} s^{\delta} B^2(s) ds
\end{align*}
Inserting the estimate on $B$ from \eqref{eq:V5}, we obtain
\begin{align*}
e^{2 \beta_{m-1}(s)} \cK(s) 
&\lesssim t^{-\frac72} \tilde v^2(t)
 +   t^{-1} e^{2 \beta_{m-1}(t)} \sum_{k\geq m} C_k^2(T) e^{-2\beta_k(t)} \\
&\quad 
+ T^{-1} t^{-1} e^{-2\beta_m(t)+2 \beta_{m-1}(t)}\tilde f(t) + t^{-2} \tilde v^2(t) .
\end{align*}
Simplifying and taking the supremum over $[t,+\infty)$,
\begin{multline}\label{eq:V4}
\sup_{s\geq t} \left(  e^{2 \beta_{m-1}(s)} \cK(s)\right)
\lesssim t^{-3} \tilde v^2(t) + t^{-1} e^{ 2 \beta_{m-1}(t)}\sum_{k\geq m} C_k^2(T)e^{-2\beta_k(t)}\\
+ T^{-2} t^{-1} \tilde f(t) e^{-2\beta_m(t)+2 \beta_{m-1}(t)}  .
\end{multline}
By \eqref{eq:z2} and \eqref{eq:vi}, \eqref{eq:V5}, \eqref{eq:V4},
\begin{multline*}
\sup_{s\geq t} \left( e^{2 \beta_{m-1}(s)}  N^2(s) \right)
\lesssim t^{-1} \tilde v^2(t) +  e^{2 \beta_{m-1}(t)}\sum_{k\geq m} C_k^2(T)e^{-2\beta_k(t)}
\\
+ T^{-2}  \tilde f(t) e^{-2\beta_m(t)+2 \beta_{m-1}(t)}.
\end{multline*}
Therefore, by the definition of $\tilde v$ and \eqref{eq:vi}, we obtain
\[
\tilde v^2(t) \lesssim 
t^{-1} \tilde v^2(t) +  e^{2\beta_{m-1}(t)}\sum_{k\geq m} C_k^2(T)e^{-2\beta_k(t)}
+ T^{-2} \tilde f(t) e^{-2\beta_m(t)+2 \beta_{m-1}(t)},
\]
and so for $T$ sufficiently large
\[
\tilde v^2(t) \lesssim  e^{ 2 \beta_{m-1}(t)} \sum_{k\geq m} C_k^2(T)e^{-2\beta_k(t)}
+ T^{-2}  \tilde f(t) e^{-2\beta_m(t)+ 2 \beta_{m-1}(t)}.
\]
Recalling now the definition of $f$, 
we obtain
\[
\tilde f(t) \lesssim \sup_{T\leq \tau \leq t} \left(  \sum_{k\geq m} C_k^2(T) 
+T^{-1} \tilde f(\tau)\right)
\lesssim \sum_{k\geq m} C_k^2(T) + T^{-2}\tilde f(t).
\]
Therefore, for $T>0$ large enough, we have $\tilde f(t)\lesssim \sum_{k\geq m} C_k^2(T)\lesssim 1$.
If $m=K+1$, this implies that $\tilde f=0$.
If $m\leq K$, then by \eqref{eq:V3},
\[
 N^2(t) + t^{-1} q^2(t) \lesssim 
 e^{-2 \beta_{m-1}(t)} \tilde v^2(t) \lesssim  e^{-2\beta_m(t)}.
\]
The estimate on $B^2$ follows from \eqref{eq:V5}.
\end{proof}

\subsection{Classification proof}
Since we will use decomposition with respect to other strong
multi-solitons later, we clarify the notation of Section \ref{S:4} by setting
\[
\vec a = \vec a[S],\quad \alpha_k^\Lambda = \alpha_k^\Lambda [S],
\quad \alpha_k^{(j)} = \alpha_k^{(j)} [S],
\quad \alpha_k^\pm = \alpha_k^\pm [S],\quad
q=q[S],\quad B = B[S]
\]
to denote the elements of the decomposition of the multi-soliton $\vec u$ with respect to the strong multi-soliton $\vec S$.
\begin{lemma}\label{le:ex}
There exist $C_0>0$ such that for all $t>T$,
\begin{equation}\label{eq:fe}
\|\vec a[S]\|_{\mathcal E} + B[S](t) + q[S](t)\lesssim e^{-\frac 12\beta_1(t)}.
\end{equation}
\end{lemma}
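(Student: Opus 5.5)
The plan is to apply Lemma~\ref{le:td} to the decomposition of $\vec u$ with respect to $\vec S$ (with $A=0$) from Section~\ref{s:4.1}, taking
\[
N=\|\vec a[S]\|_\cE,\qquad \cK=\cH-\cG
\]
as in \eqref{eq:KK}, and $\alpha_k^\pm,\alpha_k^\Lambda,\alpha_k^{(j)}$ the modulation and exponential coefficients of Section~\ref{S:4}. The conclusion \eqref{eq:z7} gives $N^2+B^2+q^2\lesssim e^{-\beta_1}$, which is exactly \eqref{eq:fe}. It then remains to check the hypotheses of Lemma~\ref{le:td}, for $t\geq T$ with $T$ large.

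\emph{Step 1: the a priori bound.} By Proposition~\ref{pr:2}, $u$ satisfies \eqref{eq:S4}, hence \eqref{eq:im}: $N+B+q\lesssim t^{-2+\delta}$. With $\delta<\frac14$ this gives $N^2+B^2+q^2\lesssim t^{-4+2\delta}\leq C_0t^{-3}$, which is \eqref{eq:z1}. It also guarantees the weak bootstrap \eqref{BPweak} for $t$ large, so all the estimates of Section~\ref{S:4} are available.

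\emph{Step 2: the differential inequalities.}
\begin{itemize}
\item \eqref{eq:z9} is \eqref{eq:ke0}.
\item \eqref{eq:z4}--\eqref{eq:z5} follow from \eqref{eq:zo0}. The quadratic terms are absorbed using \eqref{eq:im}: $N^2+q^2\lesssim t^{-2+\delta}(N+q)$. This is weaker than the required $t^{-2}(N+q)$. To fix it, I would first use Step~1 to reduce the polynomial rate and apply the lemma in an iterative fashion. More simply, I would note that the proof of Lemma~\ref{le:td} only uses that the right-hand side is $\lesssim t^{-1-\epsilon}(N+q)$ in the relevant integrations, and bootstrap $N^2+q^2\lesssim t^{-2}(N+q)$ once $N+q\lesssim t^{-2}$. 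Checking that this is enough is the first point to verify.
\item \eqref{eq:z6} follows from \eqref{eq:rj}. Expand $-\frac{d}{dt}(t^{1+\delta}\cK)=-t^{1+\delta}\cK'-(1+\delta)t^\delta\cK$, then bound $t^{-3/2}N^2$ by $t^{-3/2}(C_1\cK+C_0t^{-4}q^2+C_0B^2)$ via \eqref{eq:CO}. The $t^{-3/2}\cK$ term is absorbed by replacing $\delta$ with $2\delta$. The $t^{-2}q^3$ term in \eqref{eq:CO} is $\lesssim t^{-4+\delta}q^2$ by \eqref{eq:im}.
\item \eqref{eq:z2} is \eqref{eq:CO}, using $t^{-2}q^3\lesssim t^{-4+\delta}q^2$ again (adjusting $t^{-4}$ to $t^{-4+\delta}$ if needed; the proof of Lemma~\ref{le:td} tolerates this).
\item \eqref{eq:z8} follows from \eqref{eq:HH} and \eqref{eq:eG}.
\end{itemize}

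\emph{Main obstacle.} The delicate point is the mismatch in powers in \eqref{eq:z4}--\eqref{eq:z5}: quadratic terms $N^2,q^2$ must be dominated by $t^{-2}(N+q)$, whereas \eqref{eq:im} only yields $t^{-2+\delta}$. I would handle this by first applying Lemma~\ref{le:td} with slightly weakened exponents, for instance $t^{-2+\delta}$ in place of $t^{-2}$; I believe its proof goes through since the weight $v_0$ carries $s^{1+2\delta}$ room. The resulting exponential decay $e^{-\beta_1}$ is then far stronger than needed, which makes all quadratic terms harmless a posteriori.
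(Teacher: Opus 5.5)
Your proposal is correct and follows the paper's proof. Both apply Lemma~\ref{le:td} with $N=\|\vec a[S]\|_\cE$ and $\cK=\cH-\cG$, and check its hypotheses using \eqref{eq:ke0}, \eqref{eq:zo0}, \eqref{eq:HH}, \eqref{eq:eG}, \eqref{eq:CO}, \eqref{eq:rj} and \eqref{eq:im}.

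The mismatches you flag are real, and the paper does not address them explicitly: the rate is $t^{-2+\delta}$ rather than $t^{-2}$ in \eqref{eq:z4}--\eqref{eq:z5}, \eqref{eq:CO} has an extra $t^{-2}q^3$ term, and \eqref{eq:rj} has an extra $t^{-3/2}N^2$ term. Your remedy works, because the proof of Lemma~\ref{le:td} tolerates these losses. One small slip: absorbing $C_1t^{-3/2}\cK$ into $(1+2\delta)t^{-1}\cK$ requires $\cK\geq 0$. When $\cK<0$, use \eqref{eq:CO} to get $|\cK|\lesssim t^{-4}q^2+t^{-2}q^3+B^2$, which makes $\delta t^{-1}|\cK|$ an admissible error term in \eqref{eq:z6}.
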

\begin{proof}
We apply Lemma \ref{le:td} with
$N(t) = \|\vec a[S](t) \|_\cE$
and the same notation otherwise.
We verify all the assumptions of Lemma \ref{le:td}:
Estimate \eqref{eq:z2} follows from \eqref{eq:CO}.
Estimate \eqref{eq:z8} follows from \eqref{eq:HH} and \eqref{eq:eG}.
Then, estimates \eqref{eq:z4}, \eqref{eq:z5} and \eqref{eq:z9} follow from \eqref{eq:ke0} and \eqref{eq:zo0}.
Lastly, \eqref{eq:z6} is a consequence of \eqref{eq:rj}.
Moreover, estimate \eqref{eq:im} implies \eqref{eq:z1}.
Applying Lemma \ref{le:td}, we deduce \eqref{eq:fe}.
\end{proof}

After applying Lemma \ref{le:td} in the proof of Lemma \ref{le:ex}, we apply Lemma \ref{le:te} with $m=1$.
Indeed, \eqref{eq:fe} and the definition of $\beta_0$ implies \eqref{eq:*1} with $m=1$.
Using Lemma \ref{le:te} for $m=1$, we obtain
\begin{equation}\label{eq:P1}
\|a[S](t)\|_\cE+B[S](t)+q[S](t)\lesssim e^{-\beta_1(t)},
\end{equation}
thus strictly improving the exponential decay obtained in \eqref{eq:fe}.

At this stage, an important observation is that by \eqref{eq:zo0} and \eqref{eq:P1}, one has
\[
\left| \frac d{dt} \left( e^{\beta_1} \alpha_1^-[S] \right) \right|
\lesssim t^{-2}.
\]
Thus, there exists a real $A_1$ such that
\begin{equation}\label{eq:a1}
A_1 = \lim_{t\to+\infty} e^{\beta_1(t)} \alpha_1^-[S](t)
\end{equation}
and
\begin{equation}\label{eq:B1}
\left| e^{\beta_1(t)} \alpha_1^-(t) -A_1  \right| \lesssim t^{-1}.
\end{equation}
Now, we consider the strong multi-soliton $S_{A_1}$ constructed in the proof of Theorem \ref{th:1}
and the decomposition of the multi-soliton $u$ around the multi-soliton
$S_{A_1}$ as given in Section \ref{S:4}.
We denote by
\[
a[S_{A_1}], \quad  \alpha_k^\Lambda [S_{A_1}],
\quad \alpha_k^{(j)} [S_{A_1}],
\quad \alpha_k^\pm [S_{A_1}],\quad q[S_{A_1}],\quad B[S_{A_1}]
\]
the corresponding quantities.
By \eqref{eq:A1} and \eqref{eq:B1}
\[
\lim_{t\to+\infty} e^{\beta_1(t)} \alpha_{1}^-[S_{A_1}](t)=0.
\]
Thus, we apply Lemma \ref{le:te} with $m=2$ to get
\[
\|a[S_{A_1}](t)\|_\cE+B[S_{A_1}](t)+q[S_{A_1}](t)\lesssim e^{-\beta_2(t)}.
\]
As before, we define
\begin{equation}\label{eq:a2}
A_2 = \lim_{t\to+\infty} e^{\beta_2(t)} \alpha_2^-[S_{A_1}](t)
\end{equation}
and the strong multi-soliton $S_{A_1,A_2}$.
Using this, we apply Lemma \ref{le:te} with $m=2$ again.
to get 
\[
\|a[S_{A_1,A_2}](t)\|_\cE+B[S_{A_1,A_2}](t)+q[S_{A_1,A_2}](t)\lesssim e^{-\beta_3(t)}.
\]

Iterating this argument, defining recursively $A_m$ for any $m\in \{3,\ldots,K\}$ and applying
Lemma~\ref{le:te} with $m\in\{3,\ldots,K+1\}$ ($m=K+1$ is a special case),
we construct a strong multi-soliton 
$S_{\bf A}=S_{A_1,\ldots,A_K}$ such that the decomposition of the solution $u(t)$ in terms
of $S_{\bf A}$ satisfies $a[S_{\bf A}]=0$ and $B[S_{\bf A}]=q[S_{\bf A}]=0$ which implies
$u(t)=S_{\bf A}$.

\end{document}